\def\blfootnote{\xdef\@thefnmark{}\@footnotetext}
\DeclareRobustCommand{\format@sec@number}[2]{{\normalfont\upshape#1}#2}
\definecolor{orange}{rgb}{1,0.5,0}
\DeclareMathAlphabet{\mathpzc}{OT1}{pzc}{L}{it} 
\def\a{\alpha}
\numberwithin{equation}{section}
\theoremstyle{definition}
\newtheorem{definition}{Definition}[section]
\newtheorem{theorem}[definition]{Theorem}
\newtheorem{proposition}[definition]{Proposition}
\newtheorem{lemma}[definition]{Lemma}
\newtheorem{remark}[definition]{Remark}
\newtheorem{claim}[definition]{Claim}
\def\geq{\geqslant}
\def\leq{\leqslant}
\def\R{\mathbb{R}}
\def\Z{\mathbb{Z}}
\def\N{\mathbb{N}}
\def\epsilon{\varepsilon}
\def\a{\alpha}
\def\R{\mathbb R}
\def\N{{\mathbb N}}
\def\Z{\mathbb Z}
\def\S{\mathbb S}
\def\({\biggl(}
\def\){\biggr)}
\def\<{\mathbf{\langle}}
\def\>{\mathbf{\rangle}}
\def\uent{\overline{\text{ent}}}
\def\lent{\underline{\text{ent}}}
\def\ent{\text{ent}}
\newcommand{\Meng}[2]{\left\{#1\mathrel{}\middle|\mathrel{}#2\right\}}
\newcommand{\bea}{\begin{eqnarray}}
  \newcommand{\eea}{\end{eqnarray}}
  \newcommand{\beab}{\begin{eqnarray*}}
  \newcommand{\eeab}{\end{eqnarray*}}
\renewcommand{\a}{\alpha}
  \newcommand{\be}{\begin{equation}}
  \newcommand{\ee}{\end{equation}}
\newcommand{\abs}[1]{\left| #1 \right|}
\newcommand{\freq}{\operatorname{freq}}
\newcommand{\sh}{\operatorname{sh}}
\title{Slow entropy of some combinatorial constructions\blfootnote{Keywords: Slow entropy, finite rank, rigid, cyclic approximation, approximation-by-conjugation method.}
\blfootnote{2020 Mathematics Subject Classification:	Primary 37A35; Secondary 37A05, 37A25.}}
\author{Shilpak Banerjee\footnote{Department of Mathematics, Indraprastha Institute of Information Technology Delhi (IIIT-Delhi), Okhla Industrial Estate, Phase-III, New Delhi, 110020, India, E-mail: banerjee.shilpak@gmail.com} \and Philipp Kunde\footnote{Department of Mathematics, The Pennsylvania State University, University Park, PA 16802, USA, E-mail: pkunde.math@gmail.com P.K. acknowledges financial support from a DFG Forschungsstipendium under Grant No. 405305501.} \and Daren Wei\footnote{Einstein Institute of Mathematics, The Hebrew University of Jerusalem, Givat Ram. Jerusalem, 9190401, Israel, E-mail: Daren.Wei@mail.huji.ac.il  D.W. was partially supported by the NSF grant DMS-16-02409 and ERC 2020 grant HomDyn (grant no. 833423).}}
\begin{document}
\maketitle

\begin{abstract}
Measure-theoretic slow entropy is a more refined invariant than the classical measure-theoretic entropy to characterize the complexity of dynamical systems with subexponential growth rates of distinguishable orbit types. In this paper we prove flexibility results for the values of upper and lower polynomial slow entropy of rigid transformations as well as maps admitting a good cyclic approximation. Moreover, we show that there cannot exist a general upper bound on the lower measure-theoretic slow entropy for systems of finite rank.

\end{abstract}

\section{Introduction}
Dating back to the foundational paper of J.\ von Neumann \cite{Ne} the classification of measure-preserving transformations (MPT's) up to isomorphism is a fundamental theme in ergodic theory. While the \emph{isomorphism problem} for general ergodic transformations remains intractable and is inaccessible to countable methods that use countable amount of information \cite{FRW}, it has been solved for some special classes of transformations. Two great successes are the Halmos-von Neumann
classification of ergodic MPT's with pure point spectrum by countable subgroups of the unit circle \cite{HN} and D. Ornstein's classification of Bernoulli shifts by their
measure-theoretic entropy \cite{Or}.

In general, measure-theoretic entropy plays a central role in structural questions for dynamical systems and is a crucial tool in detecting chaoticity of a system. However, the measure-theoretic entropy is positive if and only if the system has exponential growth of distinguishable orbit types and it does not provide any information for systems with slower orbit growth. To measure the complexity of systems with subexponential orbit growth several invariants like \emph{sequence entropy} by A. Kushnirenko \cite{Kushnirenko}, \emph{slow entropy} by A. Katok and J.-P. Thouvenot \cite{KatokThou}, \emph{measure-theoretic complexity} by S. Ferenczi \cite{Fe}, \emph{entropy dimension} by M. de Carvalho \cite{Carvalho}, \emph{entropy convergence rate} by F. Blume (\cite{Blume1}, \cite{Blume2}, \cite{Blume3}), \emph{scaled entropy} by A.Vershik \cite{Vershik}, and \emph{amorphic complexity} by G. Fuhrmann, M. Gr\"oger, and T. J\"ager have been introduced and studied intensely (e.g. \cite{Goodman}, \cite{K77}, \cite{Hulse}, \cite{HuangShaoYe}, \cite{FerencziPark}, \cite{VerGor}, \cite{ADP}, \cite{DouHuangPark}, \cite{Ver11}, \cite{HKM}, \cite{CK}, \cite{Hochman}, \cite{KanigowskiVinhageWei}, \cite{KanigowskiKundeVinhageWei}, \cite{Vep20}). We refer to the recent survey article \cite{KanigowskiKatokWei} for definitions and further background. In this paper we focus on measure-theoretic slow entropy (see Section \ref{sec:slowentropy}) and address three problems stated in \cite{KanigowskiKatokWei}.

Motivated by results of Ferenczi \cite[Proposition 5]{Fe} and Kanigowski \cite[Proposition 1.3]{Kanigowski} showing that the lower measure-theoretic slow entropy of rank-one systems with respect to the polynomial scaling function $a_n(t)=n^t$ is bounded from above by $1$, Question 6.2.1 in \cite{KanigowskiKatokWei} asks about the slow entropy for systems of finite rank. On the one hand, we prove in Proposition \ref{prop:Nospacers} that every rank-two transformation \textit{without} spacers has lower measure-theoretic polynomial slow entropy at most $1$. In fact, the proof of Proposition \ref{prop:Nospacers} also gives that any finite rank system with bounded height ratio has linear complexity, see Remark \ref{rem:multitower} for more details. On the other hand, we show in Section \ref{sec:CrazyRankTwo} that there cannot be a general bound on the lower measure-theoretic slow entropy for systems of finite rank.
\begin{theorem}[Slow entropy of finite rank system]\label{thm:slowentropyFiniterank}
	For any scaling function $\{a_n(t)\}_{n\in\mathbb{N},t>0}$ satisfying $\lim_{n\to+\infty}\frac{\log a_n(t)}{n}=0$, there exists an ergodic measure-preserving rank two system $(T,X,\mathcal{B},\mu_X)$ such that its lower slow entropy 
	$$\lent_{a_n(t)}^{\mu_X}(T)=+\infty,$$
	where $X$ is a finite interval of $[0,+\infty)$,  $\mu_X(A)=\frac{\mu(A\cap X)}{\mu(X)}$ for any $A\in\mathcal{B}$ and $\mu$ is the Lebesgue measure of $\mathbb{R}$.
\end{theorem}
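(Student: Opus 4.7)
I would build, for each given subexponential scaling $\{a_n(t)\}$, a rank-two cutting-and-stacking transformation whose partition complexity at the combinatorial length $h_{n+1}$ of the $(n{+}1)$-st tower overtakes $a_{h_{n+1}}(t_n)$ for a prescribed sequence $t_n\to+\infty$. The guiding principle is that the hypothesis $\log a_n(t)/n\to 0$ for each fixed $t$ leaves an arbitrarily large exponential gap between the trivial bound $e^n$ and the scale $a_n(t)$, and that this gap can be filled combinatorially by using the second tower as a reservoir of spacer patterns while preserving rank two. Proposition \ref{prop:Nospacers} makes clear why spacers are necessary: without them, rank two already caps the lower polynomial slow entropy at $1$.

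\textbf{Construction.} Fix an enumeration $t_k\to+\infty$ and $\epsilon_k\to 0$. At stage $n$ the construction has two Rokhlin towers $T_n^{(1)},T_n^{(2)}$ of heights $h_n^{(1)},h_n^{(2)}$. To pass from stage $n$ to stage $n{+}1$, cut each tower into $q_n$ subcolumns of equal width, choose an interleaving of these subcolumns from each tower into two new towers, and insert spacer blocks between consecutive subcolumns. The resulting column of tower $j$ at stage $n{+}1$ is encoded by a word $w_n^{(j)}$ over the three-letter alphabet $\{1,2,\ast\}$ (recording whether a given level came from $T_n^{(1)}$, $T_n^{(2)}$, or is a spacer). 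The combinatorial freedom in these words is precisely what drives the complexity: one selects $w_n^{(j)}$ from a pool of $M_n$ patterns that are pairwise $\epsilon_n$-separated in Hamming distance. A random coding argument shows that $M_n$ can be taken as large as $\exp(h_{n+1}/\psi(n))$ for any prescribed $\psi(n)\to+\infty$, subject only to the rank-two mass constraints below.

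\textbf{Counting.} Let $\xi$ be a partition refining the decomposition into the two tower bases at a fixed initial stage. Two points starting in distinct subcolumns at stage $n$ traverse distinct stage $n{+}1$ columns, so their $(\xi,h_{n+1})$-names differ on at least a positive density of coordinates determined by the Hamming separation of the patterns $w_n^{(j)}$. This yields $S(\epsilon,h_{n+1})\geq M_n$. Combining $\log M_n\geq h_{n+1}/\psi(n)$ with $\log a_N(t)/N\to 0$, one arranges $M_n>a_{h_{n+1}}(t_n)$ for all sufficiently large $n$; intermediate lengths $N$ are handled by monotonicity of $S(\epsilon,\cdot)$ in $N$. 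Because $t_n\to+\infty$, this yields $\lent^{\mu_X}_{a_n(t)}(T)=+\infty$ simultaneously for every $t>0$.

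\textbf{Main obstacle and soft checks.} The technical heart of the argument is to verify that the system is of rank exactly two, i.e.\ that the two stage-$n$ towers together exhaust a proportion of $\mu_X$ tending to $1$ and approximate every measurable set. This imposes quantitative constraints on the number of spacers relative to $q_n(h_n^{(1)}+h_n^{(2)})$ and on the ratio $h_{n+1}^{(1)}/h_{n+1}^{(2)}$, which must be balanced against the lower bound $\log M_n\geq h_{n+1}/\psi(n)$ required for the complexity estimate. Delicacy here is the reason the statement allows $X$ to be a finite interval of $[0,+\infty)$ rather than of $[0,1]$: total mass is adjusted at the end by the cumulative spacer contribution. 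Ergodicity follows from a standard Chacon-type criterion, provided the chosen patterns $w_n^{(j)}$ mix the two towers at every stage, and the measurable realization of $T$ as a Lebesgue-preserving map on an interval is a routine consequence of the cutting-and-stacking formalism.
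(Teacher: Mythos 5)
Your high-level outline correctly identifies the two ingredients (spacers, combinatorial richness of the stage-$n$ column words) and correctly diagnoses why Proposition~\ref{prop:Nospacers} makes spacers necessary and why the ambient interval must be allowed to grow. But there are two concrete gaps, both at the technical heart of the result.

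\textbf{Intermediate lengths are not handled by monotonicity.} You write that ``intermediate lengths $N$ are handled by monotonicity of $S(\epsilon,\cdot)$ in $N$,'' but $S_{\mathcal P}(T,N,\epsilon)$ is not nondecreasing in $N$ in the needed direction: if two orbit segments agree on all but $<\epsilon N$ of $N$ coordinates, their extensions to length $M>N$ can still disagree everywhere else, so nothing passes from $N$ to $M$; and passing from $M$ down to $N$ degrades $\epsilon$ by the factor $M/N$, which is catastrophic here since $h_{n+1}/h_n$ is enormous. Because $\lent$ is defined with a $\liminf$, the bound must hold at \emph{every} length in $(h_n,h_{n+1}]$, not just along the subsequence $\{h_{n+1}\}$. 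This is precisely why the paper's Proposition~\ref{prop:prob} includes properties (3) and (4) (Hamming separation on substrings of intermediate and short length, not just full-length separation), and why the estimate in Proposition~\ref{prop:towerSequence} is broken into the regimes $k\in[\tau_n,\frac{7}{(n+1)^{12}}R_n)$ and $k\in[\frac{7}{(n+1)^{12}}R_n,R_n]$ — i.e.\ into all intermediate scales between $\tau_n H_n$ and $R_n H_n$, with a different argument in each. Your random-coding step only delivers pairwise $\epsilon$-separation of the full-length words, which would give a $\limsup$ bound but not a $\liminf$ bound.

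\textbf{Bounded height ratio kills the estimate.} Your construction ``cut[s] each tower into $q_n$ subcolumns … and insert[s] spacer blocks,'' which, as described, keeps $h^{(1)}_n$ and $h^{(2)}_n$ comparable. But the paper's Remark~\ref{rem:multitower} shows that any finite-rank system whose tower height ratios stay bounded away from $0$ and $\infty$ (even with spacers) has linear complexity — so lower polynomial slow entropy at most~$1$, let alone $+\infty$ for every $t$. The unbounded ratio $h^{(2)}_n/h^{(1)}_n\to\infty$ is therefore not a convenience but a structural necessity, and the paper's two mechanisms are tailored to it: the staircase spacers $kH_{n-1}$ (step (iii)) prevent long matchings across realignments of the \emph{short} tower, while the probabilistic word selection (step (vi)) prevents long matchings across substrings of the \emph{tall} tower, which is built by concatenating $N_n=b_{R_n,n}$ scrambled copies of the short-tower blocks. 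Your sketch does not produce either mechanism; in particular choosing the words ``at random subject to $\epsilon$-separation'' does not enforce the exact uniformity needed to keep the system rank two and does not control matchings after sliding, which is what the inductive Lemma~\ref{lem:inductionHamming} is for.
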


The construction bases upon the \emph{cutting-and-stacking technique} (cf. \cite{Fried}) and uses additional spacer levels. The idea of the construction is to construct a rank-two system such that the height ratio between the two towers goes to infinity by adding spacers in a staircase-like manner to the shorter tower at each step and exchanging mass between the short tower and tall tower using a probabilistic approach. Roughly speaking, the ``staircase-like'' spacers forbids the long matching phenomenon in the short tower part and ``randomness'' of exchanging mass forbids the long matching phenomenon in the tall tower part.

The aforementioned probabilistic approach is a key method for our constructions throughout the paper. We present it in detail in Section \ref{sec:probabilisticLemma} and expect it to have further applications in combinatorial constructions.

In particular, we use it to answer Question 6.1.2 from \cite{KanigowskiKatokWei}. Here, we recall that a measure preserving dynamical system $(X, \mathscr{B},\mu, T)$, $T$ is \emph{rigid} if there exists an increasing sequence $\{t_n\}_{n \in \N}$ such that for any $f\in L^2(X,\mathscr{B},\mu)$, $T^{t_n}f\to f$ in $L^2$. Since rigid transformations always have measure-theoretic entropy zero \cite{FuWe}, Question 6.1.2 from \cite{KanigowskiKatokWei} asks whether it is possible for the upper measure-theoretic slow entropy of a rigid transformation to be positive with respect to the polynomial scale $a_n(t)=n^t$. In Section \ref{sec:rigidConstruction} we answer this question in the affirmative and obtain the following flexibility result.
\begin{theorem} \label{theo:rigidUpper}
	For every $u\in [0,\infty]$ there exists an ergodic Lebesgue measure preserving rigid transformation $T$ with $\uent^{\mu}_{n^t}(T)=u$.
\end{theorem}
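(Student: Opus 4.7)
The plan is to build $T$ via a rank-one cutting-and-stacking procedure on a finite interval $X\subset[0,\infty)$, tuning the parameters so that the tower heights $\{h_n\}$ (or a subsequence) serve as a rigidity sequence while the upper polynomial slow entropy attains the prescribed value $u$. At stage $n$ I start from a Rokhlin tower $\mathcal{T}_n$ of height $h_n$ on a base $B_n$, cut it into $q_n$ subcolumns of equal width, insert a spacer vector $\bar s_n=(s_{n,1},\dots,s_{n,q_n})$, and stack to form $\mathcal{T}_{n+1}$ of height $h_{n+1}=q_nh_n+|\bar s_n|$. Rigidity along $\{h_n\}$ will be enforced by keeping the total spacer mass at stage $n$ a vanishing fraction of the mass of $\mathcal{T}_n$, so that $T^{h_n}$ geometrically displaces all but an arbitrarily small fraction of $X$ by at most $|B_n|$, which can be made to tend to $0$ by the choice of cuts.

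The control of $\uent^{\mu}_{n^t}(T)$ then splits into a deterministic upper bound and a probabilistic lower bound. For $\uent^{\mu}_{n^t}(T)\le u$, fix a generating partition $\xi$ refining the tower levels; for $h_n\le N<h_{n+1}$, an $(N,\xi)$-name is encoded by its starting position in $\mathcal{T}_{n+1}$ together with the finite word of cut indices and spacer lengths it traverses, which yields a combinatorial bound $S(T,N,\xi,\epsilon)\le N^{u+o(1)}$ once $q_n$ and $|\bar s_n|$ are calibrated to the target exponent $u$. For $\uent^{\mu}_{n^t}(T)\ge u$, I will draw the spacer vectors $\bar s_n$ at random from a carefully chosen distribution and invoke the probabilistic lemma of Section \ref{sec:probabilisticLemma} to show that, with positive probability, there is a scale $N_n$ strategically placed between $h_n$ and $h_{n+1}$ at which at least $N_n^u$ pairwise Hamming-$\epsilon$-separated $(N_n,\xi)$-names appear. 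A Borel--Cantelli / diagonal extraction then produces a single realization along which $\limsup_n S(T,N_n,\xi,\epsilon)/N_n^t=\infty$ for every $t<u$, so the upper slow entropy is at least $u$.

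The main obstacle is the simultaneous enforcement of two somewhat antagonistic requirements: rigidity along $\{h_n\}$ restricts how much combinatorial variation one may introduce through spacers, whereas matching the slow-entropy value $u$ from below forces a prescribed amount of such variation at intermediate scales. The probabilistic lemma from Section \ref{sec:probabilisticLemma} is the key device for bypassing this tension, since it replaces an awkward explicit optimization of spacer choices by a high-probability statement about random spacers that is compatible with the small-spacer-mass constraint imposed by rigidity. The extremal cases require only minor adjustments: $u=0$ is realized by any irrational circle rotation, while $u=\infty$ is obtained by letting $q_n$ grow sufficiently fast with $h_n$ in a staircase fashion analogous to (but rigidity-preserving and simpler than) the construction used for Theorem \ref{thm:slowentropyFiniterank}, with the spacer mass kept summable so that rigidity along $\{h_n\}$ persists.
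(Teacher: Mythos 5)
Your proposal replaces the paper's abstract Anosov--Katok (AbC) construction with a rank-one cutting-and-stacking scheme. In principle this is a legitimate alternative route—the paper itself cites Adams, who obtains a closely related result by ``independent cutting-and-stacking''—but the rigidity argument you sketch does not work as written, and the gap is not cosmetic.

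You claim that rigidity along $\{h_n\}$ follows from keeping the total spacer mass small, because then ``$T^{h_n}$ geometrically displaces all but a small fraction of $X$ by at most $|B_n|$.'' This is only true when the spacer vector is essentially zero. In a cut-and-stack step, $T^{h_n}$ sends level $\ell$ of column $j$ to level $\ell - s_{n,j}$ of column $j+1$; if $s_{n,j}\neq 0$ the image lands in a \emph{different} level of $\mathcal{T}_n$, which need not be geometrically near $T^{\ell}B_n$ in $X$ at all, regardless of how small the total spacer mass is. Small spacer mass is necessary for rigidity but far from sufficient: Chacon's map has spacer mass $\sum 1/h_{n+1}<\infty$ yet is not rigid, and the staircase transformations have small spacer mass yet are mixing. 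To make $T^{h_n}$ (or $T^{h_n+s_n}$) approximately level-preserving you need the spacers to be structurally coherent (e.g.\ constant across columns), but precisely that coherence kills the Hamming variation you need for the lower bound on $\uent^{\mu}_{n^t}$. You name this tension yourself, but merely invoking Proposition~\ref{prop:prob} does not resolve it: the proposition produces Hamming-separated words, it does not by itself tell you how to embed them as spacer vectors without destroying rigidity.

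The paper's AbC argument avoids this tension entirely, and that is why it is clean. In $T_n = H_n\circ R_{\alpha_{n+1}}\circ H_n^{-1}$ the intermediate map $T_{n-1}$ is exactly $q_n$-periodic, so rigidity along $\{q_n\}$ follows from $\sum 1/l_n<\infty$ alone (Lemma~\ref{lem:rig}) and is completely decoupled from the combinatorics of the conjugations $h_n$. The probabilistic lemma is then applied to the assignment defining $h_n$, which governs the Hamming separation and hence the slow-entropy lower bound (Lemmas~\ref{lem:lowerlemma}--\ref{lem:lower1}), while $s_n$ and $l_n$ tune the upper bound (Lemma~\ref{lem:upper1}). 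Nothing competes for the same parameter. If you want to pursue the cutting-and-stacking route you would need Adams' more delicate independent-spacing machinery (or an analogue), not the spacer-mass heuristic; as written the construction does not yield a rigid transformation and the proposal has a genuine gap at its core.
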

We are even able to construct rigid transformations with positive lower measure-theoretic polynomial slow entropy.
\begin{theorem} \label{theo:rigidLower}
	For every $u\in [0,\infty]$ there exists an ergodic Lebesgue measure preserving rigid transformation $T$ with $\lent^{\mu}_{n^t}(T)= u$.
\end{theorem}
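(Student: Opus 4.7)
The plan is to extend the cutting-and-stacking construction behind Theorem~\ref{theo:rigidUpper} by inserting a mechanism that controls the complexity function $H(N,\varepsilon)$ from below at every scale $N$, not merely infinitely often. I would build $T$ on an interval by cutting-and-stacking, with tower heights $(h_n)$ chosen so that $\log h_n$ grows at the rate dictated by $u$ (with obvious modifications when $u=0$ or $u=\infty$). Rigidity follows from the usual device of making the total measure of spacers added between stages $n$ and $n+1$ summable, yielding $T^{h_n}\to\id$ in measure, so that $\{h_n\}$ is a rigidity sequence.

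Recall that $\lent_{n^t}^{\mu}(T)=u$ requires two things: (a) $H(N,\varepsilon)\geq c_{\eta} N^{u-\eta}$ for all sufficiently large $N$ and every $\eta>0$, and (b) there exists a subsequence along which $H(N_k,\varepsilon)=o(N_k^{u+\eta})$ for every $\eta>0$. Rigidity supplies (b): when $N=h_n$, the approximate periodicity $T^{h_n}\approx\id$ forces orbit codings of length $N$ to be closely approximated by codings determined by the tower labels, giving $H(h_n,\varepsilon)\lesssim h_n^{u+o(1)}$ once the number of tower labels is tuned to the target $u$.

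The substantive direction, and where I expect the real work to be, is (a), the \emph{uniform-in-$N$} lower bound. I would choose the spacer patterns $(s_n^{(i)})_{i=1}^{q_n}$ at each stage using the probabilistic lemma of Section~\ref{sec:probabilisticLemma}. The idea is that random spacer choices make orbit codings of lengths $N\in[h_{n-1},h_n]$ sufficiently spread in Hamming distance that any covering by $(N,\varepsilon)$-Hamming balls must use at least $N^{u-\eta}$ balls. The technical core is combining the probabilistic estimate across all intermediate $N$ simultaneously---by a union bound over a suitably sparse net of times together with the near-monotonicity of $H(\cdot,\varepsilon)$---and then extracting a single realization of the random parameters via a Borel--Cantelli style selection, so that the resulting transformation $T$ satisfies the lower bound for all large $N$ deterministically.

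The main obstacle is precisely this uniform control in the presence of rigidity, which pushes complexity down at the scales $h_n$: the randomization must restore complexity rapidly enough as $N$ leaves each rigidity time that the prescribed value $u$ is attained exactly and is not undercut by the dips. The extremal cases $u=0$ (where a standard rigid rank-one construction suffices) and $u=\infty$ (where the tower heights must grow super-polynomially slowly in $n$ and the probabilistic lemma must be iterated to produce arbitrarily fast intermediate complexity growth) also require separate handling.
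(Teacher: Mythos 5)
Your overall strategy --- probabilistic word selection to force Hamming separation at every intermediate time scale, rigidity from a summable correction, and an upper bound supplied along a subsequence by the periodic structure --- matches the paper's approach to Theorem~\ref{theo:rigidLower}, which likewise invokes Proposition~\ref{prop:prob} and treats $u<\infty$ and $u=\infty$ separately. The paper, however, works in the untwisted AbC framework of Section~\ref{subsec:combin} rather than by direct cutting-and-stacking; the AbC formalism lets it import the symbolic representation and ergodicity criterion from~\cite{FW1} and gives a clean generating sequence $\eta_m=H_m(\xi_{k_mq_m,s_m})$ for the slow-entropy computation. Your proposal is closer in spirit to the independent cutting-and-stacking construction of Adams~\cite{Adams}, which is a viable alternative route (he proves the $\lent=\infty$ case that way) but brings its own bookkeeping.

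There is a concrete gap for $u>1$. As you describe it --- one tower of height $h_n$ at each stage, cut into $q_n$ columns with randomly chosen spacers $s_n^{(i)}$ stacked on top --- the construction is rank one, and by the Ferenczi--Kanigowski bound (see Proposition~\ref{prop:upperBoundLowerCyclic} and the surrounding discussion) rank-one systems always satisfy $\lent^{\mu}_{n^t}(T)\leq 1$. The reason is geometric rather than probabilistic: with respect to any fixed earlier partition, the $N$-name of a point for $N$ near $h_n$ is essentially determined by its level in the $n$-th tower, so there are at most about $h_n$ distinguishable $N$-names regardless of how the spacers are chosen; hence $S(h_n,\varepsilon)/h_n^{t}\to 0$ for every $t>1$. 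Randomizing spacer heights changes pairwise Hamming distances but does not increase the number of trajectories, so your lower bound (a) cannot exceed linear growth. The paper escapes this via the parameter $s_n$: the conjugation $h_n$ is built from a collection of $s_n$ distinct words supplied by Proposition~\ref{prop:prob}, which gives $s_n$ pairwise Hamming-far $T_n$-trajectories of length $q_{n+1}$ (Lemma~\ref{lem:lower2}), and $s_n$ is taken to grow like $(k_nk_{n-1})^{nu}$, far faster than the tower width, so the system is no longer rank one and the complexity is no longer capped by the tower height. Your proposal needs an analogous multiplicity --- a growing number of independently labeled columns or towers, or a growing coding alphabet in the style of independent cutting-and-stacking --- not merely random spacers on a single tower. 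A smaller point: rigidity alone does not supply your item (b); the upper complexity bound (Lemma~\ref{lem:upper1}) comes from the geometry of the periodic approximation, which coexists with rigidity but is not implied by it.
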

Currently we do not know if there exists a rigid transformation $T$ with $\lent^{\mu}_{n^t}(T)=\uent^{\mu}_{n^t}(T)=u$ for some $0<u<\infty$. In Theorem \ref{theo:rigidLowerInf} we actually show that for any given subexponential rate $a_n(t)$ there exists an ergodic Lebesgue measure preserving rigid transformation $T$ with $\lent^{\mu}_{a_n(t)}(T)= \infty$. Independently and by different methods, T. Adams also gave a positive answer to \cite[Question 6.1.2]{KanigowskiKatokWei} in his recent paper  \cite{Adams}. He shows that for any given subexponential rate $a_n(t)$ there exists a rigid and weakly mixing transformation such that the lower slow entropy is infinite with respect to $a_n(t)$. While his construction uses the technique of independent cutting-and-stacking, we use an abstract \emph{Approximation by Conjugation method} (also known as AbC or Anosov-Katok method).

The AbC method was developed by D. Anosov and A. Katok in \cite{AK} and is one of the most powerful tools of constructing smooth volume-preserving diffeomorphisms of entropy zero with prescribed ergodic or topological properties. In particular, it provided the first example of an ergodic $C^{\infty}$ diffeomorphism on the disc $\mathbb{D}^2$. In a more general framework as described in \cite[chapter 8]{K}, one can consider any measure space admitting a non-trivial circle action $\mathcal{R} = \left\{R_t\right\}_{t \in \mathbb{S}^1}$. The transformations are constructed as limits of conjugates $T_n = H_n \circ R_{\alpha_{n+1}} \circ H^{-1}_n$ with $\alpha_{n+1} = \frac{p_{n+1}}{q_{n+1}}=\alpha_n + \frac{1}{k_n \cdot l_n \cdot q^2_n} \in \mathbb{Q}$ and $H_n = H_{n-1} \circ h_n$, where the $h_n$'s are measure-preserving maps in the regularity under consideration (e.g. measurable, smooth or real-analytic) satisfying $R_{\frac{1}{q_n}} \circ h_n = h_n \circ R_{\frac{1}{q_n}}$. In each step the conjugation map $h_n$ and the parameter $k_n$ are chosen such that the transformation $T_n$ imitates the desired property with a certain precision. In a final step of the construction, the parameter $l_n$ is chosen large enough to guarantee closeness of $T_{n}$ to $T_{n-1}$, and so the convergence of the sequence $\left(T_n\right)_{n \in \mathbb{N}}$ to a limit transformation in the measurable, smooth, or even real-analytic category is provided. We refer to the survey articles \cite{FK} and \cite{Ksurvey} for more information on the AbC method and its wide range of applications.

In many ways, the AbC method can be seen as a constructive and smooth version of the concept of \emph{periodic approximation in ergodic theory} which was developed by A. Katok and A. Stepin \cite{KS}. The  concept has numerous applications in measurable dynamics and helped to solve several long-standing problems dating back to von Neumann and Kolmogorov. We give a primer on periodic approximation in Section \ref{sec:periodicApp} and recommend \cite{K} as a comprehensive exposition of the concept.

In particular, our transformations from Theorems \ref{theo:rigidUpper} and \ref{theo:rigidLower} admit periodic approximations. Hereby, these results already give a first answer to Question 6.3.1 in \cite{KanigowskiKatokWei} which asks about the slow entropy of systems constructed by fast periodic approximation. A particularly important class are cyclic approximations, where the periodic process consists of a single tower (see Definition \ref{def:cyclicApp} for the precise notion). The existence of a good cyclic approximation implies various properties for the limit transformation $T$ as summarized in \cite[Proposition 3.2]{K}, e.g. $T$ is ergodic, rigid, has simple spectrum and its maximal spectral type is singular. In Section \ref{sec:goodCyclic} we continue our investigation of slow entropy for transformations with periodic approximation by realizing systems with good cyclic approximation and any given value of upper polynomial slow entropy.
\begin{theorem} \label{theo:cyclicUpper}
	For every $u\in [0,\infty]$ there exists a Lebesgue measure preserving transformation $T$ with good cyclic approximation and $\uent^{\mu}_{n^t}(T)=u$.
\end{theorem}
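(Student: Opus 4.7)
The plan is to carry out an AbC construction of the form $T_n = H_n \circ R_{\alpha_{n+1}} \circ H_n^{-1}$ with $\alpha_{n+1} = \alpha_n + \frac{1}{k_n l_n q_n^2}$ and $H_n = H_{n-1} \circ h_n$, where each new conjugation $h_n$ commutes with $R_{1/q_n}$. This commutation ensures that at every finite stage the map $T_n$ permutes cyclically the levels of the Rokhlin tower $\xi_n$ of height $q_n$ with base $H_n([0, 1/q_n))$. By choosing $l_n$ large enough so that $q_n \cdot d(T_n, T_{n+1})$ is summable, the limit $T = \lim T_n$ inherits a good cyclic approximation by $(\xi_n, T_n)$. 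Thus the "good cyclic approximation" half of the theorem is essentially automatic from the AbC framework once $l_n$ is tuned at the end of each stage; all of the slow entropy work goes into the design of $h_n$.

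To realize the prescribed value $u$ of upper polynomial slow entropy I would use the conjugation $h_n$ to encode complexity at the scale $q_n$. Since $h_n$ commutes with $R_{1/q_n}$, it is determined by its restriction to the fundamental domain $[0, 1/q_n)$, and thus can be viewed combinatorially as a rearrangement of a fine partition of that domain. Following the strategy behind Theorems \ref{theo:rigidUpper} and \ref{theo:rigidLower}, I would invoke the probabilistic lemma from Section \ref{sec:probabilisticLemma}: distribute the mass of $h_n$ uniformly at random over $M_n$ sub-cells of the fundamental domain, where $M_n$ is chosen so that $\log M_n \sim u \log q_n$ up to a sub-logarithmic correction, and then show that with positive probability the resulting $T_n$ requires at least $q_n^{u - o(1)}$ Hamming balls of small radius $\varepsilon$ to cover the $(q_n, \varepsilon)$-orbit space of the Rokhlin partition. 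Selecting a realization with this property along the subsequence $N = q_n$ then gives the lower bound $\uent^{\mu}_{n^t}(T) \ge u$.

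The matching upper bound is where good cyclic approximation actually helps. For $N \le q_n$, up to a set of measure controlled by $N \cdot d(T, T_n)$ every $T$-orbit of length $N$ follows a $T_n$-orbit, which is a cyclic rotation of the tower $\xi_n$. The number of distinguishable $(N, \varepsilon)$-Hamming balls is therefore controlled by the combinatorial complexity that $h_n$ introduces relative to the earlier tower $\xi_{n-1}$, and by construction this is at most $q_n^{u + o(1)}$. Taking $N = q_n \to \infty$ and using the definition of $\uent^{\mu}_{n^t}$ as a limsup in the polynomial scale pins the value at exactly $u$.

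The main obstacle will be to produce enough complexity under the equivariance constraint $h_n \circ R_{1/q_n} = R_{1/q_n} \circ h_n$ while simultaneously keeping the cyclic approximation \emph{good}, i.e.\ $q_n \cdot d(T, T_n)$ summable. These demands pull in opposite directions: equivariance restricts $h_n$ to "permutations of sub-intervals of $[0, 1/q_n)$" which limits how aggressively mass can be spread, and good cyclic approximation forbids very drastic rearrangements at later stages. The resolution, as in the rigid case, is to push the complexity of $h_n$ to a very fine combinatorial scale inside the fundamental domain and then choose $l_n$ so large that the contribution of stage $n$ to $d(T, T_n)$ is controlled by $1/l_n$ independently of how complicated $h_n$ is. The endpoint cases are immediate: $u = 0$ is obtained by taking $h_n = \id$, giving an irrational rotation, while $u = \infty$ follows by letting $\log M_n / \log q_n \to \infty$ in the probabilistic step of the construction.
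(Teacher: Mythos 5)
Your overall framework (AbC with the Section~\ref{sec:probabilisticLemma} probabilistic lemma designing the combinatorics of the conjugations) is the one the paper uses, but two of your structural claims are wrong and a third step needs a much more careful calibration than you indicate.

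First, the claim that a good cyclic approximation is ``essentially automatic'' once $l_n$ is large enough is false for the \emph{untwisted} AbC method you invoke (the one behind Theorems~\ref{theo:rigidUpper} and \ref{theo:rigidLower}, where $h_n(\Delta_{q_n,1})=\Delta_{q_n,1}$). A cyclic process is a single column, and by Definition~\ref{def:cyclicApp} together with the exhaustiveness requirement, the levels of that column must converge to the partition into points. With untwisted $h_n$ the column levels remain full-height strips that never refine in the second coordinate, so exhaustiveness fails and you get a periodic approximation that is not cyclic. The paper passes to a \emph{twisted} AbC method precisely to fix this: $h_n$ sends the long stripes $\Gamma^{i,j}_{k_n,q_n}$ to small rectangles $R^{\cdot,\cdot,\cdot}_{q_n,k_n,s_n}$, and Lemma~\ref{lem:goodcyclic} shows that it is this form of $h_n$ (together with $\sum_n 1/l_n<\infty$) that produces a good cyclic approximation.

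Second, the time scale $N=q_n$ is not where the combinatorics of $h_n$ become visible. By Remark~\ref{rem:safety}, for $t\leq q_n$ the $T$-orbits track $T_{n-1}$-orbits off a set of small measure, so the $(q_n,\varepsilon)$-Hamming complexity is governed by $h_{n-1}$, not $h_n$. The paper establishes the lower bound for the upper slow entropy at times $N=r_nl_nq_n$, lying strictly between $q_n$ and $q_{n+1}$ (Lemma~\ref{lem:lower3}). Relatedly, the calibration is not $\log M_n\sim u\log q_n$: the construction takes $\lfloor (r_n)^{q_nu}\rfloor$ good words of length $r_n$ over an alphabet of size $k_{n-1}/r_{n-1}$ and sets $l_n=(r_n)^{q_n}$, so that $N=r_nl_nq_n\approx (r_n)^{q_n+1}q_n$ while $S(T,N,\varepsilon)\gtrsim (r_n)^{q_nu}\approx N^u$. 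In particular $l_n$ is \emph{not} chosen ``as large as needed for convergence''; it is pinned down by the slow entropy computation, and if you overshoot it, the $h_n$-independent upper bound of Lemma~\ref{lem:upper1} forces the upper slow entropy strictly below $u$. So the tension you flag at the end is real, but it is resolved by a precise algebraic relation among $l_n$, $r_n$, and $k_n$, not by pushing the combinatorial scale of $h_n$ to be arbitrarily fine.
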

By \cite{KS} (see also \cite[Proposition 3.2]{K}) a transformation admitting a good cyclic approximation is ergodic and rigid. Hence, our construction for Theorem \ref{theo:cyclicUpper} provides another family of examples for Theorem \ref{theo:rigidUpper}. In fact, we show in Theorem \ref{theo:CyclicInfinity} that for any subexponential scale $a_n(t)$ there is a transformation $T$ with good cyclic approximation and $\uent^{\mu}_{a_n(t)}=\infty$. Like \cite[Theorem 1]{Adams} these constructions confirm a conjecture by Ferenczi in \cite[page 205]{Fe} that there are rank one systems $T$ with $\uent^{\mu}_{n^t}(T)=\infty$ (therein the conjecture is phrased in terms of the measure-theoretic complexity). To the best of our knowledge, they also provide the first standard systems (i.e. loosely Bernoulli with zero entropy or loosely Kronecker) with growth rate faster than any polynomial scale.

Since transformations with good cyclic approximations are rank one, their lower measure-theoretic polynomial slow entropy can be at most $1$ by the results of Ferenczi and Kanigowski (see Proposition \ref{prop:upperBoundLowerCyclic} for more details). We realize all possible values of the lower polynomial slow entropy.
\begin{theorem}\label{theo:cyclicLower}
	For every $u\in [0,1]$ there exists a Lebesgue measure preserving transformation $T$ with good cyclic approximation and $\lent^{\mu}_{n^t}(T)=u$.
\end{theorem}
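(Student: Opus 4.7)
The plan is to build $T$ by the Approximation by Conjugation scheme on a space carrying a non-trivial circle action, writing $T=\lim_n T_n$ with $T_n=H_n\circ R_{\alpha_{n+1}}\circ H_n^{-1}$, $\alpha_{n+1}=\alpha_n+\frac{1}{k_n l_n q_n^2}$, and $H_n=H_{n-1}\circ h_n$ where $h_n$ commutes with $R_{1/q_n}$. The good cyclic approximation of $T$ is guaranteed by choosing $l_n$ sufficiently large at each step in the standard Katok--Stepin manner, following the same scheme used in the proof of Theorem \ref{theo:cyclicUpper}.

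To realize a prescribed value $u\in(0,1]$, I would calibrate the cutting parameters so that the period sequence satisfies $\log q_n/\log q_{n+1}\to u$, equivalently $q_{n+1}\approx q_n^{1/u}$, which forces the number of columns $k_n$ to be of order $q_n^{1/u-1}$ up to subpolynomial factors absorbed into $l_n$. The conjugations $h_n$ are then chosen via the probabilistic construction of Section \ref{sec:probabilisticLemma}: on each fundamental domain of $R_{1/q_n}$, the levels of the step-$n$ Rokhlin tower are permuted according to a randomly drawn permutation, and the probabilistic lemma supplies a deterministic $h_n$ whose induced combinatorial structure has the required Hamming-separation properties.

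The upper bound $\lent^{\mu}_{n^t}(T)\leq u$ is then read off the tower description along the rigid times $N_n=q_{n+1}$. Writing $\xi_n$ for the level partition of the step-$n$ tower, the good cyclic approximation makes the $\xi_n$-name of length $N_n$ of a typical point $\varepsilon$-determined in Hamming distance by its starting level together with subpolynomially many auxiliary bits. This gives $S(N_n,\varepsilon)\leq q_n^{1+o(1)}\approx N_n^{u+o(1)}$, and hence $\liminf_N S(N,\varepsilon)/N^t=0$ for every $t>u$.

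The main obstacle is the matching lower bound $\lent^{\mu}_{n^t}(T)\geq u$, which needs to hold along \emph{every} sufficiently large $N$, not merely on a subsequence, and is where the probabilistic construction of $h_n$ is essential. The random spreading of tower levels should force at least $q_n\approx N^{u-o(1)}$ Hamming-$\varepsilon$-distinguishable orbit segments of any length $N\in[q_n,q_{n+1}]$, yielding $S(N,\varepsilon)\gtrsim N^{u-\varepsilon'}$ for every $\varepsilon'>0$ and every sufficiently large $N$. The endpoint $u=0$ is handled by letting $q_{n+1}/q_n$ grow faster than every polynomial in $q_n$, while $u=1$ is obtained by taking $k_n$ constant or polylogarithmic so that $q_{n+1}$ and $q_n$ are comparable on the logarithmic scale; the bound $\lent\leq 1$ for rank-one systems from Proposition \ref{prop:upperBoundLowerCyclic} confirms that $u=1$ is the maximal admissible value, matching the range in the statement.
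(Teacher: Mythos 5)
Your broad framework matches the paper's: a twisted AbC construction with the probabilistic word-selection lemma driving the combinatorics of $h_n$, the lower bound coming from Hamming separation of the induced words, and the upper bound from the good-cyclic structure along rigid times. However there are two substantive gaps, one of which is fatal to your proposed calibration.

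\textbf{The calibration $q_{n+1}\approx q_n^{1/u}$ is impossible for $u>1/2$.} In the AbC scheme you fix $\alpha_{n+1}=\alpha_n+\frac{1}{k_nl_nq_n^2}$, which forces $q_{n+1}=k_nl_nq_n^2\geq q_n^2$ for all $n$. Since $q_n^{1/u}<q_n^2$ once $u>1/2$, your heuristic has no admissible parameter range there. (Your suggestion for $u=1$ — taking $k_n$ constant so that $q_{n+1}$ and $q_n$ are logarithmically comparable — contradicts $q_{n+1}\geq q_n^2$ even more directly.) The paper instead encodes $u$ \emph{inside} $l_n$ by setting $l_n\approx (r_{n-1})^{n(1-u)/u}$ while keeping $k_n=r_{n-1}^n r_n^{n+1}$ large: the number of distinguishable orbit types at time $N\approx q_{n+1}$ is then $\sim k_n/r_n$, not $\sim q_n$, and this is what scales like $N^{u\pm o(1)}$. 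The discrepancy matters because your lower-bound heuristic (distinguish orbits by their starting level within the step-$n$ tower) and the paper's (distinguish orbits by which probabilistic word their stripe is assigned to) lead to genuinely different constructions, and only the latter is compatible with $q_{n+1}\geq q_n^2$.

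\textbf{The interpolation for intermediate $N$ is asserted, not argued.} You correctly identify that $\lent$ requires a lower bound along \emph{every} large $N$, but then say the random spreading "should force" $N^{u-o(1)}$-many separated segments for $N\in[q_n,q_{n+1}]$. This is precisely the content of the paper's Lemma~\ref{lem:lower4}, and it depends on all three separation properties of Proposition~\ref{prop:prob}: property (2) for $N$ near $q_{n+1}$ (separation of full words), property (3) for $N$ in the intermediate range $\beta_n l_nq_n\leq N<\gamma k_n l_n q_n$, and property (4) for short $N$ near $q_n$ where one only needs that a given $L$-substring is $\varepsilon$-close to a controlled proportion $2/b_L$ of substrings in other words. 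The whole point of the extensions to the Substitution Lemma in Section~\ref{sec:probabilisticLemma} is to supply properties (3) and (4); without invoking them explicitly your interpolation has no basis. Similarly, your upper-bound claim $S(N_n,\varepsilon)\leq q_n^{1+o(1)}$ needs Lemma~\ref{lem:upper1}, which exploits the safe domain $\Xi_n$ and the largeness of $l_n$ to cover the space by $\sim k_nq_ns_n$ Hamming balls for all $L\in[\tfrac{\varepsilon}{2}l_nq_n,\tfrac{\varepsilon}{2}l_{n+1}q_{n+1})$; simply saying the name is determined by "starting level plus subpolynomially many bits" does not establish the quantitative bound.
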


These transformations are constructed by a twisted version of the abstract AbC method. We stress that the AbC transformations constructed in this paper are not necessarily smooth. (To guarantee smoothness further growth conditions have to be posed on the parameter sequence $\left(l_n \right)_{n\in \N}$.) In an accompanying paper \cite{BKW} we present a method how to compute the measure-theoretic and topological slow entropy of smooth AbC diffeomorphisms.

\paragraph{Acknowledgments:} The authors would like to thank Elon Lindenstrauss and Svetlana Katok for their warm encouragement. We are grateful to Adam Kanigowski for helpful discussions and warm support. We are also grateful to Terry Adams for sharing his preprints and helpful discussions. S.~B. would like to thank Federico Rodriguez Hertz for an invitation for a project related visit to Penn State, however this trip could not be materialized since travel restrictions were imposed due to the COVID19 outbreak.

\paragraph{Plan of the rest of the paper:} In Section \ref{sec:DefandPre}, we provide definitions of measure-theoretic slow entropy, finite rank systems, and periodic approximations. In Section \ref{sec:probabilisticLemma}, we prove a probabilistic lemma that will serve as a key tool for our combinatorial constructions throughout the paper. For instance, we will use it to determine the combinatorial mapping behavior of the conjugation maps in our AbC constructions in Sections \ref{sec:rigidConstruction} as well as \ref{sec:goodCyclic}. In Section \ref{sec:rigidConstruction}, we prove the flexibility of both upper and lower measure-theoretic slow entropy of the rigid transformations with respect to polynomial scale. In Section \ref{sec:goodCyclic}, we first show the flexibility of upper measure-theoretic slow entropy for good cyclic transformations with values from $0$ to $+\infty$ with respect to polynomial scale. Then we prove flexibility of lower measure-theoretic polynomial slow entropy of good cyclic transformations with values from $0$ to $1$, which implies that Proposition 1.3 in \cite{Kanigowski} is sharp. In Section \ref{sec:finiterank}, we turn to rank-two systems. We first provide an upper bound of lower measure-theoretic slow entropy for rank-two system without spacers. Then for any subexponential scale, we construct a rank-two system such that its lower measure-theoretic slow entropy is infinite with respect to the given scale.

\section{Definitions and Preliminaries}\label{sec:DefandPre}
\subsection{Measure-theoretic slow entropy}\label{sec:slowentropy}

Suppose $T$ is a measure-preserving transformation on a standard Borel probability space $(X,\mathcal{B},\mu)$, $\mathcal{P}=\{P_1,\ldots,P_m\}$ is a finite measurable partition and $$\Omega_{m,n}=\{w=(w_k)_{k=0}^{n-1}:w_k\in\{1,\ldots,m\}\}.$$ The coding map $\phi_{\mathcal{P},n}(x):X\to\Omega_{m,n}$ of $T$ with respect to partition $\mathcal{P}$ is defined as $\phi_{\mathcal{P},n}(x)=w(x)$, where $T^k(x)\in P_{w_k(x)}$. Then for $w,w'\in\Omega_{m,n}$, the Hamming metric $d_n^H(w,w')$ is defined by
\begin{equation}\label{eq:HammingMetric}
d_{n}^H(w,w')=\frac{1}{n}\sum_{k=0}^{n-1}(1-\delta_{w_k,w'_k}),
\end{equation}
where $\delta_{a,b}=\left\{
                     \begin{array}{ll}
                       1, & \hbox{if $a=b$;} \\
                       0, & \hbox{if $a\neq b$.}
                     \end{array}
                   \right.$. Next define an $\epsilon$-Hamming ball as
$$B_{\mathcal{P},n}(x,T,\epsilon)=\{y\in X:d_n^H(w(x),w(y))<\epsilon\}$$
and a family $\kappa_n(\epsilon,T,\mathcal{P})$ of  $\epsilon$-Hamming balls such that $\mu(\cup\kappa_n(\epsilon,T,\mathcal{P}))>1-\epsilon$ is a $(\epsilon,\mathcal{P},n)$-covering of $X$. Finally we denote the minimal cardinality of  $(\epsilon,\mathcal{P},n)$-covering as \begin{equation}
S_{\mathcal{P}}(T,n,\epsilon)=\min\{\operatorname{Card}(\kappa_n(\epsilon,T,\mathcal{P}))\}.
\end{equation}


In this setting, slow entropy of invertible measure-preserving transformations can be introduced as follows:
\begin{definition}[Slow entropy, Katok-Thouvenot \cite{KatokThou}]\label{def:slowEntropy}
Let $\{a_n(t)\}_{n\in\mathbb{N},t>0}$ be a family of positive sequences increasing to infinity and monotone in $t$. Then define the lower measure-theoretic slow entropy of $T$ with respect to a finite measurable partition $\mathcal{P}$ by
\begin{equation}\label{eq:slowEntroDef}
\lent_{a_n(t)}^{\mu}(T,\mathcal{P})=\lim_{\epsilon\to0} A(\epsilon,\mathcal{P}),
\end{equation}
where
$
A(\epsilon,\mathcal{P})=\left\{
\begin{array}{ll}
\sup B(\epsilon,\mathcal{P}), & \hbox{if $B(\epsilon,\mathcal{P})\neq\emptyset$;} \\
0, & \hbox{if $B(\epsilon,\mathcal{P})=\emptyset$,}
\end{array}
\right.
$
for
\begin{equation}\label{eq:lowerSlowKey}
B(\epsilon,\mathcal{P})=\{t>0:\liminf_{n\to\infty}\frac{ S_{\mathcal{P}}(T,n,\epsilon)}{a_n(t)}>0\}.
\end{equation}
The lower measure-theoretic slow entropy of $T$ is defined by
\begin{equation}
\lent_{a_n(t)}^{\mu}(T)=\sup_{\mathcal{P}}\lent_{a_n(t)}^{\mu}(T,\mathcal{P}).
\end{equation}
The upper measure-theoretic slow entropy of $T$ can be defined in a similar way by replacing $\liminf$ by $\limsup$ in \eqref{eq:lowerSlowKey} and denoted as $\uent_{a_n(t)}^{\mu}(T)$. If $\lent_{a_n(t)}^{\mu}(T)=\uent_{a_n(t)}^{\mu}(T)$, then we define this value as the measure-theoretic slow entropy of $T$ with respect to $a_n(t)$ and denote it as $\ent_{a_n(t)}^{\mu}(T)$.
\end{definition}

One of the most important features of measure-theoretic slow entropy is the following generating sequence property:
\begin{proposition}[Proposition 1 in \cite{KatokThou}]\label{prop:generatingSequence}
Let $(X,\mathcal{B},\mu,T)$ be a measure-preserving transformation and  $\mathcal{P}_1\leq\mathcal{P}_2\leq\ldots$ be an increasing sequence of finite measurable partitions of $X$ such that $\vee_{m=1}^{+\infty}\mathcal{P}_m$ generates the $\sigma-$algebra $\mathcal{B}$. Then for any scales $a_n(t)$, we have
$$\lent_{a_n(t)}^{\mu}(T)=\lim_{m\to+\infty}\lent_{a_n(t)}^{\mu}(T,\mathcal{P}_m),$$
$$\uent_{a_n(t)}^{\mu}(T)=\lim_{m\to+\infty}\uent_{a_n(t)}^{\mu}(T,\mathcal{P}_m).$$
\end{proposition}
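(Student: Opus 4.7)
The plan is to establish both inequalities, with the argument for $\uent_{a_n(t)}^\mu$ identical modulo replacing $\liminf$ by $\limsup$. The easy direction is monotonicity of slow entropy under partition refinement: if $\mathcal{P}\leq\mathcal{Q}$, the $\mathcal{Q}$-coding of a point determines its $\mathcal{P}$-coding via the canonical surjection between index sets, so the $\mathcal{Q}$-Hamming distance dominates the $\mathcal{P}$-Hamming distance; every $(\epsilon,\mathcal{Q},n)$-covering $\kappa$ therefore projects to an $(\epsilon,\mathcal{P},n)$-covering of the same cardinality, giving $S_\mathcal{P}(T,n,\epsilon)\leq S_\mathcal{Q}(T,n,\epsilon)$ and hence $\lent_{a_n(t)}^\mu(T,\mathcal{P})\leq\lent_{a_n(t)}^\mu(T,\mathcal{Q})$. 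In particular, $m\mapsto\lent_{a_n(t)}^\mu(T,\mathcal{P}_m)$ is nondecreasing with limit bounded by $\lent_{a_n(t)}^\mu(T)=\sup_\mathcal{P}\lent_{a_n(t)}^\mu(T,\mathcal{P})$.

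For the reverse inequality I would fix an arbitrary finite partition $\mathcal{Q}=\{Q_1,\ldots,Q_r\}$ and a parameter $\delta\in(0,1)$. Since $\vee_m\mathcal{P}_m$ generates $\mathcal{B}$, standard measure-theoretic approximation produces an index $m$ and a $\mathcal{P}_m$-measurable partition $\mathcal{Q}'=\{Q_1',\ldots,Q_r'\}$ with $\mu(E)<\delta^2$, where $E:=\bigcup_i Q_i\triangle Q_i'$. The key observation is that $\phi_{\mathcal{Q},n}(x)$ and $\phi_{\mathcal{Q}',n}(x)$ disagree at coordinate $k$ precisely when $T^kx\in E$, so by $T$-invariance of $\mu$ the function $x\mapsto d_n^H(\phi_{\mathcal{Q},n}(x),\phi_{\mathcal{Q}',n}(x))=\tfrac{1}{n}\sum_{k=0}^{n-1}\chi_E(T^kx)$ integrates to $\mu(E)<\delta^2$; Markov's inequality then bounds by $\delta$ the measure of the set $X_n$ on which this average exceeds $\delta$.

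With this preparation I would convert any minimum $(\epsilon,\mathcal{Q}',n)$-covering $\kappa=\{B_{\mathcal{Q}',n}(y_i,T,\epsilon)\}_i$ into a $(2\epsilon+2\delta,\mathcal{Q},n)$-covering of no greater cardinality: for each $i$ choose $\tilde y_i\in B_{\mathcal{Q}',n}(y_i,T,\epsilon)\setminus X_n$ (discarding the ball if this set is empty, which forces $B_{\mathcal{Q}',n}(y_i,T,\epsilon)\subseteq X_n$) and replace the original ball by $B_{\mathcal{Q},n}(\tilde y_i,T,2\epsilon+2\delta)$. For any $x\in B_{\mathcal{Q}',n}(y_i,T,\epsilon)\setminus X_n$ with $\tilde y_i$ retained, two applications of the triangle inequality for $d_n^H$ together with $x,\tilde y_i\notin X_n$ yield
$$d_n^H(\phi_{\mathcal{Q},n}(x),\phi_{\mathcal{Q},n}(\tilde y_i))\leq 2\delta+d_n^H(\phi_{\mathcal{Q}',n}(x),\phi_{\mathcal{Q}',n}(y_i))+d_n^H(\phi_{\mathcal{Q}',n}(y_i),\phi_{\mathcal{Q}',n}(\tilde y_i))<2\delta+2\epsilon,$$
so the new family covers $(\bigcup\kappa)\setminus X_n$, a set of measure $>1-\epsilon-\delta>1-(2\epsilon+2\delta)$. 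This yields $S_\mathcal{Q}(T,n,2\epsilon+2\delta)\leq S_{\mathcal{Q}'}(T,n,\epsilon)$; together with $a_n(t)\to\infty$ this forces $B(2\epsilon+2\delta,\mathcal{Q})\subseteq B(\epsilon,\mathcal{Q}')$ and hence $A(2\epsilon+2\delta,\mathcal{Q})\leq A(\epsilon,\mathcal{Q}')$ for every $\epsilon>0$.

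Taking the supremum over $\epsilon>0$ on both sides and using $\mathcal{Q}'\leq\mathcal{P}_m$ yields $\sup_{\eta>2\delta}A(\eta,\mathcal{Q})\leq\lent_{a_n(t)}^\mu(T,\mathcal{Q}')\leq\lent_{a_n(t)}^\mu(T,\mathcal{P}_m)\leq\lim_{m'\to\infty}\lent_{a_n(t)}^\mu(T,\mathcal{P}_{m'})$. Since the right-hand side is independent of $\delta$, letting $\delta\to 0$ (so the left-hand side increases to $\lent_{a_n(t)}^\mu(T,\mathcal{Q})$ by monotonicity of $A(\cdot,\mathcal{Q})$) and then taking the supremum over $\mathcal{Q}$ closes the proof. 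I expect the main technical obstacle to be the triangle-inequality inflation step---in particular, carefully handling centers that fall into the exceptional set $X_n$---and the bookkeeping of the three parameters $\epsilon,\delta,m$, so that the additive $2\delta$ error from the partition approximation is absorbed in the $\epsilon\to 0$ limit.
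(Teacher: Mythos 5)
The paper does not give its own proof of this proposition: it is stated as a citation to Katok--Thouvenot, Proposition~1, so there is no in-text argument to compare against. Evaluating your proposal on its own terms, it is correct. The monotonicity direction is clean: refinement dominates pointwise in Hamming distance, so every $(\epsilon,\mathcal{Q},n)$-cover pushes forward to an $(\epsilon,\mathcal{P},n)$-cover of no greater cardinality, giving $S_{\mathcal{P}}\leq S_{\mathcal{Q}}$ and hence $\lent^{\mu}(T,\mathcal{P}_m)$ nondecreasing and bounded by $\lent^{\mu}(T)$. For the reverse inequality the mechanism is the standard one and your bookkeeping is sound: the identity $d_n^H\bigl(\phi_{\mathcal{Q},n}(x),\phi_{\mathcal{Q}',n}(x)\bigr)=\tfrac1n\sum_{k<n}\chi_E(T^kx)$ (which uses that both $\mathcal{Q}$ and $\mathcal{Q}'$ are indexed by the same $r$-element alphabet so the two codings are comparable), Markov's inequality to isolate the small exceptional set $X_n$, and the three-term triangle inequality $d_n^H(\phi_{\mathcal{Q},n}(x),\phi_{\mathcal{Q},n}(\tilde y_i))\leq d_n^H(\phi_{\mathcal{Q},n}(x),\phi_{\mathcal{Q}',n}(x))+d_n^H(\phi_{\mathcal{Q}',n}(x),\phi_{\mathcal{Q}',n}(\tilde y_i))+d_n^H(\phi_{\mathcal{Q}',n}(\tilde y_i),\phi_{\mathcal{Q},n}(\tilde y_i))$ with both end terms controlled by $\delta$ off $X_n$. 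Your treatment of the degenerate case (a ball entirely contained in $X_n$ being dropped) is correct, and the resulting inclusion $B(2\epsilon+2\delta,\mathcal{Q})\subseteq B(\epsilon,\mathcal{Q}')$ together with the monotonicity of $A(\cdot,\mathcal{Q})$ in $\epsilon$ lets you pass $\delta\to0$ after the sup over $\epsilon$ to recover $\lent^{\mu}(T,\mathcal{Q})\leq\lim_m\lent^{\mu}(T,\mathcal{P}_m)$. Replacing $\liminf$ by $\limsup$ throughout gives the $\uent^{\mu}$ statement, as you indicate. In short: the argument is complete and correct; the one presentational gap is that you should state explicitly that $\mathcal{Q}'$ is chosen to have the same number $r$ of atoms as $\mathcal{Q}$ (with a fixed matching $Q_i\leftrightarrow Q_i'$) so that the cross-partition Hamming comparison is well defined under the coding convention of Section~\ref{sec:slowentropy}.
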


We end this subsection with a modified folklore estimate on the cardinality of words that are $\epsilon$-Hamming close to a given word. Similar versions have appeared in several references, e.g. \cite[Theorem 1]{Katok80}, \cite[Proposition 2]{Fe} and \cite[Lemma 2.3]{Adams}. We provide a proof here for completeness:
\begin{lemma}\label{lem:HammingCloseEstimate}
Let $\Sigma$ be a finite alphabet, $\epsilon\in(0,\frac{1}{10})$ and $w\in\Sigma^n$. Then the cardinality of words in $\Sigma^n$, that are $\epsilon$-Hamming close to $w$, is less than
$$\exp(n(2\epsilon\log|\Sigma|-(1-\epsilon)\log(1-2\epsilon)-3\epsilon\log\epsilon)),$$
where $|\Sigma|$ is the cardinality of $\Sigma$. In particular, in case of $\epsilon \geq \frac{1}{|\Sigma|}$ and $\log|\Sigma|\geq1$ we can estimate this upper bound on the number of $\epsilon$-close $n$-words by $|\Sigma|^{9\epsilon n}$.
\end{lemma}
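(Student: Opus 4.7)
The plan is to reduce this to a standard counting-and-Stirling computation. A word $w' \in \Sigma^n$ that is $\epsilon$-Hamming close to $w$ differs from $w$ in fewer than $\epsilon n$ coordinates. Choosing the set of disagreement positions and then choosing the differing symbols, one gets the crude upper bound
\[
  \#\bigl\{w' : d_n^H(w,w') < \epsilon\bigr\}
  \;\leq\; \sum_{k=0}^{\lfloor \epsilon n\rfloor} \binom{n}{k}(|\Sigma|-1)^k
  \;\leq\; \sum_{k=0}^{\lfloor \epsilon n\rfloor} \binom{n}{k}|\Sigma|^k.
\]
So the task is purely combinatorial: extract an exponential-rate estimate from this truncated binomial sum.

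First I would bound the whole sum by its last term with a geometric-series trick. Since $\epsilon<1/10<1/2$, the ratio $\binom{n}{k+1}/\binom{n}{k}=(n-k)/(k+1)$ is at least a fixed constant bigger than $1$ throughout $k\leq \epsilon n$, so
\[
 \sum_{k=0}^{\lfloor \epsilon n\rfloor} \binom{n}{k}|\Sigma|^k
 \;\leq\; C(\epsilon)\,\binom{n}{\lfloor \epsilon n\rfloor}|\Sigma|^{\lfloor \epsilon n\rfloor},
\]
where $C(\epsilon)$ can be absorbed into the subexponential slack. The factor $(1-\epsilon)\log(1-2\epsilon)$ in the statement is exactly what one gets from bounding this geometric ratio explicitly in terms of $1-2\epsilon$, so I would simply track it through the estimate. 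Next I would apply the standard Stirling-type bound
\[
  \binom{n}{\lfloor \epsilon n\rfloor} \;\leq\; \epsilon^{-\epsilon n}(1-\epsilon)^{-(1-\epsilon)n}
\]
(together with a polynomial Stirling overhead, again absorbed by loosening one $\epsilon$ to $3\epsilon$ in the $-3\epsilon\log\epsilon$ term and one $\epsilon$ to $2\epsilon$ in the $2\epsilon\log|\Sigma|$ term). Combining with the factor $|\Sigma|^{\lfloor \epsilon n\rfloor}$ and taking logs yields the stated exponent
\[
  n\bigl(2\epsilon\log|\Sigma|-(1-\epsilon)\log(1-2\epsilon)-3\epsilon\log\epsilon\bigr).
\]

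For the ``in particular'' part, I would verify the chain of elementary inequalities
\[
  2\epsilon\log|\Sigma| - (1-\epsilon)\log(1-2\epsilon) - 3\epsilon\log\epsilon \;\leq\; 9\epsilon\log|\Sigma|
\]
under the hypotheses $\epsilon\geq 1/|\Sigma|$ and $\log|\Sigma|\geq 1$. The assumption $\epsilon\geq 1/|\Sigma|$ gives $-\log\epsilon\leq\log|\Sigma|$, so $-3\epsilon\log\epsilon\leq 3\epsilon\log|\Sigma|$. For the middle term one uses $\epsilon<1/10$ and $\log|\Sigma|\geq 1$ together with the Taylor estimate $-(1-\epsilon)\log(1-2\epsilon)\leq 4\epsilon$ to conclude $-(1-\epsilon)\log(1-2\epsilon)\leq 4\epsilon\log|\Sigma|$. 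Adding $2+3+4=9$ yields the desired clean bound.

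The proof is essentially bookkeeping; no single step is genuinely difficult. The only mild annoyance is choosing the right slack (one $\epsilon$ of headroom in the $|\Sigma|$-coefficient and a $\log(1-2\epsilon)$ in place of $\log(1-\epsilon)$) so that all subexponential factors from Stirling, from the geometric-series bound, and from $(|\Sigma|-1)^k\leq|\Sigma|^k$ can be absorbed without carrying an auxiliary constant through later applications. I would pick these slacks at the outset specifically so that the final clean estimate $|\Sigma|^{9\epsilon n}$ follows mechanically.
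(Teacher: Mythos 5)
Your proof is correct, but your combinatorial setup differs from the paper's in a way worth flagging. You count by differentiating over the exact number $k$ of disagreement positions, write $\sum_{k=0}^{\lfloor \epsilon n \rfloor}\binom{n}{k}|\Sigma|^k$, and then collapse the sum to its last term via the ratio test, picking up a constant $C(\epsilon)$ that you absorb into the subexponential slack. The paper instead observes that any $w'$ that is $\epsilon$-close to $w$ differs in at most $[\epsilon n]+1$ positions and is therefore already covered by choosing a \emph{single} set of $[\epsilon n]+1$ indices and assigning arbitrary (possibly agreeing) symbols there; this gives $\binom{n}{[\epsilon n]+1}|\Sigma|^{[\epsilon n]+1}$ directly, with no sum and no geometric-series constant to absorb. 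Both routes then feed a Stirling-type estimate, and both exploit the deliberate slack built into the stated exponent (the switch from $\epsilon$ to $2\epsilon$ and $3\epsilon$, and from $1-\epsilon$ to $1-2\epsilon$) to absorb the overhead; your entropy-type bound $\binom{n}{\lfloor\epsilon n\rfloor}\leq\epsilon^{-\epsilon n}(1-\epsilon)^{-(1-\epsilon)n}$ is in fact tighter than what you need, so the $C(\epsilon)$ factor fits comfortably. One small caveat: you should separate out the trivial case $\epsilon n<1$ (where the count is exactly $1$ and the bound holds because the exponent is nonnegative), as the paper does, since the geometric-ratio bound degenerates there. Also, your parenthetical suggestion that the $(1-\epsilon)\log(1-2\epsilon)$ term ``comes from bounding the geometric ratio'' is misattributed — in the paper it arises from replacing $n-[\epsilon n]-1$ by $(1-2\epsilon)n$ in Stirling — but this does not affect the validity of your argument, since you simply need $-(1-\epsilon)\log(1-\epsilon)\leq -(1-\epsilon)\log(1-2\epsilon)$, which is immediate. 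Your derivation of the ``in particular'' clause matches the paper's.
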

\begin{proof}
Suppose $\epsilon n\geq 1$ and $w'\in\Sigma^n$ is $\epsilon$-Hamming close to $w$, then at most $[\epsilon n]+1$
digits of $w'$ may be different from corresponding digits in $w$. We denote them as $i_1,i_2,\ldots,i_{[\epsilon n]+1}$, where $[\epsilon n]$ is the integer part of $\epsilon n$. It is worth to point that the $\epsilon$-Hamming closeness of $w$ and $w'$ does not forbid $w'_{i_k}=w_{i_k}$ for some $k=1,\ldots,[\epsilon n]+1$, which corresponds to the cases that the Hamming distance between $w'$ and $w$ is even smaller than $\epsilon$. Since $w'_{i_k}$ may be equal to $w_{i_k}$ for $k=1,\ldots,[\epsilon n]+1$, the total cardinality of possible choices of codes on a fixed set of indices $\{i_1,\ldots,i_{[\epsilon n]+1}\}$ is  $|\Sigma|^{[\epsilon n]+1}$. Moreover, the total cardinality of possible choices of index sets $\{i_1,\ldots,i_{[\epsilon n]+1}\}$ is less than $\binom{n}{[\epsilon n]+1}$. Combining these two estimates, the total cardinality of length $n$ words that are $\epsilon$-Hamming close to $w$ is less than
$\binom{n}{[\epsilon n]+1}|\Sigma|^{[\epsilon n]+1}$.
By definition of binomial coefficients and Stirling's approximation formula, we get the following estimates:
\begin{equation*}
\begin{aligned}
\binom{n}{[\epsilon n]+1}|\Sigma|^{[\epsilon n]+1}&\leq\frac{n!}{([\epsilon n]+1)!(n-[\epsilon n]-1)!}|\Sigma|^{[\epsilon n]+1}\\
&\leq \frac{e\sqrt{ n}(\frac{n}{e})^n|\Sigma|^{[\epsilon n]+1}}{\sqrt{2\pi([\epsilon n]+1)}(\frac{[\epsilon n]+1}{e})^{[\epsilon n]+1}\sqrt{2\pi(n-[\epsilon n]-1)}(\frac{n-[\epsilon n]-1}{e})^{n-[\epsilon n]-1}}\\
&\leq\frac{e\sqrt{ n}(\frac{n}{e})^n|\Sigma|^{2\epsilon n}}{\sqrt{2\pi\epsilon n}(\frac{\epsilon n}{e})^{[\epsilon n]+1}\sqrt{2\pi(n-2\epsilon n)}(\frac{n-2\epsilon n}{e})^{n-[\epsilon n]-1}}\\
&=\frac{e|\Sigma|^{2\epsilon n}}{2\pi\sqrt{(n-2\epsilon n)}\epsilon^{[\epsilon n]+1+\frac{1}{2}}(1-2\epsilon)^{n-[\epsilon n]-1}}\\
&\leq\frac{e|\Sigma|^{2\epsilon n}}{2\pi\sqrt{(n-2\epsilon n)}\epsilon^{3\epsilon n}(1-2\epsilon)^{n-\epsilon n}}\\
&=\frac{e}{2\pi\sqrt{(n-2\epsilon n)}}\cdot\frac{|\Sigma|^{2\epsilon n}}{\epsilon^{3\epsilon n}(1-2\epsilon)^{n-\epsilon n}}\\
&\leq \frac{e}{2\pi\sqrt{n(1-2\epsilon)}}\cdot\frac{|\Sigma|^{2\epsilon n}}{\epsilon^{3\epsilon n}(1-2\epsilon)^{n-\epsilon n}}\\
&\leq\exp(n(2\epsilon\log|\Sigma|-(1-\epsilon)\log(1-2\epsilon)-3\epsilon\log\epsilon))
\end{aligned}
\end{equation*}
where last inequality follows from $\epsilon n\geq 1$ and $\epsilon\in(0,\frac{1}{10})$.

If $\epsilon n<1$, then by definition of Hamming metric, we know that the cardinality of words in $\Sigma^n$ that are $\epsilon$-Hamming close to $w$ is $1$. Since $\exp(n(2\epsilon\log|\Sigma|-(1-\epsilon)\log(1-2\epsilon)-3\epsilon\log\epsilon))\geq 1$ for any $n\in\mathbb{Z}^+$ and $\epsilon\in(0,\frac{1}{10})$, we complete the estimate.

If $\epsilon\geq\frac{1}{|\Sigma|}$ and $\log|\Sigma|\geq1$, then we obtain that
\begin{equation*}
\begin{aligned}
&-(1-\epsilon)\log(1-2\epsilon)\leq\log(1+\frac{2\epsilon}{1-2\epsilon})\leq \frac{2\epsilon}{1-2\epsilon}\leq 4\epsilon\leq4\epsilon\log|\Sigma|,\\
&-3\epsilon\log\epsilon\leq3\epsilon\log|\Sigma|,
\end{aligned}
\end{equation*}
which completes the proof.
\end{proof}

\subsection{Finite rank systems}
Although there are many equivalent definitions of finite rank systems, we only provide the following geometrical definition of finite rank systems in our paper. For other equivalent definitions we refer to \cite{Ferenczirank} for more details.
\begin{definition}[Finite rank system \cite{Ferenczirank}]\label{def:finiteRank}
A system is of rank at most $\ell$ if for every partition $\mathcal{P}$ of $X$, for every $\epsilon>0$, there exist $\ell$ subsets $B_i$ of $X$, $\ell$ positive integers $h_i$ and a partition $\mathcal{P}'$ of $X$ such that
\begin{enumerate}[label=\alph*)]
  \item $T^jB_i$ are disjoint for $1\leq i\leq\ell$ and $0\leq j\leq h_i-1$;
  \item $|\mathcal{P}-\mathcal{P}'|<\epsilon$;
  \item $\mathcal{P}'$ is refined by the partition $$\{T^jB_i:1\leq i\leq\ell, 0\leq j\leq h_i-1\}\cup\{X\setminus\cup_{1\leq i\leq\ell, 0\leq j\leq h_i-1}T^jB_i\}.$$
\end{enumerate}
A system is of rank $\ell$ if it is of rank at most $\ell$ and not of rank at most $\ell-1$. A system is of infinite rank if no such finite $\ell$ exists. Moreover, if $X\setminus\cup_{1\leq i\leq\ell, 0\leq j\leq h_i-1}T^jB_i=\emptyset$, then we call such systems a finite rank systems \emph{without} spacers.
\end{definition}

\begin{remark}\label{rem:towerLength}
It is worth to point out that for a rank $\ell$-system $(X,T,\mathcal{B},\mu)$, each $h_i$ will go to $+\infty$ as $\mathcal{P}$ converges to an atom partition and $\epsilon\to0$. If there is at least one bounded $h_i$ in this process, then $\mathcal{P}\to\text{atom partition}$ and $\epsilon\to0$ imply that $\mu(B_i)\to0$ and thus $\mu(\bigcup_{j=0}^{h_i}T^jB_i)\to 0$, which gives that the rank of $T$ is less than or equal to $\ell-1$ and thus a contradiction.
\end{remark}
\begin{remark}\label{rem:fullmeasure}
If $\mathcal{P}_n$ is a family of partitions that converges to atom partition as $n\to+\infty$ and $\epsilon>0$, then by taking a subsequence if necessary, we can always assume that  $\sum_{i=1}^{\ell}h_n^{(i)}\mu(B_n^{(i)})\geq1-\epsilon$ as the measure of each atom of $\mathcal{P}_n$ goes to zero as $n\to\infty$, where $h_n^{(i)}$ and $B_n^i$ are the heights and bases in Definition \ref{def:finiteRank} with respect to $\mathcal{P}_n$ and $\epsilon$.
\end{remark}

\subsection{Periodic approximation}\label{sec:periodicApp}
In this section, we provide an elementary introduction about periodic approximation, which will be used in Section \ref{sec:goodCyclic}. We refer to \cite{K} for more details.

Suppose $(X,\mu)$ is a Lebesgue space. A tower $t$ is an ordered sequence of disjoint subsets: $t=\{c_1,\ldots,c_{h(t)}\}$ of $X$ with $\mu(c_i)=m(t)$ for $1\leq i\leq h(t)$, where $c_1$ is denoted as the base of the tower $t$ and  $h(t)$ is denoted as the height of the tower $t$. Moreover, we associated a cyclic measure-preserving permutation $\sigma$ with tower $t$ such that $\sigma(c_i)=c_{i+1}$ for $1\leq i\leq h(t)$, where $c_{h(t)+1}=c_1$. Then the periodic process can be defined as follows:
\begin{definition}[Periodic process]
A periodic process is a collection of disjoint towers covering $X$, together with an equivalence relation among these towers which identifies their bases.
\end{definition}

For any given periodic process, we introduce the following two standard partitions associated with the periodic process. The first partition $\xi$ consists of all elements of all towers of the periodic process and the second partition $\eta$ consists of the unions of bases of towers in each equivalence class and their images under the iterates of $\sigma$, where we will drop certain towers if we go beyond the heights of these towers and we continue until the highest tower in the equivalence class has been exhausted. It is clear that $\eta$ is a subpartition of $\xi$, i.e. $\eta\leq\xi$. In fact, $(\xi,\eta,\sigma)$ completely determines the periodic process except for the situation that all towers within an equivalence class have the same height. Since we will not be interested in making this distinction, we identify the triple $(\xi,\eta,\sigma)$ with the periodic process from now on. With the help of these notations, we can introduce the concept of periodic approximations:
\begin{definition}[Exhaustive periodic process]
The sequence $(\xi_n,\eta_n,\sigma_n)$ of periodic processes is called exhaustive if $\eta_n$ converges to the decomposition into points as $n\to\infty$.
\end{definition}
\begin{definition}[Periodic approximation]
For a given measure-preserving transformation $T$ and an exhaustive sequence of periodic processes $(\xi_n,\eta_n,\sigma_n)$, we say $(\xi_n,\eta_n,\sigma_n)$ forms a periodic approximation of $T$ if
$$\sum_{c\in\xi_n}\mu(Tc\triangle\sigma_nc)\to 0\text{ as }n\to\infty.$$
\end{definition}

One of the important features of periodic approximations is the speed of approximation:
\begin{definition}
A measure-preserving transformation admits a periodic approximation $(\xi_n,\eta_n,\sigma_n)$ with speed $g(n)$ if for a certain subsequence $\{n_k\}$ we have
$$\sum_{c\in\xi_{n_k}}\mu(Tc\triangle\sigma_{n_k}c)<g(n_k).$$
\end{definition}

Now we have enough ingredients to define a good cyclic approximation.

\begin{definition}[Cyclic approximation]\label{def:cyclicApp}
A periodic process which consists of a single tower is called a cyclic process. An approximation by an exhaustive sequence of cyclic processes is a cyclic approximation. In particular, a cyclic approximation is a good cyclic approximation if its speed is $o(\frac{1}{q_n})$, where $q_n$ is the height of cyclic processes' single tower at step $n$.
\end{definition}

\section{A probabilistic method}\label{sec:probabilisticLemma}
One of the key tools for all our combinatorial constructions throughout this paper is a probabilistic method similar to the so-called ``Substitution Lemma'' in \cite{FRW}. Like \cite[Proposition 45]{FRW} our method allows to select good choices of words so that each constructed sequence contains each letter in the underlying alphabet the same number of times and that all pairs of letters occur with about the same frequency when comparing two substantial substrings with each other, even after some sliding along the sequence. In addition to that, our word construction also takes care of Hamming-separation for substrings of shorter lengths. We need this extension in order to prove our statements on the lower slow entropy.

To state our probabilistic method precisely, we introduce some notation.

\begin{definition}
	Let $\Sigma$ be an alphabet. For a word $w\in\Sigma^{k}$ and $x\in\Sigma$
	we write $r(x,w)$ for the number of times that $x$ occurs in $w$
	and $\freq(x,w)=\frac{r(x,w)}{k}$ for the frequency of occurrences
	of $x$ in $w$. Similarly, for $(w,w')\in\Sigma^{k}\times\Sigma^{k}$
	and $(x,y)\in\Sigma\times\Sigma$ we write $r(x,y,w,w')$ for the
	number of $i<k$ such that $x$ is the $i$-th member of $w$ and
	$y$ is the $i$-th member of $w'$. We also introduce $\freq(x,y,w,w')=\frac{r(x,y,w,w')}{k}$.
\end{definition}
We also state the Law of Large Numbers with its large deviations using Chernoff bounds:
	\begin{lemma}[Law of Large Numbers] \label{lem:law}
	Let $\left(X_{i}\right)_{i\in\mathbb{N}}$ be a sequence of independent
	identically distributed random variables taking value $1$ with probability
	$p$ and taking value $0$ with probability $1-p$. Then for any $\delta>0$ we have
	\[
	\mathbb{P}\left(\left|\frac{1}{n}\sum_{i=1}^{n}X_{i}-p\right|\geq\delta\right)<\exp\left(-\frac{n\delta^{2}}{4}\right)
	\]
	and for the upper tail we have the following estimate
	\[
	\mathbb{P}\left(\sum_{i=1}^{n}X_{i}\geq(1+\delta)np\right)\leq\exp\left(-\frac{\delta^{2}}{2+\delta}np\right).
	\]
\end{lemma}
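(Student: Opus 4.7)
The plan is to use the classical Chernoff (exponential moment) method for both claims. For the upper tail, I would first apply Markov's inequality to the random variable $e^{t S_n}$, where $S_n = \sum_{i=1}^n X_i$ and $t>0$, obtaining
\[
\mathbb{P}\bigl(S_n \geq (1+\delta)np\bigr) \;\leq\; e^{-(1+\delta)npt}\,\mathbb{E}\bigl[e^{tS_n}\bigr] \;=\; e^{-(1+\delta)npt}\,(1-p+pe^{t})^{n}.
\]
Using the elementary inequality $1+x\leq e^{x}$, the factor $(1-p+pe^{t})^{n}$ is bounded by $\exp\bigl(np(e^{t}-1)\bigr)$. Choosing the optimal $t=\log(1+\delta)$ then yields the standard Chernoff expression
\[
\mathbb{P}\bigl(S_n \geq (1+\delta)np\bigr) \;\leq\; \left(\frac{e^{\delta}}{(1+\delta)^{1+\delta}}\right)^{\!np}.
\]

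To convert this into the cleaner form $\exp\bigl(-\tfrac{\delta^{2}}{2+\delta}\,np\bigr)$, I would invoke the algebraic inequality
\[
(1+\delta)\log(1+\delta) - \delta \;\geq\; \frac{\delta^{2}}{2+\delta}, \qquad \delta \geq 0,
\]
which is verified by elementary calculus: both sides vanish at $\delta=0$, and a direct comparison of derivatives shows the left-hand side dominates on $[0,\infty)$. Taking logarithms in the previous display and applying this inequality gives the second claim.

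For the two-sided estimate, I would treat the lower tail by an analogous Chernoff computation applied to $-X_i$, and then combine both tails. The cleanest route is to use the Hoeffding-type bound $\mathbb{E}[e^{t(X_i-p)}] \leq e^{t^{2}/8}$, valid since $X_i\in[0,1]$, which after Markov's inequality and optimization in $t$ yields $\mathbb{P}(S_n/n - p \geq \delta)\leq\exp(-2n\delta^{2})$ on each side; summing the two tails produces the stated subgaussian bound (with the stated constant $1/4$ once one absorbs the factor of $2$ in the regime where the estimate is non-trivial).

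The only real ``obstacle'' is the algebraic simplification of the Chernoff expression into the clean form $\exp\bigl(-\tfrac{\delta^{2}}{2+\delta}np\bigr)$; everything else is routine application of Markov's inequality and the standard bound $1+x\leq e^{x}$. The argument is entirely self-contained and does not draw on any of the paper's earlier material.
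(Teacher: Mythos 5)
The paper states this lemma without proof, treating it as classical Chernoff/Hoeffding background, so there is no in-text argument to compare your sketch against. Your derivation of the upper tail estimate is the standard and correct one: Markov applied to $e^{tS_n}$, the bound $1-p+pe^{t}\le \exp\bigl(p(e^{t}-1)\bigr)$, the optimal choice $t=\log(1+\delta)$, and then the algebraic inequality $(1+\delta)\log(1+\delta)-\delta\ge \delta^{2}/(2+\delta)$. That last inequality does hold on $[0,\infty)$: writing $f(\delta)=(1+\delta)\log(1+\delta)-\delta-\tfrac{\delta^2}{2+\delta}$, one checks $f(0)=f'(0)=f''(0)=0$ and
\[
f''(\delta)=\frac{1}{1+\delta}-\frac{8}{(2+\delta)^{3}}\ge 0
\quad\text{since}\quad (2+\delta)^{3}-8(1+\delta)=\delta^{3}+6\delta^{2}+4\delta\ge 0,
\]
so $f\ge 0$. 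This part of your argument is complete.

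For the two-sided bound you gloss over a real issue. Hoeffding gives $\mathbb{P}\bigl(\lvert S_n/n-p\rvert\ge\delta\bigr)\le 2\exp(-2n\delta^{2})$, and $2\exp(-2n\delta^{2})\le\exp(-n\delta^{2}/4)$ holds only when $n\delta^{2}\ge \tfrac{4}{7}\log 2$. In fact the lemma as literally printed (strict inequality, all $n\ge 1$, all $\delta>0$) is false: for $n=1$, $p=\tfrac12$, $\delta=\tfrac12$ the left side is $1$ while $\exp(-1/16)<1$. So "absorbing the factor of $2$" cannot be done uniformly; what your route actually establishes is that for each $\delta>0$ there exists $n_0(\delta)$ with the stated bound holding for $n\ge n_0(\delta)$. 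That is precisely what the paper uses — the lemma is only ever invoked to produce quantities of the form $(\exp(-\delta^{2}/4))^{k}$ or $(\exp(-\sigma^{2}/4))^{L}$ that must tend to $0$ as $k,L\to\infty$ — but you should state the restriction rather than wave at it; otherwise the claimed constant $1/4$ with no condition on $n\delta^{2}$ is simply not correct.
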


Inspired by the proof of the Substitution Lemma in \cite{FRW} we apply the Law of Large Numbers to guarantee the existence of selections of words that are Hamming-apart from each other. To avoid Hamming-closeness of too many substrings of shorter length, we have to provide several extensions to \cite[Proposition 44]{FRW}. Here, for a word $w\in \Sigma^k$, we let $\sh^t(w)$ be the word whose entries are moved to the left by $t$ units. Furthermore, $w\upharpoonright I$ denotes the restriction of $w$ on the index set $I$, i.e. if $I=\{i_1,i_2,\ldots\}$, then $(w\upharpoonright I)_p=w_{i_p}$. With the help of these notations, our probabilistic method can be stated as follows.

\begin{proposition}\label{prop:prob}
	Let $\epsilon\in(0,\frac{1}{10}), \gamma\in(0,1)$ and $\Sigma$ be a finite alphabet with at least $4$ symbols. For any increasing sequence ${\mathbf b}=\{b_n\}_{n\in \N}$ with $\lim_{n\to \infty} \frac{\log b_n}{n} = 0$ there exists $K_0 \in\mathbb{N}$ such that for all $k\geq K_0$, that are multiples of $|\Sigma|$, and all $N\leq b_k$ there are $\tau_{{\mathbf b}},\beta(k) \in \mathbb{N}$ with $\tau_{{\mathbf b}}\leq\beta(k)$ and a collection of sequences $\Theta\subset\Sigma^k$ with cardinality $|\Theta|=N$ satisfying the following properties:
	\begin{enumerate}
		\item(Exact uniformity) For every $x\in\Sigma$ and every $w\in\Theta$, we have
		$$\operatorname{freq}(x,w)=\frac{1}{|\Sigma|};$$
		\item(Hamming separation on substantial substrings) Let $0\leq t<(1-\gamma)k$, $w,w'\in\Theta$ and $I$ be an interval of indices in the overlap of $w$ and $\sh^t(w')$. Suppose its length $L\geq \gamma k$. If $w,w'$ are different from each other, then we have
		\begin{equation}
			d^H_{L}(w\upharpoonright I,\sh^t(w')\upharpoonright I)\geq 1-\frac{1}{|\Sigma|}-\gamma|\Sigma|;
		\end{equation}
		if $1\leq t\leq(1-\gamma)k$, then we also have
		\begin{equation}\label{eq:selfSliding}
			d^H_{L}(w\upharpoonright I,\sh^t(w)\upharpoonright I)\geq 1-\frac{1}{|\Sigma|}-\gamma|\Sigma|;
		\end{equation}
		\item(Hamming separation for substrings of intermediate length) Let $0\leq t<(1-2\gamma)k$, $w,w'\in\Theta$ and $I$ be a subinterval of indices in the overlap of $w$ and $\sh^t(w')$. Suppose its length $\beta(k) \leq L < \gamma k$. If $w,w'$ are different from each other, then we have
		\begin{equation}
			d^H_{L}(w\upharpoonright I,\sh^t(w')\upharpoonright I)\geq \epsilon;
		\end{equation}
		if $1\leq t\leq(1-2\gamma)k$, then we also have
		\begin{equation}\label{eq:selfSliding2}
			d^H_{L}(w\upharpoonright I,\sh^t(w)\upharpoonright I)\geq \epsilon,
		\end{equation}
		\item(Hamming separation on short substrings) Let $\tau_{{\mathbf b}} \leq L < \beta(k)$, $0 \leq t < (1-2\gamma)k$, $w\in \Theta$, and $W$ be the substring of $w$ on the indices $[t,t+L) \cap \mathbb{Z}$. Then we have that $W$ is $\epsilon$-Hamming close to a proportion of at most $\frac{2}{b_L}$ of $L$-substrings in $w'\upharpoonright[0,(1-\gamma)k)$ for any $w'\in \Theta$, $w'\neq w$.
	\end{enumerate}
\end{proposition}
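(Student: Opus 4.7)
My plan is to prove the proposition by a random construction, in the spirit of the Substitution Lemma in \cite{FRW} but with additional care at short scales. I will sample the $N$ words $w^{(1)},\dots,w^{(N)}$ independently and uniformly from the set of words in $\Sigma^{k}$ whose letter frequencies are exactly $1/|\Sigma|$; this sampling model automatically enforces property (1). I will then show that each of properties (2), (3), (4) fails with probability strictly less than $1/3$, so that by a union bound a configuration satisfying all four properties exists.

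For the parameters, I take $\tau_{\mathbf b}$ to be the least integer such that $|\Sigma|^{-L(1-9\epsilon)} b_L\leq 1/4$ for all $L\geq\tau_{\mathbf b}$; this is finite because $\log b_L/L\to 0$ and $9\epsilon<1$. I take $\beta(k)\geq\tau_{\mathbf b}$ large enough to make the union bound for property (3) close, and $K_0$ large enough to absorb the lower-order terms arising below. Properties (2) and (3) are then the classical Hamming-separation estimates. For any pair $(w,w')\in\Theta^{2}$ (allowing $w=w'$ only when the shift $t\geq 1$), any shift $t$, and any subinterval $I$ of the overlap of length $L$, the number of matches between $w\upharpoonright I$ and $\sh^{t}(w')\upharpoonright I$ is a sum of nearly independent indicators each with match probability $1/|\Sigma|$. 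A Hoeffding-type large-deviations inequality, which also holds for sampling without replacement, bounds the probability that the Hamming distance is smaller than required by $\exp(-c(\gamma,\epsilon,|\Sigma|)L)$. The union bound over the $O(N^{2}k^{3})$ bad configurations then closes, using $L\geq\gamma k$ in (2) together with $\log N=o(k)$, and using $L\geq\beta(k)$ in (3).

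Property (4) is the main obstacle and is where the hypothesis $\log b_n/n\to 0$ is used most sharply. Fix $w^{(i)}\neq w^{(j)}$, a position $t$, and a length $L\in[\tau_{\mathbf b},\beta(k))$, let $W$ be the substring of $w^{(i)}$ at position $t$ of length $L$, and let $X$ count the $i'\in[0,(1-\gamma)k)$ such that $w^{(j)}[i':i'+L]$ is $\epsilon$-Hamming close to $W$. By Lemma \ref{lem:HammingCloseEstimate} the per-position probability is $p\leq|\Sigma|^{-L(1-9\epsilon)}$, and the choice of $\tau_{\mathbf b}$ yields $p\leq 1/(4 b_L)$, so $\mathbb{E}[X]\leq M/8$ where $M=2(1-\gamma)k/b_L$ is the prescribed threshold. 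To handle correlations among overlapping substrings I partition the starting positions into $L$ arithmetic progressions of common difference $L$, on each of which the indicators $Y_{i'}=\mathbb{1}[w^{(j)}[i':i'+L]\text{ is $\epsilon$-close to }W]$ are independent, apply the Chernoff upper-tail inequality of Lemma \ref{lem:law} (or equivalently the elementary binomial tail bound $\binom{n}{m}p^{m}\leq(enp/m)^{m}$) to each subseries, and union bound over the $L$ subseries and over the $N^{2}k\beta(k)$ configurations of $(w^{(i)},w^{(j)},t,L)$. With $\tau_{\mathbf b}$ and $\beta(k)$ chosen carefully enough, the total failure probability of (4) stays below $1/3$, and summing the three failure probabilities yields a value strictly less than one, so the desired collection $\Theta$ exists.
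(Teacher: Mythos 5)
Your proposal uses the same basic strategy as the paper: a random construction in $\Sigma^{k}$, concentration via Chernoff bounds, the counting Lemma \ref{lem:HammingCloseEstimate}, and a union bound over configurations, with the careful introduction of a threshold $\tau_{\mathbf b}$ determined by $b_L p<1$. Your variation --- sampling directly from the exact-frequency set so that property (1) is automatic --- is a legitimate alternative to the paper's route (sample uniformly in $\Sigma^k$, then replace a proportion $\sigma<\gamma^2/3$ of the endstring, and absorb the damage into the error term for property (2)); your variant requires sampling-without-replacement versions of the large-deviation bounds, which you correctly flag, and also introduces a mild dependence across the $L$ arithmetic progressions that the paper's unconditional model does not have.

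The genuine gap is in the closing step of property (4), which you dispatch with ``with $\tau_{\mathbf b}$ and $\beta(k)$ chosen carefully enough the total failure probability of (4) stays below $1/3$.'' This is exactly the nontrivial balancing act that the paper works out in the inequalities (\ref{eq:L11}), (\ref{eq:L22}) and (\ref{eq:Lcombine}), and it cannot be taken for granted. Property (3) forces $\beta(k)\gtrsim\frac{\log(b_kk)}{1-\epsilon-1/|\Sigma|}$, while in your per-pair set-up each of the $L$ arithmetic progressions contributes only $n\approx(1-\gamma)k/L$ trials, so the Chernoff exponent is of order $np\sim k/(Lb_L)$; you then need this to dominate $\log(N^2k\beta(k)L)$ uniformly for $\tau_{\mathbf b}\le L<\beta(k)$, i.e.\ $\beta(k)b_{\beta(k)}\lesssim k/\log(b_kk)$, and it is not clear this is compatible with the lower bound on $\beta(k)$ coming from (3) when $b_k$ grows faster than every power of $k$ (say $b_k=e^{\sqrt k}$, for which $L_{3,\min}\gtrsim\sqrt k$ but $b_{\sqrt k}=e^{k^{1/4}}$ already overwhelms $k/\sqrt k$). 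Notice also that the paper's bound (\ref{eq:prob4}) pools the counts over all $w'\neq w$, taking $n=(N-1)\lfloor(1-\gamma)k/L\rfloor$ observations so that $np\sim Nk/(Lb_L)$; that extra factor of $N$ (up to $b_k$) is precisely what makes (\ref{eq:Lcombine}) solvable in the super-polynomial case via the choice $L_{4,\max}<\sqrt k$. Your formulation proves the literal per-$w'$ statement, which is strictly stronger than the pooled count the paper establishes and uses, and you should either drop to the pooled version (as the applications only need $\min(N,b_L/2)$ Hamming balls per Remark 3.3) or explain why the per-pair union bound closes under the same hypotheses.
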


\begin{remark}
	While property (2) guarantees that the words in $\Theta$ are Hamming-apart from each other, properties (3) and (4) imply that there is also enough Hamming-separation on the level of shorter substrings. Altogether, the proposition yields for every length $\tau_{{\mathbf b}}\leq L \leq k$ that one needs at least $\min\left(N,\frac{b_L}{2}\right)$ many $(\varepsilon,L)$-Hamming balls to cover a proportion of at least $1-2\gamma$ of the $L$-strings occurring in the words in $\Theta$.
\end{remark}

\begin{proof}	
	Let $\sigma<\frac{\gamma^2}{3}$.
	We will use the Law of Large Numbers to show that for sufficiently
	large $k$, most choices in $\Sigma^{(1-\sigma)\cdot k}$
	satisfy approximate versions of properties (1) and (2). Then we
	choose the endstrings such that exact uniformity in property (1) holds
	true. Since the endstring has small proportion $\sigma$ of the total
	length, its effect on the second property will be small. Properties (3) and (4) are restricted to strings not affected by modifications to the endstrings anyway.
	
	We consider $\varOmega_{k}\coloneqq\left(\Sigma^{k} \right)^{N}=\Sigma^{k}\times\dots\times\Sigma^{k}$ equipped with the counting measure as our probability space. For \emph{each} $x\in\Sigma$ and every $i\in\left\{ 0,1,\dots,k-1\right\} $, let $X_{i}$ be the random variable that takes the value $1$ if $x$ occurs in the $i$-th place of an element $w\in\Sigma^{k}$ and $0$ otherwise. Then the $X_{i}$ are independent and identically distributed. Hence, the Law of Large Numbers gives some $\delta>0$ and $k_{x}=k_{x}(\delta)$ such that for all $k\geq k_{x}$ a proportion $\left(1-(\exp(-\delta^2/4))^k\right)$ of sequences in $\Sigma^{k}$ satisfy
	\begin{equation}\label{eq:prop13}
		\sum_{i=0}^{(1-\sigma)k-1}X_{i}\leq \frac{k}{|\Sigma|}.
	\end{equation}
	Moreover, we define for \emph{each} $0\leq t <(1-\gamma)k$ and \emph{every} pair $(x,y)\in\Sigma\times\Sigma$ the random variable $Y^t_{i}$ that takes the value $1$ if $x$ occurs in the $i$-th place of $w$ for an element $w\in\Sigma^{k}$ and $y$ is the $i$-th entry of $\sh^t(w')$ for some $w'\in\Sigma^{k}$, $w'\neq w$. Otherwise, $Y^t_{i}$ takes the value $0$. Since the $Y^t_{i}$, $i=0,\dots, k-t-1$, are independent and identically distributed, the Law of Large Numbers gives $k_{x,y}=k_{x,y}(\sigma)$ such that for all $k\geq k_{x,y}$, all $0\leq t <(1-\gamma)k$, all $0\leq s<(1-\gamma)k-t$, all $\gamma k \leq L <k-t-s$, and all but an $(\exp(-\sigma^2/4))^L$ proportion of sequences $(w,w')\in\Sigma^{k}\times\Sigma^{k}$ satisfy
	\begin{equation}\label{eq:prop23}
		\abs{\frac{1}{L}\sum_{i=s}^{s+L-1}Y^t_{i}-\frac{1}{|\Sigma|^{2}}}<\sigma.
	\end{equation}
	Finally, we introduce for \emph{each} $1\leq t <(1-\gamma)k$ the random variable $Z^t_i$ that takes the value $1$ if the $i$-th symbol of $w$ agrees with the $i$-th symbol of $\sh^t(w)$ for some $w\in \Sigma^k$. Otherwise, $Z^t_i$ takes the value $0$. Since for each $1\leq t <(1-\gamma)k$ the $Z^t_{i}$ are independent and identically distributed, the Law of Large Numbers gives $k(\sigma)$ such that for all $k\geq k(\sigma)$, all $1\leq t <(1-\gamma)k$, all $0\leq s<(1-\gamma)k-t$, and all $\gamma k \leq L <k-t-s$ an $\left(1-(\exp(-\sigma^2/4))^L\right)$ proportion of sequences $w\in\Sigma^{k}$ satisfy
	\begin{equation}\label{eq:prop33}
		\abs{\frac{1}{L}\sum_{i=s}^{s+L-1}Z^t_{i}-\frac{1}{|\Sigma|}}<\sigma.
	\end{equation}
	
	We point out that the number of requirements is less than $N |\Sigma|+2k^3N^2|\Sigma|^{2} \leq 3k^3N^2|\Sigma|^{2}$. Since
	\[
	3k^3N^2|\Sigma|^{2} \cdot (\exp(-\sigma^2/4))^{\gamma k} \leq 3k^3b_k^2|\Sigma|^{2} \cdot (\exp(-\sigma^2/4))^{\gamma k} \to 0 \text{ as } k\to \infty,
	\]
	we conclude by Bernoulli inequality that for sufficiently large $k$ the vast majority of elements in $\varOmega_{k}$ satisfies all the conditions \eqref{eq:prop13}, \eqref{eq:prop23}, and \eqref{eq:prop33}.
	
	In order to get property (3) we introduce for each $0\leq t <(1-2\gamma)k$ the random variable $V^t_{i}$ that takes the value $1$ if the $i$-th entry of an element $w\in\Sigma^{k}$ agrees with the $i$-th entry of $\sh^t(w')$ for some $w'\in\Sigma^{k}$ (in case of $t=0$ we have to assume $w'\neq w$). Otherwise, $V^t_{i}$ takes the value $0$. Since the $V^t_{i}$ are independent and identically distributed, the Law of Large Numbers with $p=\frac{1}{|\Sigma|}$ and $\delta=\frac{1-\varepsilon-p}{p}$ gives $L_{t}=L_{t}(\delta)$ such that for all $L\geq L_{t}$, all $0\leq s <k-t-L$, and all but a $\exp\left(-\frac{\delta^2}{2+\delta}Lp\right)$ proportion of sequences $(w,w')\in\Sigma^{k}\times\Sigma^{k}$ satisfy
	\begin{equation} \label{eq:prob3}
		\frac{1}{L} \sum^{s+L-1}_{i=s}V^t_i \leq (1+\delta)\cdot p = 1-\varepsilon.
	\end{equation}
	We note that this corresponds to a $d^H_L$-distance of at least $\varepsilon$ between the two strings under consideration. Since we want to control all those strings for every $L\in \N$ with $L_{3,\min}\leq L\leq \gamma k$, this gives less than $N^2k^3$ many conditions and we have to bound
	\begin{equation} \label{eq:L1}
		N^2k^3 \cdot \exp\left(-\frac{\delta^2}{2+\delta}L_{3,\min}p\right) \leq N^2k^3 \cdot \exp\left(-\frac{\delta}{2}L_{3,\min}p\right)=N^2k^3 \cdot \exp\left(-\frac{1-\varepsilon-p}{2}L_{3,\min}\right)
	\end{equation}
	where we used $\delta\geq 2$ by $|\Sigma|\geq 4$. The expression in (\ref{eq:L1}) goes to $0$ as $k\to \infty$ if
	\begin{equation}\label{eq:L11}
		L_{3,\min}> \frac{8\cdot (\ln(b_k)+\ln(k))}{1-\varepsilon-p}.
	\end{equation}
	
	To impose property (4) of this proposition we denote for $L\in \mathbb{Z}^+$, $0 \leq t < (1-2\gamma)k$, $w\in \Sigma^k$ the substring of $w$ on the indices $[t,t+L) \cap \mathbb{Z}$ by $W_{w,L,t}$. Then we introduce for each $0\leq i <(1-\gamma)k$ and $w'\in \Sigma^k$  the random variable $U_i$ that takes the value $1$, if the string $w'\upharpoonright[i,i+L-1]$ is $2\epsilon$-Hamming close to $W_{w,L,t}$, and $0$ otherwise. Since for all $w'\neq w$ and each $0\leq s < L$ the $\{U_{iL+s}\}_{0 \leq i < \lfloor \frac{(1-\gamma)k}{L} \rfloor}$ are independent and identically distributed, we can apply the Law of Large Numbers again. Note that in the notation of Lemma \ref{lem:law} we have $n=(N-1)\lfloor \frac{(1-\gamma)k}{L} \rfloor$ and $p=\frac{N_L}{|\Sigma|^L}$, where $N_L$ is the number of $L$-strings in the alphabet $\Sigma$ that are $2\varepsilon$-Hamming close to $W_{w,L,t}$. Since $\{b_L\}_{L\in \N}$ is subexponential with respect to $L$, Lemma \ref{lem:HammingCloseEstimate} implies that there exists $\tau_{\mathbf b}\in \N$ only depending on ${\mathbf b}=\{b_L\}_{L\in\mathbb{N}}$ such that
	\begin{equation} \label{eq:condTau}
		b_Lp<1 \ \text{ for every } L\geq\tau_{\mathbf b}.
	\end{equation}
    Then we apply the upper tail estimate in Lemma \ref{lem:law} with
\begin{equation}\label{eq:delta}
\delta = \frac{1.5}{b_Lp}-1.
\end{equation}
By this choice we have for all but a $\exp\left(-\frac{\delta^2}{2+\delta}np\right)$ proportion of sequences in $\varOmega_k$ that
	\begin{equation} \label{eq:prob4}
		\sum^{n-1}_{j=0} U_j \leq \frac{2}{b_L}\cdot N \cdot \frac{k}{L}
	\end{equation}
where $U_j$ is indexed by the $n$ many observations as described above. Since we want to control \eqref{eq:prob4} for each possible length $\tau_{\mathbf b}\leq L \leq L_{4,\max}$, all the $N$ representatives in $\varOmega_k$ and the $(1-2\gamma)k$ different initial positions $t$, there are less than $Nk^2$ many conditions. Thus, we have to bound
	\begin{equation}\label{eq:L2}
		Nk^2 \cdot \exp\left(-\frac{\delta^2}{2+\delta}np\right) \leq Nk^2 \cdot \exp\left(-\frac{\delta}{2}N\frac{k}{L}p\right) \leq Nk^2 \cdot \exp\left(-\frac{Nk}{4b_LL}\right).
	\end{equation}
	Hence, we need
	\begin{equation}\label{eq:L22}
		L_{4,\max}< \frac{b_kk}{b_{L_{4,\max}}\cdot 4 \cdot (\ln(b_k)+2\ln(k))}.
	\end{equation}
	
	To guarantee that for sufficiently large $k$ the vast majority of elements in $\varOmega_k$ satisfies conditions (3) and (4), it suffices to have $L_{4,\max}>L_{3,\min}$ which follows from
	\begin{equation} \label{eq:Lcombine}
		b_{L_{4,\max}}< \frac{(1-\varepsilon-p)\cdot b_kk}{40 \cdot \left(\ln(b_k)+\ln(k)\right)^2}.
	\end{equation}
	By direct computation we note that this condition is fulfilled for $L_{4,\max}=\gamma' k$ with some $\gamma'>0$ if $\lim_{k\to\infty}k^{-c}b_k=0$ for some $c\in\mathbb{R}^+$. If $\lim_{k\to\infty}k^{-c}b_k=+\infty$ for every $c\in\mathbb{R}^+$, then we write inequality (\ref{eq:Lcombine}) as
	\[
	\frac{b_{L_{4,\max}}}{\left(b_k\right)^{3/4}}< \frac{(1-\varepsilon-p)\cdot \left(b_k\right)^{1/4}\cdot k}{32 \cdot \left(\ln(b_k)+\ln(k)\right)^2},
	\]
	which is fulfilled for $L_{4,\max}<\sqrt{k}$ since 
	\[
	\lim_{k\to \infty} \frac{b_{\sqrt{k}}}{\left(b_k\right)^{3/4}} =0.
	\]
	
	We pick $k$ large enough such that there is such a collection of sequences $\Theta'\subset\Sigma^{k}$ with cardinality $|\Theta'|=N$ satisfying all the conditions (\ref{eq:prop13}), (\ref{eq:prop23}), (\ref{eq:prop33}), (\ref{eq:prob3}), and (\ref{eq:prob4}). Then by inequality \eqref{eq:prop13} in any $w_{in}\in\Theta^{\prime}$ we can replace the symbols at at most $\sigma k$ places in the endstring to obtain a word $w$ in which each element of $\Sigma$ occurs the same number of times. Clearly, the constructed word $w$ satisfies exact uniformity. The sequences built this way constitute our collection $\Theta\subset\Sigma^{k}$.
	
	To check the second property we denote for $w,w^{\prime}\in\Theta$ their original strings in $\Theta^{\prime}$ by $w_{in}$ and $w_{in}^{\prime}$, respectively. From inequality \eqref{eq:prop23} we obtain for every $\gamma k \leq L \leq k-t$, every interval $I$ of length $L$ in the overlap, and every $x,y\in\Sigma$ that
	\[
	\abs{\frac{r(x,y,w_{in}\upharpoonright I,\sh^t(w'_{in})\upharpoonright I)}{L}-\frac{1}{|\Sigma|^{2}}}<\sigma.
	\]
	Since $w_{in}$ and $\sh^t(w'_{in})$ were changed at most $\sigma k$ places, at most $2\sigma k$ positions in the alignment of $w$ and $\sh^t(w')$ can be affected. Hereby, we conclude that
	\begin{equation}
		\begin{aligned}
			& \abs{\freq\left(x,y,w\upharpoonright I,\sh^t(w')\upharpoonright I\right)-\frac{1}{|\Sigma|^{2}}}\\
			\leq & \abs{ \frac{r(x,y,w\upharpoonright I,\sh^t(w')\upharpoonright I)-r(x,y,w_{in}\upharpoonright I,\sh^t(w'_{in})\upharpoonright I)}{L}} \\&+ \abs{\frac{r(x,y,w_{in}\upharpoonright I,\sh^t(w'_{in})\upharpoonright I)}{ L} -\frac{1}{|\Sigma|^{2}}}\\
			\leq & \frac{2\sigma  k}{ \gamma k}+\sigma	< \gamma.
		\end{aligned}
	\end{equation}
	
	In particular, this implies
	\[
	d^H_{L}\left(w\upharpoonright I,\sh^t(w')\upharpoonright I\right)\geq 1-\frac{1}{|\Sigma|}-\gamma |\Sigma|,
	\]
	which yields the first part of property (2). Similarly, we check its second part with the aid of (\ref{eq:prop33}).	
\end{proof}

\begin{remark}\label{rem:tau}
	From condition (\ref{eq:condTau}) in the proof we see that $\tau_{\mathbf b} \in \N$ has to satisfy that
	\[
	\frac{b_L N_L}{|\Sigma|^L}<1 \ \text{ for every } L\geq\tau_{\mathbf b},
	\]
	where $N_L$ is the number of $L$-strings in the alphabet $\Sigma$ that are $2\varepsilon$-Hamming close to a given word in $\Sigma^L$. If $2\varepsilon \geq \frac{1}{|\Sigma|}$, we get the estimate $N_L \leq |\Sigma|^{18L\varepsilon}$ by Lemma \ref{lem:HammingCloseEstimate}. Then we can state the condition on $\tau_{\mathbf b}$ as
	\[
	\frac{b_L}{|\Sigma|^{(1-18\varepsilon)L}}<1 \ \text{ for every } L\geq\tau_{\mathbf b}.
	\]
\end{remark}

\begin{remark} \label{rem:smallerGrowth}
	Let ${\mathbf c}=\{c_n\}_{n\in \N}$ be an increasing sequence with $c_n \leq b_n$ for all $n\in \N$. Then the proof of Proposition \ref{prop:prob} can also be used to construct a collection $\Theta \subset \Sigma^k$ with cardinality $|\Theta|=N \leq b_k$ such that for every length $\tau_{{\mathbf c}}\leq L \leq k$, one needs at least $\min\left(N,\frac{c_L}{2}\right)$ many $(\varepsilon,L)$-Hamming balls to cover a proportion of at least $1-2\gamma$ of the $L$-strings occurring in the words in $\Theta$. As above the number $\tau_{\mathbf c} \in \N$ has to satisfy that
	\[
	\frac{c_L N_L}{|\Sigma|^L}<1 \ \text{ for every } L\geq\tau_{\mathbf c},
	\]
	where $N_L$ is the number of $L$-strings in the alphabet $\Sigma$ that are $2\varepsilon$-Hamming close to a given word in $\Sigma^L$.
\end{remark}

We end this section by making the following immediate observation from Proposition \ref{prop:prob} to fix some notation.

\begin{remark} \label{rem:kForCLT}
	Let $s\in \Z^+$, $s\geq 4$, $\varepsilon,\gamma>0$, and $\{b_n\}_{n\in \N}$ be a subexponential sequence with $b_n \nearrow \infty$. Then there is $K_0=K_0(\{b_n\}_{n\in \N},s,\varepsilon,\gamma)$ such that for all $k\geq K_0$ and any finite alphabet $\Sigma$ of cardinality $|\Sigma|=s$, there is a collection $\Theta \subset \Sigma^{k}$ of cardinality $|\Theta|=\lfloor b_k\rfloor $ such that the words in $\Theta$ satisfy properties (2)-(4) from Proposition \ref{prop:prob}.
\end{remark}

\section{Slow entropy of rigid transformations}\label{sec:rigidConstruction}

\subsection{Combinatorics of the conjugation map} \label{subsec:combin}
We start this section with a brief review of the symbolic representation of AbC transformations from \cite{FW1} and \cite{K}. Here, our constructions can be viewed as taking place on $\mathbb{T}^2=\R^2 / \Z^2$, $\mathbb{D}$ or $\mathbb{A}=\mathbb{S}^1\times [0,1]$. For this purpose, we introduce the following notation with $q,s\in \Z^+$:
\begin{align*}
& \Delta_{q,s} := \{(x,y)\in \S^1\times [0,1) :0\leq x  < \frac{1}{q}, 0\leq y < \frac{1}{s}\},\\
& \Delta_{q,s}^{i,j} := \{(x,y)\in \S^1\times [0,1) :(x,y)=\big(x'+\frac{i}{q},y'+\frac{j}{s}\big)\text{ for some }(x',y')\in \Delta_{q,s}\}.
\end{align*}
We collect the above sets to form the following partition
\begin{align*}
& \xi_{q, s} =\{\Delta_{q,s}^{i,j}: 0\leq i< q,\; 0\leq j< s\}.
\end{align*}
Each transformation $T$ in Theorems \ref{theo:rigidUpper} and \ref{theo:rigidLower} is obtained as a limit of an \emph{untwisted} AbC construction $T_n=H_n \circ R_{\alpha_{n+1}}\circ H^{-1}_n$, i.e. $h_n\left( \Delta_{q_n,1}\right)=\Delta_{q_n,1}$. We use three sequences of parameters $(k_n)_{n\in\N}$, $(l_n)_{n\in\N}$, and $(s_n)_{n\in\N}$. In particular, we suppose that $k_n$ as well as $s_n$ are multiples of $s_{n-1}$. In our application $k_n$ will be chosen large enough to satisfy some requirements determined by the lemmas in Section \ref{sec:probabilisticLemma}. Moreover, we will choose $h_n$ as a measure preserving translation commuting with $R_{\alpha_n}$ and permuting $\xi_{k_nq_n,s_n}$, i.e. it takes atoms of the form $\Delta_{k_nq_n,s_n}^{i,j}$ to atoms $\Delta_{k_nq_n,s_n}^{i',j'}$. Clearly, $h_n$ is determined by its action on $\Delta_{q_n,1}$. We also state some requirements on the action of $h_n$ that guarantee ergodicity of our limit transformation $T$ (see \cite[Theorem 58]{FW1}):
\begin{enumerate}[label=(R\arabic*)]
	\item\label{item:R1} The sequence $s_n$ tends to $\infty$;
	\item\label{item:R2} Strong uniformity: For each $\Delta^{0,j}_{q_{n-1},s_{n-1}}\in \xi_{q_{n-1},s_{n-1}}$ and each $s<s_n$ we have that the cardinality of
	\[
	\Meng{t<k_n}{h_n \left( \Bigg[\frac{t}{k_nq_n},\frac{t+1}{k_nq_n}\Bigg) \times \Bigg[\frac{s}{s_n},\frac{s+1}{s_n}\Bigg) \right)\subseteq \Delta^{0,j}_{q_{n-1},s_{n-1}}}
	\]
	is $\frac{k_n}{s_{n-1}}$;
	\item\label{item:R3} Given $s<s_n$ we can associate a $k_n$-tuple $\left(j_0,j_1,\dots,j_{k_n-1}\right)_s$ such that
	\[
	h_n\left( \Bigg[\frac{t}{k_nq_n},\frac{t+1}{k_nq_n}\Bigg) \times \Bigg[\frac{s}{s_n},\frac{s+1}{s_n}\Bigg) \right) \subseteq \Delta^{0,j_t}_{q_{n-1},s_{n-1}}
	\]
	for all $0\leq t<k_n$. Then we assume that the map $s \mapsto \left(j_0,j_1,\dots,j_{k_n-1}\right)_s$ is one-to-one.
\end{enumerate}

To give the symbolic representation of untwisted AbC transformations we inductively assume that the $T_{n-1}$-names with length $q_n$ of the tower bases $H_{n-1} (\Delta^{0,s}_{q_n,s_{n-1}})$, $0\leq s<s_{n-1}$, are $u_0,\dots , u_{s_{n-1}-1}$. Then for each $0\leq s^{\ast}<s_{n}$ we define a sequence of words $w_0,\dots , w_{k_n -1}$ by setting $w_j=u_s$ iff
\[
h_n\left( \Big[\frac{j}{k_nq_n}, \frac{j+1}{k_nq_n} \Big) \times \Big[\frac{s^{\ast}}{s_n}, \frac{s^{\ast}+1}{s_n}\Big) \right) \subseteq \Delta^{0,s}_{q_{n}, s_{n-1}}.
\]
Then according to \cite[section 7]{FW1} the $T_n$-name with length $k_nl_nq^2_n=q_{n+1}$ of the tower $H_n(\Delta^{0,s^{\ast}}_{q_{n+1}, s_n})$ is given by
\begin{equation}\label{eq:circularOperator}
\mathcal{C}_n(w_0,\dots,w_{k_n-1}) \coloneqq \prod^{q_n -1}_{i=0} \prod^{k_n-1}_{j=0} \left(b^{q_n-j_i} w^{l_n-1}_j e^{j_i}\right),
\end{equation}
where $b,e$ are so-called spacer symbols and $j_i \in \{0,\dots , q_n-1\}$ such that $j_i \equiv (p_n)^{-1}i \mod q_n$.

We also add the following well-known fact for AbC transformations.
\begin{lemma} \label{lem:rig}
	Let $T$ be constructed by the abstract AbC method with any sequence $(l_n)_{n\in\N}$ such that $\sum_{n\in \N}\frac{1}{l_n}<\infty$. Then $T$ is rigid along the sequence $(q_n)_{n\in\N}$.
\end{lemma}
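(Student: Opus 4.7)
The plan is to show that $\mu(T^{q_n} A \triangle A) \to 0$ for every $A$ in the generating algebra of finite unions of atoms of some $\xi_{q_m, s_m}$; rigidity in $L^2$ will then follow by density. The key observation is that since $h_{n+1}$ commutes with $R_{1/q_{n+1}}$ and hence with $R_{\alpha_{n+1}}$, one has the identity $T_n = H_{n+1}\circ R_{\alpha_{n+1}}\circ H_{n+1}^{-1}$ (sharing the same conjugation as $T_{n+1}$), and in particular $T_n^{q_n} = H_n\circ R_{\gamma_n}\circ H_n^{-1}$ where $\gamma_n := q_n \alpha_{n+1} - p_n = \frac{1}{k_n l_n q_n}$ is a tiny rotation.

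First I would bound $\mu(T_n^{q_n} A\triangle A)$. Assuming $A$ is a union of atoms of $\xi_{q_m,s_m}$ with $m< n$, the divisibility relations $q_m\mid q_n\mid k_nq_n$ and $s_m\mid s_n$ imply that $A$ is a union of atoms of the finer partition $\xi_{k_nq_n, s_n}$. Since $h_n$ (and thus $H_n$) permutes $\xi_{k_n q_n, s_n}$ by \ref{item:R2}, $H_n^{-1}A$ is again a union of such atoms. Displacing these atoms (of width $\frac{1}{k_nq_n}$) by $\gamma_n = \frac{1}{k_n l_n q_n}$ shifts each by a fraction $1/l_n$ of its width, so that using measure-invariance of $H_n$,
\[
\mu(T_n^{q_n} A\triangle A) = \mu(R_{\gamma_n}H_n^{-1}A\triangle H_n^{-1}A)\leq \frac{2\mu(A)}{l_n}\xrightarrow{n\to\infty}0.
\]

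Next I would compare $T^{q_n}A$ with $T_n^{q_n}A$ via a telescoping bound $\mu(T^{q_n} A \triangle T_n^{q_n}A)\leq \sum_{k\geq n}\mu(T_{k+1}^{q_n}A\triangle T_k^{q_n}A)$, whose validity relies on the $L^1$-convergence $\mathbf{1}_{T_k^{q_n}A}\to\mathbf{1}_{T^{q_n}A}$, itself a consequence of the standard AbC convergence $T_k\to T$ guaranteed by $\sum 1/l_n<\infty$. For each summand, the twin identities $T_k^{q_n} = H_{k+1}\circ R_{q_n\alpha_{k+1}}\circ H_{k+1}^{-1}$ and $T_{k+1}^{q_n}=H_{k+1}\circ R_{q_n\alpha_{k+2}}\circ H_{k+1}^{-1}$ together with measure-invariance yield
\[
\mu(T_{k+1}^{q_n}A\triangle T_k^{q_n}A) = \mu(R_{\beta_{k,n}} H_{k+1}^{-1}A \triangle H_{k+1}^{-1}A),\qquad \beta_{k,n}=\frac{q_n}{k_{k+1}l_{k+1}q_{k+1}^{2}}.
\]
Repeating the atom-width argument on $\xi_{k_{k+1}q_{k+1},s_{k+1}}$ bounds this by $\frac{2q_n\mu(A)}{l_{k+1}q_{k+1}}$.

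The final step is to verify the tail tends to zero. Using $q_{k+1}=k_kl_kq_k^{2}\geq q_k q_n$ for $k\geq n$, I get $\frac{q_n}{q_{k+1}}\leq \frac{1}{l_n q_n}$ uniformly in $k\geq n$, so
\[
\mu(T^{q_n}A\triangle T_n^{q_n}A)\leq \frac{2\mu(A)}{l_n q_n}\sum_{j\geq n+1}\frac{1}{l_j}\xrightarrow{n\to\infty}0,
\]
exploiting the summability hypothesis $\sum 1/l_n<\infty$. Combining this with the first step gives $\mu(T^{q_n}A\triangle A)\to 0$, and rigidity follows. The main conceptual obstacle is the telescoping step, where one must rule out that the tiny rotational discrepancies between successive $T_k^{q_n}$ conspire to produce large symmetric differences; the rapid (in fact, doubly exponential) growth of $q_k$ relative to $q_n$, combined with the summability condition, is precisely what makes this work.
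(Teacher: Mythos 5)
Your proof is correct and follows essentially the same route as the paper's: both arguments telescope $\mu(T^{q_n}A\triangle T_j^{q_n}A)$ over $j$ using the commutation $h_j R_{\alpha_j}=R_{\alpha_j}h_j$ to place successive $T_j$'s under a common conjugator, then exploit the smallness of $\alpha_{j+1}-\alpha_j$ together with the summability of $1/l_j$. The only bookkeeping difference is that the paper telescopes off the exact identity $T_{n-1}^{q_n}=\mathrm{id}$, whereas you telescope off the near-identity $T_n^{q_n}$ and handle the resulting $2/l_n$ base term separately.
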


\begin{proof}
	Let $A\subseteq X$ be any measurable set and $\varepsilon>0$. Since the sequence $\{\eta_m\}_{m\in \N}$ of partitions $\eta_m\coloneqq H_m\left(\xi_{k_mq_m,s_m}\right)$ is generating, there is $N\in \N$ such that for all $n\geq N$ we can $\varepsilon$-approximate $A$ by a union of sets in $\eta_n$. We also note that for $t\leq q_{n}$ and $n$ sufficiently large we have by our assumption on the sequence $(l_n)_{n\in\N}$ that
	\begin{align*}
	\sum_{c\in \eta_n} \mu\left(T^t(c)\triangle T^t_{n-1}(c)\right) & \leq  \sum_{i=n}^{\infty}\sum_{c\in \eta_i} \mu\left(T^t_{i}(c)\triangle T^t_{i-1}(c)\right) \\
	& = \sum_{i=n}^{\infty}\sum_{c\in \eta_i} \mu\left(H_{i}R^t_{\alpha_{i+1}}H^{-1}_i(c)\triangle H_{i-1} \circ h_i \circ R^t_{\alpha_i} \circ h^{-1}_i \circ H^{-1}_{i-1}(c)\right) \\
	& = \sum_{i=n}^{\infty}\sum_{\tilde{c}\in \xi_{k_iq_i,s_i}} \mu\left(R^t_{\alpha_{i+1}-\alpha_i}(\tilde{c})\triangle \tilde{c}\right) \\
	& \leq \sum_{i=n}^{\infty}\frac{t}{l_{i}q_{i}} < \sum_{i=n}^{\infty}\frac{1}{l_{i}} < \varepsilon
	\end{align*}
where the first equality is due to $h_i\circ R_{\alpha_i}=R_{\alpha_i}\circ h_i$. Since $T^{q_{n}}_{n-1}=\text{id}$, we conclude $\mu\left(T^{q_n}(A)\triangle A\right)<2\varepsilon$. This yields the rigidity of $T$.
\end{proof}

\begin{remark} \label{rem:safety}
	When moving from the partition element $\Delta^{i,j}_{k_nq_n,s_n}$ to $\Delta^{i+1,j}_{k_nq_n,s_n}$ the mapping behavior of $h_n$ changes instantly. To keep control of the orbit of $T^t_n$ compared to $T^t_{n-1}$ for small numbers of iterates $t\leq q_n$ we introduce the sets
	\[
	\overline{\Delta}^{i,j}_{k_nq_n,s_n} =\Bigg[ \frac{i}{k_nq_n}, \frac{i+1}{k_nq_n}-\frac{2}{k_nl_nq_n} \Bigg) \times \Bigg[\frac{j}{s_n}, \frac{j+1}{s_n} \Bigg).
	\]
	Since $t\cdot \abs{\a_{n+1}-\a_n}\leq \frac{1}{k_nl_nq_n}$, for any point $P\in \overline{\Delta}^{i,j}_{k_nq_n,s_n}$ the images $T^t_n(P)$ and $T^t_{n-1}(P)$ lie in the same element $\Delta^{i',j'}_{k_nq_n,s_n}$.
	
	Then we define
	\[
	\Theta_n \coloneqq \bigcup_{0\leq i<k_nq_n}\bigcup_{0\leq j <s_n} \overline{\Delta}^{i,j}_{k_nq_n,s_n}
	\]
	and call it the \emph{safe domain}.
	
	Similarly, to compare the orbits of $T^t$ and $T^t_{n-1}$ for small numbers of iterates $t\leq q_n$ we define for $m\geq n$ the sets
	\[
	\tilde{\Delta}^{i,j}_{k_mq_m,s_m} =\Bigg[ \frac{i}{k_mq_m}, \frac{i+1}{k_mq_m}-\frac{2q_n}{k_ml_mq^2_m} \Bigg) \times \Bigg[\frac{j}{s_m}, \frac{j+1}{s_m} \Bigg)
	\]
	and its union
	\[
	\Xi_{n,m} \coloneqq \bigcup_{0\leq i<k_mq_m}\bigcup_{0\leq j <s_m} \tilde{\Delta}^{i,j}_{k_mq_m,s_m}.
	\]
	Note that $\Xi_{n,n}=\Theta_n$. Hereby, we define
	\[
	\Xi_n \coloneqq \bigcap_{m\geq n} \Xi_{n,m}.
	\]
	Then we note that for any $0\leq t\leq q_n$ and any point $P\in \Delta^{i,j}_{k_nq_n,s_n}\cap \Xi_n$ the images $T^t(P)$ and $T^t_{n-1}(P)$ lie in the same element $\Delta^{i',j'}_{k_nq_n,s_n}$.
	
	We note that $\mu\left(\Xi_{n,m}\right) \geq  1-\frac{2}{l_m}$. Under the assumption $\sum_{m\in \N}\frac{1}{l_m}<\infty$ this yields for any given $\delta>0$ that $\mu\left(\Xi_n  \right)>1-\delta$ for $n$ sufficiently large.
\end{remark}

\subsection{Proof of Theorem \ref{theo:rigidUpper}}
Let $0 < u <\infty$. We want to construct an ergodic rigid transformation $T$ with $\uent^{\mu}_{n^t}(T)=u$. For a start, we choose a sequence $(\epsilon_n)_{n\in \N}$ of quickly decreasing positive numbers satisfying
\begin{equation} \label{eq:eps1}
\prod^{\infty}_{n=1}(1-\epsilon_n)>\frac{1}{2}.
\end{equation}
The parameter sequences $(k_m)_{m\in\N}$, $(l_m)_{m\in\N}$, and $(s_m)_{m\in\N}$ are defined inductively. Assume that they have been defined up to $m=n-1$. Notice that this also determines the number $q_n=k_{n-1}l_{n-1}q^2_{n-1}$. Then we apply Proposition \ref{prop:prob} with $\varepsilon=\frac{1}{1000}$, $\gamma=\frac{1}{s^2_{n-1}}$, alphabet $\Sigma=\{0,1,\dots,s_{n-1}-1\}$, and subexponential sequence $\{b_k\}_{k\in \N}=\{k^{q_n+1}\}_{k\in \N}$. According to this we choose $k_n$ as a multiple of $s_{n-1}$ and sufficiently large such that there are at least $\lfloor \frac{8}{\epsilon_n}k^{q_n}_n q^{2u}_n  \rfloor s_{n-1}$ many \emph{good words} satisfying all the properties of Proposition \ref{prop:prob}. Then we set
\begin{equation}\label{eq:s1}
s_n=\lfloor \frac{8}{\epsilon_n}k^{q_n}_n q^{2u}_n \rfloor \cdot s_{n-1}.
\end{equation}
As adumbrated before, $s_n$ is a multiple of $s_{n-1}$ and \ref{item:R1} is satisfied. The collection of good words determines the combinatorics of the conjugation map $h_n$. In particular, property (1) from Proposition \ref{prop:prob} gives \ref{item:R2} and property (2) implies \ref{item:R3}. Thus, $T$ is ergodic by \cite[Theorem 58]{FW1}. Finally, we set
\begin{equation}\label{eq:l1}
l_n=\lfloor \frac{8}{\epsilon_n} \left(k_{n}\right)^{\frac{q_n}{u}} \rfloor,
\end{equation}
which guarantees that $\sum_{n\in \N} \frac{1}{l_n} <\infty$. As a result, we obtain that $T$ is a rigid transformation by Lemma \ref{lem:rig}.

To compute the upper measure-theoretic slow entropy we will use Proposition \ref{prop:generatingSequence} with the generating sequence $\{\eta_m\}_{m\in \N}$ of partitions $\eta_m\coloneqq H_m\left(\xi_{k_mq_m,s_m}\right)$. We start with the following observation.

\begin{lemma}\label{lem:lowerlemma}
For any $n>m$, let $A^{(n)}_{i,j}=\Delta^{i,j}_{k_nq_n,s_n}\cap \Xi_{n+1}$ for any $0\leq i <k_nq_n$ and $0\leq j<s_{n}$. If $P_1\in H_n\left(A^{(n)}_{i_1,j_1}\right)$, $P_2\in H_n\left(A^{(n)}_{i_2,j_2}\right)$ and $j_1\neq j_2$, then $P_1$ and $P_2$ are $\left(\prod^{n-1}_{i=m}\left(1-\frac{4}{s_{i}}-\frac{4}{l_{i}}\right),q_{n+1}\right)$-Hamming apart from each other with respect to the partition $\eta_m$.
\end{lemma}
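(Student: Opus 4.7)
The plan is to argue by induction on $n$, with base case $n = m+1$ handled by the same argument as the inductive step but with a trivial inductive input. The main step reduces the $T$-name of $P_k$ of length $q_{n+1}$ with respect to $\eta_m$ to the symbolic description provided by the circular operator in \eqref{eq:circularOperator}. First, the safe-domain property from Remark \ref{rem:safety} applied at level $n+1$ ensures that for $k = 1, 2$ the $T$-name and the $T_n$-name of $P_k$ with respect to $\eta_m$ disagree on at most a proportion $O(1/l_n)$ of the $q_{n+1}$ positions, using both the accumulated displacement estimate in Remark \ref{rem:safety} and the refinement $\xi_{k_m q_m, s_m} \leq \xi_{k_{n+1} q_{n+1}, s_{n+1}}$, which holds because $k_m q_m \mid q_{n+1}$ and $s_m \mid s_{n+1}$ by our parameter choices.

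Next, write $P_k = H_n(Q_k)$ with $Q_k \in \Delta^{i_k, j_k}_{k_n q_n, s_n} \cap \Xi_{n+1}$. Since $\Delta^{i_k, j_k}_{k_n q_n, s_n}$ is subdivided into $l_n q_n$ atoms of $\xi_{q_{n+1}, s_n}$, the $T_n$-orbit of $P_k$ of length $q_{n+1}$ traces out the full tower $H_n(\Delta^{0, j_k}_{q_{n+1}, s_n})$ starting from some level depending on $Q_k$. Hence its $\eta_m$-name under $T_n$ is a cyclic rotation of the canonical word $\mathcal{C}_n(w_0^{(j_k)}, \ldots, w_{k_n-1}^{(j_k)})$, where each $w_j^{(j_k)}$ equals one of the $u_s$'s and the assignment is determined by the combinatorics of $h_n$. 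The map $h_n$ was built from the good words of Proposition \ref{prop:prob} over the alphabet $\{0, \ldots, s_{n-1}-1\}$ with $\gamma = 1/s_{n-1}^2$, so property (2) of Proposition \ref{prop:prob}, including its sliding clause, yields for $j_1 \neq j_2$ and any admissible cyclic shift Hamming separation of at least $1 - 2/s_{n-1}$ between the two $w$-sequences of length $k_n$.

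Finally, at every position $j$ where $w_j^{(j_1)} = u_{s_1}$ and $w_j^{(j_2)} = u_{s_2}$ with $s_1 \neq s_2$, the inductive hypothesis applied at level $n-1$ furnishes Hamming distance at least $\prod_{i=m}^{n-2}(1 - 4/s_i - 4/l_i)$ between the corresponding $\eta_m$-names $u_{s_1}, u_{s_2}$ of length $q_n$. Assembling the non-spacer proportion $1 - 1/l_n$ in $\mathcal{C}_n$, the $w$-separation $1 - 2/s_{n-1}$, the inductive factor on the substituted $u$-words, and subtracting the $O(1/l_n)$ safe-domain slack, produces the new factor $(1 - 4/s_{n-1} - 4/l_{n-1})$ (using that $l_{n-1} \leq l_n$ from \eqref{eq:l1}), and hence the product $\prod_{i=m}^{n-1}(1 - 4/s_i - 4/l_i)$ as required. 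The main technical obstacle is tracking the cyclic shifts coming from the starting positions of $P_1, P_2$ within their towers, which is precisely what the sliding clause of Proposition \ref{prop:prob}(2) was designed to handle.
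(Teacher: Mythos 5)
Your proposal follows the same inductive scheme as the paper's proof: induct on $n$, use the safe domain (Remark \ref{rem:safety}) to identify $T$-names with $T_n$-names, invoke property (2) of Proposition \ref{prop:prob} to get Hamming separation at the level-$n$ alphabet $\{0,\dots,s_{n-1}-1\}$, and apply the inductive hypothesis at the finer scale. The paper argues directly about the trajectories $\{h_{n+1}\circ R^t_{\alpha_{n+2}}\}_{t\leq q_{n+2}}$ landing in the various $j$-strips, while you pass through the explicit circular-operator encoding $\mathcal{C}_n$ from \eqref{eq:circularOperator}; these are two ways of organizing the same combinatorics, and neither buys anything substantive over the other.

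Two imprecisions worth flagging. First, you claim the $T$-name and the $T_n$-name of $P_k$ ``disagree on at most a proportion $O(1/l_n)$'' of the $q_{n+1}$ positions; in fact, the definition $A^{(n)}_{i,j}=\Delta^{i,j}_{k_nq_n,s_n}\cap\Xi_{n+1}$ and Remark \ref{rem:safety} give \emph{exact} agreement of the relevant names for all $t\leq q_{n+1}$, and the $\frac{4}{l_i}$ terms in the product come instead from the positions where the $h_{n+1}\circ R^t_{\alpha_{n+2}}$-trajectory leaves the safe domain of the lower-level map $h_n$ \emph{inside} the inductive step. Second, your bookkeeping stacks a ``non-spacer proportion $1-1/l_n$'' on top of a separate ``$O(1/l_n)$ safe-domain slack'' and then appeals to $l_{n-1}\leq l_n$ to land on $(1-4/s_{n-1}-4/l_{n-1})$; this double-count is not cleanly justified, whereas the paper obtains the single $\frac{4}{l_{n-1}}$ loss directly from the safe-domain of $h_{n-1}$ (the spacers in $\mathcal{C}_n$ are not an additional independent error source, as they are already implicit in the trajectory positions leaving the safe domain). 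Neither of these imprecisions is fatal, but the second one in particular should be tightened before the argument is considered complete.
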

\begin{proof}

We show this inductively starting with $n=m+1$. To see this we note that for every $t\leq q_{n+1}$ and $P \in H_n(A^{(n)}_{i,j})$ the images $T^t\left(P\right)$ and $T^t_n\left(P\right)$ lie in the same element $\Delta^{i',j'}_{k_nq_n,s_n}$ by Remark \ref{rem:safety}. Then we use that for the map $T_n$ the points from $H_n\left(A^{(n)}_{i_1,j_1}\right)$ and $H_n\left(A^{(n)}_{i_2,j_2}\right)$ with $j_1\neq j_2$ are $\left(1-\frac{4}{s_{n-1}},q_{n+1}\right)$-Hamming apart from each other by the second property of Proposition \ref{prop:prob} describing the combinatorics of $h_n$. This finishes the base case.
	
In the inductive step, we suppose that for some $n>m$ points  $(x_1,y_1) \in H_n\left(A^{(n)}_{i_1,j_1}\right)$ and $(x_2,y_2) \in H_n\left(A^{(n)}_{i_2,j_2}\right)$ with $j_1\neq j_2$ are $\left(\prod^{n-1}_{i=m}\left(1-\frac{4}{s_{i}}-\frac{4}{l_{i}}\right),q_{n+1}\right)$-Hamming apart from each other with respect to the partition $\eta_m$, where the sets $A^{(n)}_{i,j}$ are defined as above for any $0\leq i <k_nq_n$ and $0\leq j<s_{n}$. Then we consider the sets $A^{(n+1)}_{v,w}$ for $0\leq v <k_{n+1}q_{n+1}$ and $0\leq w<s_{n+1}$. Once again, we use the second property of Proposition \ref{prop:prob} to see that points $P^{\prime}_1$ from $A^{(n+1)}_{v_1,w_1}$ and $P^{\prime}_2$ from $A^{(n+1)}_{v_2,w_2}$ with $w_1\neq w_2$ have $\{h_{n+1}\circ R^t_{\alpha_{n+2}}\}_{0\leq t \leq q_{n+2}}$-trajectories that are $1-\frac{4}{s_n}$ apart from each other with respect to the alphabet $\{0,1,\dots,s_n -1\}$ labeling the $j$-coordinate in $A^{(n)}_{i,j}$. Moreover, noticing that for at most $\frac{4q_{n+2}}{l_n}$ many $t\leq q_{n+2}$, we have that $h_{n+1}\circ R^t_{\alpha_{n+2}}(P^{\prime}_1)$ or $h_{n+1}\circ R^t_{\alpha_{n+2}}(P^{\prime}_2)$ do not lie in the safe domain of $h_n$. Along the other iterates we can apply the induction assumption. Altogether these yields that for the map $T_{n+1}$, points in $H_{n+1}\left(A^{(n+1)}_{v_1,w_1}\right)$ and $H_{n+1}\left(A^{(n+1)}_{v_2,w_2}\right)$ with $w_1\neq w_2$ are $\left(\prod^{n}_{i=m}\left(1-\frac{4}{s_{i}}-\frac{4}{l_{i}}\right),q_{n+2}\right)$-Hamming apart from each other with respect to the partition $\eta_m$. Since for every $t\leq q_{n+2}$ and $P\in H_{n+1}\left(A^{(n+1)}_{v,w}\right)$, the images $T^t\left(P\right)$ and $T^t_{n+1}\left(P\right)$ lie in the same element $\Delta^{i',j'}_{k_{n+1}q_{n+1},s_{n+1}}$ by Remark \ref{rem:safety}, this finishes the proof of the induction step.
	
\end{proof}

The previous lemma allows us to prove a lower bound on $S_{\eta_m}(T,q_{n+1},\epsilon)$, where $S_{\eta_m}(T,t,\epsilon)$ is defined in Section \ref{sec:slowentropy}.
\begin{lemma}\label{lem:lower1}
	Let $0<\varepsilon<\frac{1}{2}$ and $m\in \N$, then for $n>m$, we have
	$$S_{\eta_m}\left(T, q_{n+1},\epsilon\right)\geq \frac{s_n}{2}.$$
\end{lemma}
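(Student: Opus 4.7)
The plan is a measure-counting/pigeonhole argument that converts the Hamming separation of the $s_n$ classes $H_n\bigl(\bigcup_i A^{(n)}_{i,j}\bigr)$ from Lemma~\ref{lem:lowerlemma} into a lower bound on the number of $\varepsilon$-Hamming balls needed to cover a set of measure larger than $1-\varepsilon$.

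First, I would quantify the separation constant. By the choices \eqref{eq:s1}--\eqref{eq:l1} one has $\tfrac{4}{s_i}+\tfrac{4}{l_i}\le \epsilon_i$, whence by \eqref{eq:eps1}
\begin{equation*}
D:=\prod_{i=m}^{n-1}\Big(1-\tfrac{4}{s_i}-\tfrac{4}{l_i}\Big)\ge \prod_{i=1}^{\infty}(1-\epsilon_i)>\tfrac{1}{2}.
\end{equation*}
(Shrinking the $\epsilon_i$ further at the outset pushes $D$ as close to $1$ as desired; this is harmless, and the slow-entropy computation in Definition~\ref{def:slowEntropy} only requires $\varepsilon\to 0$ in the end.) I then set
\begin{equation*}
S_j:=H_n\Big(\bigcup_{0\le i<k_nq_n} A^{(n)}_{i,j}\Big)\subseteq H_n(\Xi_{n+1}),\qquad 0\le j<s_n.
\end{equation*}
The sets $S_j$ are pairwise disjoint, and by the $R_{1/s_n}$-invariance of $\Xi_{n+1}$ together with the measure-preservation of $H_n$, they have equal measure $\mu(S_j)=\mu(\Xi_{n+1})/s_n$.

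Next, by the triangle inequality for the Hamming metric, any two points inside a single $\varepsilon$-Hamming ball are $\eta_m$-Hamming-apart by less than $2\varepsilon<D$, while Lemma~\ref{lem:lowerlemma} forces points in distinct $S_{j_1},S_{j_2}$ to be at Hamming distance at least $D$. Hence every $\varepsilon$-Hamming ball $B$ meets $S:=\bigsqcup_j S_j$ in at most one class $S_j$, so $\mu(B\cap S)\le \mu(\Xi_{n+1})/s_n$.

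Finally, if $\mathcal{F}$ is any family of $\varepsilon$-Hamming balls with $\mu(\bigcup\mathcal{F})>1-\varepsilon$, the portion of $S$ covered by $\mathcal{F}$ has measure at least $\mu(S)-\varepsilon=\mu(\Xi_{n+1})-\varepsilon$, so
\begin{equation*}
|\mathcal{F}|\ge \frac{\mu(\Xi_{n+1})-\varepsilon}{\mu(\Xi_{n+1})/s_n}=s_n\Big(1-\frac{\varepsilon}{\mu(\Xi_{n+1})}\Big)\ge \frac{s_n}{2},
\end{equation*}
the last inequality holding as soon as $\mu(\Xi_{n+1})\ge 2\varepsilon$, which by Remark~\ref{rem:safety} and the summability $\sum 1/l_k<\infty$ holds for all $n$ large enough. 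The only delicate point is guaranteeing $2\varepsilon<D$ over the full range $\varepsilon<1/2$; this is controlled purely by the initial choice of $\{\epsilon_n\}$ and does not interact with any of the later construction parameters.
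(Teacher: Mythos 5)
Your proof is correct and follows essentially the same route as the paper's own: Lemma~\ref{lem:lowerlemma} together with the product bound coming from \eqref{eq:eps1}, \eqref{eq:s1}, \eqref{eq:l1} forces each $\varepsilon$-Hamming ball to meet at most one of the $s_n$ classes inside $H_n(\Xi_{n+1})$, and a pigeonhole/measure count then gives the lower bound. You merely make the triangle-inequality step (hence the $2\varepsilon<D$ requirement) and the final counting explicit; the concern about $\varepsilon$ near $\tfrac12$ is harmless because the slow-entropy definition only uses the limit $\varepsilon\to 0$, which is how the paper applies the lemma.
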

\begin{proof}
Since $\frac{4}{s_i}+\frac{4}{l_{i}}<\epsilon_i$ by (\ref{eq:s1}) and (\ref{eq:l1}), assumption (\ref{eq:eps1}) implies $\prod^{\infty}_{i=m}\left(1-\frac{4}{s_{i}}-\frac{4}{l_{i}}\right)>\varepsilon$. Then Lemma \ref{lem:lowerlemma} yields for $P\in H_{n}(\Xi_{n+1})$ that
$$\mu \left(B_{\eta_m, q_{n+1}}(P,T,\epsilon)\cap H_{n}(\Xi_{n+1})\right)\leq\frac{1}{s_{n}},$$
which together with $\mu(\Xi_{n+1})>1-\delta$ finishes the proof.
\end{proof}

\begin{lemma}\label{lem:upper1}
	Let $\varepsilon>0$ and $m\in \N$, then for $n$ sufficiently large and $\frac{\varepsilon}{2} l_nq_n\leq L<\frac{\varepsilon}{2} l_{n+1}q_{n+1}$,  we have
$$S_{\eta_m}\left(T, L,\epsilon\right)\leq \frac{2}{\varepsilon^2}k_n q_n s_n.$$
\end{lemma}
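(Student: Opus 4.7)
The plan is to exhibit an explicit covering of $X$ (up to measure $\varepsilon$) by at most $\frac{2}{\varepsilon^2}k_nq_ns_n$ many $(\varepsilon,L)$-Hamming balls with respect to $\eta_m$. Since $\eta_n$ already has exactly $k_nq_ns_n$ atoms $H_n(\Delta^{i,j}_{k_nq_n,s_n})$, I would subdivide each such atom into $\lceil 2/\varepsilon^2\rceil$ equal vertical slices in the $x$-coordinate and pick one representative point from each slice. Every $P\in X$ then has a unique representative $P_{\mathrm{rep}}$ in its own slice, and setting $\tilde P := H_n^{-1}(P)$ and $\tilde P_{\mathrm{rep}} := H_n^{-1}(P_{\mathrm{rep}})$, the preimages differ only in the $x$-coordinate by some $|s|\leq \varepsilon^2/(2k_nq_n)$.

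To estimate the $\eta_m$-Hamming distance between the $T$-names of $P$ and $P_{\mathrm{rep}}$, I would split a bad time $t\in[0,L)$ (one on which $T^t(P)$ and $T^t(P_{\mathrm{rep}})$ lie in different $\eta_m$-atoms) into three cases: (a) $T^t(P)$ and $T_n^t(P)$ lie in different $\eta_m$-atoms; (b) the analogous event for $P_{\mathrm{rep}}$; (c) $T_n^t(P)$ and $T_n^t(P_{\mathrm{rep}})$ lie in different $\eta_m$-atoms. If none of (a)--(c) holds then $T^t(P)$ and $T^t(P_{\mathrm{rep}})$ share an $\eta_m$-atom.

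Case (c) I would handle via the explicit cyclic structure of $T_n$. A key auxiliary observation is that $\eta_n$ refines $\eta_m$: since $H_n=H_m\circ h_{m+1}\circ\cdots\circ h_n$ and each $h_i$ permutes atoms of every $\xi_{k_Nq_N,s_N}$ with $N\geq i$ (a consequence of the divisibilities $k_iq_i\mid k_Nq_N$ and $s_i\mid s_N$), one has $H_n(\xi_{k_nq_n,s_n})=H_m(\xi_{k_nq_n,s_n})$, which refines $\eta_m=H_m(\xi_{k_mq_m,s_m})$. Hence it is enough to bound the fraction of $t$ for which $R_{\alpha_{n+1}}^t(\tilde P)$ and $R_{\alpha_{n+1}}^t(\tilde P_{\mathrm{rep}})$ fall in distinct $\xi_{k_nq_n,s_n}$-atoms. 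Since their $x$-coordinates always differ by $|s|$, this occurs only when $R_{\alpha_{n+1}}^t(\tilde P_{\mathrm{rep}})$ lies within $|s|$ of an atom boundary $i'/(k_nq_n)$; the equidistribution of $\{t\alpha_{n+1}\bmod 1/(k_nq_n):0\leq t<L\}$ across its $l_nq_n$ equally spaced residues---exact on multiples of the period $q_{n+1}$, with an $O(1/L)$ partial-period error absorbed by the hypothesis $L\geq \tfrac{\varepsilon}{2}l_nq_n$---then bounds this pointwise fraction by $|s|\cdot k_nq_n+o(1)\leq \varepsilon^2/2+o(1)$.

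Cases (a) and (b) I would control by combining the pointwise agreement of $T^t$ and $T_n^t$ on the safe domain $\Xi_{n+1}$ for $t\leq q_{n+1}$ (Remark~\ref{rem:safety}) with the rigidity-style estimate $\sum_{c\in\eta_m}\mu(T^{q_{n+1}}c\triangle c)\lesssim 1/l_{n+1}$ derived as in the proof of Lemma~\ref{lem:rig}. Writing $t=K(t)\,q_{n+1}+r(t)$ with $r(t)<q_{n+1}$ and $K(t)\leq \tfrac{\varepsilon}{2}l_{n+1}$ (by the hypothesis $L<\tfrac{\varepsilon}{2}l_{n+1}q_{n+1}$), the cumulative $L^1$-drift after $K(t)$ periods is at most $K(t)/l_{n+1}\leq \varepsilon/2$; Fubini and Markov then produce an exceptional $P$-set of measure $\leq \varepsilon/2$ outside which $a(P)+b(P)<\varepsilon/2$. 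The main obstacle is precisely converting these $L^1$-level estimates into genuine pointwise bounds on the bad-time fractions, which is what the factor $\tfrac{2}{\varepsilon^2}$ in the target cardinality accommodates via the extra slicing. Combined with the bound from (c), the total bad-time fraction is $<\varepsilon$ for all $P$ outside a set of measure $\leq \varepsilon$, completing the covering.
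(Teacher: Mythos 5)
Your overall plan---subdividing each $\xi_{k_nq_n,s_n}$-atom into $\lceil 2/\varepsilon^2\rceil$ vertical slices, covering by these slices, and using the cyclic structure of $T_n$ to control your case (c)---matches the paper, and your auxiliary observation that $\eta_n=\xi_{k_nq_n,s_n}$ refines $\eta_m$ is correct and implicitly used there. There is, however, a genuine gap in how you handle cases (a) and (b), i.e.\ the comparison between $T^t$ and $T_n^t$.

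The rigidity-style $L^1$ estimate yields only $\int a(P)\,d\mu(P)\lesssim \varepsilon$ (and likewise for $b$). Markov's inequality then gives $\mu\bigl(\{P: a(P)>\varepsilon/4\}\bigr)\lesssim 1$, which is vacuous. To produce an exceptional set of measure $\leq\varepsilon/2$ outside which $a(P)+b(P)<\varepsilon/2$ you would need $\int a\,d\mu\lesssim\varepsilon^2$, which the $L^1$-drift estimate does not supply. The factor $2/\varepsilon^2$ in the slicing absorbs only the case-(c) error (an $x$-shift $|s|\leq\varepsilon^2/(2k_nq_n)$ against a per-atom sweep of length at least $\varepsilon/(2k_nq_n)$ coming from $L\geq\tfrac{\varepsilon}{2}l_nq_n$ iterates of $R_{\alpha_{n+1}}$); it does not help with (a) or (b), contrary to what you suggest.

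The paper avoids the $L^1$-to-pointwise conversion entirely by making (a) and (b) pointwise statements. The trick is to compare $T$ with $T_{n+1}$ (not directly with $T_n$) over the full range of iterates, and then $T_{n+1}$ with $T_n$. Concretely, each slice is intersected with $\Theta_{n+1,\varepsilon}\cap\Xi_{n+2}$, a set of measure at least $1-\varepsilon$ for $n$ large. For $P\in\Xi_{n+2}$, Remark~\ref{rem:safety} gives that $T^t(P)$ and $T_{n+1}^t(P)$ lie in the same $\xi_{k_{n+2}q_{n+2},s_{n+2}}$-atom for \emph{all} $t\leq q_{n+2}$, and $L<\tfrac{\varepsilon}{2}l_{n+1}q_{n+1}<q_{n+2}$. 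For $P\in\Theta_{n+1,\varepsilon}$ there is a buffer of width $\tfrac{\varepsilon/2}{k_{n+1}q_{n+1}}$ to the right boundary of its $\xi_{k_{n+1}q_{n+1},s_{n+1}}$-atom. Since $R_{\alpha_{n+1}}^t$ permutes $\xi_{k_{n+1}q_{n+1},s_{n+1}}$ exactly, preserving the relative position within atoms, and since the drift $t\cdot|\alpha_{n+2}-\alpha_{n+1}|<\tfrac{\varepsilon}{2}l_{n+1}q_{n+1}\cdot\tfrac{1}{k_{n+1}l_{n+1}q_{n+1}^2}=\tfrac{\varepsilon/2}{k_{n+1}q_{n+1}}$ fits inside this buffer, the orbits of $T_{n+1}^t=H_{n+1}R_{\alpha_{n+2}}^tH_{n+1}^{-1}$ and $T_n^t=H_{n+1}R_{\alpha_{n+1}}^tH_{n+1}^{-1}$ agree at the $\xi_{k_{n+1}q_{n+1},s_{n+1}}$-level for \emph{every} $t<\tfrac{\varepsilon}{2}l_{n+1}q_{n+1}$. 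Combining, $T^t(P)$ and $T_n^t(P)$ lie in the same $\xi_{k_nq_n,s_n}$-atom for every $t<L$ and every $P$ in the cover. Thus (a) and (b) contribute zero bad times, and the only error is from (c), which your slicing handles. You should replace the $\Xi_{n+1}$-plus-Markov argument with this two-step pointwise comparison.
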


\begin{proof}
	For any $n\in \N$, $0\leq i_1 <k_n q_n$, and $0\leq i_2 <s_n$ we define the sets
	\[
	\tilde{\Delta}_{i_1,i_2}=\Bigg[ \frac{i_1}{k_nq_n}, \frac{i_1 +1-\frac{\varepsilon}{2}}{k_n q_n} \Bigg) \times \Bigg[\frac{i_2}{s_n}, \frac{i_2 +1}{s_n} \Bigg)
	\]
	and its union
	\[
	\Theta_{n,\varepsilon} \coloneqq \bigcup_{0\leq i_1 <k_n q_n} \bigcup_{0\leq i_2 <s_n}\tilde{\Delta}_{i_1,i_2}.
	\]
	We note that the sets $\tilde{\Delta}_{i_1,i_2}$ are defined in this way to guarantee that for any point $P\in \tilde{\Delta}_{i_1,i_2}$ and all $0\leq t <\frac{\varepsilon}{2} l_nq_n$ the images $T^t_n(P)$ and $T^t_{n-1}(P)$ lie in the same element $\Delta^{i',j'}_{k_nq_n,s_n}$.
	
	Hereby, we define for $0\leq j_1<k_nq_n$, $0\leq j_2 < \lfloor \frac{2}{\varepsilon^2} \rfloor$ and $0\leq j_3<s_n$ the sets
	\[
	B_{j_1,j_2,j_3} = \Bigg[\frac{j_1}{k_nq_n}+\frac{j_2 \cdot \varepsilon^2 }{2k_nq_n},\frac{j_1}{k_nq_n}+\frac{(j_2 +1) \cdot \varepsilon^2 }{2k_nq_n} \Bigg) \times \Bigg[ \frac{j_3}{s_n}, \frac{j_3 +1}{s_n} \Bigg) \cap \Theta_{n+1,\varepsilon} \cap  \Xi_{n+2},
	\]
where the intersection with $\Theta_{n+1,\varepsilon}$ guarantees that $T_{n+1}^t(P)$ and $T_{n}^t(P)$ lie in the same atom for $0\leq t \leq \frac{\epsilon}{2}l_{n+1}q_{n+1}$ and the intersection with $\Xi_{n+2}$ guarantees that $T^t(P)$ and $T^t_{n+1}(P)$ lie in the same atom for $0\leq t\leq q_{n+2}$ by Remark \ref{rem:safety}.

	For $n$ sufficiently large the union of all these sets $B_{j_1,j_2,j_3}$ covers a measure of at least $1-\varepsilon$ of the space. Since points within one set $B_{j_1,j_2,j_3}$ stay $(\varepsilon,L)$-Hamming close for $\frac{\varepsilon}{2} l_{n}q_{n}\leq L<\frac{\varepsilon}{2} l_{n+1}q_{n+1}$, we conclude the statement.
\end{proof}

\begin{remark}\label{rem:IndepCombi}
	We conclude from its proof that the statement of Lemma \ref{lem:upper1} is independent of the combinatorics of the conjugation map $h_n$.
\end{remark}

Altogether we can compute the upper measure-theoretic slow entropy of $T$.
\begin{lemma}
	We have $\uent^{\mu}_{n^t}(T)=u$.
\end{lemma}

\begin{proof}
	On the one hand, we see with the aid of Lemma \ref{lem:lower1} that
	\begin{equation}
\begin{aligned}
	\limsup_{N\to \infty} \frac{S_{\eta_m}\left(T,N,\epsilon\right)}{N^t} &\geq \limsup_{n\to \infty} \frac{S_{\eta_m}\left(T,q_{n+1},\epsilon\right)}{q^t_{n+1}}\geq \lim_{n\to \infty} \frac{\frac{1}{2}s_n}{\left(k_nl_nq^2_n\right)^t}
	\\&\geq \lim_{n\to \infty} \frac{\frac{1}{2}\epsilon^t_n \cdot k^{q_n}_n \cdot q^{2u}_n}{8^t \cdot \left(k_n\right)^{t+q_n\frac{t}{u}}\cdot q^{2t}_n},
\end{aligned}	
\end{equation}
which is positive for all $t<u$. This yields $\uent^{\mu}_{n^t}(T)\geq u$.

On the other hand, Lemma \ref{lem:upper1} implies that
\begin{equation}
\begin{aligned}
\limsup_{L\to \infty} \frac{S_{\eta_m}\left(T,L,\epsilon\right)}{L^t}& \leq \lim_{n\to \infty} \frac{\frac{2}{\varepsilon^2}k_n q_n s_n}{\left(\frac{\varepsilon}{2} l_{n}q_{n}\right)^t}  \leq \lim_{n\to \infty} \frac{16\epsilon^{t-1}_n k^{q_n+1}_n q^{2u+1}_n s_{n-1}}{\varepsilon^{t+2} \left(k_n\right)^{q_n\frac{t}{u}}q^{t}_n},
\end{aligned}
\end{equation}
which is zero for all $t> u$.

Altogether, we obtain $\uent^{\mu}_{n^t}(T)=u$.
\end{proof}

For any given subexponential scale $a_n(t)$ the existence of an ergodic rigid transformation $T$ with $\uent^{\mu}_{a_n(t)}(T)=\infty$ follows from the stronger Theorem \ref{theo:rigidLowerInf} in the next subsection. This finishes the proof of Theorem \ref{theo:rigidUpper}.

\subsection{Proof of Theorem \ref{theo:rigidLower}}

In this section, we present the construction in the two cases: $\lent^{\mu}_{n^t}(T)=u<\infty$ and $\lent^{\mu}_{n^t}(T)=\infty$.

\subsubsection{Case 1: $\lent^{\mu}_{n^t}(T)=u<\infty$}
We start by constructing a rigid transformation with $\lent^{\mu}_{n^t}(T)=u$ for some $0<u<\infty$.

As before, we choose a sequence $(\epsilon_n)_{n\in \N}$ of quickly decreasing positive numbers satisfying
\begin{equation} \label{eq:eps2}
\prod^{\infty}_{n=1}(1-\epsilon_n)>\frac{1}{2}
\end{equation}
and the integer sequences $(k_m)_{m\in\N}$, $(l_m)_{m\in\N}$, and $(s_m)_{m\in\N}$ are defined inductively. Assume that they have been defined up to $m=n-1$ which also determines the number $q_n=k_{n-1}l_{n-1}q^2_{n-1}$. Furthermore, we assume $s_{n-1} \geq 1000$ and $k_{n-1} \geq 4$.

In the next step, we set
\begin{equation} \label{eq:l2}
l_n = \lfloor \frac{6}{\epsilon_n}\cdot (k_{n-1})^{n-1} \rfloor.
\end{equation}
We apply Proposition \ref{prop:prob} with $\epsilon=\frac{1}{1000}$, $\gamma=\frac{1}{s^2_{n-1}}$, alphabet $\Sigma=\{0,1,\dots, s_{n-1} -1\}$, and subexponential sequence ${\mathbf b}=\{b_k\}_{k\in \N} = \{(kk_{n-1})^{nu}\}_{k\in \N}$. Let $K_0$ be the resulting threshold. Then we choose
\begin{equation}\label{eq:k2}
k_n \geq \max\left( K_0 ,\frac{4}{\epsilon_{n}}\right)
\end{equation}
as a multiple of $s_{n-1}$ and such that
\begin{equation}\label{eq:s2}
s_n =\lfloor (k_{n}k_{n-1})^{nu}\rfloor
\end{equation}
is a multiple of $s_{n-1}$. We also denote the corresponding value of $\tau_{{\mathbf b}}$ and $\beta(k_n)$ from Proposition \ref{prop:prob} by $\tau_n$ and $\beta_n$, respectively. Moreover, the collection of words $\Theta$ with cardinality $|\Theta | =s_n$ constructed in Proposition \ref{prop:prob} determines the combinatorics of the conjugation map $h_n$. This finishes the construction in the inductive step.

To estimate the lower measure-theoretic slow entropy via Proposition \ref{prop:generatingSequence}, we make the following observation with regard to the generating sequence $\{\eta_m=H_m\left(\xi_{k_mq_m,s_m}\right)\}_{m\in \N}$.
\begin{lemma}\label{lem:lower2}
	Let $\varepsilon =\frac{1}{1000}$ and $m\in \N$, then for $n>m$, we have
	\[
	S_{\eta_{m}}\left(T,N,\frac{\varepsilon}{2}\right)\geq\begin{cases}
	0.5s_{n-1} & \text{for }q_{n}\leq N<5l_{n}q_{n},\\
	0.5\lfloor (jk_{n-1})^{nu} \rfloor & \text{for }jl_{n}q_{n} \leq N <(j+1)l_nq_n,\text{ where }5\leq j<\beta_{n},\\
	0.5s_{n} & \text{for }\beta_{n}l_{n}q_{n}\leq N<k_{n}l_{n}q_{n}^{2}=q_{n+1}.
	\end{cases}
	\]
\end{lemma}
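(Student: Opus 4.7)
The proof adapts the strategy of Lemma \ref{lem:lowerlemma} and Lemma \ref{lem:lower1}, splitting according to which of the Hamming-separation properties (2)--(4) of Proposition \ref{prop:prob} dominates on the length scale $N$. As a first step, I would replace the $T$-orbit by the $T_n$-orbit on the safe set $\Xi_{n+1}$ of Remark \ref{rem:safety}; this costs at most $\delta$ in measure and at most $\sum_{i\ge n} (4/s_i + 4/l_i)$ in Hamming distance, which by \eqref{eq:eps2} together with the choices \eqref{eq:l2}, \eqref{eq:k2}, \eqref{eq:s2} is smaller than $\varepsilon/2$. It then suffices to bound the measure of an $\varepsilon$-Hamming ball in the symbolic $T_n$-names, which are of the form $\mathcal{C}_n(w)$ for $w$ in the collection $\Theta\subset\Sigma^{k_n}$, $|\Sigma|=s_{n-1}$, selected by Proposition \ref{prop:prob}. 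Since each macro-symbol of $w$ corresponds to $l_n q_n$ iterates of $T_n$, a window of length $N$ sees roughly $j := \lfloor N/(l_n q_n)\rfloor$ consecutive macro-symbols, and the three cases of the lemma correspond to the substantial, intermediate and short regimes for $j$.

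For Case 3 ($j\ge \beta_n$), property (3) of Proposition \ref{prop:prob} gives macro-Hamming distance $\ge \varepsilon$ between the corresponding substrings of any two distinct $w,w'\in\Theta$; combined with the inductive Hamming separation of the building blocks $u_s$ at level $n-1$, this yields separation well above $\varepsilon/2$ between the $T_n$-names of distinct tower bases at level $n$, so every Hamming ball is contained in a single tower base and $S\ge s_n/2$. For Case 2 ($5\le j<\beta_n$), property (4) applied with the subexponential sequence $b_L=(Lk_{n-1})^{nu}$ implies that any fixed $j$-substring of $w$ is $\varepsilon$-close to at most a $2/b_j$ proportion of $j$-substrings in any other $w'\in\Theta$; summing over $w'\in\Theta$ and converting to the orbit level (where each macro position expands to $l_n q_n$ starting positions), the measure of a single $\varepsilon$-Hamming ball is at most $2/b_j$ of the total, giving $S\ge b_j/2 = (jk_{n-1})^{nu}/2$. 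For Case 1 ($j\le 5$) the length scale is too short for properties (3) or (4), so I would argue via exact uniformity (property (1)): since $N\ge q_n$, a full copy of some $u_s$ fits in the window, and by Lemma \ref{lem:lowerlemma} at level $n-1$, distinct symbols $s\neq s'$ give $T_{n-1}$-names $u_s,u_{s'}$ that are $(1-o(1))$-Hamming apart. Because each of the $s_{n-1}$ symbol values occurs in proportion exactly $1/s_{n-1}$ in every $w\in\Theta$, a single $\varepsilon$-Hamming ball has measure at most $2/s_{n-1}$, yielding $S\ge s_{n-1}/2$.

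The main difficulty is Case 2, where property (4) --- rather than the more robust property (2) used in Lemma \ref{lem:lower1} --- gives only a proportion-based separation bound that must be transferred carefully to the orbit level, accounting for the spacer symbols in $\mathcal{C}_n$, edge effects at the ends of the $N$-window that may straddle two macro-symbols, and the self-sliding estimate \eqref{eq:selfSliding2} when $w=w'$ but different initial positions $t$ are aligned. Once this bookkeeping is carried out, the three cases combine with the safe-domain reduction from the first paragraph to yield the stated lower bound on $S_{\eta_m}(T,N,\varepsilon/2)$.
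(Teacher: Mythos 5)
Your decomposition into the safe-domain reduction plus a three-case analysis keyed to properties (2)--(4) of Proposition \ref{prop:prob} is exactly the paper's strategy, and your handling of Case 2 via property (4) (with the caveats you flag about spacers, edge effects, and self-sliding) is the substance of the proof. Two small points where your outline is incomplete relative to the paper's argument: in Case 3, property (3) only covers macro-lengths $L<\gamma_n k_n$ (i.e.\ $N < \gamma_n k_n l_n q_n$), so for $\gamma_n k_n l_n q_n \leq N \leq k_n l_n q_n$ you must fall back on property (2), and for $k_n l_n q_n < N < q_{n+1}$ the circular operator $\mathcal{C}_n$ simply repeats the string, which is a separate (easy) argument. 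Your Case 1 via exact uniformity (property (1)) plus the level-$(n-1)$ separation is a valid variant; the paper gets Case 1 essentially for free from the inductive Hamming-separation claim at the previous level, but these amount to the same thing.
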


\begin{proof}
We consider for any $0\leq i<k_nq_n$, $0\leq j<s_{n}$ the set
\[
A^{(n)}_{i,j}=\overline{\Delta}^{i,j}_{k_nq_n,s_n}\cap \Xi_{n+1}.
\]
We recall that for every $t\leq N \leq q_{n+1}$ and $P\in H_n\left(A^{(n)}_{i,j}\right)$ the images $T^t\left(P\right)$ and $T^t_n\left(P\right)$ lie in the same element $\Delta^{i',j'}_{k_nq_n,s_n}$ by Remark \ref{rem:safety}. As in the proof of Lemma \ref{lem:lower1} we show inductively that for every $n>m$ points $P_1 \in H_n\left(A^{(n)}_{i_1,j_1}\right)$ and $P_2\in H_n\left(A^{(n)}_{i_2,j_2}\right)$ with $j_1 \neq j_2$ are $\left(\prod^{n-1}_{i=m}\left(1-\frac{6}{s_{i}}-\frac{6}{l_{i}}\right),q_{n+1}\right)$-Hamming apart from each other for the map $T_n$ with respect to the partition $\eta_m$. Here, we recall that for the map $T_n$ the trajectories from different $H_n\left(A^{(n)}_{i,j}\right)$ are determined by the properties of Proposition \ref{prop:prob} describing the combinatorics of $h_n$. Hence, Proposition \ref{prop:prob} also describes the combinatorics for the different $A^{(n+1)}_{v, w}$ under $\{h_{n+1}\circ R^t_{\alpha_{n+2}}\}_{0\leq t \leq N}$ with respect to the alphabet $\{0,1,\dots,s_{n} -1\}$ labeling the $j$-coordinate in $A^{(n)}_{i,j}$. We also use that for at most $\frac{6N}{l_n}$ many $t\leq N \leq q_{n+2}$ we have that $h_{n+1}\circ R^t_{\alpha_{n+2}}(P_1)$ or $h_{n+1}\circ R^t_{\alpha_{n+2}}(P_2)$ do not lie in the safe domain of $h_n$.
\begin{itemize}
	\item By $s_n \geq 1000$ and Remark \ref{rem:tau} and $\epsilon=\frac{1}{1000}$, we get the condition
	\[
	\frac{(Lk_n)^{(n+1)u}}{(s_n)^{0.99L}}<1 \text{ for every } L\geq \tau_{n+1}
	\]
	for the integer $\tau_{n+1}$ in our application of Proposition \ref{prop:prob}. Since $k_n \geq 4$, we can choose $\tau_{n+1}=5$. Hence, part (4) of Proposition \ref{prop:prob} corresponds to $jl_{n+1}q_{n+1} \leq N <(j+1)l_{n+1}q_{n+1}$ with $5\leq j < \beta_{n+1}$ in our application, while part (3) corresponds to $\beta_{n+1}l_{n+1}q_{n+1}\leq N < \gamma_{n+1}k_{n+1}l_{n+1}q_{n+1}$.
	\item Part (2) of Proposition \ref{prop:prob} yields that for every $\gamma_{n+1}k_{n+1}l_{n+1}q_{n+1}\leq N \leq k_{n+1}l_{n+1}q_{n+1}$ the trajectories of distinct  $H_{n+1}\left(A^{(n+1)}_{v,w}\right)$ under $T_{n+1}$ have Hamming distance $\left(\left(1-\frac{4}{s_{n}}-\frac{6}{l_{n}}\right) \cdot \prod^{n-1}_{i=m}\left(1-\frac{6}{s_{i}}-\frac{6}{l_{i}}\right),N\right)$ from each other with respect to the partition $\eta_m$. Then for $k_{n+1}l_{n+1}q_{n+1}< N \leq k_{n+1}l_{n+1}q^2_{n+1}=q_{n+2}$, the different strings coming from Proposition \ref{prop:prob} are repeated as in the symbolic representation in (\ref{eq:circularOperator}). Since $\gamma_{n+1}=\frac{1}{s^2_n}$, we conclude that for every such $N$ the trajectories are $\left(\prod^{n}_{i=m}\left(1-\frac{6}{s_{i}}-\frac{6}{l_{i}}\right),N\right)$-Hamming apart from each other.
\end{itemize}
As recalled above, we have for every $i\leq N\leq q_{n+2}$ and $P\in H_{n+1}\left(A^{(n+1)}_{i_2,j_2}\right)$  that the images $T^i\left(P\right)$ and $T^i_{n+1}\left(P\right)$ lie in the same element $\Delta^{i',j'}_{k_{n+1}q_{n+1},s_{n+1}}$. Using equations (\ref{eq:l2}), (\ref{eq:k2}), and (\ref{eq:s2}) we also have
\begin{align*}
\prod^{n}_{i=m}\left(1-\frac{6}{s_{i}}-\frac{6}{l_{i}}\right) \geq\prod^{n}_{i=m}\left(1-\epsilon_i\right) >\frac{1}{2}
\end{align*}
by our assumption (\ref{eq:eps2}). Then we conclude the proof as in Lemma \ref{lem:lower1}.
\end{proof}	

Hereby, we can compute the lower measure-theoretic polynomial entropy of $T$.
\begin{lemma} \label{lem:rigidLowerExact}
	We have $\lent^{\mu}_{n^t}(T)=u$.
\end{lemma}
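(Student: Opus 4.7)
The plan is to show both inequalities $\lent^\mu_{n^t}(T) \geq u$ and $\lent^\mu_{n^t}(T) \leq u$ by working with the generating sequence $\{\eta_m\}_{m\in\mathbb{N}}$ and applying Proposition \ref{prop:generatingSequence}. For a fixed $m \in \mathbb{N}$ I would compute $\lent^\mu_{n^t}(T,\eta_m)$, and since the resulting bounds are uniform in $m$, the conclusion follows.

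For the inequality $\lent^\mu_{n^t}(T,\eta_m) \geq u$, fix any $t < u$. For large $N$, locate the unique $n=n(N)$ with $q_n \leq N < q_{n+1}$ and apply Lemma \ref{lem:lower2}, distinguishing its three ranges. The ratio $S_{\eta_m}(T, N, \varepsilon/2)/N^t$ is smallest at the right endpoint of each range. Substituting the explicit forms $s_n \asymp (k_n k_{n-1})^{nu}$, $l_n \asymp k_{n-1}^{n-1}$, and $q_{n+1} = k_n l_n q_n^2$, each lower bound contains a dominating factor of the form $k_{n-1}^{n(u-t) + O(1)}$ (with an extra factor $k_n^{n(u-t)}$ in the third range) whose positive exponent $n(u-t)$ will dwarf the polynomial loss coming from $q_n^{t}$ or $q_n^{2t}$ in the denominator, as long as the $k_i$ grow fast enough. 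Imposing such a fast-growth condition on $k_n$ during the inductive construction (permissible, since only lower bounds and divisibility constraints are placed on $k_n$) forces all three ratios to be bounded below by a positive constant uniformly in $N$, yielding the desired liminf estimate.

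For the inequality $\lent^\mu_{n^t}(T,\eta_m) \leq u$, fix any $t > u$. Lemma \ref{lem:upper1}, applicable here by Remark \ref{rem:IndepCombi}, provides the subsequence $L_n := \lfloor \tfrac{\varepsilon}{2} l_{n+1} q_{n+1} \rfloor$ along which $S_{\eta_m}(T, L_n, \varepsilon) \leq \tfrac{2}{\varepsilon^2} k_n q_n s_n$. The ratio $S_{\eta_m}(T,L_n,\varepsilon)/L_n^t$ then reduces, up to an $\varepsilon$-dependent constant, to
\[
k_n^{\,nu+1-(n+1)t}\, k_{n-1}^{\,nu-(n-1)t}\, q_n^{\,1-2t}.
\]
For $t > u$, both exponents $nu+1-(n+1)t = -n(t-u)+1-t$ and $nu-(n-1)t = -n(t-u)+t$ tend to $-\infty$. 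With the same fast-growth choice of $k_n$ as above, these negative exponents drive the ratio to $0$ along $L_n$, so $\liminf_L S_{\eta_m}(T,L,\varepsilon)/L^t = 0$, and hence $\lent^\mu_{n^t}(T,\eta_m) \leq u$.

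The main obstacle is choosing a single growth rate for $(k_n)$ that simultaneously validates both inequalities; concretely, this amounts to requiring, as an extra step in the induction of Section \ref{subsec:combin}, that $k_n$ grow faster than any fixed power of $q_n k_{n-1}$. Because the existing construction only bounds $k_n$ from below, this additional demand is consistent. Once it is in place, the bookkeeping in the three ranges of Lemma \ref{lem:lower2} and the matching Lemma \ref{lem:upper1} estimate is essentially routine arithmetic with the $s_n,l_n,q_n$ recursions, and passage to the limit $m\to\infty$ via Proposition \ref{prop:generatingSequence} finishes the proof of $\lent^\mu_{n^t}(T) = u$.
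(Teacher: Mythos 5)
Your proof follows the paper's route exactly: invoke Proposition~\ref{prop:generatingSequence} on the generating sequence $\{\eta_m\}$, use the three ranges of Lemma~\ref{lem:lower2} for the lower bound, and use Lemma~\ref{lem:upper1} (valid by Remark~\ref{rem:IndepCombi}) along $L\approx\tfrac{\varepsilon}{2}l_{n+1}q_{n+1}$ for the upper bound, with the same exponent bookkeeping. The one thing worth flagging is that you propose to \emph{impose} an extra fast-growth condition on $k_n$, whereas the construction already enforces the required growth without any modification: since $k_n$ must be a multiple of $s_{n-1}=\lfloor(k_{n-1}k_{n-2})^{(n-1)u}\rfloor$, one has $\log k_n\geq(n-1)u\log k_{n-1}$ up to lower-order terms, and from the recursion $\log q_n=\log k_{n-1}+\log l_{n-1}+2\log q_{n-1}$ together with $\log l_{n-1}\asymp(n-2)\log k_{n-2}$ one checks that $\log q_n=O(\log k_{n-1})$, so $k_n$ automatically dominates any fixed power of $q_nk_{n-1}$ once $n$ is large. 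Thus your arithmetic closes out exactly as written, but the auxiliary hypothesis on $(k_n)$ is superfluous --- and in fact adding it would technically change the transformation $T$ that the lemma refers to, so it is better to observe that the bound already holds for the $T$ constructed by \eqref{eq:l2}--\eqref{eq:s2}.
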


\begin{proof}
	On the one hand, we investigate
$\liminf_{N\to \infty}\frac{S_{\eta_m}\left(T,N,\varepsilon\right)}{N^t}$
for $\varepsilon \leq \frac{1}{2000}$ by examining the three different cases in the statement of Lemma \ref{lem:lower2}. In case of $q_n \leq N<5l_nq_n$ we see
$$ \frac{S_{\eta_m}\left(T,N,\varepsilon\right)}{N^t}\geq \frac{0.5s_{n-1}}{\left(5l_nq_n\right)^t} \geq  \frac{0.5\cdot (k_{n-1})^{(n-1)u}}{\left(\frac{30}{\epsilon_n} \cdot (k_{n-1})^{n} \cdot l_{n-1} \cdot q^2_{n-1}\right)^t},$$
whose limit is positive for all $t<u$. Similarly, for $jl_nq_n\leq N< (j+1)l_nq_n$ with $5\leq j < \beta_n$ we have
$$	\frac{S_{\eta_m}\left(T,N,\varepsilon\right)}{N^t}\geq \frac{0.25(jk_{n-1})^{nu}}{\left((j+1) l_nq_n\right)^t} \geq \frac{0.25 \cdot j^{u} \cdot (k_{n-1})^{nu}} { \left((j+1)\cdot \frac{6}{\epsilon_n} \cdot (k_{n-1})^{n}\cdot l_{n-1} \cdot q^2_{n-1}\right)^t},$$
whose limit is also positive for all $t<u$. Clearly, in the remaining case $\beta_n l_nq_n \leq N <q_{n+1}$ the limit is positive for all $t<u$ as well. Altogether, this yields $\lent^{\mu}_{n^t}(T)\geq u$.
	
	On the other hand, as observed in Remark \ref{rem:IndepCombi} we can apply Lemma \ref{lem:upper1} again, which yields
\begin{equation}
\begin{aligned}
\liminf_{L\to \infty} \frac{S_{\eta_m}\left(T,L,\varepsilon\right)}{L^t} \leq \lim_{n\to \infty} \frac{\frac{2}{\varepsilon^2}k_n q_n s_n}{\left(\frac{\varepsilon}{2} l_{n+1}q_{n+1}\right)^t}  \leq \lim_{n\to \infty} \frac{2 \cdot (k_{n})^{nu+1} \cdot (k_{n-1})^{nu}  \cdot q_n}{\varepsilon^{t+2} \cdot\left( (k_n)^{n+1} \cdot (k_{n-1})^{n-1} \cdot q^2_n\right)^t}.
\end{aligned}
\end{equation}	
This is zero for all $t> u$.
	
	Altogether, we obtain $\lent^{\mu}_{n^t}(T)=u$.
\end{proof}

This finishes the proof of Theorem \ref{theo:rigidLower} for $0<u<\infty$.

\subsubsection{Case 2: $\lent^{\mu}_{n^t}(T)=\infty$}
In order to cover the case $\lent^{\mu}_{n^t}(T)=\infty$, we prove the following stronger statement.

\begin{theorem} \label{theo:rigidLowerInf}
	For any scaling function $\{a_n(t)\}_{n\in\mathbb{N},t>0}$ satisfying $\lim_{n\to\infty}\frac{\log a_n(t)}{n}=0$, there exists an ergodic Lebesgue measure preserving rigid transformation $T$ with $\lent_{a_n(t)}^{\mu}(T)=\infty$.
\end{theorem}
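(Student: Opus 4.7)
The approach is to push the Case 1 construction further, using a diagonal choice of exponents $t_n \nearrow \infty$ so that the resulting covering numbers dominate $a_N(t)$ simultaneously for every fixed $t > 0$. I use the abstract AbC construction with parameter sequences $(k_n), (l_n), (s_n)$, chosen so that $\sum_n 1/l_n < \infty$ (yielding rigidity via Lemma~\ref{lem:rig}) and requirements \ref{item:R1}--\ref{item:R3} hold (yielding ergodicity). Take $l_n = n^2$, a rapidly decreasing $(\epsilon_n)$ with $\prod_n(1-\epsilon_n) > 1/2$, and $t_n \nearrow \infty$ as the diagonal sequence.

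At stage $n$, having chosen all parameters up through $n-1$, the number $q_n$ is determined. Define a subexponential sequence $b^{(n)} = \{b^{(n)}_k\}_{k\in \N}$ --- for instance, $b^{(n)}_k := \max\{s_{n-1}^{k^{\alpha_n}}, k+2\}$ for a suitable $\alpha_n \in (0,1)$ --- that is genuinely subexponential in $k$, keeps the threshold $\tau_n$ from Remark~\ref{rem:tau} bounded, and is large at $k = k_n$. Apply Proposition~\ref{prop:prob} with $\epsilon = \tfrac{1}{1000}$, $\gamma = \tfrac{1}{s_{n-1}^2}$, alphabet $\Sigma = \{0,\dots,s_{n-1}-1\}$, and this $b^{(n)}$; choose $k_n \geq K_0$ as a multiple of $s_{n-1}$ and set $s_n := b^{(n)}_{k_n}$ to be a multiple of $s_{n-1}$ (rounding causes only a bounded-factor loss). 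The collection of $s_n$ good words furnished by Proposition~\ref{prop:prob} determines the conjugation $h_n$. The parameters are chosen inductively so that (a) $s_n \geq a_{q_{n+1}}(t_n)$, (b) $b^{(n)}_j \geq a_{(j+1)l_nq_n}(t_n)$ for $j$ in the intermediate range, and (c) $s_{n-1} \geq a_{5l_nq_n}(t_{n-1})$.

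With these choices, an argument exactly paralleling Lemma~\ref{lem:lower2} --- using the Hamming-separation properties of Proposition~\ref{prop:prob} applied to our $b^{(n)}$ --- gives covering-number lower bounds on each subinterval of $[q_n, q_{n+1})$: $S_{\eta_m}(T,N,\epsilon) \geq s_{n-1}/2$ on $[q_n,5l_nq_n)$, $\geq b^{(n)}_j/2$ on $[jl_nq_n,(j+1)l_nq_n)$ for $\tau_n \leq j < \beta_n$, and $\geq s_n/2$ on $[\beta_nl_nq_n, q_{n+1})$. Fix any $t > 0$ and choose $n_0$ with $t_{n_0} > t$. For $n \geq n_0 + 1$, monotonicity of $a$ in $t$ gives $s_{n-1} \geq a_N(t)$ on the first subrange, $b^{(n)}_j \geq a_N(t)$ on the intermediate subranges, and $s_n \geq a_N(t)$ on the last subrange. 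Hence $\liminf_{N\to\infty} S_{\eta_m}(T,N,\epsilon)/a_N(t) > 0$, so $\lent^\mu_{a_n(t)}(T) \geq t$, and since $t > 0$ is arbitrary, $\lent^\mu_{a_n(t)}(T) = \infty$.

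The main obstacle is the interlocking of three subexponential growth conditions: $a$ is subexponential in its first argument for each fixed $t$; $b^{(n)}$ must be subexponential in $k$ for Proposition~\ref{prop:prob} to apply; and $\tau_n$ must remain bounded so the intermediate-range covering bound begins at a controlled index. These must be balanced against the super-polynomial growth of $q_n$ forced inductively by $q_{n+1} = k_nl_nq_n^2$. The diagonal must increase $t_n$ slowly enough that conditions (a)--(c) can all be met, yet with $t_n \to \infty$; that such a choice exists relies on the freedom to choose $k_n$ very large relative to $s_{n-1}$, which makes $s_n = b^{(n)}_{k_n}$ as large as needed while preserving the subexponentiality of $b^{(n)}$ in $k$.
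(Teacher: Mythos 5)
Your high-level scheme is the right one — a diagonal over exponents $t_n$, with covering bounds on each subinterval of $[q_n,q_{n+1})$ forced to dominate $a_N(t)$ — but the central technical obstacle you flag at the end is not actually resolved, and in fact the concrete $b^{(n)}$ you propose fails it.

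The tension is this. For $j$ in the intermediate range $[\tau_n,\beta_n)$ you need condition~(b), $b^{(n)}_j \geq a_{(j+1)l_nq_n}(t_n)$, \emph{and} the threshold condition from Remark~\ref{rem:tau}, $b^{(n)}_L < s_{n-1}^{(1-18\epsilon)L}$ for $L\geq\tau_n$. Take the subexponential test case $a_m(t)=\exp\big(tm/\log m\big)$. Then $\log a_{(j+1)l_nq_n}(t_n) \approx t_n (j+1) l_nq_n /\log(l_nq_n)$ grows essentially \emph{linearly} in $j$ over the whole intermediate range (up to $\beta_n\approx\sqrt{k_n}$). But for your $b^{(n)}_k = s_{n-1}^{k^{\alpha_n}}$ with $\alpha_n$ bounded away from $1$ (which is needed to keep $\tau_n$ bounded, since the threshold condition reads $L^{\alpha_n-1}<1-18\epsilon$), one has $\log b^{(n)}_j = j^{\alpha_n}\log s_{n-1}=o(j)$. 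Whatever the fixed value of $s_{n-1}$, the ratio $\log b^{(n)}_j/\log a_{(j+1)l_nq_n}(t_n)\to 0$ as $j\to\infty$, so condition~(b) fails for $j$ near $\beta_n$. Raising $\alpha_n$ to fix this pushes $\tau_n$ beyond any fixed bound, and then $s_{n-1}\geq a_{\tau_n l_nq_n}(t_{n-1})$ — the bound you use on the first subrange $[q_n,\tau_n l_nq_n)$ — is no longer forced by your condition~(c). The same linear-vs-sublinear mismatch also breaks condition~(a), since $\log s_n = k_n^{\alpha_n}\log s_{n-1}=o(k_n)$ while $\log a_{q_{n+1}}(t_n)=\log a_{k_nl_nq_n^2}(t_n)$ grows linearly in $k_n$. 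So your observation that you can make $s_n$ ``as large as needed by choosing $k_n$ very large'' does not hold for this $a$.

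The paper resolves exactly this tension with two devices that do not appear in your proposal. First, it invokes the two-sequence version of Proposition~\ref{prop:prob} (Remark~\ref{rem:smallerGrowth}): the cardinality $|\Theta|$ is controlled by a large sequence ${\mathbf b}=\{a_{(j+1)l_{n+1}q_n}(u_{n+1})\}$, while the threshold $\tau$ is controlled by the much smaller sequence ${\mathbf c}=\{a_{(j+1)\sqrt{k_{n-1}}\,l_nq_{n-1}}(u_n)\}$; the $\sqrt{k_{n-1}}$-scaled arguments in ${\mathbf c}$ are what make $\tau_{\mathbf c}=2$ achievable via~\eqref{eq:FastGrowth}, at the price that the intermediate covering bound is $c_j/2$ rather than $b_j/2$. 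Second, it imposes the inductive exponent condition $a_m(u_{n+1})\geq a_{m^2}(u_n)$, which converts the bound $c_j\geq a_{(j+1)\sqrt{k_{n-1}}l_nq_{n-1}}(u_n)$ into $c_j\geq a_{(j+1)^2 l_nq_n}(u_{n-1})$ — a bound against $a_N(t)$ for $t\leq u_{n-1}$. It is precisely this freedom to trade growth in the first argument for slack in the $t$-parameter that allows the construction to beat an \emph{arbitrary} subexponential scale; your conditions~(a)--(c) keep the same $t_n$ on both sides of each inequality and therefore cannot exploit it. Without these two devices your interlocking conditions cannot all be met for fast-growing subexponential $a$, so the proposal as written has a genuine gap.
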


\begin{proof}
	It suffices to prove the theorem for any scaling function $\{a_n(t)\}_{n\in\mathbb{N},t>0}$ with at least polynomial growth. In particular, for any given $t_1 >0$, there exists $t_2>t_1$ such that $a_n(t_2)>a_{n^2}(t_1)$ for all $n\in \N$. To build a rigid ergodic transformation $T$ with $\lent^{\mu}_{a_n(t)}(T)=\infty$, we follow the construction above but with an additional inductively defined increasing sequence $(u_n)_{n\in \N}$ of integers. This time, we define the sequence $(l_n)_{n\in \N}$ in advance as $l_n=n^2$. This choice guarantees $\sum^{\infty}_{n=1} \frac{1}{l_n} <\infty$ and, hence, rigidity of $T$ by Lemma \ref{lem:rig}. For the inductive choice of the other parameters we proceed as follows. We assume that $u_{n} \in \Z^+$ is large enough such that $a_{m}(u_{n}) \geq a_{m^2}(u_{n-1})$ for all $m\in \N$,
	\begin{equation*}
	s_{n-1} = \lfloor a_{k_{n-1}l_{n}q_{n-1}}(u_{n})\rfloor,
	\end{equation*}
	and that (recalling the notation from Remark \ref{rem:kForCLT})
	\begin{equation*}
		k_{n-1} \geq K_0\left( \{a_{(j+1)l_nq_{n-1}}(u_n)\}_{j\in \N},s_{n-2}, \frac{1}{1000}, \frac{1}{s^2_{n-2}}\right)
	\end{equation*}
    is sufficiently large such that for all $j\geq \sqrt{k_{n-1}}$, we have
    \begin{equation*}
    	\left(a_{k_{n-1}l_nq_{n-1}}(u_{n}) \right)^{0.99j} > a_{(j+1)\sqrt{k_{n-1}}l_n q_{n-1}}(u_{n}).
    \end{equation*}
	Note that this implies
	\begin{equation} \label{eq:FastGrowth}
		\left(a_{k_{n-1}l_nq_{n-1}}(u_{n}) \right)^{0.99j} > a_{(j+1)\sqrt{k_{n-1}}l_n q_{n-1}}(u_{n})
	\end{equation}
    for all $j\geq 2$.
	
	We start the inductive step by choosing an $u_{n+1}>u_{n}$ such that $a_m(u_{n+1}) \geq a_{m^2}(u_n)$ for all $m\in \N$. To determine the combinatorics of $h_n$ we apply Proposition \ref{prop:prob} and the observation in Remark \ref{rem:smallerGrowth} with alphabet $\Sigma=\{0, 1, \dots, s_{n-1} -1\}$, $\varepsilon=\frac{1}{1000}$, $\gamma=\frac{1}{s^2_{n-1}}$ and subexponential sequences $\{b_j\}_{j\in \N} = \{ a_{(j+1)l_{n+1}q_{n}}(u_{n+1})\}_{j\in \N}$ and $\{c_j\}_{j\in \N} = \{ a_{(j+1)\sqrt{k_{n-1}}l_{n}q_{n-1}}(u_{n})\}_{j\in \N}$. Let $L_0$ be the resulting threshold. Then we choose
	\begin{equation*}
	k_n \geq \max\left( L_0 ,\frac{4}{\epsilon_{n}}\right)
	\end{equation*}
	as a multiple of $s_{n-1}$, such that
	\begin{equation*}
		\left(a_{k_{n}l_{n+1}q_{n}}(u_{n+1}) \right)^{0.99j} > a_{(j+1) \sqrt{k_{n}}l_{n+1}q_{n}}(u_{n+1})
	\end{equation*}	
	for all $j\geq \sqrt{k_n}$, and such that
	\begin{equation*}
	s_n \coloneqq \lfloor a_{k_n l_{n+1}q_n}(u_{n+1})\rfloor
	\end{equation*}
	is a multiple of $s_{n-1}$. Hereby, the induction assumptions are satisfied.

	For this construction we investigate
	$\liminf_{N\to \infty}\frac{S_{\eta_m}\left(T,N,\varepsilon\right)}{a_N(t)}	$
	as in the proof of Lemma \ref{lem:rigidLowerExact}. We note that $a_{k_n l_{n+1}q_n}(u_{n+1}) \geq a_{k^2_n l^2_{n+1}q^2_n}(u_{n}) \geq a_{k_n l_{n+1}q_{n+1}}(u_{n})$ for all $n\in \N$. Furthermore, \eqref{eq:FastGrowth} implies that according to Remark \ref{rem:smallerGrowth} we can take $\tau_{{\mathbf c}}=2$. In case of $q_n \leq N<2l_nq_n$ we see
	$$ \frac{S_{\eta_m}\left(T,N,\varepsilon\right)}{a_N(t)}\geq \frac{0.5s_{n-1}}{a_{2l_nq_n}(t)} \geq  \frac{0.25a_{k_{n-1}l_{n}q_{n-1}}(u_{n})}{a_{2l_nq_n}(t)} \geq  \frac{0.25a_{k_{n-1}l_n q_{n}}(u_{n-1})}{a_{2l_nq_n}(t)}  ,$$
	whose limit is positive for all $t\leq u_{n-1}$. Similarly, for $jl_nq_n\leq N< (j+1) l_nq_n$, $2\leq j <\beta_n$, we have
	$$	\frac{S_{\eta_m}\left(T,N,\varepsilon\right)}{a_N(t)}\geq \frac{0.5a_{(j+1)\sqrt{k_{n-1}}l_nq_{n-1}}(u_n)}{a_{(j+1) l_nq_n}(t)}\geq \frac{0.5a_{(j+1)^2l_nq_{n}}(u_{n-1})}{a_{(j+1) l_nq_n}(t)},$$
	whose limit is also positive for all $t\leq u_{n-1}$. In the remaining case $\beta_n l_nq_n \leq N <q_{n+1}$ we have
	$$\frac{S_{\eta_m}\left(T,N,\varepsilon\right)}{a_N(t)}\geq \frac{0.5s_{n}}{a_{q_{n+1}}(t)} \geq  \frac{0.25a_{k_n l_{n+1}q_n}(u_{n+1})}{a_{k_{n}l_nq^2_n}(t)} >  \frac{0.25a_{k_n l_{n+1}q^2_n}(u_{n})}{a_{k_{n}l_nq^2_n}(t)} $$
	and this limit is positive for all $t\leq u_{n}$. Since $u_n \to \infty$ as $n\to \infty$, this yields $\lent^{\mu}_{a_n(t)}(T)=\infty$.
\end{proof}

\section{Slow entropy for good cyclic approximations} \label{sec:goodCyclic}
The previous section already provides examples of measure preserving transformations admitting a periodic approximation with positive or even infinite lower measure-theoretic polynomial entropy. In this section we examine one particular class of periodic approximations, the so-called good cyclic approximations. We recall their definition from Definition \ref{def:cyclicApp}.

On the one hand, we know that the lower measure-theoretic polynomial entropy for a transformation with good cyclic approximation is at most one following from estimates in Proposition $5$ in \cite{Fe} and Proposition 1.3 in \cite{Kanigowski}:
\begin{proposition}\label{prop:upperBoundLowerCyclic}
If $T:(X,\mu)\to (X,\mu)$ admits a good cyclic approximation, then the lower measure-theoretic slow entropy with respect to the polynomial scale $a_n(t)=n^t$ is at most $1$.
\end{proposition}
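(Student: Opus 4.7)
The plan is to show that for any finite measurable partition $\mathcal{P}$ of $X$ and any $\epsilon>0$, we have $S_\mathcal{P}(T,q_n,\epsilon)\leq q_n$ for all sufficiently large $n$, where $q_n$ is the height of the single tower in the cyclic process at step $n$. Once this is established, passing to the subsequence $N=q_n$ yields
\[
\liminf_{N\to\infty}\frac{S_\mathcal{P}(T,N,\epsilon)}{N^t}\leq\lim_{n\to\infty}\frac{q_n}{q_n^t}=0
\]
for every $t>1$, so $A(\epsilon,\mathcal{P})\leq 1$ and hence $\lent^{\mu}_{n^t}(T,\mathcal{P})\leq 1$. Since this holds for every finite partition $\mathcal{P}$, taking the supremum gives $\lent^{\mu}_{n^t}(T)\leq 1$.

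Fix $\mathcal{P}$ and $\epsilon$. For each $n$, I would first approximate $\mathcal{P}$ by a $\xi_n$-measurable partition $\mathcal{P}'_n$ with $|\mathcal{P}-\mathcal{P}'_n|<\epsilon^2/10$, possible for $n$ large since $\eta_n\leq\xi_n$ and exhaustiveness of $(\eta_n)$ makes $\xi_n$ generating. Since $\sigma_n$ permutes the $q_n$ levels $c_1,\dots,c_{q_n}$ cyclically and $\mathcal{P}'_n$ assigns to each level $c_k$ a fixed label $L_k$, the $\mathcal{P}'_n$-$\sigma_n$-name of length $q_n$ of any point $x\in c_k$ is the cyclic shift $(L_k,L_{k+1},\dots,L_{k+q_n-1})$ (indices mod $q_n$). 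In particular at most $q_n$ distinct such names occur.

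Next I would bound the two error sources on which the $\mathcal{P}$-$T$-name may fail to match one of these $q_n$ shifts. Setting $E_n:=\bigcup_{i=1}^{q_n}(c_i\cap T^{-1}(X\setminus c_{i+1}))$ with $c_{q_n+1}:=c_1$, one has $\mu(E_n)\leq g(n)$, and so $\mu(B_n)\leq q_n\cdot g(n)\to 0$ for $B_n:=\bigcup_{i=0}^{q_n-1}T^{-i}(E_n)$; here the speed assumption $g(n)=o(1/q_n)$ intrinsic to a \emph{good} cyclic approximation is essential. For $x\notin B_n$, the $T$-orbit of $x$ visits the levels of $\xi_n$ in the same cyclic order as the $\sigma_n$-orbit, so their $\mathcal{P}'_n$-names coincide. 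To pass from $\mathcal{P}'_n$ back to $\mathcal{P}$, I would exploit $T$-invariance together with Markov's inequality: for each $0\leq i<q_n$, $\mu(\{x:\mathcal{P}(T^i x)\neq\mathcal{P}'_n(T^i x)\})\leq|\mathcal{P}-\mathcal{P}'_n|<\epsilon^2/10$, so the set $C_n$ of $x$ whose proportion of disagreements over $i=0,\dots,q_n-1$ exceeds $\epsilon$ satisfies $\mu(C_n)\leq\epsilon/10$.

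Combining, for $x$ outside $B_n\cup C_n$, a set of measure at least $1-\epsilon$ once $n$ is large, the $\mathcal{P}$-$T$-name of length $q_n$ is $\epsilon$-Hamming close to one of at most $q_n$ cyclic shifts, giving $S_\mathcal{P}(T,q_n,\epsilon)\leq q_n$ as required. The only delicate point is the combination of the two error sets $B_n$ and $C_n$, which is a routine application of Markov once the parameters are fixed; the decisive ingredient is the speed $o(1/q_n)$ characterizing \emph{goodness}, since it is precisely what forces $q_n\cdot g(n)\to 0$ over the relevant time window of length $q_n$ and thus makes the orbit-level discrepancy negligible.
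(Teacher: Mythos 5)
Your proof is essentially correct and is a genuinely different route from the paper's. The paper does not give a self-contained argument for this proposition at all: it observes that a transformation with a good cyclic approximation is rank one and then cites Ferenczi (Proposition 5) and Kanigowski (Proposition 1.3), both of which bound the lower polynomial slow entropy of rank-one systems by $1$. Your argument instead works directly from the cyclic-tower structure, which has the virtue of being self-contained and of exposing exactly where the speed $o(1/q_n)$ enters (namely, to force $q_n\,\mu(E_n)\to 0$, so that over a time window of length $q_n$ the $T$-orbit and the $\sigma_n$-orbit agree on a full-measure set in the limit). This is also slightly more informative than the citation: the Ferenczi/Kanigowski statements are for general rank-one systems, whereas your proof isolates the cyclic case.

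One small technical point to tidy up. The $\epsilon$-Hamming balls in the paper's definition are centered at \emph{points} $x\in X$, i.e.\ at realized $\mathcal{P}$-names, whereas the $q_n$ cyclic shifts $(L_k,\dots,L_{k+q_n-1})$ are $\mathcal{P}'_n$-names and need not coincide with the $\mathcal{P}$-name of any point. The clean fix is the standard triangle inequality: for each level $c_k$ meeting $X\setminus(B_n\cup C_n)$, choose one representative $x_k$ there; then any $y\in c_k\setminus(B_n\cup C_n)$ has its $\mathcal{P}$-name within Hamming distance $\epsilon$ of the same cyclic shift as $x_k$, hence within $2\epsilon$ of $w(x_k)$, so the $2\epsilon$-Hamming balls around the (at most $q_n$) points $x_k$ cover $X\setminus(B_n\cup C_n)$. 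This gives $S_{\mathcal{P}}(T,q_n,2\epsilon)\leq q_n$, which is just as good since the bound $q_n$ is uniform in $\epsilon$ and the limit in the definition of $\lent^{\mu}_{n^t}(T,\mathcal{P})$ is taken as $\epsilon\to 0$. Apart from this relabeling of $\epsilon$, the argument is complete.
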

In fact, we show that the above estimate is optimal by Theorem \ref{theo:cyclicLower}:  every possible value in $[0,1]$ is attained by a transformation admitting a good cyclic approximation.

On the other hand, we are able to construct transformations admitting a good cyclic approximation with positive or even infinite upper measure-theoretic polynomial entropy in Theorem \ref{theo:cyclicUpper}, which exhibits different features of upper and lower measure-theoretic slow entropy.


As for the proof Theorem \ref{theo:cyclicUpper} and Theorem \ref{theo:cyclicLower}, both constructions are based on a \emph{twisted} abstract AbC method. For this purpose, we introduce further notation: For $0\leq i<q$, $0\leq j <k$ we define the ``stripes''
\begin{equation*}
\Gamma^{i,j}_{k,q} \coloneqq \Bigg[\frac{i}{q}+\frac{j}{kq}, \frac{i}{q}+\frac{j+1}{kq} \Bigg) \times \left[0,1 \right].
\end{equation*}
Hereby, we introduce the partition
\[
\zeta_{k,q} \coloneqq \Meng{\Gamma^{i,j}_{k,q}}{0\leq i<q, \ 0\leq j<k}.
\]
Under the assumption that $k$ is a multiple of $s$ we also use the following rectangles for $0\leq i<q$, $0\leq j< \frac{k}{s}$, $0\leq t <s$:
\[
R^{i,j,t}_{q,k,s} \coloneqq \Bigg[\frac{i}{q}+\frac{j\cdot s}{kq}, \frac{i}{q}+\frac{(j+1)\cdot s}{kq}\Bigg) \times \Bigg[\frac{t}{s}, \frac{t+1}{s}\Bigg) =\bigcup_{0\leq u<s} \Delta^{ik+js+u,t}_{kq,s},
\]
where $\Delta_{q,s}^{i,j}$ is defined in Section \ref{subsec:combin}. Finally notice that $\mu(\Gamma^{i,j}_{k,q})=\mu(R^{i,j,t}_{q,k,s})$.

\subsection{Proof of Theorem \ref{theo:cyclicUpper}}
For a start, we let $0<u<\infty$. We choose a sequence $(\epsilon_n)_{n\in \N}$ of quickly decreasing positive numbers satisfying
\begin{equation} \label{eq:eps3}
\prod^{\infty}_{n=1}(1-\epsilon_n)>\frac{1}{2}
\end{equation}
and this time we fix $(s_n)_{n\in\N}$ as a strictly increasing sequence of positive integers from the beginning, where $s_n$ is a multiple of $s_{n-1}$. This will suffice to guarantee that $\{\eta_m=H_m\left(\xi_{k_mq_m,s_m}\right)\}_{m\in \N}$ is a generating sequence. Besides that, we follow our inductive scheme again. Assume that the parameter sequences $(k_m)_{m\in\N}$ and $(l_m)_{m\in\N}$ have been defined up to $m=n-1$, which also determines the number $q_n=k_{n-1}l_{n-1}q^2_{n-1}$. Moreover, we assume that $k_{n-1}$ is of the form $k_{n-1}=r_{n-1} \cdot \lfloor (r_{n-1})^{q_{n-1} u} \rfloor$ with some $r_{n-1}\in \Z^+$.
Then we apply Proposition \ref{prop:prob} with subexponential sequence $\{b_j\}_{j\in \N} = \{j^{uq_n}\}_{j\in \N}$,
\[
\varepsilon = \frac{1}{1000}, \ \gamma= \frac{r^2_{n-1}}{k^2_{n-1}},  \ \text{ and alphabet } \Sigma =\left\{0,1,\dots , \frac{k_{n-1}}{r_{n-1}}-1\right\}.
\]
Accordingly, we can choose a number $r_n \in \N$ as a multiple of $k_{n-1}$ as well as $s_n$,  sufficiently large such that $r_n \geq \frac{12}{\epsilon_n}$ and there is a collection $\Theta \subset \Sigma^{r_n}$ with $|\Theta |= \lfloor (r_n)^{q_n u} \rfloor$ of \emph{good words} of length $r_n$ satisfying properties (1) and (2) in Proposition \ref{prop:prob}. In particular, property (2) yields for all $w,w' \in \Theta$ that
\begin{equation}\label{Hamming3}
d^H(w,w')\geq 1-\frac{2r_{n-1}}{k_{n-1}}.
\end{equation}
Then we concatenate all these words from $\Theta$ to obtain one word of length
\begin{equation} \label{eq:k3}
k_n \coloneqq r_n \cdot \lfloor (r_n)^{q_n u} \rfloor.
\end{equation}
This word $w=w_0 \dots w_{k_n-1}$ determines an assignment
\begin{equation}\label{eq:assignment1}
\psi_n:\{0,1,\dots ,k_n -1\} \to \left\{0,1,\dots ,\frac{k_{n-1}}{r_{n-1}} -1\right\}, \ \psi_n(j)=w_j.
\end{equation}
With it $h_n$ will be of the form
\begin{equation}
\begin{aligned}
& h_n\left(\Gamma^{i,j}_{k_n,q_n}\right) \\
= & R^{i+\psi_n(j)r_{n-1} l_{n-1}q_{n-1}, \ \lfloor \frac{j}{s_n} \rfloor ,\ j \mod s_n}_{q_n,k_n,s_n} \\
= &  \Bigg[\frac{\psi_n(j) \cdot r_{n-1}}{k_{n-1}q_{n-1}}+\frac{i}{q_n}+\frac{\lfloor \frac{j}{s_n} \rfloor \cdot s_n}{k_nq_n}, \frac{\psi_n(j) \cdot r_{n-1}}{k_{n-1}q_{n-1}}+\frac{i}{q_n}+\frac{\left(\lfloor \frac{j}{s_n} \rfloor+1\right)\cdot s_n}{k_nq_n}\Bigg)\\& \times \Bigg[\frac{j \mod s_n}{s_n}, \frac{(j \mod s_n) +1}{s_n}\Bigg),
\end{aligned}
\end{equation}
i.e. descriptively speaking long stripes are mapped to rectangles. This will guarantee that the sequence of partitions $\{H_n(\zeta_{k_n,q_n})\}_{n\in \N}$ is generating. We also note that the conjugation map $h_n$ of this form satisfies the required relation
\[
h_n \circ R_{\frac{1}{q_n}} = R_{\frac{1}{q_n}} \circ h_n.
\]

To finalize the construction we set
\begin{equation}\label{eq:l3}
l_n= (r_n)^{q_n}.
\end{equation}

The combinatorics allows us to make the following observation with regard to the generating sequence $\{\eta_m=H_m\left(\xi_{k_mq_m,s_m}\right)\}_{m\in \N}$. It will prove to be useful when estimating the upper measure-theoretic slow entropy via Proposition \ref{prop:generatingSequence}:
\begin{lemma}\label{lem:lower3}
	Let $0<\varepsilon<\frac{1}{2}$ and $m\in \N$. For $n>m$ we have
$$S_{\eta_m}\left(T,r_nl_nq_n,\varepsilon\right)\geq 0.5 \lfloor (r_n)^{q_n u}\rfloor .$$
\end{lemma}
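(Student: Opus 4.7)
The plan is to follow the strategy of Lemmas \ref{lem:lower1} and \ref{lem:lower2} and exhibit approximately $\lfloor (r_n)^{q_n u}\rfloor$ test regions whose $T$-orbits of length $r_n l_n q_n$ are pairwise $\varepsilon$-Hamming-separated with respect to $\eta_m$. For each $0\leq p<\lfloor (r_n)^{q_n u}\rfloor$, I would set
\[
B_p \coloneqq H_n\Bigg( \bigcup_{0\leq i<q_n} \big(\Gamma^{i,p r_n}_{k_n,q_n}\cap \Xi_{n+1}\big)\Bigg),
\]
after removing from each stripe a thin right margin in the spirit of the overline construction of Remark \ref{rem:safety}. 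This ensures that $T^t(P)$, $T_n^t(P)$ and $T_{n-1}^t(P)$ remain in the same element of $\zeta_{k_n,q_n}$ for all $0\leq t\leq r_n l_n q_n$ and all $P\in B_p$, up to a set of measure $O(1/l_n)$ that we absorb into the final error.

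Next I would analyze how $T_n$ acts on $B_p$. Since $\alpha_{n+1}=\alpha_n+1/(k_n l_n q_n^2)$ we have $p_{n+1}=p_n k_n l_n q_n+1$, and hence $l_n q_n \alpha_{n+1}\equiv 1/(k_n q_n)\pmod{1/q_n}$; iterating $T_n$ by exactly $l_n q_n$ therefore advances the second stripe index by one. Over the full time $r_n l_n q_n$ the orbit sweeps the stripes indexed by $p r_n,\ldots,(p+1) r_n-1$ in order, and within each $l_n q_n$-block the horizontal location at the $(n-1)$-scale is encoded by $\psi_n$ read along the sub-word $\omega^{(p)}\coloneqq (w_{p r_n},\ldots,w_{(p+1)r_n-1})$. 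For $p_1\neq p_2$, inequality \eqref{Hamming3} forces $\omega^{(p_1)}$ and $\omega^{(p_2)}$ to disagree on at least a $1-2r_{n-1}/k_{n-1}$ fraction of indices; at each such index the corresponding stripes of $h_n$ land in different atoms of $\xi_{k_{n-1}q_{n-1},s_{n-1}}$. An induction on $n-m$ in the spirit of Lemma \ref{lem:lower2} then propagates this separation down to $\eta_m$, yielding over the full length $r_n l_n q_n$ an effective Hamming distance at least
\[
\Big(1-\tfrac{2r_{n-1}}{k_{n-1}}\Big)\prod_{i=m}^{n-1}\Big(1-\tfrac{6}{s_i}-\tfrac{6}{l_i}\Big)>\tfrac{1}{2}>\varepsilon,
\]
using $r_n\geq 12/\epsilon_n$ together with \eqref{eq:eps3}. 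The safe-domain construction transfers this bound from $T_n$ to $T$ with negligible loss.

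Finally, any $\varepsilon$-Hamming ball meets $\bigcup_p B_p$ in at most one $B_p$, so its intersection with $\bigcup_p B_p$ has $\mu$-measure at most $1/\lfloor (r_n)^{q_n u}\rfloor$. Since $\mu(\bigcup_p B_p)>1-\varepsilon/2$ for large $n$, any $(\varepsilon,\eta_m,r_n l_n q_n)$-covering of $X$ requires at least $0.5\lfloor (r_n)^{q_n u}\rfloor$ balls. The main obstacle will be the third step above: rigorously carrying the separation provided by $\psi_n$ through the nested safe-domain losses while respecting the twisted combinatorics of $h_n$. Once this is formalized exactly in the pattern of Lemmas \ref{lem:lower1} and \ref{lem:lower2}, the remainder is routine bookkeeping.
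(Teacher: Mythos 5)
Your strategy — exhibiting $\lfloor (r_n)^{q_n u}\rfloor$ pairwise $\varepsilon$-Hamming-separated test sets and invoking the safe-domain construction plus an induction on $n-m$ — is exactly the route the paper takes (Claims~1--3 in the proof). But the choice of test sets $B_p$ is too thin and this breaks the concluding covering step.

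You set $B_p=H_n\bigl(\bigcup_{0\leq i<q_n}(\Gamma^{i,p r_n}_{k_n,q_n}\cap\Xi_{n+1})\bigr)$, i.e.\ you pick the single stripe with offset $v=0$ inside the $p$-th block of $r_n$ stripes. Each such stripe has measure $1/(k_nq_n)$, so $\mu(B_p)\approx q_n/(k_nq_n)=1/k_n$, and therefore $\mu\bigl(\bigcup_p B_p\bigr)\approx\lfloor (r_n)^{q_n u}\rfloor/k_n=1/r_n\to 0$. Consequently, the claim ``$\mu(\bigcup_p B_p)>1-\varepsilon/2$'' is false, and the per-ball bound you state (``at most $1/\lfloor (r_n)^{q_n u}\rfloor$'') is also inconsistent with your own construction: each $B_p$ has measure $1/k_n$, not $1/\lfloor (r_n)^{q_n u}\rfloor$. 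An $(\varepsilon,\eta_m,r_nl_nq_n)$-covering is only required to cover a $1-\varepsilon$ fraction of the space, so a family of test sets whose union has vanishing measure gives no lower bound on $S_{\eta_m}$.

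The fix is to include \emph{all} offsets, i.e.\ take $B_p=H_n\bigl(\bigcup_{0\leq i<q_n}\bigcup_{0\leq v<r_n}(\Gamma^{i,pr_n+v}_{k_n,q_n}\cap\Xi_{n+1})\bigr)$; this is what the paper does in Claim~3, where separation is proved for arbitrary offsets $v_1,v_2$. But once you allow $v_1\neq v_2$, the two trajectories read the codewords $\omega^{(p_1)},\omega^{(p_2)}$ with a relative shift, so the direct alignment estimate \eqref{Hamming3} (which you cite) no longer applies; you must invoke the \emph{sliding} separation built into property~(2) of Proposition~\ref{prop:prob} (the statements with $\sh^t$ for $t\geq 1$). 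This is precisely why Proposition~\ref{prop:prob}(2) was formulated with shifts, and it is the content you flagged as ``the main obstacle'' without resolving. Everything else — the use of $\Xi_{n+1}$, the arithmetic $l_nq_n\alpha_{n+1}\equiv 1/(k_nq_n)\ (\mathrm{mod}\ 1)$, the product bound $\prod_i(1-3r_i/k_i-6/l_i)>\varepsilon$ via \eqref{eq:eps3}, and the final pigeonhole — matches the paper once the test sets are corrected.
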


\begin{proof}
	For any $0\leq i<q_n$ and $0\leq j<k_{n}$ we define the set
	\begin{equation*}
	\overline{\Gamma}^{i,j}_{k_n,q_n} \coloneqq \Gamma^{i,j}_{k_n,q_n} \cap \Xi_{n+1}.
	\end{equation*}
	Then we note that for every $t\leq q_{n+1}$ and $P \in H_n\left(\overline{\Gamma}^{i,j}_{k_n,q_n}\right)$  the images $T^t\left(P\right)$ and $T^t_n\left(P\right)$ lie in the same element $R^{i',j'}_{k_nq_n,s_n}$ by Remark \ref{rem:safety}. The proof of our lemma follows from the following three claims:
\begin{claim}\label{claim:1}
Let $0\leq i_1,i_2<q_{m+1}$, $0\leq j_1,j_2 < \frac{k_{m+1}}{r_{m+1}}$, and $0\leq v_1,v_2<r_{m+1}$, then the points $P_1 \in H_{m+1}\left(\overline{\Gamma}^{i_1,j_1r_{m+1}+v_1}_{k_{m+1},q_{m+1}}\right)$ and $P_2 \in H_{m+1}\left(\overline{\Gamma}^{i_2,j_2r_{m+1}+v_2}_{k_{m+1},q_{m+1}}\right)$ with $j_1 \neq j_2$ are $\left(1-\frac{3r_m}{k_{m}},q_{m+2}\right)$-Hamming apart from each other with respect to the partition $\eta_m$ and the map $T$.
\end{claim}
	
\begin{proof}
Since $q_{m+2}= k_{m+1} l_{m+1} q^2_{m+1}$, the $\{h_{m+1}\circ R^t_{\alpha_{m+2}}\}_{t\leq q_{m+2}}$ trajectory of a point $ P \in \overline{\Gamma}^{i,jr_{m+1}+v}_{k_{m+1},q_{m+1}}$ corresponds to $q_{m+1}$ times repeating a string of length $k_{m+1}=r_{m+1} \cdot \lfloor (r_{m+1})^{q_{m+1} u} \rfloor$ in the alphabet $\Sigma=\{0,1,\dots ,\frac{k_{m}}{r_{m}} -1\}$ describing the assignment $\psi_{m+1}$ (see \eqref{eq:assignment1}).

We divide this string into the words of length $r_{m+1}$ obtained by Proposition \ref{prop:prob}. Under our assumption $j_1 \neq j_2$, the aligned words are different from each other and we can exploit property (2) in Proposition \ref{prop:prob} to see that with frequency at most $\frac{3r_{m}}{k_{m}}$ our stripes $\overline{\Gamma}^{i_1,j_1r_{m+1}+v_1}_{k_{m+1},q_{m+1}}$ and $\overline{\Gamma}^{i_2,j_2r_{m+1}+v_2}_{k_{m+1},q_{m+1}}$ are mapped to the same $\frac{r_{m}}{k_{m}q_{m}}$-domain $\Big[\frac{ir_{m}}{k_{m}q_{m}},\frac{(i+1) r_{m}}{k_{m}q_{m}}\Big)\times [0,1]$, $i\in \mathbb{N}$, under $h_{m+1}\circ R^t_{\alpha_{m+2}}$, $t\leq q_{m+2}$. Hence, $P_1 \in H_{m+1}\left(\overline{\Gamma}^{i_1,j_1r_{m+1}+v_1}_{k_{m+1},q_{m+1}}\right)$ and $P_2 \in H_{m+1}\left(\overline{\Gamma}^{i_2,j_2r_{m+1}+v_2}_{k_{m+1},q_{m+1}}\right)$ with $j_1 \neq j_2$ are $\left(1-\frac{3r_{m}}{k_{m}},q_{m+2}\right)$-Hamming apart from each other with respect to the partition $\eta_m=H_m\left(\xi_{k_{m}q_{m},s_{m}}\right)$ and the map $T_{m+1}$. By our observation above this also holds true for the map $T$.
\end{proof}

\begin{claim}\label{claim:2}
Let $n>m$, $0\leq i_1,i_2<q_n$, $0\leq j_1,j_2 < \frac{k_n}{r_n}$, and $0\leq v_1,v_2<r_n$. Then the points $P_1 \in H_n\left(\overline{\Gamma}^{i_1,j_1r_n+v_1}_{k_n,q_n}\right)$ and $P_2 \in H_n\left(\overline{\Gamma}^{i_2,j_2r_n+v_2}_{k_n,q_n}\right)$ with $j_1 \neq j_2$ are $\left(\prod^{n-1}_{i=m}\left(1-\frac{3r_i}{k_{i}}-\frac{6}{l_i}\right),q_{n+1}\right)$-Hamming apart from each other with respect to the partition $\eta_m$ and the map $T$.
\end{claim}
	
\begin{proof}
We show this by induction. The base clause $n=m+1$ follows from Claim \ref{claim:1}. In the inductive step, we suppose that for some $n>m$ points from $H_n\left(\overline{\Gamma}^{i_1,j_1r_n+v_1}_{k_n,q_n}\right)$ and $H_n\left(\overline{\Gamma}^{i_2,j_2r_n+v_2}_{k_n,q_n}\right)$, respectively, with $j_1 \neq j_2$  are $\left(\prod^{n-1}_{i=m}\left(1-\frac{3r_i}{k_{i}}-\frac{6}{l_i}\right),q_{n+1}\right)$-Hamming apart from each other with respect to the partition $\eta_m$. Then we consider $P^{\prime}_1 \in A_1 \coloneqq \overline{\Gamma}^{i^{\prime}_1,j^{\prime}_1r_{n+1}+v^{\prime}_1}_{k_{n+1},q_{n+1}}$ and $P^{\prime}_2 \in A_2 \coloneqq \overline{\Gamma}^{i^{\prime}_2,j^{\prime}_2r_{n+1}+v^{\prime}_2}_{k_{n+1},q_{n+1}}$ with $j^{\prime}_1 \neq j^{\prime}_2$.

Then for at most $\frac{6q_{n+2}}{l_n}$ many $t\leq q_{n+2}$, we have that $h_{n+1}\circ R^t_{\alpha_{n+2}}(P^{\prime}_1)$ or $h_{n+1}\circ R^t_{\alpha_{n+2}}(P^{\prime}_2)$ do not lie in the safe domain of $h_n$. On the remaining iterates we use the second property in Proposition \ref{prop:prob} again and follow the argument from Claim \ref{claim:1} to see that with frequency at most $\frac{3r_{n}}{k_{n}}$ these stripes are mapped into the same $\frac{r_n}{k_{n}q_{n}}$-domain under $h_{n+1}\circ R^t_{\alpha_{n+2}}$, $t\leq q_{n+2}$.  By the induction assumption this yields that for the map $T_{n+1}$, the points from $H_{n+1}\left(A_1\right)$ and $H_{n+1}\left(A_2 \right)$, respectively, are $\left(\prod^{n}_{i=m}\left(1-\frac{3r_i}{k_{i}}-\frac{6}{l_i}\right),q_{n+2}\right)$-Hamming apart from each other with respect to the partition $\eta_m$. By the initial observation this also holds true under the map $T$.
\end{proof}

\begin{claim}\label{claim:3}
Let $n>m$, $0\leq i_1,i_2<q_n$, $0\leq j_1,j_2 < \frac{k_n}{r_n}$, and $0\leq v_1,v_2<r_n$. Then the points $P_1 \in H_n\left(\overline{\Gamma}^{i_1,j_1r_n+v_1}_{k_n,q_n}\right)$ and $P_2 \in H_n\left(\overline{\Gamma}^{i_2,j_2r_n+v_2}_{k_n,q_n}\right)$  with $j_1 \neq j_2$ are  $$\left(\prod^{n-1}_{i=m}\left(1-\frac{3r_i}{k_{i}}-\frac{6}{l_i}\right),r_nl_nq_n\right)$$-Hamming apart from each other with respect to the partition $\eta_m$.
\end{claim}	
\begin{proof}
By the second property of Proposition \ref{prop:prob} describing the assignment $\psi_n$ and, hence, the combinatorics of $h_n$ we have that for  $\overline{\Gamma}^{i_1,j_1r_n+v_1}_{k_n,q_n}$ and $\overline{\Gamma}^{i_2,j_2r_n+v_2}_{k_n,q_n}$ with $j_1 \neq j_2$ a proportion of at most $\frac{3r_{n-1}}{k_{n-1}}$ of the iterates $h_{n}\circ R^t_{\alpha_{n+1}}$, $t\leq r_nl_nq_n$, is mapped into the same $\frac{r_{n-1}}{k_{n-1}q_{n-1}}$ domain. Using Claim \ref{claim:2} for $n-1$, this yields that for the map $T_n$ the points $P_1 \in H_n\left(\overline{\Gamma}^{i_1,j_1r_n+v_1}_{k_n,q_n}\right)$ and $P_2 \in H_n\left(\overline{\Gamma}^{i_2,j_2r_n+v_2}_{k_n,q_n}\right)$  with $j_1 \neq j_2$ are $\left(\prod^{n-1}_{i=m}\left(1-\frac{3r_i}{k_{i}}-\frac{6}{l_i}\right),r_nl_nq_n\right)$-Hamming apart from each other. By the initial observation this also holds true under the map $T$ since $r_nl_nq_n <q_{n+1}$.
\end{proof}
	
Additionally, we note that
\begin{equation}\label{eq:prodEpsilon}
\prod^{\infty}_{i=m}\left(1-\frac{3r_i}{k_{i}}-\frac{6}{l_i}\right)>\prod^{\infty}_{i=m}\left(1-\epsilon_i\right)>\varepsilon
\end{equation} by assumption (\ref{eq:eps3}). Then we complete the proof by the same method as in Lemma \ref{lem:lower1} using Claim \ref{claim:3} and estimate \eqref{eq:prodEpsilon}.
\end{proof}

Then we also check that our limit transformation $T=\lim_{n\to \infty} T_n$ admits a good cyclic approximation.
\begin{lemma}\label{lem:goodcyclic}
	Suppose $\sum_{n\in \N}\frac{1}{l_n} < \infty$ and that the conjugation map $h_n$ is of the form $h_n\left(\Gamma^{i,j}_{k_n,q_n}\right)
	= R^{i+\psi_n(j)r_{n-1} l_{n-1}q_{n-1}, \ \lfloor \frac{j}{s_n} \rfloor ,\ j \mod s_n}_{q_n,k_n,s_n}$. Then the limit transformation $T=\lim_{n\to \infty} T_n$ admits a good cyclic approximation.
\end{lemma}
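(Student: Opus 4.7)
The plan is to take as the cyclic process at step $n$ the single tower
\[
\xi_n = \bigl\{ H_n(\Delta^{i,0}_{q_{n+1},1}) : 0 \leq i < q_{n+1} \bigr\}
\]
with cyclic permutation $\sigma_n$ acting on the atoms as $T_n$ does. Since $R_{\alpha_{n+1}}$ with $\alpha_{n+1}=p_{n+1}/q_{n+1}$ and $\gcd(p_{n+1},q_{n+1})=1$ permutes the stripes $\Delta^{i,0}_{q_{n+1},1}$ as a single $q_{n+1}$-cycle, conjugating by $H_n$ shows that $T_n$ cyclically permutes $\xi_n$. Thus $\xi_n$ is a cyclic tower of height $Q_n := q_{n+1}$ under $\sigma_n=T_n\!\restriction_{\xi_n}$.

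For exhaustiveness I would argue that $\xi_n$ refines the partition $H_n(\zeta_{k_n,q_n})$. Since $q_{n+1}=k_nl_nq_n^2$ is a multiple of $k_nq_n$, each thin stripe $\Delta^{i,0}_{q_{n+1},1}$ of width $1/q_{n+1}$ is contained in a unique $\Gamma^{i',j'}_{k_n,q_n}$ of width $1/(k_nq_n)$, and applying the bijection $H_n$ preserves this containment. The construction already ensures that $\{H_n(\zeta_{k_n,q_n})\}_{n\in\N}$ is generating, hence so is $\{\xi_n\}_{n\in\N}$, giving convergence of the cyclic processes to the decomposition into points.

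The heart of the argument is the speed estimate. Following the telescoping argument from Lemma \ref{lem:rig} with $t=1$,
\[
\sum_{c\in\xi_n}\mu(Tc\triangle\sigma_nc)\;\leq\;\sum_{k>n}\sum_{c\in\xi_n}\mu(T_kc\triangle T_{k-1}c).
\]
Because $h_k\circ R_{\alpha_k}=R_{\alpha_k}\circ h_k$, we may write $T_{k-1}=H_kR_{\alpha_k}H_k^{-1}$, so each inner summand equals $\mu\bigl(H_k^{-1}c\triangle R_{\alpha_{k+1}-\alpha_k}(H_k^{-1}c)\bigr)$. The key observation, which I would verify inductively, is that $H_k^{-1}(c)=(h_{n+1}\cdots h_k)^{-1}(\Delta^{i,0}_{q_{n+1},1})$ is a union of atoms of $\xi_{k_kq_k,s_k}$: this propagates because each $h_j$ ($n<j\leq k$) permutes the atoms of $\xi_{k_jq_j,s_j}$, which refines $\xi_{k_{j-1}q_{j-1},s_{j-1}}$ by the divisibility $k_{j-1}q_{j-1}\mid k_jq_j$ and $s_{j-1}\mid s_j$. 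Summing the symmetric differences across the partition $\{H_k^{-1}c\}_{c\in\xi_n}$ of $X$ produces the standard rotation-boundary estimate
\[
\sum_{c\in\xi_n}\mu(T_kc\triangle T_{k-1}c)\;\leq\;2\,k_kq_k\cdot|\alpha_{k+1}-\alpha_k|\;=\;\frac{2}{l_kq_k}.
\]

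Finally, since $q_k\geq q_{n+1}$ for $k>n$, the total bound collapses to
\[
\sum_{c\in\xi_n}\mu(Tc\triangle\sigma_nc)\;\leq\;\frac{2}{q_{n+1}}\sum_{k>n}\frac{1}{l_k},
\]
and the hypothesis $\sum_n 1/l_n<\infty$ makes the tail $\sum_{k>n}1/l_k$ vanish as $n\to\infty$, yielding speed $o(1/Q_n)$ and so the required \emph{good} cyclic approximation. The only delicate step is the atom-refinement claim under the telescoped pullback; once it is in place, the rest reduces to the rotation-boundary bound already used for Lemma \ref{lem:rig}.
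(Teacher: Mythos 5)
Your proof is correct and follows essentially the same route as the paper: the cyclic process you take, $\{H_n(\Delta^{i,0}_{q_{n+1},1})\}_{i<q_{n+1}}$, is literally the partition $H_n(\zeta_{k_nl_nq_n,q_n})$ used in the paper (stripes of width $1/q_{n+1}$ and full height), and the telescoping sum $\mu(Tc\triangle T_nc)\leq\sum_{k>n}\mu(T_kc\triangle T_{k-1}c)$ combined with the conjugation identity $T_{k-1}=H_kR_{\alpha_k}H_k^{-1}$ and the atom-refinement observation is exactly the paper's computation. The one step you flag as "delicate"—that $H_k^{-1}c$ is a union of $\xi_{k_kq_k,s_k}$ atoms—is indeed what the paper uses implicitly when it passes from the sum over $c\in\zeta_{k_nl_nq_n,q_n}$ to the sum over $\tilde c\in\xi_{k_iq_i,s_i}$; your inductive justification of it is sound given the divisibilities $k_{j-1}q_{j-1}\mid k_jq_j$ and $s_{j-1}\mid s_j$ built into the construction.
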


\begin{proof}
	We recall that under the given form of the conjugation map $h_n$ the sequence of partitions $\{H_n(\zeta_{k_nl_nq_n,q_n})\}_{n\in \N}$ is generating. We compute the weak distance between $T$ and $T_n$ with respect to the partition $H_n(\zeta_{k_nl_nq_n,q_n})$:
	\begin{align*}
	q_{n+1} \cdot d\left(H_n(\zeta_{k_nl_nq_n,q_n}),T,T_n\right) & = q_{n+1} \cdot \sum_{c\in \zeta_{k_nl_nq_n,q_n}} \mu\left(T(H_n(c))\triangle T_n(H_n(c))\right) \\
	& \leq q_{n+1} \cdot \sum_{i=n+1}^{\infty}\sum_{\tilde{c}\in \xi_{k_iq_i,s_i}} \mu\left(R_{\alpha_{i+1}-\alpha_i}(\tilde{c})\triangle \tilde{c}\right) \\
	& \leq q_{n+1} \cdot \sum_{i=n+1}^{\infty}\frac{1}{l_{i}q_{i}} < \sum_{i=n+1}^{\infty}\frac{1}{l_{i}}
	\end{align*}
	Since this goes to $0$ as $n\to \infty$, this suffices to conclude the weak convergence of the cyclic approximation $(H_n(\zeta_{k_nl_nq_n,q_n}),T_n)$ to the limit transformation $T$. We also obtain that the speed of convergence is of the order $o(\frac{1}{q})$, i.e. we have a good cyclic approximation.
\end{proof}

\begin{remark}\label{rem:cyclicCrit}
	We note that proof and conclusion of Lemma \ref{lem:goodcyclic} do not depend on the exact combinatorics of the assignment $\psi_n:\{0,1,\dots ,k_n -1\} \to \left\{0,1,\dots ,\frac{k_{n-1}}{r_{n-1}} -1\right\}$.
\end{remark}

Now we are ready to compute the upper slow entropy

\begin{lemma}
	We have $\uent^{\mu}_{n^t}(T)=u$.
\end{lemma}	

\begin{proof}
	On the one hand, we see with the aid of Lemma \ref{lem:lower3} that
	\begin{equation*}
	\limsup_{N\to \infty} \frac{S_{\eta_m}\left(T,N,\varepsilon\right)}{N^t} \geq \limsup_{n\to \infty} \frac{S_{\eta_m}\left(T,r_nl_nq_n,\varepsilon\right)}{\left(r_nl_nq_n\right)^t}  \geq \lim_{n\to \infty} \frac{0.5\lfloor (r_n)^{q_n u}\rfloor}{\left((r_n)^{q_n +1}\cdot q_n\right)^t},
	\end{equation*}
	which is positive for all $t<u$. This yields $\uent^{\mu}_{n^t}(T)\geq u$.
	
	On the other hand, Lemma \ref{lem:upper1} still holds true since its proof was independent of the combinatorics of $h_n$ as observed in Remark \ref{rem:IndepCombi}. This implies that
\begin{equation}
\begin{aligned}
\limsup_{L\to \infty} \frac{S_{\eta_m}\left(T,L,\varepsilon\right)}{L^t} \leq \lim_{n\to \infty} \frac{\frac{2}{\varepsilon^2}k_n q_n s_n}{\left(\frac{\varepsilon}{2} l_{n}q_{n}\right)^t}  \leq \lim_{n\to \infty} \frac{2^{t+1}\cdot \lfloor (r_n)^{q_nu} \rfloor \cdot r_n \cdot q_n \cdot s_n}{\varepsilon^{t+2} \cdot (r_n)^{q_nt} \cdot q^{t}_n},
\end{aligned}
\end{equation}
which is zero for all $t> u$. Altogether, we obtain $\uent^{\mu}_{n^t}(T)=u$.
\end{proof}

The case of a transformation with good cyclic approximation and $\uent^{\mu}_{n^t}(T)=\infty$ is covered by the subsequent more general theorem.

\begin{theorem} \label{theo:CyclicInfinity}
	For any scaling function $\{a_n(t)\}_{n\in\mathbb{N},t>0}$ satisfying $\lim_{n\to+\infty}\frac{\log a_n(t)}{n}=0$, there exists a Lebesgue measure preserving  transformation $T$ with good cyclic approximation and $\uent_{a_n(t)}^{\mu}(T)=\infty$.
\end{theorem}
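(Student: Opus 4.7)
The plan is to adapt the construction from the proof of Theorem \ref{theo:cyclicUpper} in the same manner that Theorem \ref{theo:rigidLowerInf} generalized Theorem \ref{theo:rigidLower}. Instead of fixing a single target value $u$, we inductively introduce an increasing sequence $(u_n)_{n\in\N}$ of positive reals with $u_n\to\infty$ and arrange, at step $n$, that the cardinality of the collection of good words produced by Proposition \ref{prop:prob} is at least $\lfloor a_{r_n l_n q_n}(u_n)\rfloor$. To guarantee a good cyclic approximation via Lemma \ref{lem:goodcyclic} we fix $l_n = n^2$ in advance so that $\sum_{n} 1/l_n < \infty$, and as in the proof of Theorem \ref{theo:cyclicUpper} we fix in advance a strictly increasing sequence $(s_n)_{n\in\N}$ with $s_{n-1}\mid s_n$, which will ensure that $\{\eta_m\}_{m\in\N}$ is generating.

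At step $n$ of the induction we choose $u_n > u_{n-1}$ and apply Proposition \ref{prop:prob} with alphabet $\Sigma=\{0,\dots,k_{n-1}/r_{n-1}-1\}$, $\varepsilon=1/1000$, $\gamma=r_{n-1}^2/k_{n-1}^2$, and subexponential sequence $\{b_k\}_{k\in\N}=\{a_{k l_n q_n}(u_n)\}_{k\in\N}$. Since $\lim_{m\to\infty}\log a_m(u_n)/m=0$ and $l_n q_n$ is a fixed constant at this stage, we have $\lim_{k\to\infty}\log b_k/k=0$, so Proposition \ref{prop:prob} is applicable. We then pick $r_n$ as a multiple of $\mathrm{lcm}(k_{n-1},s_n)$, large enough to exceed the resulting threshold $K_0$ and to satisfy $3 r_n/k_n+6/l_n<\epsilon_n$ for a summable sequence $(\epsilon_n)_{n\in\N}$ with $\prod(1-\epsilon_n)>1/2$, and set
\[
k_n = r_n\cdot \lfloor a_{r_n l_n q_n}(u_n)\rfloor.
\]
Concatenating the words of $\Theta$ defines the assignment $\psi_n$ exactly as in (\ref{eq:assignment1}) and hence $h_n$ in the form required by Lemma \ref{lem:goodcyclic} and Remark \ref{rem:cyclicCrit}, so the limit $T$ admits a good cyclic approximation.

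To verify $\uent^{\mu}_{a_n(t)}(T)=\infty$ we prove the direct analog of Lemma \ref{lem:lower3}: for every $n>m$,
\[
S_{\eta_m}(T, r_n l_n q_n,\varepsilon)\geq \tfrac{1}{2}\lfloor a_{r_n l_n q_n}(u_n)\rfloor.
\]
The argument is \emph{verbatim} that of Lemma \ref{lem:lower3} via the three-claim inductive structure: property (2) of Proposition \ref{prop:prob} yields the Hamming separation of stripes with distinct $j$-class indices at the single time scale $r_n l_n q_n$, the safe-domain argument of Remark \ref{rem:safety} transfers this separation from $T_n$ to $T$, and estimate (\ref{eq:prodEpsilon}) survives by our choice of $\epsilon_n$. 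Consequently, for every fixed $t>0$,
\[
\limsup_{N\to\infty}\frac{S_{\eta_m}(T,N,\varepsilon)}{a_N(t)}\geq \limsup_{n\to\infty}\frac{\tfrac{1}{2}\lfloor a_{r_n l_n q_n}(u_n)\rfloor}{a_{r_n l_n q_n}(t)}=+\infty,
\]
since $u_n\to\infty$ and $a_N(\cdot)$ is monotone. Proposition \ref{prop:generatingSequence} then gives $\uent^{\mu}_{a_n(t)}(T)=\infty$.

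The main obstacle I anticipate is the book-keeping to keep the parameter choices mutually consistent across the induction, particularly verifying that $b_k=a_{k l_n q_n}(u_n)$ is subexponential in $k$ despite the presence of the large factor $l_n q_n$. This is exactly where the hypothesis $\lim_{m\to\infty}\log a_m(t)/m=0$ is used, together with the fact that $l_n q_n$ has been frozen before we invoke Proposition \ref{prop:prob}. Beyond that, no novelty is needed: since we only claim a lower bound at the single time scale $r_n l_n q_n$, no analog of parts (3)--(4) of Proposition \ref{prop:prob} is required and the parameters $\beta_n,\tau_n$ play no role, so the Hamming-separation machinery of Claims \ref{claim:1}--\ref{claim:3} carries over without modification.
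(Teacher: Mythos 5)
Your proposal matches the paper's actual proof of Theorem \ref{theo:CyclicInfinity} essentially step for step: fix $l_n=n^2$ and $(s_n)_{n\in\N}$ in advance, introduce an increasing integer sequence $(u_n)_{n\in\N}$, invoke Proposition \ref{prop:prob} with $\{b_j\}_{j\in\N}=\{a_{jl_nq_n}(u_n)\}_{j\in\N}$ and the same $(\Sigma,\varepsilon,\gamma)$, set $k_n=r_n\lfloor a_{r_nl_nq_n}(u_n)\rfloor$, and conclude via Lemma \ref{lem:lower3} and Lemma \ref{lem:goodcyclic}. The only cosmetic differences are the precise phrasing of the side condition on $r_n$ (the paper takes $r_n\geq 6/\epsilon_n$ and a multiple of both $k_{n-1}$ and $s_n$, which amounts to the same thing), so no further comment is needed.
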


\begin{proof}
	As before, we fix $(s_n)_{n\in\N}$ as a strictly increasing sequence of positive integers from the beginning, where $s_n$ is a multiple of $s_{n-1}$. We also set $l_n=n^2$ which yields that the limit transformation admits a good cyclic approximation by Lemma \ref{lem:goodcyclic}. Additionally, we fix a strictly increasing sequence $(u_n)_{n\in \N}$ of positive integers. Besides that, we follow our inductive scheme again.
	
	Assume that the parameter sequence $(k_m)_{m\in\N}$ has been defined up to $m=n-1$, which also determines the number $q_n=k_{n-1}l_{n-1}q^2_{n-1}$. Moreover, we assume that $k_{n-1}$ is of the form $k_{n-1}=r_{n-1} \cdot \lfloor a_{r_{n-1}l_{n-1}q_{n-1}}(u_{n-1}) \rfloor$ with some $r_{n-1}\in \Z^+$. Then we apply Proposition \ref{prop:prob} with subexponential sequence $\{b_j\}_{j\in \N}=\{a_{jl_{n}q_{n}}(u_n)\}_{j\in \N}$,
	\[
	\varepsilon = \frac{1}{1000}, \ \gamma= \frac{r^2_{n-1}}{k^2_{n-1}}, \ \text{ and alphabet } \Sigma =\left\{0,1,\dots , \frac{k_{n-1}}{r_{n-1}}-1\right\}.
	\]
	Accordingly, we can choose a number $r_n \in \N$ as a multiple of $k_{n-1}$ as well as $s_n$ and sufficiently large such that $r_n \geq \frac{6}{\epsilon_n}$ and there is a collection $\Theta \subset \Sigma^{r_n}$ with $|\Theta |= \lfloor a_{r_nl_{n}q_{n}}(u_n) \rfloor$ of \emph{good words} of length $r_n$ satisfying properties (1) and (2) in Proposition \ref{prop:prob}. Then we concatenate all these words from $\Theta$ to obtain one word of length
	\begin{equation}
	k_n \coloneqq r_n \cdot \lfloor a_{r_nl_{n}q_{n}}(u_n) \rfloor.
	\end{equation}
	This accomplishes the inductive step.
	
	Finally, we compute with the aid of Lemma \ref{lem:lower3} that
	\begin{equation*}
	\limsup_{N\to \infty} \frac{S_{\eta_m}\left(T,N,\varepsilon\right)}{a_N(t)} \geq \limsup_{n\to \infty} \frac{S_{\eta_m}\left(T,r_nl_nq_n,\varepsilon\right)}{a_{r_nl_nq_n}(t)}  \geq \lim_{n\to \infty} \frac{0.5 \lfloor a_{r_nl_{n}q_{n}}(u_n)\rfloor}{a_{r_nl_nq_n}(t)},
	\end{equation*}
	which is positive for all $t \leq u_n$. Since $u_n \to \infty$ as $n\to \infty$, this yields $\uent^{\mu}_{a_n(t)}(T) = \infty$.
\end{proof}

\subsection{Proof of Theorem \ref{theo:cyclicLower}}
We fix $0<u\leq 1$ and choose a sequence $(\epsilon_n)_{n\in \N}$ of quickly decreasing positive numbers satisfying (\ref{eq:eps3}). Moreover, we choose a strictly increasing sequence $(s_n)_{n\in\N}$ of positive integers, where $s_n$ is a multiple of $s_{n-1}$. The parameter sequences $(k_m)_{m\in\N}$ and $(l_m)_{m\in\N}$ are defined inductively. Assume that they have been defined up to $m=n-1$. At this juncture, we assume that $k_{n-1}$ is of the form $k_{n-1}= r^{n-1}_{n-2}r_{n-1}^{n}$ with some integers $r_{n-2}<r_{n-1}$ satisfying $r_{n-1} \geq K_0\left(\{(kr_{n-2})^{n-1}\}_{k \in \N},\frac{k_{n-2}}{r_{n-2}},\frac{1}{1000},\left(\frac{r_{n-2}}{k_{n-2}}\right)^2\right)$, where we recall its definition from Remark \ref{rem:kForCLT}.

In the inductive step, we set
\begin{equation} \label{eq:l4}
l_n = \lfloor (r_{n-1})^{n\cdot \frac{1-u}{u}} \cdot \frac{6}{\epsilon_n} \rfloor.
\end{equation}
We apply Proposition \ref{prop:prob} with subexponential sequences $\{b_k\}_{k\in \N} = \{(kr_{n-1})^{n}\}_{k \in \N}$,
\[
\varepsilon=\frac{1}{1000}, \ \gamma = \left(\frac{r_{n-1}}{k_{n-1}}\right)^2, \ \text{ and alphabet } \Sigma=\left\{0,1,\dots, \frac{k_{n-1}}{r_{n-1}} -1\right\}.
\]
Let $L_0$ be the resulting threshold. Then we choose
\begin{equation}\label{eq:r4}
r_n \geq \max\left( L_0 ,\frac{4}{\epsilon_{n}}\right)
\end{equation}
as a multiple of $k_{n-1}$ and $s_{n}$. Hereby, we obtain $(r_nr_{n-1})^n$ many \emph{good words} of length $r_n$ from Proposition \ref{prop:prob} and we set
\begin{equation} \label{eq:k4}
k_n \coloneqq  r^n_{n-1} \cdot (r_n)^{n+1},
\end{equation}
which is the length of the concatenation of all these words. We also denote the corresponding values of $\tau_{{\mathbf b}}$ and $\beta(r_n)$ from Proposition \ref{prop:prob} by $\tau_n$ and $\beta_n$, respectively. Moreover, the collection of words $\Theta$ constructed in Proposition \ref{prop:prob} determines the assignment
\begin{equation}\label{eq:assignment2}
\psi_n:\{0,1,\dots ,k_n -1\} \to \left\{0,1,\dots ,\frac{k_{n-1}}{r_{n-1}} -1\right\}
\end{equation}
and, hence, the combinatorics of the conjugation map $h_n$ of the form
\begin{equation*}
h_n\left(\Gamma^{i,j}_{k_n,q_n}\right) = R^{i+\psi_n(j)r_{n-1} l_{n-1}q_{n-1}, \ \lfloor \frac{j}{s_n} \rfloor ,\ j \mod s_n}_{q_n,k_n,s_n}.
\end{equation*}
This finishes the construction in the inductive step.

Taking this combinatorics into account we make the following observation with regard to the generating sequence $\{\eta_m=H_m\left(\xi_{k_mq_m,s_m}\right)\}_{m\in \N}$.

\begin{lemma}\label{lem:lower4}
	Let $\varepsilon =\frac{1}{1000}$ and $m\in \N$. For $n>m$ we have
$$	S_{\eta_{m}}\left(T,N,\frac{\varepsilon}{2}\right)\geq\begin{cases}
	0.5 \frac{k_{n-1}}{r_{n-1}} & \text{for }q_{n}\leq N<5l_{n}q_{n},\\
	0.5 \lfloor (jr_{n-1})^{n} \rfloor & \text{for }jl_{n}q_{n}\leq N<(j+1)l_nq_n,\text{ where } 5\leq j <\beta_{n},\\
	0.5 \frac{k_n}{r_n}  & \text{for }\beta_{n}l_{n}q_{n}\leq N<k_{n}l_{n}q_{n}^{2}=q_{n+1}.
	\end{cases}
$$
\end{lemma}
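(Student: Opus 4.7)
The proof should closely parallel that of Lemma \ref{lem:lower2}, adapted to the twisted AbC combinatorics governed by the assignment $\psi_n$ from \eqref{eq:assignment2}. I would begin by introducing the safe stripes $\bar{\Gamma}^{i,j}_{k_n,q_n} := \Gamma^{i,j}_{k_n,q_n} \cap \Xi_{n+1}$ so that Remark \ref{rem:safety} allows one to replace $T^t(P)$ by $T_n^t(P)$ for $t \leq q_{n+1}$ while losing only a set of measure at most $\delta$. Symbolically, iterating $T_n$ on $H_n(\bar{\Gamma}^{i,j}_{k_n,q_n})$ amounts to reading the $\psi_n$-word obtained by concatenating the $(r_n r_{n-1})^n$ good words supplied by Proposition \ref{prop:prob}.

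The engine of the proof is an inductive claim, analogous to Claims \ref{claim:1}--\ref{claim:3} in the proof of Theorem \ref{theo:cyclicUpper}: for every $n>m$ and every pair of stripes $\bar{\Gamma}^{i_1,j_1}_{k_n,q_n}$, $\bar{\Gamma}^{i_2,j_2}_{k_n,q_n}$ whose $\psi_n$-labels differ, the $T$-trajectories started in their $H_n$-images are $\left(\prod_{i=m}^{n-1}\bigl(1 - \tfrac{3r_i}{k_i} - \tfrac{6}{l_i}\bigr), q_{n+1}\right)$-Hamming apart with respect to $\eta_m$. The base case $n=m+1$ uses property (2) of Proposition \ref{prop:prob} applied directly to the combinatorics of $h_n$; in the inductive step, at most $\frac{6 q_{n+2}}{l_n}$ iterates fall outside the safe domain of $h_n$ (accounting for the $\frac{6}{l_n}$ loss), while on the remaining iterates property (2) at level $n+1$ separates distinct $\psi_{n+1}$-classes with precision $1 - \frac{3r_n}{k_n}$. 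The cumulative loss stays bounded below by $\prod_{i=m}^{\infty}\bigl(1 - \frac{3r_i}{k_i} - \frac{6}{l_i}\bigr) > \prod_{i=m}^{\infty}(1 - \epsilon_i) > \tfrac{1}{2}$ by \eqref{eq:eps3}, \eqref{eq:l4}, and \eqref{eq:r4}, which exceeds $\tfrac{\varepsilon}{2}$.

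The three cases of the lemma then correspond to the three regimes guaranteed by Proposition \ref{prop:prob}. In case (i), $q_n \leq N < 5 l_n q_n$, the trajectory has traversed too few copies of the $q_n$-period to exploit the new combinatorics at level $n$, so separation is inherited from the previous level, giving $\tfrac{k_{n-1}}{r_{n-1}}$ distinguishable classes. In case (ii), $jl_n q_n \leq N < (j+1)l_n q_n$ with $5 \leq j < \beta_n$, I would invoke property (4) of Proposition \ref{prop:prob}; here the admissibility of $\tau_n = 5$ must be verified via Remark \ref{rem:tau} with alphabet size $\tfrac{k_{n-1}}{r_{n-1}}$ and sequence $b_k = (kr_{n-1})^n$, which follows from \eqref{eq:r4} together with the growth of $k_{n-1}$ built into \eqref{eq:k4}. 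The $\tfrac{2}{b_L}$ bound on close substrings then yields $\tfrac{1}{2}\lfloor (jr_{n-1})^n \rfloor$ Hamming balls. In case (iii), $\beta_n l_n q_n \leq N < q_{n+1}$, property (3) fully separates all $\tfrac{k_n}{r_n} = (r_n r_{n-1})^n$ good words.

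The main technical obstacle I anticipate lies in bookkeeping across the three regimes: the shorter substring estimates in parts (3) and (4) of Proposition \ref{prop:prob} must remain valid after inductive propagation through nested tower heights, and the acceptable Hamming precision has to interact coherently with the at most $\frac{6N}{l_n}$ iterates that escape the safe domain at each level. Once the inductive claim is established, combining the resulting Hamming separation with $\mu(\Xi_{n+1}) > 1 - \delta$ as in Lemma \ref{lem:lower1} converts it into the desired lower bound on $S_{\eta_m}(T, N, \tfrac{\varepsilon}{2})$ for all three ranges of $N$.
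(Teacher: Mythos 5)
Your proposal matches the paper's own argument: the paper likewise introduces the safe stripes $\overline{\Gamma}^{i,j}_{k_n,q_n}=\Gamma^{i,j}_{k_n,q_n}\cap\Xi_{n+1}$, notes via Remark \ref{rem:safety} that $T^t$ and $T^t_n$ agree on these stripes for $t\leq q_{n+1}$, transfers Claims \ref{claim:1} and \ref{claim:2} from the proof of Lemma \ref{lem:lower3}, and then invokes the three-regime analysis of Lemma \ref{lem:lower2} (properties (2), (3), (4) of Proposition \ref{prop:prob}) to handle the three ranges of $N$. Your identification of the main bookkeeping difficulty and the verification that $\tau_n=5$ is admissible are both consistent with what the paper does.
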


\begin{proof}
	As in the proof of Lemma \ref{lem:lower3} we define the sets
	\begin{equation*}
	\overline{\Gamma}^{i,j}_{k_n,q_n} \coloneqq \Gamma^{i,j}_{k_n,q_n} \cap \Xi_{n+1}
	\end{equation*}
	and recall that for every $t\leq q_{n+1}$ and $P\in H_n\left(\overline{\Gamma}^{i,j}_{k_n,q_n}\right)$ the images $T^t\left(P\right)$ and $T^t_n\left(P\right)$ lie in the same element $R^{i',j'}_{k_nq_n,s_n}$ by Remark \ref{rem:safety}. By the same methods as in the proof of Lemma \ref{lem:lower3} we can show that Claim \ref{claim:1} and Claim \ref{claim:2} also hold true in our new setting. Then Proposition \ref{prop:prob} also determines the Hamming-distance of $\{h_n \circ R^t_{\alpha_{n+1}}\}_{t\leq q_{n+1}}$-trajectories of points $P_1 \in H_n\left(\overline{\Gamma}^{i_1,j_1r_n+v_1}_{k_n,q_n}\right)$ and $P_2 \in H_n\left(\overline{\Gamma}^{i_2,j_2r_n+v_2}_{k_n,q_n}\right)$  with $0\leq j_1,j_2 <\frac{k_n}{r_n}$, $j_1 \neq j_2$. An analysis like in the proof of Lemma \ref{lem:lower2} yields the statement.	
\end{proof}

\begin{proof}[Proof of Theorem \ref{theo:cyclicLower}]
	From Lemma \ref{lem:goodcyclic} and the accompanying Remark \ref{rem:cyclicCrit} we conclude that our transformation $T=\lim_{n\to \infty} T_n$ admits a good cyclic approximation.
	
	To get a lower bound on $\lent^{\mu}_{n^t}(T)$ we investigate
$\liminf_{N\to \infty} \frac{S_{\eta_m}\left(T,N,\varepsilon\right)}{N^t}$
	by examining the three different cases in the statement of Lemma \ref{lem:lower4}. In case of $q_n \leq N<5l_nq_n$ we see
	\begin{align*}
	\frac{S_{\eta_m}\left(T,N,\varepsilon\right)}{N^t} &\geq \frac{0.5\frac{k_{n-1}}{r_{n-1}}}{\left(5l_nq_n\right)^t} \\
	& \geq  \frac{0.5 (r_{n-2}r_{n-1})^{n-1}}{\left(\frac{30}{\epsilon_n} \cdot  (r_{n-1})^{n\cdot \frac{1-u}{u}}\cdot (r_{n-1})^{n} \cdot r^{n-1}_{n-2} \cdot l_{n-1}\cdot q^2_{n-1}\right)^t} \\
	& =  \frac{0.5\epsilon^t_n \cdot (r_{n-2}r_{n-1})^{n-1}}{\left(30 \cdot (r_{n-1})^{n\cdot \frac{1}{u}} \cdot r^{n-1}_{n-2} \cdot l_{n-1}\cdot q^2_{n-1}\right)^t},
	\end{align*}
	whose limit is positive for all $t<u$. Similarly, for $jl_{n}q_{n}\leq N<(j+1)l_nq_n$, where $5\leq j <\beta_{n}$ and $u\in[0,1]$, we have
	\[
	\frac{S_{\eta_m}\left(T,N,\varepsilon\right)}{N^t}\geq \frac{0.25 \cdot (jr_{n-1})^n}{\left((j+1)l_nq_n\right)^t} \geq \frac{0.25 \cdot j^{u} \cdot r^n_{n-1}} {\left((j+1)\cdot \frac{6}{\epsilon_n} \cdot (r_{n-1})^{n\cdot \frac{1}{u}}\cdot r^{n-1}_{n-2} \cdot l_{n-1} \cdot q^2_{n-1}\right)^t},
	\]
	whose limit is also positive for all $t<u$. In the remaining case $\beta_n l_nq_n \leq N <q_{n+1}$, we have
	\[	\frac{S_{\eta_m}\left(T,N,\varepsilon\right)}{N^t}\geq \frac{0.25 (r_nr_{n-1})^{n}}{\left(k_n l_nq^2_n\right)^t} \geq \frac{0.25(r_nr_{n-1})^{n}} {\left(r^n_{n-1} \cdot (r_n)^{n+1}\cdot l_n \cdot q^2_n\right)^t},
	\]
	which is positive for all $t< 1$. Altogether, this yields $\lent^{\mu}_{n^t}(T)\geq u$.
	
	On the other hand, Lemma \ref{lem:upper1} still holds true since its proof was independent of the combinatorics of $h_n$ as observed in Remark \ref{rem:IndepCombi}. This implies that
	\begin{align*}
	\liminf_{L\to \infty} \frac{S_{\eta_m}\left(T,L,\varepsilon\right)}{L^t} & \leq \lim_{n\to \infty} \frac{\frac{2}{\varepsilon^2}k_n q_n s_n}{\left(\frac{\varepsilon}{2} l_{n+1}q_{n+1}\right)^t}  \\
	& \leq \lim_{n\to \infty} \frac{2 \cdot \epsilon^t_{n+1} \cdot r^n_{n-1} \cdot (r_n)^{n+1} \cdot q_n \cdot s_n }{\varepsilon^{t+2} \cdot\left((r_n)^{(n+1)\cdot \frac{1-u}{u}} \cdot (r_n)^{n+1} \cdot r^n_{n-1} \cdot  l_{n} \cdot q^2_n\right)^t} \\
	& \leq \lim_{n\to \infty} \frac{2 \cdot \epsilon^t_{n+1}\cdot r^n_{n-1}  \cdot (r_n)^{n+1} \cdot q_n \cdot s_n }{\varepsilon^{t+2} \cdot\left( (r_n)^{(n+1)\cdot \frac{1}{u}} \cdot r^n_{n-1} \cdot l_{n} \cdot q^2_n\right)^t},
	\end{align*}
	which is zero for all $t> u$.
	
	Overall, we obtain $\lent^{\mu}_{n^t}(T)=u$.
\end{proof}

\section{Slow entropy of finite rank systems}\label{sec:finiterank}
In this section, we discuss the slow entropy of finite rank systems. We first provide an upper bound for the lower slow entropy of rank-two systems without any spacers. Then for any given subexponential scale we construct a rank-two system such that its lower slow entropy is infinity with respect to the given scale.
\subsection{Slow entropy of rank-two systems without spacers}

\begin{proposition}\label{prop:Nospacers}
	If $(X,T,\mathcal{B},\mu)$ is a rank-two system without spacers, then we have the following estimate of its lower slow entropy with respect to scale $a_n(t)=n^t$:
	$$\lent_{n^t}^{\mu}(T)\leq1.$$
\end{proposition}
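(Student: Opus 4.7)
The plan is to show that for any finite measurable partition $\mathcal{P}$ and any $\epsilon>0$ there is a subsequence $N_m \to \infty$ along which $S_{\mathcal{P}}(T, N_m, \epsilon) \leq C_\epsilon N_m$, which directly gives $\lent_{n^t}^{\mu}(T) \leq 1$ for every $t > 1$. By Proposition \ref{prop:generatingSequence} it suffices to carry this out for partitions $\mathcal{P}^{(m)}$ refined by the rank-two tower partition at approximation level $m$. Applying Definition \ref{def:finiteRank} with parameters $\epsilon_m \to 0$, I would extract subsets $B_1^{(m)}, B_2^{(m)}$ and heights $h_1^{(m)} \leq h_2^{(m)}$; Remark \ref{rem:towerLength} guarantees $h_i^{(m)} \to \infty$, and the absence of spacers yields the exact mass equation $h_1^{(m)} \mu(B_1^{(m)}) + h_2^{(m)} \mu(B_2^{(m)}) = 1$, which in particular forces $\mu(B_i^{(m)}) \leq 1/h_i^{(m)} \to 0$.

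The core counting step exploits safe codings. For $N \leq h_1^{(m)}$, any point $x \in T^j B_i^{(m)}$ with $0 \leq j \leq h_i^{(m)} - N$ has its $N$-coding equal to the length-$N$ substring of the tower word $W_i^{(m)}$ starting at position $j$, and there are at most $h_1^{(m)} + h_2^{(m)}$ such substrings in total. The complementary ``unsafe'' set (points within the top $N-1$ levels of either tower) has measure $(N-1)(\mu(B_1^{(m)}) + \mu(B_2^{(m)}))$, so choosing $N_m = \lfloor \epsilon/(\mu(B_1^{(m)}) + \mu(B_2^{(m)}))\rfloor$ gives $N_m \to \infty$ together with $S_{\mathcal{P}^{(m)}}(T, N_m, \epsilon) \leq h_1^{(m)} + h_2^{(m)}$. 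Rewriting via the mass equation,
\[
\frac{S_{\mathcal{P}^{(m)}}(T, N_m, \epsilon)}{N_m} \leq \frac{(h_1^{(m)} + h_2^{(m)})(\mu(B_1^{(m)}) + \mu(B_2^{(m)}))}{\epsilon} = \frac{1 + h_1^{(m)} \mu(B_2^{(m)}) + h_2^{(m)} \mu(B_1^{(m)})}{\epsilon}.
\]
When the height ratio $h_2^{(m)}/h_1^{(m)}$ stays bounded, this right-hand side is $O_\epsilon(1)$ and the conclusion follows; this already gives the general finite-rank statement with bounded height ratios alluded to in Remark \ref{rem:multitower}.

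The main obstacle is the unbalanced case $h_2^{(m)}/h_1^{(m)} \to \infty$, where the cross term $h_2^{(m)} \mu(B_1^{(m)})$ can blow up. I plan to handle this via a dichotomy on the mass of the shorter tower. If along some subsequence $h_1^{(m)} \mu(B_1^{(m)}) \to 0$, then tower 1 has vanishing measure and $T$ restricts to an essentially rank-one system on the long tower; the classical linear-complexity bound of Ferenczi and Kanigowski (Proposition 5 in \cite{Fe} and Proposition 1.3 in \cite{Kanigowski}) then applies and gives $\lent^{\mu}_{n^t}(T, \mathcal{P}^{(m)}) \leq 1$. If instead both tower masses $h_i^{(m)} \mu(B_i^{(m)})$ stay bounded below by some $\delta > 0$, I would use that without spacers each level-$(m+1)$ tower word is a concatenation of level-$m$ tower words with no added mass; the additive recursion $h_i^{(m+1)} = \sum_j h_{\sigma_i(j)}^{(m)}$ combined with mass conservation forces any unbounded growth of the ratio to be compensated in later refinements, producing a subsequence of levels along which the ratio is bounded. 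Running the counting step on this subsequence closes the argument and yields $\lent^{\mu}_{n^t}(T) \leq 1$ in all cases.
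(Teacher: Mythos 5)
Your opening steps are sound and agree with the paper in spirit: the safe-coding count bounding the number of length-$N$ strings by $h_1^{(m)}+h_2^{(m)}$, together with the mass identity $h_1^{(m)}\mu(B_1^{(m)})+h_2^{(m)}\mu(B_2^{(m)})=1$ available without spacers, closes the bounded-height-ratio case exactly as in the paper's Case~(ii) and Remark~\ref{rem:multitower}. The gap is in your treatment of the unbalanced case. Branch (b) of your dichotomy, where both $h_i^{(m)}\mu(B_i^{(m)})\geq\delta>0$ while $h_2^{(m)}/h_1^{(m)}\to\infty$, is a genuine possibility for a rank-two system without spacers, and the assertion that ``mass conservation forces any unbounded growth of the ratio to be compensated, producing a subsequence with bounded ratio'' is unjustified and, I believe, false: nothing in the additive recursion $h_i^{(m+1)}=\sum_j h_{\sigma_i(j)}^{(m)}$ prevents $h_2/h_1$ from increasing monotonically while the two tower masses stay, say, split $\tfrac12$--$\tfrac12$. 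Your own counting cannot reach this regime either: with $h_1\mu(B_1)\geq\delta$ your $N_m\leq h_1^{(m)}$ requirement is satisfied once $\epsilon<\delta$, but the cross term $h_2^{(m)}\mu(B_1^{(m)})=(h_2^{(m)}/h_1^{(m)})\cdot h_1^{(m)}\mu(B_1^{(m)})$ in your displayed bound then diverges and the estimate gives nothing.

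The paper closes this case by a different and sharper use of the no-spacer hypothesis. It takes the window $N=\epsilon^2 h_2^{(n)}$, which may far exceed $h_1^{(n)}$, and notes that the transition set $A=S_n^{(1)}\cap T^{-1}(S_n^{(2)})$ — short-tower points that enter the tall tower on the next step — satisfies $\mu(A)\leq\mu(B_n^{(2)})\leq 1/h_2^{(n)}$, so the bad set $B=\bigcup_{1\leq i< 2\epsilon^2 h_2^{(n)}}T^{-i}A$ has measure $<2\epsilon^2$. Because there are no spacers, a point of $S_n^{(1)}\setminus B$ must cycle around the short tower for the entire window, so two such points starting in the same level track the same $\mathcal{P}_n$-atom for all $\epsilon^2 h_2^{(n)}$ iterates, even though they wrap around the top of the short tower many times. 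That collapses the tower-$1$ contribution to $h_1^{(n)}$ Hamming balls without discarding any positive-measure piece of the short tower. Your notion of ``unsafe'' (top $N-1$ levels of each tower) implicitly uses only $\mu(A)\leq\mu(B_1^{(n)})\leq 1/h_1^{(n)}$ for the short tower; when $h_1\ll h_2$ that is a loss of a factor $h_2/h_1$, which is precisely the cross term that kills your estimate. Replacing ``unsafe in tower $1$ means reaching the top of tower $1$ within the window'' by ``unsafe in tower $1$ means entering the tall tower within the window'' is the missing idea and recovers the paper's argument.
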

\begin{proof}
	
	Let $h^{(1)}_n$, $h^{(2)}_n$, $B^{(1)}_n$, $B^{(2)}_n$ and $\mathcal{P}_n=\{T^jB^{(i)}_n:1\leq i\leq2, 0\leq j\leq h_n^{(i)}-1\}$ be defined as in Definition \ref{def:finiteRank} for a fixed partition $\mathcal{P}$ and $\epsilon>0$. We have the following two situations (by passing to subsequences if necessary):
	\begin{enumerate}[label=(\roman*)]
		\item\label{case1} $\lim_{n\to\infty}\frac{h^{(2)}_n}{h^{(1)}_n}=+\infty\text{ or }0$;
		\item\label{case2} $0<\lim_{n\to\infty}\frac{h^{(2)}_n}{h^{(1)}_n}<+\infty$.
	\end{enumerate}
	\paragraph{Case \ref{case1}:}
	We start with the situation that $\lim_{n\to\infty}\frac{h^{(2)}_n}{h^{(1)}_n}=+\infty$ and the other situation follows by switching the two towers. We assume that $\frac{h^{(1)}_n}{h^{(2)}_n}<\epsilon^3$ by enlarging $n$ if necessary. Denote $S^{(1)}_n=\cup_{i=0}^{h^{(1)}_n-1}T^iB^{(1)}_n$ and $S^{(2)}_n=\cup_{i=0}^{h^{(2)}_n-1}T^iB^{(2)}_n$. The following considerations will help us to estimate the number of Hamming balls with length $\epsilon^2h^{(2)}_n$ with respect to $\mathcal{P}_n$.
	
	Denote $A=S^{(1)}_n\cap T^{-1}(S^{(2)}_n)$, i.e. the points belonging to the shorter tower which will enter the tall tower at the next iteration. Then let $B=\cup_{i=1}^{2\epsilon^2h^{(2)}_n-1}T^{-i}A$. We obtain from the definition of $A$ that
$\mu(A)\leq\mu(B^{(2)}_n)\leq\frac{1}{h^{(2)}_n}$
and thus we know that $\mu(B)<2\epsilon^2$. Since $h^{(1)}_n<\epsilon^3h^{(2)}_n$, there exists $m\in\mathbb{Z}$ such that $mh^{(1)}_n\leq2\epsilon^2h^{(2)}_n<(m+1)h^{(1)}_n$ and $mh^{(1)}_n>\epsilon^2h^{(2)}_n$. Since there are no spacers, we get for $x,y\in S^{(1)}_n\setminus B$ that if they are in the same level set,  then
\begin{equation}\label{eq:keyObser}
\begin{aligned}
&\text{$T^ix$ and $T^iy$ will lie in the same atom with respect to partition $\mathcal{P}_n$}\\
&\text{for the next $\epsilon^2h^{(2)}_n$ iterates.}
\end{aligned}
\end{equation}
Let $C$ be the top level of $S^{(2)}_n$ and $D=\cup_{i=0}^{2\epsilon^2h^{(2)}_n-1}T^{-i}C$. For any $x,y\in S^{(2)}_n\setminus D$ we have, that if they are in the same level set, then they lie in the atom with respect to partition $\mathcal{P}_n$ for the next $\epsilon^2h^{(2)}_n$ iterates. Since the total measure of these points is larger than $\mu(S^{(1)}_n\setminus B)+\mu(S^{(2)}_n\setminus D)$, i.e. $1-4\epsilon^2$, we obtain that we need at most $h^{(1)}_n+h^{(2)}_n$ different $\epsilon^2-$Hamming balls with length $\epsilon^2h^{(2)}_n$ to cover an $1-4\epsilon^2$ portion of the space with respect to partition $\mathcal{P}_n$. Since $|\mathcal{P}_n-\mathcal{P}|<\epsilon$, we complete the proof in this case.
	\paragraph{Case \ref{case2}:}\label{para:case2}
	In this situation, by passing to a subsequence, we can assume that $h^{(1)}_n$ and $h^{(2)}_n$'s ratio is approximating to a constant. Then either by considering $\epsilon$-Hamming balls with length $\epsilon^2h^{(1)}_n$ or $\epsilon^2h^{(2)}_n$ with respect to $\mathcal{P}_n$, we need at most $h^{(1)}_n+h^{(2)}_n$ different such Hamming balls with length $\epsilon^2h^{(1)}_n$ or $\epsilon^2h^{(2)}_n$ to cover an $1-4\epsilon^2$ portion of the space. This is a similar argument as dealing with the taller tower in case \ref{case1}. As a result, we obtain linear bounds for the number of Hamming balls with lengths $\epsilon^2h^{(2)}_n$ or $\epsilon^2h^{(1)}_n$. Since $|\mathcal{P}_n-\mathcal{P}|<\epsilon$, we also complete the proof in this case.
\end{proof}
\begin{remark}\label{rem:multitower}
The proof in \eqref{para:case2} also works for any finite rank system provided that the height ratio of the towers is bounded away from $0$ and $+\infty$ (even with spacers), which indeed shows that any finite rank systems with bounded height ratio has linear complexity.
\end{remark}

\subsection{Rank-two system with arbitrarily large lower slow entropy}\label{sec:CrazyRankTwo}
The construction of our desired finite rank system is a rank-two system with spacers. More precisely, we proceed the construction by cutting-and-stacking method with two towers and several spacer levels, where the ratio of heights of the two towers is increasing to infinity and grows faster than a given subexponential scaling function $\{a_n(t)\}_{n\in\mathbb{N},t>0}$ (see \eqref{eq:scalingFunction} and \eqref{eq:initialEquation}). Then the inductive processes (especially step \ref{item:step6}) guarantee that the growth rate of the number of Hamming balls we need to cover a significant part of the higher tower is faster than the speed given by $\{a_n(t)\}_{n\in\mathbb{N},t>0}$ (see Proposition \ref{prop:towerSequence}).
\subsubsection{Construction process and some basic ergodic properties}
The finite rank system $T$ will act on $(X,\mathcal{B},\mu_X)$\footnote{For any $A\in\mathcal{B}$, $\mu_X(A)=\frac{\mu(A\cap X)}{\mu(X)}$.}, where $X$ is a non-empty interval of $[0,+\infty)$ of finite length, $\mathcal{B}$ is Borel $\sigma$-algebra and $\mu$ is Lebesgue measure. Let $\{a_n(t)\}_{n\in\mathbb{N},t>0}$ be a family of positive sequences increasing to infinity and monotone in $t$ that has subexponential growth, i.e. satisfying the following conditions:
\begin{equation}\label{eq:scalingFunction}
	\begin{aligned}
		&\lim_{n\to+\infty}a_n(t)=+\infty, \forall t>0;\lim_{n\to+\infty}\frac{\log a_n(t)}{n}=0, \forall t>0;\\
		&a_n(t)\leq a_{n+1}(t),\forall t>0;\ \ a_n(t_1)\leq a_n(t_2), \forall t_2> t_1>0.\\
	\end{aligned}
\end{equation}
We also fix a positive and increasing sequence $\{t_n\}_{n \in \N}$ such that $\lim_{n\to+\infty}t_n=+\infty$, which will be used in our construction of a rank-two system with infinite lower slow entropy. In the following context, we fix $\epsilon=\frac{1}{1000}$ and introduce $Q_{n,k}$ for any $n,k\geq1$ as
\begin{equation}\label{eq:Nk}
	Q_{n,k}=\exp(k(24\epsilon\log(n+1)-(1-2\epsilon)\log(1-4\epsilon)-6\epsilon\log(2\epsilon)))
\end{equation}
whose definition is motivated by Lemma \ref{lem:HammingCloseEstimate} for an alphabet of cardinality $|\Sigma|=(n+1)^6$ and strings of length $k$. Thus, $Q_{n,k}$ gives the number of words in $\Sigma^k$ that are $2\epsilon$-Hamming close to a given $k$-string.

We start our construction from two towers $S_1^{(1)}$, $S_1^{(2)}$, where
\begin{equation}
	\begin{aligned}
		&S_1^{(1)}=\{[\frac{i}{2^{21}},\frac{i+1}{2^{21}}):i=0,1,\ldots,2^{21}-1\},\\ &S_1^{(2)}=\{[1+\frac{i}{2^{70}},1+\frac{i+1}{2^{70}}):i=0,1,\ldots,2^{70}-1\},
	\end{aligned}
\end{equation}
where our map $T$ on $S^{(1)}_1$ and $S^{(2)}_1$ is a translation between intervals in each tower respectively. Moreover, we denote $H_0=1$.

In the inductive process, we denote our tower at step $n$ as $S_n^{(1)}$ and $S_n^{(2)}$ with heights $h_n^{(1)}$, $h_n^{(2)}$ and bases $B_n^{(1)}$, $B_n^{(2)}$, respectively, as in the following picture:
\begin{figure}[H]
	\centering
	\scalebox{0.6}
	{
		\begin{tikzpicture}[scale=5]
			\tikzstyle{vertex}=[circle,minimum size=0pt,inner sep=0pt]
			\tikzstyle{selected vertex} = [vertex, fill=red!24]
			\tikzstyle{edge1} = [draw,line width=5pt,-,red!50]
			\tikzstyle{edge2} = [draw,line width=5pt,-,green!50]
			\tikzstyle{edge3} = [draw,line width=5pt,-,blue!50]
			\tikzstyle{edge4} = [draw,line width=5pt,-,brown!50]
			
			\tikzstyle{edge} = [draw,thick,-,black]
			\node[vertex] (t000) at (0.0,0) {};
			\node[vertex] (t001) at (2,0) {};
			\node[vertex] (t010) at (0.0,0.1) {};
			\node[vertex] (t011) at (2,0.1) {};
			\node[vertex] (t020) at (0.0,0.2) {};
			\node[vertex] (t021) at (2,0.2) {};
			\node[vertex] (t030) at (0.0,0.3) {};
			\node[vertex] (t031) at (2,0.3) {};
			\node[vertex] (w00) at (1,-0.1) {$B_n^{(1)}$};
			\node[vertex] (w01) at (-0.1,0.15) {$h_n^{(1)}$};
			
			\node[vertex] (t100) at (2.2,0) {};
			\node[vertex] (t101) at (2.5,0) {};
			\node[vertex] (t110) at (2.2,0.1) {};
			\node[vertex] (t111) at (2.5,0.1) {};
			\node[vertex] (t120) at (2.2,0.2) {};
			\node[vertex] (t121) at (2.5,0.2) {};
			\node[vertex] (t130) at (2.2,0.3) {};
			\node[vertex] (t131) at (2.5,0.3) {};
			\node[vertex] (t140) at (2.2,0.4) {};
			\node[vertex] (t141) at (2.5,0.4) {};
			\node[vertex] (t150) at (2.2,0.5) {};
			\node[vertex] (t151) at (2.5,0.5) {};
			\node[vertex] (t160) at (2.2,0.6) {};
			\node[vertex] (t161) at (2.5,0.6) {};
			\node[vertex] (t170) at (2.2,0.7) {};
			\node[vertex] (t171) at (2.5,0.7) {};
			\node[vertex] (t180) at (2.2,0.8) {};
			\node[vertex] (t181) at (2.5,0.8) {};
			\node[vertex] (b10) at (2.4,-0.1) {$B_n^{(2)}$};
			\node[vertex] (b11) at (2.6,0.35) {$h_n^{(2)}$};
			
			\draw[edge1] (t000)--(t001);
			\draw[edge1] (t010)--(t011);
			\draw[thick,dash dot] (t020)--(t021);
			\draw[edge1] (t030)--(t031);

			\draw[edge2] (t100)--(t101);
			\draw[edge2] (t110)--(t111);
			\draw[thick,dash dot] (t120)--(t121);
			\draw[edge2] (t130)--(t131);
			\draw[thick,dash dot] (t140)--(t141);
			\draw[thick,dash dot] (t150)--(t151);
			\draw[thick,dash dot] (t160)--(t161);
			\draw[thick,dash dot] (t170)--(t171);
			\draw[edge2] (t180)--(t181);

		\end{tikzpicture}.
	}
\end{figure}

Under this setting, the inductive construction proceeds as follows:
\begin{enumerate}[label=(\roman*)]
	\item\label{item:step1} Let $K_{n,0}=K_0(\{k\}_{k\in\mathbb{N}},(n+1)^6,\frac{1}{1000},\frac{1}{(n+1)^{12}})$ be defined as in Proposition \ref{prop:prob} and Remark \ref{rem:kForCLT}. Suppose that we have already constructed towers $S_n^{(1)}$, $S_n^{(2)}$ and that positive integers $H_n$, $H_{n-1}$, and $\tau_{n}$ are given such that the following conditions are satisfied:
	\begin{equation}\label{eq:initialEquation}
		\begin{aligned}
			&H_n=\prod_{i=0}^{2(n+1)^6-1}(h_n^{(1)}+iH_{n-1}),
			\ \ h_n^{(1)}\geq(n+1)^{21}H_{n-1},\\
			&h_n^{(2)}\geq2^{27n^2} (K_{n,0}h_n^{(1)})^2 H_n,\ \
			\frac{1}{3}\mu(S_n^{(2)})<\mu(S_n^{(1)})<\frac{31}{10}\mu(S_n^{(2)}),\\
&H_{n-1}|h_n^{(1)}, \ \ H_{n}|h_n^{(2)},
		\end{aligned}
	\end{equation}
	and $\tau_{n}\geq n$ is large enough such that for every $k\geq\tau_{n}$, we have $\frac{Q_{n,k}b_{k,n}}{(n+1)^{6k}}< 1$, where $b_{k,n} = a_{(k+1)H_{n}}(t_{n})$ and $Q_{n,k}$ is defined in \eqref{eq:Nk}. We observe that $H_{n-1}|h_n^{(1)}$ and the definition of $H_n$ imply $H_{n-1}|H_n$.

\item\label{item:step2} Let
\begin{align*}
	\alpha_k &=\frac{\frac{1}{h_n^{(1)}+kH_{n-1}}}{\sum_{i=0}^{(n+1)^6-1}\frac{1}{h_n^{(1)}+iH_{n-1}}} & \text{ for } 0\leq k\leq(n+1)^6-1, \\
	\beta_j &=\frac{\frac{1}{h_n^{(1)}+jH_{n-1}}}{\sum_{i=(n+1)^6}^{2(n+1)^6-1}\frac{1}{h_n^{(1)}+iH_{n-1}}} & \text{ for } (n+1)^6\leq j\leq2(n+1)^6-1.
\end{align*}
Then we cut $B_n^{(1)}$ into two disjoint subintervals $B_n^{(1,1)}$ and $B_n^{(1,2)}$ such that for some $g_n\geq(n+2)^{10}K_{n,0}$ we have
    \begin{equation}\label{eq:choiceOfB23}
    \begin{aligned}
\mu(B_n^{(1,2)})(h_n^{(1)}+\sum_{j=(n+1)^6}^{2(n+1)^6-1}jH_{n-1}\beta_j)&=g_n^2H_n(n+1)^{12}\mu(B_n^{(2)}),\\
\mu(B_n^{(1,2)})&\in(\frac{1}{100}\mu(B_n^{(1)}),\frac{1}{75}\mu(B_n^{(1)})).
    \end{aligned}
    \end{equation}
    The existence of such $g_n$ follows from \eqref{eq:initialEquation} since
    \begin{equation*}
    \begin{aligned}
    \frac{K_{n,0}^2(n+2)^{20}(n+1)^{12}H_n\mu(B_n^{(2)})}{(h_n^{(1)}+\sum_{j=(n+1)^6}^{2(n+1)^6-1}jH_{n-1}\beta_j)\mu(B_n^{(1)})}&\leq\frac{K_{n,0}^2(n+2)^{32}H_n\mu(B_n^{(2)})}{\mu(S_n^{(1)})}\\
    &\leq\frac{3(n+2)^{32}K_{n,0}^2H_n}{h_n^{(2)}}\leq\frac{3}{2^{27n^2}}.
    \end{aligned}
    \end{equation*}

\item\label{item:step3} Cut $B_n^{(1,1)}$ into $(n+1)^6$ disjoint intervals $W_n^{(k)}$ such that for any $0\leq k\leq (n+1)^6-1$, we have
	\begin{equation} \label{eq:WidthsBases}
	\begin{aligned}
	&\mu(W_n^{(k)})=\alpha_k\mu(B_n^{(1,1)}).\\
	\end{aligned}
	\end{equation}
	Then we add $kH_{n-1}$ spacers on top of $T^{h_n^{(1)}-1}W_n^{(k)}$ as first part of newly added mass.
Next, we cut $B_n^{(1,2)}$ into $(n+1)^6$ disjoint intervals $W_n^{(j)}$ such that for any $(n+1)^6\leq j\leq 2(n+1)^6-1$, we have
	\begin{equation} \label{eq:WidthsBases1}
    \begin{aligned}
&\mu(W_n^{(j)})=\beta_j\mu(B_n^{(1,2)}).\\
	\end{aligned}
	\end{equation}
Then we add $jH_{n-1}$ spacers on top of $T^{h_n^{(1)}-1}W_n^{(j)}$ as the second part of newly added mass.

Indeed this construction can be intuitively expressed by the following picture, where the red levels belong to $S_n^{(1)}$, green levels belong to $S_n^{(2)}$ and blue levels are newly added spacers:
	\begin{figure}[H]
		\centering
		\scalebox{0.6}
		{
			\begin{tikzpicture}[scale=5]
				\tikzstyle{vertex}=[circle,minimum size=0pt,inner sep=0pt]
				\tikzstyle{selected vertex} = [vertex, fill=red!24]
				\tikzstyle{edge1} = [draw,line width=5pt,-,red!50]
				\tikzstyle{edge2} = [draw,line width=5pt,-,green!50]
				\tikzstyle{edge3} = [draw,line width=5pt,-,blue!50]
				\tikzstyle{edge4} = [draw,line width=5pt,-,brown!50]
				
				\tikzstyle{edge} = [draw,thick,-,black]
				\node[vertex] (t000) at (0.0,0) {};
				\node[vertex] (t001) at (1.95,0) {};
				\node[vertex] (t010) at (0.0,0.15) {};
				\node[vertex] (t011) at (1.95,0.15) {};
				\node[vertex] (t020) at (0.0,0.3) {};
				\node[vertex] (t021) at (1.95,0.3) {};
				\node[vertex] (t030) at (0.0,0.45) {};
				\node[vertex] (t031) at (1.95,0.45) {};
				\node[vertex] (b01) at (-0.15,0.225) {$h_n^{(1)}$};
				
				\node[vertex] (e00) at (1.95,-0.03) {};
				\node[vertex] (e01) at (1.95,1.35) {};
				
				\node[vertex] (w00) at (0.3,-0.03) {};
				\node[vertex] (w01) at (0.3,0.6) {};
				\node[vertex] (w02) at (0.15,-0.1) {$W_n^{(0)}$};
				\node[vertex] (w10) at (0.57,-0.03) {};
				\node[vertex] (w11) at (0.57,0.75) {};
				\node[vertex] (w12) at (0.435,-0.1) {$W_n^{(1)}$};
				\node[vertex] (w20) at (0.81,-0.03) {};
				\node[vertex] (w21) at (1.05,0.9) {};
				\node[vertex] (w22) at (0.69,-0.1) {$W_n^{(2)}$};
				\node[vertex] (w30) at (1.05,-0.03) {};
				\node[vertex] (w31) at (1.05,1.05) {};
				\node[vertex] (w32) at (0.90,-0.1) {$\ldots$};
                \node[vertex] (w33) at (0.90,0.6) {$\ldots$};
				\node[vertex] (w40) at (1.26,-0.03) {};
				\node[vertex] (w41) at (1.26,1.2) {};
				\node[vertex] (w42) at (1.155,-0.1) {$W_n^{((n+1)^6)-1}$};
				\node[vertex] (w50) at (1.8,-0.03) {};
				\node[vertex] (w51) at (1.8,1.35) {};
				\node[vertex] (w52) at (1.95,-0.1) {$W_n^{(2(n+1)^6-1)}$};
                \node[vertex] (w62) at (1.52,-0.1) {$W_n^{((n+1)^6)}$};

				\node[vertex] (i00) at (1.8,0.6) {};
				\node[vertex] (i01) at (1.95,0.6) {};
				\node[vertex] (i10) at (1.8,0.75) {};
				\node[vertex] (i11) at (1.95,0.75) {};
				\node[vertex] (i20) at (1.8,0.9) {};
				\node[vertex] (i21) at (1.95,0.9) {};
				\node[vertex] (i30) at (1.8,1.05) {};
				\node[vertex] (i31) at (1.95,1.05) {};
				\node[vertex] (i40) at (1.8,1.2) {};
				\node[vertex] (i41) at (1.95,1.2) {};

				\node[vertex] (sp10) at (0.81,0.6) {};
				\node[vertex] (sp20) at (0.81,0.75) {};
				\node[vertex] (sp30) at (1.44,0.9) {};
				\node[vertex] (ad50) at (1.05,0.6) {};
                \node[vertex] (ad51) at (1.44,0.6)
{};
                \node[vertex] (ad52) at (1.44,1.2)
{};
                \node[vertex] (ad53) at (1.44,1.05)
{};
                \node[vertex] (ad54) at (1.44,-0.03) {};

				\node[vertex] (h00) at (2.35,1.05) {$(2(n+1)^6-1)H_{n-1}$};
				
				\node[vertex] (h01) at (1.6,0.9) {$\ldots\ldots$};
				
				\node[vertex] (t100) at (3.3,0) {};
				\node[vertex] (t101) at (3.45,0) {};
				\node[vertex] (t110) at (3.3,0.15) {};
				\node[vertex] (t111) at (3.45,0.15) {};
				\node[vertex] (t120) at (3.3,0.3) {};
				\node[vertex] (t121) at (3.45,0.3) {};
				\node[vertex] (t130) at (3.3,0.45) {};
				\node[vertex] (t131) at (3.45,0.45) {};
				\node[vertex] (t140) at (3.3,0.6) {};
				\node[vertex] (t141) at (3.45,0.6) {};
				\node[vertex] (t150) at (3.3,0.75) {};
				\node[vertex] (t151) at (3.45,0.75) {};
				\node[vertex] (t160) at (3.3,0.9) {};
				\node[vertex] (t161) at (3.45,0.9) {};
				\node[vertex] (t170) at (3.3,1.05) {};
				\node[vertex] (t171) at (3.45,1.05) {};
				\node[vertex] (t180) at (3.3,1.2) {};
				\node[vertex] (t181) at (3.45,1.2) {};
				\node[vertex] (t190) at (3.3,1.35) {};
				\node[vertex] (t191) at (3.45,1.35) {};
				\node[vertex] (t1100) at (3.3,1.5) {};
				\node[vertex] (t1101) at (3.45,1.5) {};
				\node[vertex] (t1110) at (3.3,1.65) {};
				\node[vertex] (t1111) at (3.45,1.65) {};
				\node[vertex] (b10) at (3.37,-0.1) {$B_n^{(2)}$};
				\node[vertex] (b11) at (3.55,0.675) {$h_n^{(2)}$};
				

				\draw[edge1] (t000)--(t001);
				\draw[edge1] (t010)--(t011);
				\draw[thick,dash dot] (t020)--(t021);
				\draw[edge1] (t030)--(t031);

				\draw[edge2] (t100)--(t101);
				\draw[edge2] (t110)--(t111);
				\draw[thick,dash dot] (t120)--(t121);
				\draw[edge2] (t130)--(t131);
				\draw[thick,dash dot] (t140)--(t141);
				\draw[thick,dash dot] (t150)--(t151);
				\draw[thick,dash dot] (t160)--(t161);
				\draw[thick,dash dot] (t170)--(t171);
				\draw[thick,dash dot] (t180)--(t181);
				\draw[thick,dash dot] (t190)--(t191);
				\draw[thick,dash dot] (t1100)--(t1101);
				\draw[edge2] (t1110)--(t1111);

				\draw[edge3] (w01)--(sp10);
				\draw[edge3] (w11)--(sp20);
				\draw[thick,dash dot] (w21)--(sp30);
                \draw[thick,dash dot] (ad52)--(ad54);
				\draw[edge3] (w31)--(ad53);
                \draw[edge3] (w41)--(ad52);
                \draw[edge3] (ad50)--(ad51);
				\draw[edge3] (e01)--(w51);
				\draw[edge3] (i00)--(i01);
				\draw[thick,dash dot] (i10)--(i11);
				\draw[thick,dash dot] (i20)--(i21);
				\draw[thick,dash dot] (i30)--(i31);
				\draw[thick,dash dot] (i40)--(i41);
				
				\draw[thick,dash dot] (e00)--(e01);
				\draw[thick,dash dot] (w00)--(w01);
				\draw[thick,dash dot] (w10)--(w11);
				\draw[thick,dash dot] (w20)--(sp20);
				\draw[thick,dash dot] (w30)--(w31);
				\draw[thick,dash dot] (w40)--(w41);
				\draw[thick,dash dot] (w50)--(w51);
				
				\draw [decorate,decoration={brace,amplitude=10pt},xshift=-0pt,yshift=0pt]
(1.26,-0.2) -- (0.0,-0.2) node [black,midway,xshift=0.6cm,yshift=-0.4cm]
{\footnotesize $B_n^{(1,1)}$};
				\draw [decorate,decoration={brace,amplitude=10pt},xshift=-0pt,yshift=0pt]
(1.95,-0.2) -- (1.26,-0.2) node [black,midway,xshift=0.6cm,yshift=-0.4cm]
{\footnotesize $B_n^{(1,2)}$};
			\end{tikzpicture}
		}.
	\end{figure}
Now we estimate the total measure of spacers we added in step \ref{item:step3} of stage $n+1$, i.e. the blue levels in the above pictures: 
	\begin{equation}\label{eq:addMass1}
	\begin{aligned} \sum_{k=1}^{2(n+1)^6-1}kH_{n-1}\mu(W_n^{(k)})&=\sum_{k=1}^{(n+1)^6-1}kH_{n-1}\alpha_k\mu(B_n^{(1,1)})+\sum_{j=(n+1)^6}^{2(n+1)^6-1}jH_{n-1}\beta_j\mu(B_n^{(1,2)})\\
&< 2(n+1)^6H_{n-1}\mu(B_n^{(1)})=\frac{2(n+1)^6H_{n-1}}{h_n^{(1)}}\mu(S_n^{(1)}),\\
&\leq\frac{1}{(n+1)^{14}}\mu(S_n^{(1)}).
	\end{aligned}
	\end{equation}

	\item\label{item:step4} For $0\leq k\leq 2(n+1)^6-1$, we define: \begin{equation}
		\overline{D}_n^{(k)}=\bigcup_{i=0}^{h_n^{(1)}+kH_{n-1}-1}T^iW_n^{(k)}.
	\end{equation}
	Then we divide $\overline{D}_n^{(k)}$ into $\frac{H_n}{h_n^{(1)}+kH_{n-1}}$ many sub-towers of the same width and then stack them together. We denote this new tower by $D_n^{(k)}$ and observe that the height of $D_n^{(k)}$ is $H_n$. By our choices of $\mu(W^{(k)}_n)$ in (\ref{eq:WidthsBases}), we also have
\begin{enumerate}
\item For $0\leq k \leq (n+1)^6-1$, towers $D_n^{(k)}$ have the same width;
\item For $(n+1)^6\leq j \leq 2(n+1)^6-1$, towers $D_n^{(j)}$ have the same width. Moreover, \eqref{eq:choiceOfB23} gives that  $$\mu(D_n^{(j)})=g_n^2(n+1)^6H_n\mu(B_n^{(2)}).$$
\end{enumerate}
The following picture provides an intuitive explanation of this step:
	\begin{figure}[H]
		\centering
		\scalebox{0.6}
		{
			\begin{tikzpicture}[scale=5]
				\tikzstyle{vertex}=[circle,minimum size=2pt,inner sep=0pt]
				\tikzstyle{selected vertex} = [vertex, fill=red!24]
				\tikzstyle{edge1} = [draw,line width=5pt,-,red!50]
				\tikzstyle{edge2} = [draw,line width=5pt,-,green!50]
				\tikzstyle{edge3} = [draw,line width=5pt,-,blue!50]
				\tikzstyle{edge4} = [draw,line width=5pt,-,brown!50]
				
				\tikzstyle{edge} = [draw,thick,-,black]
				\node[vertex] (d00) at (0,-0.1) {$D_n^{(0)}$};
				\node[vertex] (t00) at (0,0) {};
				\node[vertex] (t01) at (0,0.2) {};
				\node[vertex] (t02) at (0,0.4) {};
				\node[vertex] (t03) at (0,0.6) {};
				\node[vertex] (t04) at (0,0.7) {$\vdots$};
				\node[vertex] (t05) at (0,0.8) {};
				\node[vertex] (t06) at (0,1) {};
				\node[vertex] (t07) at (0,1.2) {};
				\node[vertex] (e01) at (-0.1,0.1) {$h_n^{(1)}$};
				\node[vertex] (e02) at (-0.1,0.3) {$h_n^{(1)}$};
				\node[vertex] (e03) at (-0.1,0.5) {$h_n^{(1)}$};
				\node[vertex] (e04) at (-0.1,0.9) {$h_n^{(1)}$};
				\node[vertex] (e05) at (-0.1,1.1) {$h_n^{(1)}$};
				
				\draw[edge1] (t00)--(t01);
				\draw[edge1] (t01)--(t02);
				\draw[edge1] (t02)--(t03);
				\draw[edge1] (t05)--(t06);
				\draw[edge1] (t06)--(t07);
				
				\node[vertex] (d10) at (0.5,-0.1) {$D_n^{(1)}$};
				\node[vertex] (t10) at (0.5,0) {};
				\node[vertex] (t11) at (0.5,0.22) {};
				\node[vertex] (t12) at (0.5,0.44) {};
				\node[vertex] (t13) at (0.5,0.66) {};
				\node[vertex] (t14) at (0.5,0.72) {$\vdots$};
				\node[vertex] (t15) at (0.5,0.76) {};
				\node[vertex] (t16) at (0.5,0.98) {};
				\node[vertex] (t17) at (0.5,1.2) {};
				\node[vertex] (e11) at (0.78,0.1) {$h_n^{(1)}+H_{n-1}$};
				\node[vertex] (e12) at (0.78,0.3) {$h_n^{(1)}+H_{n-1}$};
				\node[vertex] (e13) at (0.78,0.5) {$h_n^{(1)}+H_{n-1}$};
				\node[vertex] (e14) at (0.78,0.9) {$h_n^{(1)}+H_{n-1}$};
				\node[vertex] (e15) at (0.78,1.1) {$h_n^{(1)}+H_{n-1}$};
				
				\draw[edge1] (t10)--(t11);
				\draw[edge1] (t11)--(t12);
				\draw[edge1] (t12)--(t13);
				\draw[edge1] (t15)--(t16);
				\draw[edge1] (t16)--(t17);

				\node[vertex] (d20) at (1.6,-0.1) {$D_n^{(2(n+1)^6-1)}$};
				\node[vertex] (t20) at (1.6,0) {};
				\node[vertex] (t21) at (1.6,0.35) {};
				\node[vertex] (t22) at (1.6,0.7) {};
				\node[vertex] (t23) at (1.6,0.78) {$\vdots$};
				\node[vertex] (t24) at (1.6,0.85) {};
				\node[vertex] (t25) at (1.6,1.2) {};
				
				\node[vertex] (e21) at (2.2,0.17) {$h_n^{(1)}+(2(n+1)^6-1)H_{n-1}$};
				\node[vertex] (e22) at (2.2,0.52) {$h_n^{(1)}+(2(n+1)^6-1)H_{n-1}$};
				\node[vertex] (e23) at (2.2,1) {$h_n^{(1)}+(2(n+1)^6-1)H_{n-1}$};

				\draw[edge1] (t20)--(t21);
				\draw[edge1] (t21)--(t22);
				\draw[edge1] (t24)--(t25);
				
				\node[vertex] (t25) at (1,0.6) {$\ldots\ldots\ldots\ldots\ldots$};

			\end{tikzpicture}
		}.
	\end{figure}

	\item\label{item:step5} Since $g_n\geq K_{n,0}=K_0(\{k\}_{k\in\mathbb{N}},(n+1)^6,\frac{1}{1000},\frac{1}{(n+1)^{12}})$, we can apply Proposition \ref{prop:prob} with $\Sigma=\{(n+1)^6,\ldots,2(n+1)^6-1\}$, $\gamma=\frac{1}{(n+1)^{12}}$, $\epsilon=\frac{1}{1000}$ with subexponential sequence $b_k=k$ to find $g_n(n+1)^6$ different words $a_n^{(0)},\ldots,a_{n}^{(g_n(n+1)^6-1)}\in\Sigma^{g_n(n+1)^6}$ which are uniform in the symbols.
Then for $(n+1)^6\leq j\leq 2(n+1)^6-1$, the exact uniformity of Proposition \ref{prop:prob} and the choice of $g_n$ indicate that by cutting each $D_n^{(j)}$ into $g_n^2(n+1)^{6}$ subtowers of equal width we can obtain $g_n(n+1)^6$ towers $A_n^{(0)},\ldots,A_{n}^{(g_n(n+1)^6-1)}$ that are built as concatenations of $g_n(n+1)^6$ of these subtowers such that $A_n^{(k)}$ corresponding to $a_n^{(k)}$ by viewing $D_n^{(j)}$ as symbol $j$. Since $\mu(D_n^{(j)})=g_n^2H_n(n+1)^6\mu(B_n^{(2)})$ and the height of $D_n^{(j)}$ is $H_n$, we have that the width of $A_n^{(k)}$ is $\mu(B_n^{(2)})$ for $0\leq k\leq g_n(n+1)^6-1$.

We stack $A_n^{(0)},\ldots,A_{n}^{(g_n(n+1)^6-1)}$ over $S_n^{(2)}$ and denote this new tower as $S_{n+1}^{(1)}$, whose height is denoted by $h_{n+1}^{(1)}$. By construction of $S_{n+1}^{(1)}$, we have that $$h_{n+1}^{(1)}=(g_n(n+1)^6)^2H_n+h_n^{(2)},$$ which together with $H_n |h_n^{(2)}$  guarantees that $H_n |h_{n+1}^{(1)}$. Since $g_n\geq K_{n,0}(n+2)^{10}$, we also obtain that
$$h_{n+1}^{(1)}\geq(n+2)^{21}H_n.$$

	\item\label{item:step6}
	Let $H_{n+1}=\prod_{i=0}^{2(n+2)^6-1}(h_{n+1}^{(1)}+iH_{n})$, $\{b_{k,n+1}\}_{k\in \N} = \{a_{(k+1)H_{n+1}}(t_{n+1})\}_{k\in \N}$ and $\tau_{n+1}\geq n+1$ large enough such that for every $k\geq\tau_{n+1}$, we have $\frac{Q_{n+1,k}b_{k,n+1}}{(n+2)^{6k}}< 1$, where $Q_{n+1,k}$ is defined according to \eqref{eq:Nk}. Then we pick $R_{n} \in \Z^{+}$ as a multiple of $(n+1)^6H_{n+1}$ large enough such that
	\begin{equation}\label{eq:heightHn23}
		\begin{aligned}
			&b_{R_n,n}>(n+1)^8,\\
			&R_{n}H_n\geq\tau_{n+1}H_{n+1},\\
			&R_{n}\geq\max\{\tau_{n}+1,K_{n,1},2^{27(n+1)^2}(K_{n+1,0}h_{n+1}^{(1)})^2H_{n+1}\},\\
		\end{aligned}
	\end{equation}
	where $\tau_{n}$ is defined in step \ref{item:step1}, $K_{n,1}\coloneqq K_{0}\left(\{b_{k,n}\}_{k\in \N},(n+1)^6,\frac{1}{1000},\frac{1}{(n+1)^{12}}\right)$ and $K_{n+1,0}=K_0(\{k\}_{k\in\mathbb{N}},(n+2)^6,\frac{1}{1000},\frac{1}{(n+2)^{12}})$ from Proposition \ref{prop:prob} and Remark \ref{rem:kForCLT}.
	
	We apply Proposition \ref{prop:prob} with alphabet $\Sigma=\{0,1,\dots , (n+1)^6-1\}$, $\gamma=\frac{1}{(n+1)^{12}}$, $\epsilon=\frac{1}{1000}$, $k=R_n$, and subexponential sequence $\{b_{m,n}\}_{m\in \N}$ to find $N_n \coloneqq b_{R_n,n}$ many different words $e_n^{(0)},\ldots,e_{n}^{(N_n-1)}\in\Sigma^{R_n}$ which are uniform in the symbols. Then for $0\leq i\leq (n+1)^6-1$, the exact uniformity of Proposition \ref{prop:prob} indicates that by cutting $D_n^{(i)}$ into $\frac{N_nR_n}{(n+1)^6}$ subtowers of equal width, we can obtain $N_n$ subtowers $E_n^{(0)},\ldots,E_{n}^{(N_n-1)}$ that are built by concatenating $R_n$ of these substowers in such a way that $E_n^{(k)}$ corresponds to $e_n^{(k)}$ by viewing $D_n^{(i)}$ as symbol $i$.

\item Stacking $E_n^{(0)},\ldots,E_{n}^{(N_n-1)}$ above each other, we get a tower of height $N_nR_nH_n$ and we denote this tower as $S_{n+1}^{(2)}$. The choice of $R_n$ implies that its height $h_{n+1}^{(2)}$ satisfies $H_{n+1}|h_{n+1}^{(2)}$. Now we check that our construction satisfies our inductive assumptions in \eqref{eq:initialEquation} for $h_{n+1}^{(2)}$, $\mu(S_{n+1}^{(1)})$ and $\mu(S_{n+1}^{(2)})$:

Recall that $h_{n+1}^{(2)}=N_nR_nH_n$. Then \eqref{eq:heightHn23} guarantees that
	$$h_{n+1}^{(2)}\geq R_n\geq2^{27(n+1)^2}(K_{n+1,0}h_{n+1}^{(1)})^2H_{n+1}.$$
	
	Finally, step \ref{item:step2}, estimate \eqref{eq:addMass1}, and step \ref{item:step5} imply
	\begin{equation}\label{eq:meaureEstimate1}
		\begin{aligned}
			\frac{98}{100}\mu(S_n^{(1)})\leq&\mu(S_{n+1}^{(2)})\leq\frac{99}{100}\mu(S_n^{(1)})+\frac{1}{10000}\mu(S_n^{(1)}),\\
			\mu(S_n^{(2)})+\frac{1}{100}\mu(S_n^{(1)})\leq&\mu(S_{n+1}^{(1)})\leq\mu(S_n^{(2)})+\frac{2}{100}\mu(S_n^{(1)}).
		\end{aligned}
	\end{equation}
	which together with \eqref{eq:initialEquation} and \eqref{eq:choiceOfB23} guarantee the following estimates:
	\begin{equation}
		\begin{aligned}
			&\frac{\mu(S_{n+1}^{(1)})}{\mu(S_{n+1}^{(2)})}\leq\frac{\mu(S_n^{(2)})+\frac{2}{100}\mu(S_n^{(1)})}{\frac{98}{100}\mu(S_n^{(1)})}<\frac{3\mu(S_n^{(1)})+\frac{2}{100}\mu(S_n^{(1)})}{\frac{98}{100}\mu(S_n^{(1)})}=\frac{302}{98}<\frac{31}{10},\\
			&\frac{\mu(S_{n+1}^{(1)})}{\mu(S_{n+1}^{(2)})}\geq\frac{\mu(S_n^{(2)})+\frac{1}{100}\mu(S_n^{(1)})}{\frac{99}{100}\mu(S_n^{(1)})+\frac{1}{10000}\mu(S_n^{(1)})}>\frac{\frac{10}{31}\mu(S_n^{(1)})+\frac{1}{100}\mu(S_n^{(1)})}{\frac{99}{100}\mu(S_n^{(1)})+\frac{1}{10000}\mu(S_n^{(1)})}>\frac{1}{3}.
		\end{aligned}
	\end{equation}
	Combining these two estimates, we obtain that
	\begin{equation}\label{eq:massEstimate}
		\begin{aligned}
			\frac{1}{3}\mu(S_{n+1}^{(2)})<\mu(S_{n+1}^{(1)})<\frac{31}{10}\mu(S_{n+1}^{(2)}),
		\end{aligned}
	\end{equation}
	and, thus, we finish our construction at step $n+1$.
\end{enumerate}

After completing the construction, it is worth to point out that the right-hand side of \eqref{eq:addMass1} gives a summable sequence. Thus,
\begin{equation}\label{eq:addMassEst}
	\mu(X)\leq(\mu(S_1^{(1)})+\mu(S_1^{(2)}))\prod_{n=1}^{\infty}(1+\frac{1}{(n+1)^{14}})\leq 2\exp(\sum_{n=1}^{\infty}\frac{1}{(n+1)^{14}})<+\infty,
\end{equation}
which implies that the measurable system $(T,X,\mathcal{B},\mu_X)$ that we construct above is a measure-preserving transformation on a finite interval of $[0,+\infty)$. Moreover, the construction implies that this system's rank is at most $2$.

We obtain the ergodicity of the system by exchanging mass between the two towers on each stage of the construction (see step \ref{item:step5}). The detailed proof is postponed to Appendix \ref{sec:ergodic}.
\begin{proposition}\label{prop:ergodic}
	$(T,X,\mathcal{B})$ is ergodic with respect to the measure $\mu_X$ induced from Lebesgue measure $\mu$.
\end{proposition}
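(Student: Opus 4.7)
My strategy is to work directly with the densities of $A$ on the two towers at each stage. Suppose $A\subseteq X$ is $T$-invariant; I aim to prove $\mu_X(A)\in\{0,1\}$. Set $\alpha_n^{(k)}:=\mu(A\cap B_n^{(k)})/\mu(B_n^{(k)})$ for $k\in\{1,2\}$. Because $T$-invariance gives $\mu(A\cap T^iB_n^{(k)})=\mu(A\cap B_n^{(k)})$ for every $i$, every level of the tower $S_n^{(k)}$ carries the same density $\alpha_n^{(k)}$, and in particular $\mu(A\cap S_n^{(k)})=\alpha_n^{(k)}\mu(S_n^{(k)})$. The construction in step (v) makes $B_n^{(2)}$ itself the base of $S_{n+1}^{(1)}$ (its first $h_n^{(2)}$ levels coincide with the levels of $S_n^{(2)}$), which yields the first key identity $\alpha_{n+1}^{(1)}=\alpha_n^{(2)}$.

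Next I introduce the local densities $\gamma_k:=\mu(A\cap W_n^{(k)})/\mu(W_n^{(k)})$ for $0\le k<2(n+1)^6$. Invariance applied along $\overline{D}_n^{(k)}$ gives $\mu(A\cap \overline{D}_n^{(k)})=(h_n^{(1)}+kH_{n-1})\gamma_k\mu(W_n^{(k)})$, and by the choice of $\alpha_k,\beta_k$ in step (ii) the product $(h_n^{(1)}+kH_{n-1})\mu(W_n^{(k)})$ is constant over each of the two ranges $\{0,\ldots,(n+1)^6-1\}$ and $\{(n+1)^6,\ldots,2(n+1)^6-1\}$. Computing $\mu(A\cap S_{n+1}^{(1)})$ two ways, once as $\alpha_{n+1}^{(1)}\mu(S_{n+1}^{(1)})=\alpha_n^{(2)}\mu(S_{n+1}^{(1)})$ and once as $\mu(A\cap S_n^{(2)})+\sum_{k\ge (n+1)^6}\mu(A\cap \overline{D}_n^{(k)})$, will yield the averaging identity $\tfrac{1}{(n+1)^6}\sum_{k=(n+1)^6}^{2(n+1)^6-1}\gamma_k=\alpha_n^{(2)}$; an analogous computation on $S_{n+1}^{(2)}$ gives $\tfrac{1}{(n+1)^6}\sum_{k=0}^{(n+1)^6-1}\gamma_k=\alpha_{n+1}^{(2)}$.

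Because $h_n^{(1)}\ge(n+1)^{21}H_{n-1}$, the weights $\mu(W_n^{(k)})\propto 1/(h_n^{(1)}+kH_{n-1})$ are uniform up to relative error $O(1/(n+1)^{15})$, so the $\mu(W_n^{(k)})$-weighted and unweighted averages of the $\gamma_k$ coincide up to an additive error of the same order. Hence the density of $A$ on $B_n^{(1,1)}$ equals $\alpha_{n+1}^{(2)}$ and the density on $B_n^{(1,2)}$ equals $\alpha_n^{(2)}$, each up to $O(1/(n+1)^{15})$. Combining with $\alpha_n^{(1)}\mu(B_n^{(1)})=\mu(A\cap B_n^{(1,1)})+\mu(A\cap B_n^{(1,2)})$ and writing $\lambda_n:=\mu(B_n^{(1,1)})/\mu(B_n^{(1)})\in(74/75,99/100)$ (from step (ii)), I get $\alpha_n^{(1)}=\lambda_n\alpha_{n+1}^{(2)}+(1-\lambda_n)\alpha_n^{(2)}+O(1/(n+1)^{15})$. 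Setting $d_n:=\alpha_n^{(1)}-\alpha_n^{(2)}$ and using $\alpha_{n+1}^{(1)}=\alpha_n^{(2)}$, algebraic rearrangement produces the recursion $d_{n+1}=-d_n/\lambda_n+O(1/(n+1)^{15})$.

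The hard part is the stability analysis of this recursion, but it is decisive: since $1/\lambda_n\ge 100/99>1$ and $|d_n|\le 1$ always, iterating from any $n_0$ forces $|d_n|$ to grow exponentially in $n-n_0$ unless $|d_{n_0}|=O(1/n_0^{15})$; hence $d_n\to 0$. Combined with $\alpha_{n+1}^{(1)}=\alpha_n^{(2)}$ this makes $\alpha_n^{(1)}-\alpha_{n+1}^{(1)}=-d_n\to 0$, so both $\alpha_n^{(1)}$ and $\alpha_n^{(2)}$ are Cauchy and converge to a common limit $\alpha$. Since the level widths $\mu(B_n^{(k)})\to 0$, the $\sigma$-algebras $\mathcal{F}_n$ generated by the stage-$n$ levels generate the full Borel $\sigma$-algebra, and the conditional expectation $\mathbb{E}(\chi_A\mid \mathcal{F}_n)$, which equals $\alpha_n^{(k)}$ on $S_n^{(k)}$, converges pointwise to the constant $\alpha$ and, by the martingale convergence theorem, to $\chi_A$ almost everywhere. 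Therefore $\chi_A=\alpha$ a.e., forcing $\alpha\in\{0,1\}$ and $\mu_X(A)\in\{0,1\}$, which proves ergodicity.
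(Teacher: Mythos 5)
Your argument is correct, but it takes a genuinely different route from the paper's. The paper works pointwise: it picks a Lebesgue density point of an invariant set $A$, finds a level on which $A$ has density $\geq 1-\epsilon$, propagates this along the tower by invariance, and then uses a Markov-inequality count on the levels $D_i$ and $E_j$ that go to $S_{n+1}^{(1)}$ and $S_{n+1}^{(2)}$ respectively to find one good level in each; since each such level occupies a whole new tower under invariance, $\mu(A)\geq(1-10^5\epsilon)\mu(X)$, and $\epsilon\to 0$ finishes. Your argument instead tracks the exact tower densities $\alpha_n^{(1)},\alpha_n^{(2)}$ globally, exploits the base identity $B_{n+1}^{(1)}=B_n^{(2)}$ (hence $\alpha_{n+1}^{(1)}=\alpha_n^{(2)}$), and uses the deliberate uniformization of $\mu(\overline{D}_n^{(k)})$ over each of the two index ranges (the reason $\alpha_k,\beta_k\propto 1/(h_n^{(1)}+kH_{n-1})$) to get the clean averaging identities and the unstable linear recursion $d_{n+1}=-d_n/\lambda_n+O((n+1)^{-15})$. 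The expansivity $1/\lambda_n\geq 100/99$, together with the boundedness $|d_n|\leq 1$ and the decay of the error, then forces $|d_n|\to 0$ (in fact $|d_n|=O((n+1)^{-15})$), and a martingale argument along the filtration generated by $\mathcal{P}_n$ yields $\chi_A=\mathrm{const}$ a.e. Both proofs are valid; the paper's is more robust to perturbations of the construction (it only needs the mass split to be bounded away from $0$ and $1$ and each $T^iW_n^{(k)}$ to occupy a level), whereas yours is more quantitative and exposes why the designer chose the $1/(h_n^{(1)}+kH_{n-1})$-weights — it gives an explicit rate of equidistribution between the two towers. Two small points you should add if you formalize this: first, the error constant in the recursion is absolute because the $\gamma_k$ are bounded by $1$ and the raw weights vary by a relative factor $\leq 1+2(n+1)^6 H_{n-1}/h_n^{(1)}\leq 1+2/(n+1)^{15}$, so the weighted and unweighted averages differ by at most $2/(n+1)^{15}$; second, $\{\mathcal{F}_n\}$ is indeed a filtration (each level of $S_n^{(i)}$ is a union of elements of $\mathcal{P}_{n+1}$, and the remainder decreases), and it generates $\mathcal{B}$ because the levels are intervals of vanishing length and vanishing complementary mass, so the martingale step is legitimate.
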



\subsubsection{Estimate of the lower slow entropy}
\paragraph{Outline of the proof:} The estimate of lower slow entropy can be divided into three parts. The first part is Lemma \ref{lem:useTool}, which estimates under some assumptions of codings for the shorter tower the Hamming distance between ``basic blocks'' that are constructed in step \ref{item:step4}. The second part is Lemma \ref{lem:inductionHamming}, which shows by induction that the assumptions in Lemma \ref{lem:useTool} always hold true. The third part is Proposition \ref{prop:towerSequence}, which gives a lower bound for the cardinality of minimal covers along certain subsequences by using Lemma \ref{lem:inductionHamming} and combinatorial estimates.

Let $n_0,n\in\mathbb{Z}^+$, $n_0\geq1000$ sufficiently large such that for any $n\geq n_0$, we have $2^{\frac{n^2}{2}}\geq n^8$. We denote $v_n^{(1)}=\phi_{\mathcal{P}_{n_0},h_n^{(1)}}(x)$ for $x\in B_n^{(1)}$ and $v_n^{(2)}=\phi_{\mathcal{P}_{n_0},h_n^{(2)}}(y)$ for $y\in B_n^{(2)}$, where we recall the notation $\phi_{\mathcal{P}_{n_0},h_n}$ from Section \ref{sec:slowentropy} and that $\mathcal{P}_{n_0}$ is defined as
\begin{equation}\label{eq:finiteRankPartition}
	\mathcal{P}_{n_0}=\{B_{n_0}^{(1)},\ldots,T^{h_{n_0}^{(1)}-1}B_{n_0}^{(1)},B_{n_0}^{(2)},\ldots,T^{h_{n_0}^{(2)}-1}B_{n_0}^{(2)},X\setminus(S_{n_0}^{(1)}\cup S_{n_0}^{(2)})\}.
\end{equation}
Moreover, for any $0\leq k\leq2(n+1)^6-1$, we denote $z_n^{(k)}=v_n^{(1)}0^{kH_{n-1}}$ and   $w_n^{(k)}=(z_n^{(k)})^{\frac{H_n}{h_n^{(1)}+kH_{n-1}}}$. Descriptively,
\begin{equation}\label{eq:w_n}
\begin{aligned}
&\text{the string $z^{(k)}_n$ codes the subtower $\overline{D}^{(k)}_n$ from step \ref{item:step3} of our construction;}\\
&\text{the string $w^{(k)}_n$ corresponds to $D^{(k)}_n$ from step \ref{item:step3} of our construction,}
\end{aligned}
\end{equation}
where the symbol $0$ stands for a newly introduced spacer level.
\begin{definition}
	For any $0\leq i< j\leq2(n+1)^6-1$, we say $w_n^{(i)}$ and $w_n^{(j)}$ are \emph{realigned} at $k$ if $\sh^{kH_{n-1}}(w_n^{(i)})$ starts with $z_n^{(i)}$ and $\sh^{kH_{n-1}}(w_n^{(j)})$ starts with $z_n^{(j)}$.
\end{definition}

The following lemma is a very useful tool in our proof, where $\frac{h_n^{(1)}}{H_{n-1}}$ is a positive integer due to  \eqref{eq:initialEquation}.
\begin{lemma}\label{lem:useTool}
	Let $n\geq n_0$ and $w^{(0)}_n,\ldots,w^{(2(n+1)^6-1)}_n$ be defined as above. For each $1\leq t_n^{(1)}\leq(1-\frac{1}{(n-1)^6})\frac{h_n^{(1)}}{H_{n-1}}$ we define $I_n^{(1)}$ as the interval of indices in the overlap of $v_n^{(1)}$ and $\sh^{t^{(1)}_nH_{n-1}}(v_n^{(1)})$, denote the length of $I_n^{(1)}$ by $L_n^{(1)}$, and suppose that
	\begin{equation}\label{eq:lemUesAssume}
		d^H_{L_n^{(1)}}(v_n^{(1)}\upharpoonright I_n^{(1)},\sh^{t_n^{(1)}H_{n-1}}(v_n^{(1)})\upharpoonright I_n^{(1)})\geq \alpha.
	\end{equation}
	Then we have for any $0\leq i\neq j\leq(n+1)^2-1$, that
	$$d_{H_n}^H(w^{(i)}_n,w^{(j)}_n)\geq(1-\frac{6}{n^2})^2\alpha.$$
\end{lemma}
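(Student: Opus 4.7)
My plan is to expand both $w_n^{(i)}$ and $w_n^{(j)}$ in terms of their constituent copies of $z_n^{(i)}$ and $z_n^{(j)}$ and reduce the Hamming comparison, position by position, to shifted comparisons of $v_n^{(1)}$ with itself governed by the hypothesis \eqref{eq:lemUesAssume}. Set $L_i := h_n^{(1)} + iH_{n-1}$ and $L_j := h_n^{(1)} + jH_{n-1}$. By \eqref{eq:initialEquation}, $H_{n-1}$ divides $h_n^{(1)}$ and hence both $L_i$ and $L_j$, and $H_n = \prod_{k=0}^{2(n+1)^6-1}L_k$ is in particular a multiple of $\operatorname{lcm}(L_i,L_j)$. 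I will therefore regard $w_n^{(i)}$ as $H_n/L_i$ back-to-back copies of $z_n^{(i)}$, and similarly for $w_n^{(j)}$.

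For a position $p\in[0,H_n)$ lying in the $m_i$-th block of $w_n^{(i)}$, write $r_{m_i} := m_iL_i \bmod L_j$, which is a multiple of $H_{n-1}$ since both $L_i$ and $L_j$ are. A short case analysis of block alignments shows that the non-spacer subinterval $[m_iL_i, m_iL_i + h_n^{(1)})$ intersects the non-spacer parts of the overlapping $w_n^{(j)}$-blocks in at most two intervals: one of length $h_n^{(1)}-r_{m_i}$ (when $r_{m_i}<h_n^{(1)}$) on which the two words read $v_n^{(1)}\upharpoonright[0,h_n^{(1)}-r_{m_i})$ and $\sh^{r_{m_i}}(v_n^{(1)})\upharpoonright[0,h_n^{(1)}-r_{m_i})$, and one of length $r_{m_i}-jH_{n-1}$ (when $r_{m_i}>jH_{n-1}$) on which the relevant shift is $L_j-r_{m_i}$. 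Whenever this shift lies in $[H_{n-1},(1-1/(n-1)^6)h_n^{(1)}]$, the hypothesis \eqref{eq:lemUesAssume} applies directly to that overlap interval and yields Hamming distance at least $\alpha$ there.

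Summing these contributions, $d_{H_n}^H(w_n^{(i)},w_n^{(j)})\geq \alpha|B|/H_n$, where $B\subseteq[0,H_n)$ consists of the positions at which both words are in their non-spacer parts and the associated shift is admissible. Since $H_n$ is a multiple of $\operatorname{lcm}(L_i,L_j)$, a Chinese-remainder-theorem count shows that the fraction of non-spacer/non-spacer positions equals $(h_n^{(1)}/L_i)(h_n^{(1)}/L_j)$ up to a negligible boundary correction, and using $h_n^{(1)}\geq(n+1)^{21}H_{n-1}$ and $i,j\leq(n+1)^2-1$ from \eqref{eq:initialEquation} this product is at least $(1-6/n^2)^2$. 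The inadmissible positions split into the realignment blocks with $r_{m_i}=0$ (numbering at most $H_n\gcd(L_i,L_j)/L_j\leq H_n/(n+1)^{19}$) and the short overlaps produced when $r_{m_i}$ is within $h_n^{(1)}/(n-1)^6$ of $h_n^{(1)}$ or of $jH_{n-1}$ (contributing at most $O(H_n/(n-1)^{12})$ positions); both losses are absorbed into the $(1-6/n^2)^2$ bound, giving $d_{H_n}^H(w_n^{(i)},w_n^{(j)})\geq(1-6/n^2)^2\alpha$.

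The main obstacle is the combinatorial bookkeeping in the second paragraph: one must verify that the two block-pair overlap intervals are disjoint and jointly exhaust the non-spacer/non-spacer contribution of block $m_i$, that $r_{m_i}$ is always a multiple of $H_{n-1}$ so that the two shifts match the hypothesis parameter $t_n^{(1)}H_{n-1}$ exactly, and that the effectively uniform distribution of $r_{m_i}$ over its allowed residues transfers cleanly from block indices to individual positions $p$. Everything else reduces to elementary counting that is made small by the scale separation $H_{n-1}\ll h_n^{(1)}$ forced by \eqref{eq:initialEquation}.
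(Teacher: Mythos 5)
Your proof is correct and takes essentially the same block-by-block route as the paper, differing only in bookkeeping: you anchor the count at blocks of $w_n^{(i)}$ and treat both overlap intervals (with shifts $r_{m_i}$ and $L_j-r_{m_i}$), computing the good fraction by a Chinese-remainder count over one common period, while the paper anchors at block starts of $w_n^{(j)}$ (the sets $\mathcal{M}$ and $\mathcal{K}$) and estimates the proportion of aligned $v_n^{(1)}$-positions per $L_i$-block. Both arguments turn on the same two observations: the relative block offset is always a multiple of $H_{n-1}$, so \eqref{eq:lemUesAssume} applies on all but an $O((n-1)^{-6})$-fraction of the aligned positions, and the remaining spacer and realignment losses are negligible by the scale separation forced by \eqref{eq:initialEquation}.
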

\begin{proof}
Without loss of generality, we may assume that $i<j$. Denote $$\mathcal{M}=\{m\in[0,\frac{H_n}{H_{n-1}}]\cap\mathbb{Z}:\text{$\sh^{mH_{n-1}}(w_n^{(j)})$ starts with $z_n^{(j)}$}\},$$
$$\mathcal{NRM}=\{m\in[0,\frac{H_n}{H_{n-1}}]\cap\mathbb{Z}:\text{$w_n^{(i)}$ and $w_n^{(j)}$ are not realigned at $m$}\},$$
then we have
\begin{equation}\label{eq:uselem1}
\operatorname{Card}(\mathcal{M})=\frac{H_n}{h_n^{(1)}+jH_{n-1}}.
\end{equation}
Moreover, if $w_n^{(i)}$ and $w_n^{(j)}$ are realigned at both $r_1$ and $r_2$ with $r_1<r_2$, then we have
\begin{equation}\label{eq:uselem2}
\operatorname{Card}(\mathcal{M}\cap[r_1,r_2])\geq \frac{h_n^{(1)}}{2(n+1)^6H_{n-1}}.
\end{equation}
We obtain that
\begin{equation}\label{eq:NRM}
\operatorname{Card}(\mathcal{NRM})\geq(1-\frac{4(n+1)^6H_{n-1}}{h_n^{(1)}})\operatorname{Card}(\mathcal{M}).
\end{equation}

Notice that if $\sh^{mH_{n-1}}(w_n^{(j)})$ starts with $z_n^{(j)}$, then there is a unique $k_m\in\mathbb{Z}$ corresponding to $m$ such that $z_n^{(j)}$ is $k_m$-th $z_n^{(j)}$ in  $w_n^{(j)}$. Then denote by $\mathcal{K}$ the set
\begin{equation*}
\begin{aligned}
\{k_m:\, &\text{$w_n^{(i)}$ and $w_n^{(j)}$ are not realigned at $m$},\\
&a_{k_m}=k_m(j-i)H_{n-1}\mod(h_n^{(1)}+iH_{n-1}) \text{ satisfies } a_{k_m}\in[1,(1-\frac{1}{(n-1)^6})h_n^{(1)}]\}.
\end{aligned}
\end{equation*}
Assumption \eqref{eq:lemUesAssume} implies for any $m\in\mathcal{M}$, that if $k_m\in\mathcal{K}$, then we have
\begin{equation}\label{eq:uselem3}		d_{h_n^{(1)}+iH_{n-1}}^H(\sh^{mH_{n-1}}(w_n^{(i)}),\sh^{mH_{n-1}}(w_n^{(j)}))\geq\frac{h_n^{(1)}-2(n+1)^6H_{n-1}}{h_n^{(1)}+2(n+1)^6H_{n-1}}\alpha.
\end{equation}
where $\frac{h_n^{(1)}-2(n+1)^6H_{n-1}}{h_n^{(1)}+2(n+1)^6H_{n-1}}$ is a lower bound of the proportion of $v_n^{(1)}$ in $w_n^{(i)}$ aligned with a shifted $v_n^{(1)}$ in $w_n^{(j)}$.

Combining \eqref{eq:NRM} and the definition of $\mathcal{K}$, we obtain
\begin{equation}\label{eq:uselem4}
			\frac{\operatorname{Card}(\mathcal{K})}{\operatorname{Card}(\mathcal{M})}\geq1-\frac{1}{(n-1)^6}-\frac{4(n+1)^6H_{n-1}}{h_n^{(1)}}\geq1-\frac{6}{n^2},
\end{equation}
where the summand $-\frac{1}{(n-1)^6}$ comes from the condition $a_{k_m}\in[1,(1-\frac{1}{(n-1)^6})h_n^{(1)}]$.

	Since each $k$ corresponds to a unique $m$, the estimates in \eqref{eq:uselem3}, of the cardinality of realigned $m$, and in \eqref{eq:uselem4} imply
	\begin{equation}
		\begin{aligned}
			d_{H_n}^H(w_n^{(i)},w_n^{(j)})&\geq\frac{\operatorname{Card}(\mathcal{K})}{\operatorname{Card}(\mathcal{M})}\frac{h_n^{(1)}-2(n+1)^6H_{n-1}}{h_n^{(1)}+2(n+1)^6H_{n-1}}\alpha\\
			&\geq(1-\frac{6}{n^2})(1-\frac{2}{(n+1)^{14}})\alpha\\
			&\geq(1-\frac{6}{n^2})^2\alpha.
		\end{aligned}
	\end{equation}

\end{proof}

The following lemma is the first step to establish the estimate of lower slow entropy. Recall that $\frac{h_n^{(1)}}{H_{n-1}}$ and $\frac{h_n^{(2)}}{H_{n-1}}$ are positive integers due to \eqref{eq:initialEquation}.
\begin{lemma}\label{lem:inductionHamming}
	For any $n\geq n_0$, $1\leq t_n^{(1)}\leq(1-\frac{1}{(n-1)^2})\frac{h_n^{(1)}}{H_{n-1}}$, $1\leq t_n^{(2)}\leq(1-\frac{1}{n^2})\frac{h_n^{(2)}}{H_{n-1}}$ and $1\leq t_n^{(3)}\leq\frac{h_n^{(2)}-h_n^{(1)}}{H_{n-1}}$, let
\begin{enumerate}
\item $I_n^{(1)}$ be the interval of indices in the overlap of $v_n^{(1)}$ and $\sh^{t_n^{(1)}H_{n-1}}(v_n^{(1)})$,
\item $I_n^{(2)}$ be the interval of indices in the overlap of $v_n^{(2)}$ and $\sh^{t_n^{(2)}H_{n-1}}(v_n^{(2)})$,
\item $I_n^{(3)}$ be the interval of indices in the overlap of $v_n^{(1)}$ and $\sh^{t_n^{(3)}H_{n-1}}(v_n^{(2)})$.
\end{enumerate}
Denote the length of $I_n^{(s)}$ by $L_n^{(s)}$ for $s=1,2,3$. Then we have for $0\leq i\neq j\leq 2(n+1)^6-1$:
\begin{equation}\label{eq:ineq1}
\begin{aligned}
&d^H_{L_n^{(1)}}(v_n^{(1)}\upharpoonright I_n^{(1)},\sh^{t_n^{(1)}H_{n-1}}(v_n^{(1)})\upharpoonright I_n^{(1)})\geq \prod_{m=n_0}^{2n}(1-\frac{1600}{m^2})^5,\\
\end{aligned}
\end{equation}	
\begin{equation}\label{eq:ineq2}
\begin{aligned}
&d^H_{L_n^{(2)}}(v_n^{(2)}\upharpoonright I_n^{(2)},\sh^{t_n^{(2)}H_{n-1}}(v_n^{(2)})\upharpoonright I_n^{(2)})\geq \prod_{m=n_0}^{2n}(1-\frac{1600}{m^2})^5,\\
\end{aligned}
\end{equation}
\begin{equation}\label{eq:ineq3}
\begin{aligned}
&d_{h_n^{(1)}}^H(v_n^{(1)}\upharpoonright I_n^{(3)},\sh^{t_n^{(3)}H_{n-1}}(v_n^{(2)})\upharpoonright I_n^{(3)})\geq \prod_{m=n_0}^{2n}(1-\frac{1600}{m^2})^5,\\
\end{aligned}
\end{equation}
\begin{equation}\label{eq:ineq4}
\begin{aligned}
&d_{H_n}^H(w^{(i)}_n,w^{(j)}_n)\geq\prod_{m=n_0}^{2n+1}(1-\frac{1600}{m^2})^5.
\end{aligned}
\end{equation}
\end{lemma}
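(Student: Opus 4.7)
The plan is to prove all four inequalities by simultaneous induction on $n\ge n_0$. At each level, inequality (4) is extracted from inequality (1) at the same level by a direct application of Lemma \ref{lem:useTool}: the loss factor $(1-6/n^2)^2$ comfortably majorizes the one-step product increment $(1-1600/(2n+1)^2)^5$, so once (1) is known, (4) is automatic. The real work is therefore to promote (1), (2), (3) from level $n$ to level $n+1$, and I will do this by exploiting the $H_n$-block structure of $v_{n+1}^{(1)}$ and $v_{n+1}^{(2)}$ together with the combinatorial Proposition \ref{prop:prob} that governs the block-index assignments chosen in steps \ref{item:step5} and \ref{item:step6} of the construction.

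For the base case $n=n_0$, the partition $\mathcal{P}_{n_0}$ in \eqref{eq:finiteRankPartition} assigns a distinct symbol to every level of $S_{n_0}^{(1)}$ and of $S_{n_0}^{(2)}$, and the two level-alphabets are disjoint. Every symbol of $v_{n_0}^{(1)}$ and of $v_{n_0}^{(2)}$ is therefore unique within its own string, and the two strings share no symbols, so any nontrivial shift produces Hamming distance exactly $1$ on the overlap. Inequalities (1), (2), (3) hold trivially, and Lemma \ref{lem:useTool} yields (4).

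For the inductive step, decompose
\[
v_{n+1}^{(1)} = v_n^{(2)}\cdot A_n^{(0)}\cdots A_n^{(g_n(n+1)^6-1)}, \qquad v_{n+1}^{(2)} = E_n^{(0)}\cdots E_n^{(N_n-1)},
\]
where each $A_n^{(k)}$ is the length-$g_n(n+1)^6$ concatenation of blocks $w_n^{(j)}$ with $j\in[(n+1)^6,2(n+1)^6-1]$ prescribed by the word $a_n^{(k)}$, and each $E_n^{(k)}$ is the length-$R_n$ concatenation of blocks $w_n^{(i)}$ with $i\in[0,(n+1)^6-1]$ prescribed by $e_n^{(k)}$. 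Because $H_{n-1}\mid H_n$, any shift by $tH_n$ respects the $H_n$-block structure. For each of (1), (2), (3) the overlap then splits into a bounded number of regimes: an initial ``$v_n^{(2)}$-vs-$v_n^{(2)}$'' regime (only in (1), controlled by inequality (2) at level $n$), one or two short boundary ``$v_n^{(\ast)}$-vs-$w_n^{(\ast)}$'' regimes (controlled by inequality (3) at level $n$ together with automatic disagreement between spacer and non-spacer symbols), and a dominant ``tail-vs-tail'' regime of aligned $w_n^{(\ast)}$ blocks. In the tail regime, Proposition \ref{prop:prob}'s property (2) together with its self-shift form \eqref{eq:selfSliding}, applied on the block-index alphabet of size $(n+1)^6$ with $\gamma=1/(n+1)^{12}$, forces the aligned block-index pairs to disagree at a proportion $\ge 1-2/(n+1)^6$ of the positions; at each such disagreeing pair, inequality (4) at level $n$ contributes Hamming distance $\ge \prod_{m=n_0}^{2n+1}(1-1600/m^2)^5$. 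For (3) at level $n+1$ the block-alphabets of the tail of $v_{n+1}^{(1)}$ and of $v_{n+1}^{(2)}$ are disjoint, so every aligned tail block-pair is automatically distinct and no combinatorial loss arises in the tail. Multiplying the combinatorial factor $1-2/(n+1)^6$ by the boundary-loss factor $1-O(1/n^2)$ and by the inductive bound produces at level $n+1$ a Hamming-distance lower bound of $\prod_{m=n_0}^{2n+2}(1-1600/m^2)^5$, after which Lemma \ref{lem:useTool} delivers (4) at level $n+1$.

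The main obstacle will be uniform bookkeeping of the boundary regimes: shifts $tH_n$ that are not multiples of $g_n(n+1)^6H_n$ (respectively $R_nH_n$) cause an $H_n$-block in one string to straddle two adjacent $a_n^{(k)}$- (respectively $e_n^{(k)}$-) words in the shifted string, so Proposition \ref{prop:prob} must be invoked in both its cross-word and self-shift forms on partial overlaps, each requiring overlap length at least $\gamma\cdot g_n(n+1)^6$ (respectively $\gamma R_n$) to be applicable. Verifying that the proportion of block-positions for which neither form applies stays below the $1/n^2$ buffer built into the hypotheses on $t_{n+1}^{(s)}$ is the delicate quantitative step, and is made possible by the choice $\gamma=1/(n+1)^{12}$ together with the exact uniformity property (1) of Proposition \ref{prop:prob}.
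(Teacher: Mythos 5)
Your proposal follows essentially the same route as the paper: simultaneous induction over all four inequalities, the base case at $n_0$ trivialized by the disjointness of symbol alphabets in $\mathcal{P}_{n_0}$, inequality (4) extracted from (1) at the same level via Lemma \ref{lem:useTool}, and the inductive step driven by splitting the overlap according to the decomposition $v_{n+1}^{(1)}=v_n^{(2)}\bar v_n$ and the $H_n$-block structure, with the three sub-regimes controlled respectively by (2) at level $n$, by (3) at level $n$ (plus the fact that non-$v_n^{(1)}$ spacer positions occupy a negligible proportion of each $w_n^{(k)}$), and by Proposition \ref{prop:prob}(2)/\eqref{eq:selfSliding} on the block-index alphabet of size $(n+1)^6$ combined with (4) at level $n$. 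These are exactly the ingredients of the paper's argument.

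One conceptual misstep worth flagging is your description of the ``$v_n^{(*)}$-vs-$w_n^{(*)}$'' regime as ``one or two short boundary regimes.'' This is not accurate and leads you to the wrong picture of how the final bound is assembled. In the paper's proof of \eqref{eq:ineq1}, the interval $I_{\ell+1}^{(1,2)}$ (the overlap of $v_\ell^{(2)}$ with the shifted $\bar v_\ell$) is \emph{not} a small transition piece; in the case $L_{\ell+1}^{(1,1)}<\tfrac{1}{\ell^2}h_\ell^{(2)}$ it can occupy (essentially) the whole overlap, and in the proof of \eqref{eq:ineq3} the analogous region $I_{\ell+1}^{(3,1)}$ always has the full length $h_\ell^{(2)}$. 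Consequently the final lower bound is obtained as a length-weighted average $\tfrac{L^{(1,1)}}{L}d_1+\tfrac{L^{(1,2)}}{L}d_2+\tfrac{L^{(1,3)}}{L}d_3$ with an explicit case analysis on which of $L^{(1,1)},L^{(1,2)},L^{(1,3)}$ are above or below the threshold $\tfrac{1}{\ell^2}h_\ell^{(2)}$ (and on the constraint $L^{(1,2)}\le \ell(\bar v_\ell)$), not as the single product ``combinatorial factor $\times$ boundary loss $\times$ inductive bound'' you sketch, which presupposes that the block-vs-block tail dominates. The controlling inequality you identify for that regime --- (3) at level $n$ --- is the right one, so once the regime sizing is corrected the proof goes through, but the case analysis is not an afterthought: it is precisely where the $\tfrac{1}{n^2}$ buffer in the hypotheses $t_{n+1}^{(s)}\le(1-\tfrac{1}{n^2})\cdot(\dots)$ and the bound $\ell(\bar v_\ell)/h_\ell^{(2)}\in(\tfrac{1}{400},\tfrac{1}{20})$ are consumed, and omitting it leaves the argument incomplete rather than merely informal.
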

\begin{proof}
	We formulate our proof by induction.
	
	\paragraph{$n=n_0$:} By the definition of $\mathcal{P}_{n_0}$, $n_0\geq3$ and $v^{(s)}_{n_0}$ for $s=1,2$, we have for $1\leq t_{n_0}^{(1)}\leq(1-\frac{1}{({n_0}-1)^2})\frac{h_{n_0}^{(1)}}{H_{n_0-1}}$ and $1\leq t_{n_0}^{(2)}\leq(1-\frac{1}{{n_0}^2})\frac{h_{n_0}^{(2)}}{H_{n_0-1}}$:	$$d^H_{L_{n_0}^{(s)}}(v_{n_0}^{(s)}\upharpoonright I_{n_0}^{(s)},\sh^{t_{n_0}^{(s)}H_{n_0-1}}(v_{n_0}^{(s)})\upharpoonright I_{n_0}^{(s)})=1.$$
Moreover, the definition of $\mathcal{P}_{n_0}$ implies that for $1\leq t_{n_0}^{(3)}\leq\frac{h_{n_0}^{(2)}-h_{n_0}^{(1)}}{H_{n_0-1}}$, we have
$$d^H_{h_{n_0}^{(1)}}(v_{n_0}^{(1)}\upharpoonright I_{n_0}^{(3)},\sh^{t_{n_0}^{(3)}H_{n_0-1}}(v_{n_0}^{(2)})\upharpoonright I_{n_0}^{(3)})=1.$$

	By Lemma \ref{lem:useTool} with $n=n_0$, we obtain for $0\leq i\neq j\leq2(n_0+1)^6-1$ $$d_{H_{n_0}}^{H}(w^{(i)}_{n_0},w^{(j)}_{n_0})\geq(1-\frac{6}{n_0})^2.$$

	\paragraph{\eqref{eq:ineq2} for $n=\ell+1$:} Suppose our lemma holds for any $n_0\leq n\leq\ell$, then we obtain
	\begin{equation}\label{eq:wInduction1} d_{H_\ell}^H(w_\ell^{(i)},w_\ell^{(j)})\geq\prod_{m=n_0}^{2\ell+1}(1-\frac{1600}{m^2})^5
	\end{equation}
	for any $0\leq i<j\leq2(\ell+1)^6-1$ by induction  assumption. Recall that $v_{\ell+1}^{(2)}$ only depends on $w_\ell^{(0)},\ldots,w_\ell^{((\ell+1)^6-1)}$ by step \ref{item:step6}. Denote the $N_{\ell}$ different words obtained at step \ref{item:step6} by $u_{\ell}^{(1)},\ldots,u_{\ell}^{(N_{\ell})}$, then Proposition \ref{prop:prob} (2) and \eqref{eq:wInduction1} imply that
	\begin{equation}\label{eq:differentWord}
		d_{R_{\ell}H_{\ell}}^H(u_\ell^{(i)},u_\ell^{(j)})\geq \left(1-\frac{2}{(\ell+1)^6}\right) \cdot \prod_{m=n_0}^{2\ell+1}(1-\frac{1600}{m^2})^5
	\end{equation}
	for any $0\leq i\neq j\leq N_{\ell}-1$ and $R_{\ell}$ defined in step \ref{item:step6}.
	Fix $0\leq i,j\leq N_{\ell}-1$ and recall the conditions on $R_\ell$ in \eqref{eq:heightHn23}. Moreover, let $t_\ell^{(4)}\in[1,(1-\frac{1}{(\ell+1)^6})R_{\ell}]\cap\mathbb{Z}$ and $I_{\ell}^{(4)}$ be the interval of indices in the overlap of $u_\ell^{(i)}$ and $\sh^{t_{\ell}^{(4)}H_{\ell}}(u_\ell^{(j)})$, whose length is denoted by $L_\ell^{(4)}$, then $L_\ell^{(4)}\geq\frac{1}{(\ell+1)^6}R_\ell H_{\ell}$, \eqref{eq:wInduction1} and Proposition \ref{prop:prob} (2) imply
	\begin{equation}\label{eq:wInduction2}
		\begin{aligned}
			&d^H_{L_\ell^{(4)}}(u_\ell^{(i)}\upharpoonright I_\ell^{(4)},\sh^{t_\ell^{(4)}H_{\ell}}(u_\ell^{(j)})\upharpoonright I_\ell^{(4)})\geq(1-\frac{2}{(\ell+1)^6}) \prod_{m=n_0}^{2\ell+1}(1-\frac{1600}{m^2})^5.
		\end{aligned}
	\end{equation}
	Recall that $S_{\ell+1}^{(2)}$ is constructed by stacking words $u_\ell^{(1)},\ldots,u_\ell^{(N_\ell)}$ above each other and $I_{\ell+1}^{(2)},L_{\ell+1}^{(2)}$ are defined by induction as in the statement of the lemma. Then $1\leq t_{\ell+1}^{(2)}\leq(1-\frac{1}{(\ell+1)^2})\frac{h_{\ell+1}^{(2)}}{H_{\ell}}$ implies $L_{\ell+1}^{(2)}\geq\frac{1}{(\ell+1)^2}h_{\ell+1}^{(2)}$, which together with \eqref{eq:differentWord} and \eqref{eq:wInduction2} guarantee
	\begin{equation}
		\begin{aligned}			
&d_{L_{\ell+1}^{(2)}}^H(v_{\ell+1}^{(2)}\upharpoonright I_{\ell+1}^{(2)},\sh^{t_{\ell+1}^{(2)}H_{\ell}}(v_{\ell+1}^{(2)})\upharpoonright I_{\ell+1}^{(2)})\\
&\geq (1-\frac{2}{(\ell+1)^6})(1-\frac{2}{(\ell+1)^6}) \prod_{m=n_0}^{2\ell+1}(1-\frac{1600}{m^2})^5\\ &\geq\prod_{m=n_0}^{2\ell+2}(1-\frac{1600}{m^2})^5,
		\end{aligned}
	\end{equation}
where the first factor comes from the case that $t_{\ell+1}^{(2)}H_{\ell} \mod R_\ell H_{\ell}$ is too small or too large. As a result, we complete the induction step for \eqref{eq:ineq2} of our lemma.

\paragraph{\eqref{eq:ineq1} for $n=\ell+1$:} Denote the $g_{\ell}(\ell+1)^6$ different words obtained at step \ref{item:step5} with $n=\ell$ as $\bar{u}_\ell^{(0)},\ldots,\bar{u}_{\ell}^{(g_{\ell}(\ell+1)^6-1)}$. Recall that step \ref{item:step5} implies
\begin{equation}\label{eq:v1}
v_{\ell+1}^{(1)}=v_{\ell}^{(2)}\bar{v}_{\ell},
\end{equation}
	where $\bar{v}_{\ell}=\bar{u}_\ell^{(0)}\ldots\bar{u}_{\ell}^{(g_{\ell}(\ell+1)^6-1)}$. Then for any $1\leq t_{\ell+1}^{(1)}\leq(1-\frac{1}{l^2})\frac{h_{\ell+1}^{(1)}}{H_{\ell}}$ and for $I_{\ell+1}^{(1)},L_{\ell+1}^{(1)}$ defined as in the statement of the lemma for $v_{\ell+1}^{(1)}$ and $\sh^{t_{\ell+1}^{(1)}H_{\ell}}(v_{\ell+1}^{(1)})$, we decompose $I_{\ell+1}^{(1)}$ as the union three intervals $I_{\ell+1}^{(1,1)},I_{\ell+1}^{(1,2)},I_{\ell+1}^{(1,3)}$, where
\begin{enumerate}[label=(\alph*)]
		\item $I_{\ell+1}^{(1,1)}$ is the interval of indices in the overlap of $v_\ell^{(2)}$ and $\sh^{t_{\ell+1}^{(1)}H_{\ell}}(v_\ell^{(2)})$, whose length is denoted by $L_{\ell+1}^{(1,1)}$;
		\item $I_{\ell+1}^{(1,2)}$ is the interval of indices in the overlap of $v_\ell^{(2)}$ and $\sh^{t_{\ell+1}^{(1)}H_{\ell}-h_{\ell}^{(2)}}(\bar{v}_{\ell})$, whose length is denoted by $L_{\ell+1}^{(1,2)}$;
		\item $I_{\ell+1}^{(1,3)}$ is the interval of indices in the overlap of $\sh^{-h_{\ell}^{(2)}}(\bar{v}_{\ell})$ and $\sh^{t_{\ell+1}^{(1)}H_{\ell}-h_{\ell}^{(2)}}(\bar{v}_{\ell})$, whose length is denoted by $L_{\ell+1}^{(1,3)}$.
\end{enumerate}
We examine the situation on each of these three intervals:
\begin{enumerate}
\item If  $L_{\ell+1}^{(1,1)}\geq\frac{1}{\ell^2}h_{\ell}^{(2)}$, then by induction assumption \eqref{eq:ineq2} and $H_{\ell-1}|H_{\ell}$, we have
	\begin{equation}\label{eq:induction1} d_{L_{\ell+1}^{(1,1)}}^H(v_\ell^{(2)}\upharpoonright I_{\ell+1}^{(1,1)},\sh^{t_{\ell+1}^{(1)}H_{\ell}}(v_\ell^{(2)})\upharpoonright I_{\ell+1}^{(1,1)})\geq\prod_{m=n_0}^{2\ell}(1-\frac{1600}{m^2})^5.
	\end{equation}
\item If  $L_{\ell+1}^{(1,2)}\geq\frac{1}{\ell^2}h_{\ell}^{(2)}$, then \eqref{eq:heightHn23} together with $H_{\ell}|h_{\ell}^{(2)}$ by \eqref{eq:initialEquation} yields
\begin{equation}\label{eq:hamEst1}
\begin{aligned}
&\sh^{t_{\ell+1}^{(1)}H_{\ell}-h_{\ell}^{(2)}}(\bar{v}_{\ell})\upharpoonright I_{\ell+1}^{(1,2)}=\tilde{w}_{\ell}^{(0)}\tilde{w}_{\ell}^{(1)}\ldots \tilde{w}_{\ell}^{(p)},\\
&\text{each } \tilde{w}_{\ell}^{(k)}\text{ is equal to one of }w_{\ell}^{((\ell+1)^6)},\ldots,w_{\ell}^{(2(\ell+1)^6-1)},\\
&p\geq(\ell+1)^6.\\
\end{aligned}
\end{equation}
Since the heights of spacers that we add in step \ref{item:step2} are multiples of $H_{\ell-1}$, for any $1\leq k\leq p-1$ every $v_{\ell}^{(1)}$ substring of $\tilde{w}_{\ell}^{(k)}$ is Hamming-matching to  $\sh^{t_{\ell}^{(5)}H_{\ell-1}}(v_\ell^{(2)})\upharpoonright[0,h_{\ell}^{(1)}-1]$ for some $1\leq t_{\ell}^{(5)}\leq\frac{h_{\ell}^{(2)}-h_{\ell}^{(1)}}{H_{\ell-1}}$.
Recall that by induction assumption \eqref{eq:ineq3} we have for any $1\leq t_{\ell}^{(5)}\leq\frac{h_{\ell}^{(2)}-h_{\ell}^{(1)}}{H_{\ell-1}}$ that
\begin{equation}
d_{h_{\ell}^{(1)}}^H(v_\ell^{(1)}\upharpoonright I_\ell^{(5)},\sh^{t_\ell^{(5)}H_{\ell-1}}(v_\ell^{(2)})\upharpoonright I_\ell^{(5)})\geq \prod_{m=n_0}^{2\ell}(1-\frac{1600}{m^2})^5,
\end{equation}
where $I_{\ell}^{(5)}$ is the interval of indices in the overlap of $v_{\ell}^{(1)}$ and $\sh^{t_{\ell}^{(5)}H_{\ell-1}}(v_{\ell}^{(2)})$. Moreover, if we remove all $v_{\ell}^{(1)}$ substrings from $\tilde{w}_{\ell}^{(k)}$, there is a proportion of at most $\frac{2(\ell+1)^6H_{\ell-1}}{h_\ell^{(1)}+2(\ell+1)^6H_{\ell-1}}$ left, which is less than $\frac{1}{(\ell+1)^{14}}$ by \eqref{eq:initialEquation}. Combining all these observations, we obtain that
\begin{equation}\label{eq:induction22}
	\begin{aligned}
&d^H_{L_{\ell+1}^{(1,2)}}(v_{\ell}^{(2)}\upharpoonright I_{\ell+1}^{(1,2)},\sh^{t_{\ell+1}^{(1)}H_{\ell}-h_{\ell}^{(2)}}(\bar{v}_{\ell})\upharpoonright I_{\ell+1}^{(1,2)})\\
&\geq(1-\frac{2}{(\ell+1)^6})(1-\frac{1}{(\ell+1)^{14}})\prod_{m=n_0}^{2\ell}(1-\frac{1600}{m^2})^5.
\end{aligned}
\end{equation}

\item If $L_{\ell+1}^{(1,3)}\geq\frac{1}{\ell^2}h_{\ell}^{(2)}$, then by step \ref{item:step5} and Proposition \ref{prop:prob} we have
\begin{equation}\label{eq:induction31}
\begin{aligned} d_{L_{\ell+1}^{(1,3)}}^H(\sh^{-h_{\ell}^{(2)}}(\bar{v}_\ell)\upharpoonright I_{\ell+1}^{(1,3)},\sh^{t_{\ell+1}^{(1)}H_{\ell}-h_{\ell}^{(2)}}(\bar{v}_{\ell})\upharpoonright I_{\ell+1}^{(1,3)}) \geq(1-\frac{2}{(\ell+1)^6})\prod_{m=n_0}^{2\ell+1}(1-\frac{1600}{m^2})^5.
\end{aligned}
\end{equation}
\end{enumerate}

Now we can finish the inductive proof of inequality \eqref{eq:ineq1} of our lemma. In order to simplify the notation, we define
\begin{equation}
\begin{aligned}
&d_1=d_{L_{\ell+1}^{(1,1)}}^H(v_\ell^{(2)}\upharpoonright I_{\ell+1}^{(1,1)},\sh^{t_{\ell+1}^{(1)}H_{\ell}}(v_\ell^{(2)})\upharpoonright I_{\ell+1}^{(1,1)}),\\ &d_2=d_{L_{\ell+1}^{(1,2)}}^H(v_\ell^{(2)}\upharpoonright I_{\ell+1}^{(1,2)},\sh^{t_{\ell+1}^{(1)}H_{\ell}-h_{\ell}^{(2)}}(\bar{v}_{\ell})\upharpoonright I_{\ell+1}^{(1,2)}),\\ &d_3=d_{L_{\ell+1}^{(1,3)}}^H(\sh^{-h_{\ell}^{(2)}}(\bar{v}_\ell)\upharpoonright I_{\ell+1}^{(1,3)},\sh^{t_{\ell+1}^{(1)}H_{\ell}-h_{\ell}^{(2)}}(\bar{v}_{\ell})\upharpoonright I_{\ell+1}^{(1,3)}).
\end{aligned}
\end{equation}
Then we have
\begin{equation}\label{eq:dSummary}
d_{L_{\ell+1}^{(1)}}^H(v_{\ell+1}^{(1)}\upharpoonright I_{\ell+1}^{(1)},\sh^{t_{\ell+1}^{(1)}H_{\ell}}(v_{\ell+1}^{(1)})\upharpoonright I_{\ell+1}^{(1)})=\frac{L_{\ell+1}^{(1,1)}}{L_{\ell+1}^{(1)}}d_1+\frac{L_{\ell+1}^{(1,2)}}{L_{\ell+1}^{(1)}}d_2+\frac{L_{\ell+1}^{(1,3)}}{L_{\ell+1}^{(1)}}d_3.
\end{equation}
We have the following cases:
\begin{enumerate}
  \item If $L_{\ell+1}^{(1,1)}\in[0,\frac{1}{\ell^2}h_{\ell}^{(2)})$, then we have $L_{\ell+1}^{(1,3)}=0$, which is due to $\ell\geq1000$ and $\frac{l(\bar{v}_\ell)}{h_{\ell}^{(2)}}\in(\frac{1}{400},\frac{1}{20})$ by \eqref{eq:initialEquation} and step \ref{item:step2}. Together with our assumption $L_{\ell+1}^{(1)}=L_{\ell+1}^{(1,1)}+L_{\ell+1}^{(1,2)}+L_{\ell+1}^{(1,3)}\geq\frac{1}{\ell^2}h_{\ell+1}^{(1)}$, this implies that we are in one of the following two cases:
      \begin{enumerate}
        \item $L_{\ell+1}^{(1,1)}=0$ and $L_{\ell+1}^{(1,2)}=L_{\ell+1}^{(1)}\geq\frac{1}{\ell^2}h_{\ell+1}^{(1)}>\frac{1}{\ell^2}h_{\ell}^{(2)}$;
        \item $L_{\ell+1}^{(1,1)}>0$ and $L_{\ell+1}^{(1,2)}=l(\bar{v}_\ell)>\frac{1}{400}h_{\ell}^{(2)}>\frac{1}{\ell^2}h_{\ell}^{(2)}$.
      \end{enumerate}
       As a result, \eqref{eq:dSummary} and \eqref{eq:induction22} guarantee that
      \begin{equation}\label{eq:case1}
      \begin{aligned}
&d_{L_{\ell+1}^{(1)}}^H(v_{\ell+1}^{(1)}\upharpoonright I_{\ell+1}^{(1)},\sh^{t_{\ell+1}^{(1)}H_{\ell}}(v_{\ell+1}^{(1)})\upharpoonright I_{\ell+1}^{(1)})\\
\geq&\frac{L_{\ell+1}^{(1,2)}}{L_{\ell+1}^{(1)}}(1-\frac{2}{(\ell+1)^6})(1-\frac{1}{(\ell+1)^{14}})\prod_{m=n_0}^{2\ell}(1-\frac{1600}{m^2})^5 \\
      \geq&\frac{\frac{1}{400}h_{\ell}^{(2)}}{(\frac{1}{400}+\frac{1}{\ell^2})h_{\ell}^{(2)}}(1-\frac{2}{(\ell+1)^6})(1-\frac{1}{(\ell+1)^{14}})\prod_{m=n_0}^{2\ell}(1-\frac{1600}{m^2})^5\\
      \geq&(1-\frac{400}{\ell^2})(1-\frac{2}{(\ell+1)^6})(1-\frac{1}{(\ell+1)^{14}})\prod_{m=n_0}^{2\ell}(1-\frac{1600}{m^2})^5\\
      \geq&\prod_{m=n_0}^{2\ell+2}(1-\frac{1600}{m^2})^5
      \end{aligned}
      \end{equation}
  \item If $L_{\ell+1}^{(1,1)}\geq\frac{1}{\ell^2}h_{\ell}^{(2)}$, then we have the following cases:
      \begin{enumerate}
      \item Suppose $L_{\ell+1}^{(1,2)}<\frac{1}{\ell^2}h_{\ell}^{(2)}$. Since $I_{\ell+1}^{(1,1)}, I_{\ell+1}^{(1,2)},I_{\ell+1}^{(1,3)}$ are consecutive intervals and $l(\bar{v}_{\ell})>\frac{1}{400}h_{\ell}^{(2)}$, we get $t_{\ell+1}^{(1)}H_{\ell}<\frac{1}{\ell^2}h_{\ell}^{(2)}$, which yields
          \begin{equation}\label{eq:Lesti}
          L_{\ell+1}^{(1,1)}\geq h_{\ell}^{(2)}-\frac{1}{\ell^2}h_{\ell}^{(2)},\ \ \ L_{\ell+1}^{(1,3)}=l(\bar{v}_{\ell})-L_{\ell+1}^{(1,2)}.
          \end{equation}
          Combining \eqref{eq:induction1} and \eqref{eq:Lesti}, we obtain
          \begin{equation*}
          \begin{aligned}
          \frac{L_{\ell+1}^{(1,1)}}{L_{\ell+1}^{(1,1)}+L_{\ell+1}^{(1,2)}}d_1\geq\frac{h_{\ell}^{(2)}-\frac{1}{\ell^2}h_{\ell}^{(2)}}{h_{\ell}^{(2)}}\prod_{m=n_0}^{2\ell}(1-\frac{1600}{m^2})^5=(1-\frac{1}{\ell^2})\prod_{m=n_0}^{2\ell}(1-\frac{1600}{m^2})^5,
          \end{aligned}
          \end{equation*}
          which together with \eqref{eq:dSummary} and \eqref{eq:induction31} gives
          \begin{equation}\label{eq:case2}
          d_{L_{\ell+1}^{(1)}}^H(v_{\ell+1}^{(1)}\upharpoonright I_{\ell+1}^{(1)},\sh^{t_{\ell+1}^{(1)}H_{\ell}}(v_{\ell+1}^{(1)})\upharpoonright I_{\ell+1}^{(1)})\geq\prod_{m=n_0}^{2\ell+2}(1-\frac{1600}{m^2})^5.
          \end{equation}
      \item Suppose $L_{\ell+1}^{(1,2)}\geq\frac{1}{\ell^2}h_{\ell}^{(2)}$. Then we have the following two subcases:
      \begin{enumerate}
        \item If $L^{(1,3)}_{\ell+1}\geq\frac{1}{\ell^2}h_{\ell}^{(2)}$, then \eqref{eq:dSummary}, \eqref{eq:induction1},  \eqref{eq:induction22} and \eqref{eq:induction31} imply
            \begin{equation}\label{eq:case3}
            d_{L_{\ell+1}^{(1)}}^H(v_{\ell+1}^{(1)}\upharpoonright I_{\ell+1}^{(1)},\sh^{t_{\ell+1}^{(1)}H_{\ell}}(v_{\ell+1}^{(1)})\upharpoonright I_{\ell+1}^{(1)})\geq\prod_{m=n_0}^{2\ell+2}(1-\frac{1600}{m^2})^5.
            \end{equation}
        \item If $L^{(1,3)}_{\ell+1}<\frac{1}{\ell^2}h_{\ell}^{(2)}$, then we have $L_{\ell+1}^{(1,2)}\geq l(\bar{v}_\ell)-\frac{1}{\ell^2}h_{\ell}^{(2)}$, which together with \eqref{eq:induction22} and $l(\bar{v}_\ell)>\frac{1}{400}h_{\ell}^{(2)}$ imply
            \begin{equation}\label{eq:inductionKey}
            \begin{aligned}
            &\frac{L_{\ell+1}^{(1,2)}}{L_{\ell+1}^{(1,2)}+L_{\ell+1}^{(1,3)}}d_2 \\
            \geq&\frac{l(\bar{v}_\ell)-\frac{1}{\ell^2}h_{\ell}^{(2)}}{l(\bar{v}_\ell)}(1-\frac{2}{(\ell+1)^6})(1-\frac{1}{(\ell+1)^{14}})\prod_{m=n_0}^{2\ell}(1-\frac{1600}{m^2})^5\\
            \geq&\prod_{m=n_0}^{2\ell+2}(1-\frac{1600}{m^2})^5.
            \end{aligned}
            \end{equation}
             Combining \eqref{eq:dSummary}, \eqref{eq:induction1} and \eqref{eq:inductionKey}, we obtain
            \begin{equation}\label{eq:case4}
            \begin{aligned}
            d_{L_{\ell+1}^{(1)}}^H(v_{\ell+1}^{(1)}\upharpoonright I_{\ell+1}^{(1)},\sh^{t_{\ell+1}^{(1)}H_{\ell}}(v_{\ell+1}^{(1)})\upharpoonright I_{\ell+1}^{(1)})\geq \prod_{m=n_0}^{2\ell+2}(1-\frac{1600}{m^2})^5.
            \end{aligned}
            \end{equation}
      \end{enumerate}
      \end{enumerate}

\end{enumerate}
Combining \eqref{eq:case1}, \eqref{eq:case2}, \eqref{eq:case3} and \eqref{eq:case4}, we complete the inductive proof of inequality \eqref{eq:ineq1} of our lemma.
	
\paragraph{\eqref{eq:ineq3} for $n=\ell+1$:}Suppose $1\leq t_{\ell+1}^{(3)}\leq\frac{h_{\ell+1}^{(2)}-h_{\ell+1}^{(1)}}{H_\ell}$, we define $I_{\ell+1}^{(3)}, L_{\ell+1}^{(3)}$ as in the statement of the lemma for $v_{\ell+1}^{(1)}$ and $\sh^{t_{\ell+1}^{(3)}H_{\ell}}(v_{\ell+1}^{(2)})$. Recall $v_{\ell+1}^{(1)}=v_{\ell}^{(2)}\bar{v}_{\ell}$ from \eqref{eq:v1}. We decompose $I_{\ell+1}^{(3)}$ as the union of two intervals $I_{\ell+1}^{(3,1)},I_{\ell+1}^{(3,2)}$, where
\begin{enumerate}
  \item $I_{\ell+1}^{(3,1)}$ is the interval of indices in the overlap of $v_{\ell}^{(2)}$ and $\sh^{t_{\ell+1}^{(3)}H_{\ell}}(v_{\ell+1}^{(2)})$, whose length is denoted by $L_{\ell+1}^{(3,1)}$;
  \item $I_{\ell+1}^{(3,2)}$ is the interval of indices in the overlap of $\sh^{-h_{\ell}^{(2)}}(\bar{v}_{\ell})$ and $\sh^{t_{\ell+1}^{(3)}H_{\ell}}(v_{\ell+1}^{(2)})$, whose length is denoted by $L_{\ell+1}^{(3,2)}$.
\end{enumerate}
Indeed, we have $L_{\ell+1}^{(3,1)}=h_{\ell}^{(2)}$ and $L_{\ell+1}^{(3,2)}$ is the length of $\bar{v}_{\ell}$.

Since we shift by multiples of $H_{\ell}$, step \ref{item:step6} and \eqref{eq:heightHn23} imply
\begin{equation}\label{eq:decompose1}
\begin{aligned}
&\sh^{t_{\ell+1}^{(3)}H_{\ell}}(v_{\ell+1}^{(2)})\upharpoonright I_{\ell+1}^{(3,1)}=\bar{w}_{\ell}^{(0)}\bar{w}_{\ell}^{(1)}\ldots \bar{w}_{\ell}^{(q)},\\
&\text{each } \bar{w}_{\ell}^{(k)}\text{ is equal to one of the }w_{\ell}^{(0)},\ldots,w_{\ell}^{((\ell+1)^6-1)},\\
&q\geq(\ell+1)^8.\\
\end{aligned}
\end{equation}
Since the heights of spacers that we add in step \ref{item:step2} are multiples of $H_{\ell-1}$, then for any $1\leq k\leq q-1$ every $v_{\ell}^{(1)}$ substring of $\bar{w}_{\ell}^{(k)}$ is Hamming-matching $\sh^{t_{\ell}^{(3)}H_{\ell-1}}(v_{\ell}^{(2)})\upharpoonright[0,h_{\ell}^{(1)}-1]$ for some $1\leq t_{\ell}^{(3)}\leq\frac{h_{\ell}^{(2)}-h_{\ell}^{(1)}}{H_{\ell-1}}$. By our induction assumption of \eqref{eq:ineq3} for $n=\ell$, we have for any $1\leq t_{\ell}^{(3)}\leq\frac{h_{\ell}^{(2)}-h_{\ell}^{(1)}}{H_{\ell-1}}$ that
\begin{equation}\label{eq:decompose2}
d_{h_{\ell}^{(1)}}^H(v_\ell^{(1)}\upharpoonright I_\ell^{(3)},\sh^{t_\ell^{(3)}H_{\ell-1}}(v_\ell^{(2)})\upharpoonright I_\ell^{(3)})\geq \prod_{m=n_0}^{2\ell}(1-\frac{1600}{m^2})^5,
\end{equation}
where $I_{\ell}^{(3)}$ is the interval of indices in the overlap of $v_{\ell}^{(1)}$ and $\sh^{t_{\ell}^{(3)}H_{\ell-1}}(v_{\ell}^{(2)})$. If we remove all $v_{\ell}^{(1)}$ substrings from $\bar{w}_{\ell}^{(k)}$, there is a proportion of at most $\frac{2(\ell+1)^6H_{\ell-1}}{h_\ell^{(1)}+2(\ell+1)^6H_{\ell-1}}$ left, which is less than $\frac{1}{(\ell+1)^{14}}$ by \eqref{eq:initialEquation}. Combining all these observations, we have
\begin{equation}\label{eq:shiftHam1}
d_{L_{\ell+1}^{(3,1)}}^H(v_{\ell+1}^{(1)}\upharpoonright I_{\ell+1}^{(3,1)},\sh^{t_{\ell+1}^{(3)}H_{\ell}}(v_{\ell+1}^{(2)})\upharpoonright I_{\ell+1}^{(3,1)})\geq(1-\frac{2}{(\ell+1)^8})(1-\frac{1}{(\ell+1)^{14}})\prod_{m=n_0}^{2\ell}(1-\frac{1600}{m^2})^5.
\end{equation}
Furthermore, we recall that $v_{\ell+1}^{(2)}$ is built with $w_{\ell}^{(0)},\ldots,w_{\ell}^{((\ell+1)^6-1)}$ while $\bar{v}_{\ell}$ is built with   $w_{\ell}^{((\ell+1)^6)},\ldots,w_{\ell}^{(2(\ell+1)^6-1)}$. Then \eqref{eq:wInduction1} and $H_{\ell}|h_{\ell}^{(2)}$ guarantee that
\begin{equation*}
d_{L_{\ell+1}^{(3,2)}}^H(\sh^{-h_{\ell}^{(2)}}(\bar{v}_{\ell})\upharpoonright I_{\ell+1}^{(3,2)},\sh^{t_{\ell+1}^{(3)}H_{\ell}}(v_{\ell+1}^{(2)})\upharpoonright I_{\ell+1}^{(3,2)})\geq \prod_{m=n_0}^{2\ell+1}(1-\frac{1600}{m^2})^5,
\end{equation*}
which is equivalent to
\begin{equation}\label{eq:shiftHam2}
d_{L_{\ell+1}^{(3,2)}}^H(v_{\ell+1}^{(1)}\upharpoonright I_{\ell+1}^{(3,2)},\sh^{t_{\ell+1}^{(3)}H_{\ell}}(v_{\ell+1}^{(2)})\upharpoonright I_{\ell+1}^{(3,2)})\geq \prod_{m=n_0}^{2\ell+1}(1-\frac{1600}{m^2})^5.
\end{equation}
Combining \eqref{eq:shiftHam1} and \eqref{eq:shiftHam2}, we complete the induction step for \eqref{eq:ineq3}.

\paragraph{\eqref{eq:ineq4} for $n=\ell+1$:} For the remaining inequality of our lemma we applying Lemma \ref{lem:useTool} with $n=\ell+1$ and notice that $(1-\frac{6}{(\ell+1)^2})^2\geq(1-\frac{600}{(2\ell+3)^2})^2$. Hereby, we complete our proof.
\end{proof}

With the help of Lemma \ref{lem:inductionHamming}, we are able to estimate the number of Hamming balls we need to cover a large portion of the space along a given subsequence. Recall that in step \ref{item:step6} at stage $n$ of our construction we apply Proposition \ref{prop:prob} with  $\gamma=\frac{1}{(n+1)^{12}}$ and $\epsilon=\frac{1}{1000}$. Then we have the following estimate:
\begin{proposition}\label{prop:towerSequence}
	Let $H_n$, $\tau_n$ and $R_n$ be defined as in the construction. For any $\epsilon\in(0,\frac{1}{1000})$, there exists $N_1\in\mathbb{N}$ such that for any  $n_0\geq N_1$ we have for every $n\geq n_0$ and $k\in[\frac{7}{(n+1)^{12}}R_n,R_n]\cap\mathbb{Z}$:
	\begin{equation}\label{eq:finiteLower1}
		S_{\mathcal{P}_{n_0}}(T,kH_n,\frac{\epsilon}{2})\geq \frac{1}{5}a_{(R_n+1)H_{n}}(t_{n}),
	\end{equation}
	and for any $k\in[\tau_n,\frac{7}{(n+1)^{12}}R_{n})\cap\mathbb{Z}$:
	\begin{equation}\label{eq:finiteLower2}
		S_{\mathcal{P}_{n_0}}(T,kH_{n},\frac{3}{4}\epsilon)\geq \frac{1}{6}a_{(k+1)H_{n}}(t_{n}).
	\end{equation}
\end{proposition}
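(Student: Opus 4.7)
The strategy is to lower bound the cardinality of any Hamming cover by analyzing how many strata of the tower $S_{n+1}^{(2)}$ can be packed into a single Hamming ball. Recall from step \ref{item:step6} that $S_{n+1}^{(2)}$ is, at the $\mathcal{P}_{n_0}$-level, the concatenation of $N_n := a_{(R_n+1)H_n}(t_n)$ blocks $E_n^{(0)},\ldots,E_n^{(N_n-1)}$, each of inner length $R_nH_n$ and corresponding (at outer level) to a word $e_n^{(i)}\in\Sigma^{R_n}$ produced by Proposition \ref{prop:prob} in the alphabet $\Sigma=\{0,\ldots,(n+1)^6-1\}$. Since $\mu(S_{n+1}^{(2)})$ is bounded below by a universal constant (via the balance \eqref{eq:massEstimate} and the summable mass bound \eqref{eq:addMassEst}), for $n_0\geq N_1$ any cover of $X$ with measure $>1-\epsilon$ must cover strata of $S_{n+1}^{(2)}$ of measure at least $\mu(S_{n+1}^{(2)})-\epsilon\geq \tfrac{4}{5}\mu(S_{n+1}^{(2)})$; equivalently, at least $\tfrac{4}{5}h_{n+1}^{(2)}$ many strata.

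The key bookkeeping tool is the transfer from outer to inner Hamming distance furnished by Lemma \ref{lem:inductionHamming}. Setting $\delta_n:=\prod_{m=n_0}^{2n+1}(1-1600/m^2)^5$, inequality \eqref{eq:ineq4} says that each outer disagreement between two length-$kH_n$ codings forces at least $\delta_nH_n$ inner disagreements on the corresponding $H_n$-block; hence inner Hamming distance $\geq\delta_n\cdot$ outer Hamming distance. For $N_1$ large we have $\delta_n\geq 3/4$, so inner $\tfrac{\epsilon}{2}$-closeness implies outer $\epsilon$-closeness and so does inner $\tfrac{3\epsilon}{4}$-closeness. This reduces the ball-capacity estimate to an outer-alphabet counting.

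For the first inequality ($k\in[7R_n/(n+1)^{12},R_n]$) we are in the substantial regime $k\geq\gamma R_n$ with $\gamma=(n+1)^{-12}$, and property (2) of Proposition \ref{prop:prob} supplies an outer Hamming separation of at least $1-2/(n+1)^6\gg\epsilon$ between the outer codings of strata in distinct blocks $E_n^{(i)},E_n^{(j)}$ as well as between nontrivial shifts within a single $e_n^{(i)}$. Transferring via Lemma \ref{lem:inductionHamming}, each Hamming ball of inner radius $\epsilon/2$ can meet strata of only one block $E_n^{(i_0)}$, whence its ball-capacity is bounded by $R_nH_n$. Dividing the $\tfrac{4}{5}h_{n+1}^{(2)}$ strata to be covered by the capacity $R_nH_n$ yields $\tfrac{4}{5}N_n\geq\tfrac{1}{5}a_{(R_n+1)H_n}(t_n)$.

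For the second inequality ($k\in[\tau_n,7R_n/(n+1)^{12})$) we invoke property (4) of Proposition \ref{prop:prob} with the subexponential sequence $b_{L,n}=a_{(L+1)H_n}(t_n)$: any outer length-$k$ substring of $e_n^{(i_0)}$ is $\epsilon$-close at outer level to at most a $\tfrac{2}{a_{(k+1)H_n}(t_n)}$-fraction of the length-$k$ substrings of each other $e_n^{(j)}$, while property (3) controls self-shifts within $e_n^{(i_0)}$. Transferring via the $\tfrac{3\epsilon}{4}$-tolerance, a Hamming ball of inner radius $\tfrac{3\epsilon}{4}$ contains at most $\sim \tfrac{2N_nR_nH_n}{a_{(k+1)H_n}(t_n)}$ strata, where the extra factor $H_n$ absorbs the fractional outer shifts $s\in[0,H_n)$. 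Dividing once more produces the bound $\tfrac{1}{6}a_{(k+1)H_n}(t_n)$. The main technical obstacle is precisely this two-scale bookkeeping: one must verify that the $H_n$-multiplicity from fractional outer shifts does not erode the gain from property (4), and that the margin $\tfrac{7}{(n+1)^{12}}$ rather than $\gamma=\tfrac{1}{(n+1)^{12}}$ is generous enough to cover the transitional range $[\beta_n,\gamma R_n)$ from property (3) of Proposition \ref{prop:prob}; the choice of $\tfrac{3\epsilon}{4}$ (instead of $\tfrac{\epsilon}{2}$) as the Hamming threshold is dictated by the factor $1/\delta_n$ in the inner-to-outer transfer.
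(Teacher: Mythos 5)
Your proposal follows the paper's route: transfer from the inner alphabet $\mathcal{P}_{n_0}$ to the outer alphabet $\{w_n^{(0)},\ldots,w_n^{((n+1)^6-1)}\}$ via Lemma \ref{lem:inductionHamming}\eqref{eq:ineq4}, apply Proposition \ref{prop:prob}(2)--(4) at the outer scale, and divide the number of strata to be covered by the per-ball capacity. Two points are glossed that the paper handles carefully. First, the claim that an $\frac{\epsilon}{2}$-ball ``can meet strata of only one block $E_n^{(i_0)}$'' is false as stated: a stratum's length-$kH_n$ coding is a substring of $u_iu_{i+1}$, so strata from \emph{adjacent} blocks can be Hamming-close; the paper's alignment argument ($v_x=C_1C_2C_3$, $v_y=D_1D_2D_3$, $\ell(C_{p_0})\geq\frac{1}{3}\ell(v_x)$, leading to \eqref{eq:secondHammingSepe}) shows only that $|i-j|\geq 2$ forces $d^H_{kH_n}\geq\frac{1}{16}$, which bounds the capacity by roughly $3R_nH_n$ rather than $R_nH_n$ --- still enough for the constant $\frac{1}{5}$ in \eqref{eq:finiteLower1}. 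Second, the inner-to-outer transfer needs the $H_n$-blocks of the two codings to be aligned; the paper decomposes the ``good'' part $G_{n+1}^{(2)}$ of $S_{n+1}^{(2)}$ into $H_n$ offset classes $G_{n+1}^{(2,s)}$, proves a per-class ball-measure bound at doubled radius, and recovers the halved-radius covering bound via the triangle inequality (pick $z\in B(x,\frac{\epsilon}{2})\cap G_{n+1}^{(2,s)}$ so that $B(x,\frac{\epsilon}{2})\cap G_{n+1}^{(2,s)}\subseteq B(z,\epsilon)\cap G_{n+1}^{(2,s)}$, since the cover need not center its balls at points of $G_{n+1}^{(2)}$). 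You invoke this $H_n$-offset device only for \eqref{eq:finiteLower2}, but it is equally needed in \eqref{eq:finiteLower1}. With those two details filled in, your proposal is the paper's proof; no genuinely different route is taken.
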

\begin{proof}
	Let $N_1$ be sufficiently large such that $\prod_{m=N_1}^{\infty}(1-\frac{600}{m^2})^5>\frac{3}{4}$. For any $n\geq n_0$ and $k\in[K_n,R_n]$, we denote the $N_n$ different words obtained at step \ref{item:step6} as $\{u_i\}_{i=1}^{N_n}$. Then there exists a subset $G_{n+1}^{(2)}\subset S_{n+1}^{(2)}$ with $\mu(G_{n+1}^{(2)})\geq(1-\frac{1}{N_n})\mu(S_{n+1}^{(2)})$ such that for any $x\in G_{n+1}^{(2)}$ its symbolic name $\phi_{\mathcal{P}_{n_0},kH_n}(x)$ of length $kH_n$ with respect to $\mathcal{P}_{n_0}$ is a substring of $u_iu_{i+1}$ with length $kH_n$, where $i=1,\ldots,N_n-1$. 
	We decompose the set $G_{n+1}^{(2)}$ into $H_n$ different subsets $G_{n+1}^{(2,s)}$ for $s=0,\ldots,H_n-1$ defined as follows:
\begin{equation}
\begin{aligned}
G_{n+1}^{(2,s)}=\{x\in G_{n+1}^{(2)}:\ & \phi_{\mathcal{P}_{n_0},kH_n}(x)\upharpoonright[s,(k-1)H_n+s]\text{ is a concatenation }\\
&\text{of } w_n^{(0)},\ldots,w_n^{((n+1)^6-1)}\},
\end{aligned}
\end{equation}
where $w_n^{(k)}$ is defined in \eqref{eq:w_n}. Suppose $x,y\in G_{n+1}^{(2,s)}$ for some $s=0,1,\ldots,H_n-1$. Let $v_x,v_y$ be defined as
\begin{equation}\label{eq:v1v2}
v_x=\phi_{\mathcal{P}_{n_0},kH_n}(x)\upharpoonright[s,(k-1)H_n+s], \ \ \ \ v_y=\phi_{\mathcal{P}_{n_0},kH_n}(y)\upharpoonright[s,(k-1)H_n+s].
\end{equation}
Then we have
\begin{equation}\label{eq:seperateWordsCon}
\begin{aligned}
&\text{$v_x$ is a substring of $u_iu_{i+1}$ with length $(k-1)H_n$,}\\
&\text{$v_y$ is a substring of $u_ju_{j+1}$ with length $(k-1)H_n$,}\\
&\text{$v_x$, $v_y$ are concatenations of $w_n^{(0)},\ldots,w_n^{((n+1)^6-1)}$.}
\end{aligned}
\end{equation}

	\paragraph{Proof of \eqref{eq:finiteLower1}:} Let $x\in G_{n+1}^{(2)}$ and $\epsilon\in(0,\frac{1}{200})$. If we have for $i=0,\ldots,H_n-1$ and  $k\in[\frac{7}{(n+1)^{12}}R_n,R_n]\cap\mathbb{Z}$ that
\begin{equation}\label{eq:seperateMass}
\mu(B_{\mathcal{P}_{n_0},kH_n}(x,T,\frac{\epsilon}{2})\cap G_{n+1}^{(2,i)})\leq \frac{4}{N_n}\mu(G_{n+1}^{(2,i)}),
\end{equation}
then the first part of our lemmas follows from $N_n=a_{(R_n+1)H_{n}}(t_{n})$ and \begin{equation}\label{eq:Sn+1Mea}
		\mu(S_{n+1}^{(2)})>\frac{1}{5}\mu(X).
	\end{equation}
which is due to \eqref{eq:initialEquation}, \eqref{eq:addMass1} and \eqref{eq:addMassEst}. Thus, we only need to prove \eqref{eq:seperateMass} to complete the proof of \eqref{eq:finiteLower1}.

Suppose $x,y\in G_{n+1}^{(2,s)}$ for some $s=0,\ldots,H_n-1$ and let $v_x,v_y$ be defined as is \eqref{eq:v1v2}. We write $v_x=A_1A_2$ and $v_y=B_1B_2$, where $A_1$, $A_2$, $B_1$ and $B_2$ are substrings of $u_i$, $u_{i+1}$, $u_{j}$, and $u_{j+1}$, respectively. Suppose $l(A_1)\geq l(B_1)$ (if $l(A_1)\leq l(B_1)$, we just switch the role of $v_x$ and $v_y$). Since we have $l(A_1)+l(A_2)=l(B_1)+l(B_2)$, we have $l(A_2)\leq l(B_2)$. We decompose $A_1$ as $C_1C_2$ such that $C_1$, $C_2$ are substrings of $u_i$ and $l(C_1)=l(B_1)$. We also decompose $B_2$ into $D_2D_3$ such that $D_2$, $D_3$ are substrings of $u_{j+1}$ and $l(D_3)=l(A_2)$. Finally let $C_3=A_2$ and $D_1=B_1$. Hence, we have $v_x=C_1C_2C_3$ and $v_y=D_1D_2D_3$ such that $C_1$, $C_2$ are substrings of $u_i$; $C_3$ is a substring of $u_{i+1}$; $D_1$ is a substring of $u_j$; $D_2$, $D_3$ are substrings of $u_{j+1}$ and $l(C_p)=l(D_p)$ for $p=1,2,3$. Moreover, \eqref{eq:seperateWordsCon} guarantees that
\begin{equation}\label{eq:seperateWordsCon1}
\text{$C_1$, $C_2$, $C_3$, $D_1$, $D_2$, and $D_3$ are concatenations of $w_n^{(0)},\ldots,w_n^{((n+1)^6-1)}$.}
\end{equation}
	\begin{figure}[H]
		\centering
		\scalebox{0.6}
		{
			\begin{tikzpicture}[scale=5]
				\tikzstyle{vertex}=[circle,minimum size=2pt,inner sep=0pt]
				\tikzstyle{selected vertex} = [vertex, fill=red!24]
				\tikzstyle{edge1} = [draw,line width=5pt,-,red!50]
				\tikzstyle{edge2} = [draw,line width=5pt,-,green!50]
				\tikzstyle{edge3} = [draw,line width=5pt,-,blue!50]
				\tikzstyle{edge4} = [draw,line width=5pt,-,brown!50]
				
				\tikzstyle{edge} = [draw,thick,-,black]
				\node[vertex] (l01) at (-1.0,0) {};
				\node[vertex] (l02) at (-0.2,0) {};
				\node[vertex] (l03) at (0,0) {};
				\node[vertex] (l04) at (0.6,0) {};
				\node[vertex] (l05) at (1.0,0) {};
				
				\node[vertex] (l11) at (-1.0,0.2) {};
				\node[vertex] (l12) at (-0.6,0.2) {};
				\node[vertex] (l13) at (0,0.2) {};
				\node[vertex] (l14) at (0.2,0.2) {};
				\node[vertex] (l15) at (1.0,0.2) {};
				
				\node[vertex] (l21) at (1.2,0) {};
				\node[vertex] (l22) at (2,0) {};
				\node[vertex] (l23) at (2.2,0) {};
				\node[vertex] (l231) at (2.6,0) {};
				\node[vertex] (l24) at (2.8,0) {};
				\node[vertex] (l25) at (3.2,0) {};
				
				\node[vertex] (l31) at (1.2,0.2) {};
				\node[vertex] (l32) at (1.6,0.2){};
				\node[vertex] (l321) at (1.8,0.2) {};
				\node[vertex] (l33) at (2.2,0.2) {};
				\node[vertex] (l34) at (2.4,0.2) {};
				\node[vertex] (l35) at (3.2,0.2) {};
				
				\node[vertex] (w01) at (-0.1,-0.1) {$B_1$};
				\node[vertex] (w01) at (0.3,-0.1) {$B_2$};
				\node[vertex] (w01) at (-0.3,0.1) {$A_1$};
				\node[vertex] (w01) at (0.1,0.1) {$A_2$};
				\node[vertex] (w01) at (1.7,0.1) {$C_1$};
				\node[vertex] (w01) at (2.0,0.1) {$C_2$};
				\node[vertex] (w01) at (2.3,0.1) {$C_3$};
				\node[vertex] (w01) at (2.1,-0.1) {$D_1$};
				\node[vertex] (w01) at (2.4,-0.1) {$D_2$};
				\node[vertex] (w01) at (2.7,-0.1) {$D_3$};
				
				\draw[thick,dash dot] (l01)--(l05);
				\draw[edge1] (l02)--(l03);
				\draw[edge1] (l03)--(l04);
				\draw[thick,dash dot] (l11)--(l15);
				\draw[edge1] (l12)--(l13);
				\draw[edge1] (l13)--(l14);
				
				\draw[thick,dash dot] (l21)--(l25);
				\draw[edge2] (l22)--(l23);
				\draw[edge2] (l23)--(l231);
				\draw[edge2] (l231)--(l24);
				\draw[thick,dash dot] (l31)--(l35);
				\draw[edge2] (l32)--(l321);
				\draw[edge2] (l321)--(l33);
				\draw[edge2] (l33)--(l34);
				
			\end{tikzpicture}
		}.
	\end{figure}
	
	Recall $\gamma=\frac{1}{(n+1)^{12}}$ and $l(v_x)=l(v_y)\geq 6\gamma R_nH_n$. Then by Pigeonhole principle, there exists $p_0\in\{1,2,3\}$ such that $l(C_{p_0})\geq\frac{1}{3}l(v_1)$. Thus, there exists $l_0\in\mathbb{Z}^+$ such that $$l_0(\|\gamma R_n\|+1)H_n\leq l(C_{p_0})\leq (l_0+1)(\|\gamma R_n\|+1)H_n,$$
	where $\|a\|$ is the integer part of $a\in\mathbb{R}^+$. We denote the first $l_0(\|\gamma R_n\|+1)H_n$ symbols of $C_{p_0}$ and $D_{p_0}$ by $C'_{p_0}$ and $D'_{p_0}$, respectively. Suppose $|i-j|\geq2$, then by Proposition \ref{prop:prob} (2), step \ref{item:step6}, \eqref{eq:ineq4} of Lemma \ref{lem:inductionHamming} and \eqref{eq:seperateWordsCon1}, we have
	\begin{equation}\label{eq:HD1}
		\begin{aligned}
			d_{l(C_{p_0})}^H(C_{p_0},D_{p_0})&\geq \frac{l_0}{l_0+1}d_{l_0(\|\gamma R_n\|+1)H_n}^H(C'_{p_0},D'_{p_0})\\
			&\geq\frac{1}{2}\prod_{m=n_0}^{\infty}(1-\frac{600}{m^2})^5.\\
		\end{aligned}
	\end{equation}
Combining \eqref{eq:HD1} with the assumption that $n_0\geq N_1$ guarantees
	\begin{equation*}
		d_{l(C_{p_0})}^H(C_{p_0},D_{p_0})>\frac{3}{8},
	\end{equation*}
which together with $l(C_{p_0})\geq\frac{1}{3}l(v_x)$,  \eqref{eq:v1v2} and $\frac{1}{(n+1)^{12}}R_n\geq(n+1)^{20}$ implies that \begin{equation}\label{eq:secondHammingSepe}
		d_{kH_n}^H(\phi_{\mathcal{P}_{n_0},kH_n}(x),\phi_{\mathcal{P}_{n_0},kH_n}(y))\geq \frac{1}{16}.
	\end{equation}
	
Since each string of length $kH_n$ has the same measure, we obtain from \eqref{eq:secondHammingSepe} and the assumption $|i-j|\geq2$ that for every $z\in G_{n+1}^{(2,s)}$ and $\epsilon\in(0,\frac{1}{100})$, \begin{equation}\label{eq:middleSeperate}
\mu(B_{\mathcal{P}_{n_0},kH_n}(z,T,\epsilon)\cap G_{n+1}^{(2,s)})\leq\frac{4}{N_n}\mu(G_{n+1}^{(2,s)}).
\end{equation}
For any $x\in G_{n+1}^{(2)}$, we have either $B_{\mathcal{P}_{n_0},kH_n}(x,T,\frac{\epsilon}{2})\cap G_{n+1}^{(2,s)}=\emptyset$ or there exists $z\in B_{\mathcal{P}_{n_0},kH_n}(x,T,\frac{\epsilon}{2})\cap G_{n+1}^{(2,s)}$. The triangle inequality gives that $$\left(B_{\mathcal{P}_{n_0},kH_n}(x,T,\frac{\epsilon}{2})\cap G_{n+1}^{(2,s)}\right)\subset B_{\mathcal{P}_{n_0},kH_n}(z,T,\epsilon)\cap G_{n+1}^{(2,s)}.$$
Thus, \eqref{eq:middleSeperate} implies that  \eqref{eq:seperateMass}.

	\paragraph{Proof of \eqref{eq:finiteLower2}:}
	If $k\in[\tau_n,6\gamma R_n]$, denote
	\begin{equation*}
		\begin{aligned}
			\Omega&=\{v:\text{$v$ is a substring of $u_iu_{i+1}$ with length $kH_n$ for some $i\in\{1,\ldots,N_n-1\}$}\},\\
			\Omega_G&=\{v\in\Omega:\text{$v$ is a substring of $u_i$ with length $kH_n$ for some $i\in\{1,\ldots,N_n\}$}\},\\
\Omega'_G&=\{v\in\Omega_G:\text{$v=u_i\upharpoonright[t,t+kH_n]$ for $0\leq t\leq(1-2\gamma)R_nH_n$.}\}
		\end{aligned}
	\end{equation*}

	By direct computation, we have
	\begin{equation*}
		\begin{aligned}
			\operatorname{Card}(\Omega)=N_nR_nH_n-kH_n, \ \ \operatorname{Card}(\Omega_G)=N_n(R_nH_n-kH_n).
		\end{aligned}
	\end{equation*}
	Recall $k\leq6\gamma R_n$ and, thus, we have
	\begin{equation*}
		\frac{\operatorname{Card}(\Omega_G)}{\operatorname{Card}(\Omega)}=\frac{N_n(R_nH_n-kH_n)}{N_nR_nH_n-kH_n}\geq\frac{R_n-k}{R_n}\geq1-6\gamma.
	\end{equation*}
which together with $\operatorname{Card}(\Omega'_G)\geq(1-3\gamma)\operatorname{Card}(\Omega_G)$ implies that
\begin{equation}\label{eq:goodStrings}
\operatorname{Card}(\Omega'_G)\geq(1-3\gamma)(1-6\gamma)\operatorname{Card}(\Omega)\geq(1-9\gamma)\operatorname{Card}(\Omega).
\end{equation}
Recalling that each substring of length $kH_n$ has the same measure, the estimates in \eqref{eq:Sn+1Mea} and \eqref{eq:goodStrings} imply that
	\begin{equation}\label{eq:goodStrings1}
		\begin{aligned} \mu(\phi_{\mathcal{P}_{n_0},kH_n}^{-1}(\Omega'_G))&\geq(1-9\gamma)\mu(\phi_{\mathcal{P}_{n_0},kH_n}^{-1}(\Omega))\geq(1-9\gamma)\mu(S_{n+1}^{(2)})\\			&>\frac{(1-9\gamma)}{5}\mu(X).
		\end{aligned}
	\end{equation}

If for any $x\in G_{n+1}^{(2)}$ and $s=0,\ldots,H_n-1$, we have
\begin{equation}\label{eq:seperateMass1}
\mu\left(B_{\mathcal{P}_{n_0},kH_n}(x,T,\frac{\epsilon}{4})\cap \phi_{\mathcal{P}_{n_0},kH_n}^{-1}(\Omega'_G)\cap G_{n+1}^{(2,s)}\right)\leq \frac{5}{a_{(k+1)H_{n}}(t_{n})}\mu(\phi_{\mathcal{P}_{n_0},kH_n}^{-1}(\Omega'_G)\cap G_{n+1}^{(2,s)}),
\end{equation}
then the second part of our lemma follows from  \eqref{eq:goodStrings1}. Thus, we only need to prove \eqref{eq:seperateMass1} to complete the proof of \eqref{eq:finiteLower2}.
	
Suppose $x,y\in G_{n+1}^{(2,s)}$ for some $s=0,\ldots,H_n-1$ and let $v_x, v_y$ be defined as in \eqref{eq:v1v2}. Recall that at step \ref{item:step6} we apply Proposition \ref{prop:prob} with basic symbols $\{w_n^{(0)},\ldots,w_n^{((n+1)^6-1)}\}$. Hence we get, that if $v_x$ and $v_y$ are $\epsilon$ Hamming far away with respect to alphabet $\{w_n^{(0)},\ldots,w_n^{((n+1)^6-1)}\}$, then we have the following estimates by \eqref{eq:ineq4} of Lemma \ref{lem:inductionHamming}:
	\begin{equation*}
		d_{(k-1)H_n}^H(v_x,v_y)\geq \epsilon\prod_{m=n_0}^{\infty}(1-\frac{600}{m^2})^5,
	\end{equation*}
which together with the assumption $n_0\geq N_1$ and $\tau_n\geq n$ guarantees that
	\begin{equation}\label{eq:HammingFar}
		d_{kH_n}^H(\phi_{\mathcal{P}_{n_0},kH_n}(x),\phi_{\mathcal{P}_{n_0},kH_n}(y))\geq \frac{1}{2}\epsilon.
	\end{equation}

Combining Proposition \ref{prop:prob} (3), (4), the choice of $\tau_n$ and \eqref{eq:HammingFar}, we have for $z\in G_{n+1}^{(2,s)}$ that:
	\begin{equation}\label{eq:middleSeperate1}
		\begin{aligned}
		&\mu\left(B_{\mathcal{P}_{n_0},kH_n}(z,T,\frac{\epsilon}{2})\cap \phi_{\mathcal{P}_{n_0},kH_n}^{-1}(\Omega'_G)\cap G_{n+1}^{(2,s)}\right)\\
		\leq & \left(\frac{3}{a_{(k+1)H_{n}}(t_{n})}+\frac{2}{N_n}\right)\cdot \mu(\phi_{\mathcal{P}_{n_0},kH_n}^{-1}(\Omega'_G)\cap G_{n+1}^{(2,s)}),
		\end{aligned}
	\end{equation}
where $\frac{3}{a_{(k+1)H_{n}}(t_{n})}$ comes from Proposition \ref{prop:prob} and $\frac{2}{N_n}$ comes from the substrings, which lie in the same $u_i$ as $v_z$.

Recall for any $x\in G_{n+1}^{(2)}$ that we have either $B_{\mathcal{P}_{n_0},kH_n}(x,T,\frac{\epsilon}{4})\cap G_{n+1}^{(2,s)}=\emptyset$ or there exists $z\in B_{\mathcal{P}_{n_0},kH_n}(x,T,\frac{\epsilon}{4})\cap G_{n+1}^{(2,s)}$. The triangle inequality gives that $$\left(B_{\mathcal{P}_{n_0},kH_n}(x,T,\frac{\epsilon}{4})\cap \phi_{\mathcal{P}_{n_0},kH_n}^{-1}(\Omega'_G)\cap G_{n+1}^{(2,s)}\right)\subset B_{\mathcal{P}_{n_0},kH_n}(z,T,\frac{\epsilon}{2})\cap\phi_{\mathcal{P}_{n_0},kH_n}^{-1}(\Omega'_G)\cap  G_{n+1}^{(2,s)}.$$
Thus, \eqref{eq:middleSeperate1}, $N_n=a_{(R_n+1)H_{n}}(t_{n})$ and $k+1\leq R_n$ imply \eqref{eq:seperateMass1}.
	
\end{proof}

Now we will show that the lower slow entropy of $T$ with respect to the scaling function $a_n(t)$ with properties \eqref{eq:scalingFunction} is positive:
\begin{proposition}
	For $a_n(t)$ defined in \eqref{eq:scalingFunction}  and $n_0\in\mathbb{N}$ fixed, we have
	$$\lent^{\mu_X}_{a_n(t)}(T,\mathcal{P}_{n_0})=+\infty,$$
	where $\mathcal{P}_{n_0}$ is defined in \eqref{eq:finiteRankPartition}.
\end{proposition}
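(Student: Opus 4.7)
The plan is to extract from Proposition \ref{prop:towerSequence} a lower bound on $S_{\mathcal{P}_{n_0}}(T,N,\delta)$ valid for \emph{all} sufficiently large integers $N$ (not only the specific lengths $kH_n$), and then compare it directly with $a_N(t)$ for any fixed $t>0$. Concretely, I will show that for every $t>0$ there is some $\delta_0>0$ such that for all $\delta\in(0,\delta_0)$,
\[
\liminf_{N\to\infty}\frac{S_{\mathcal{P}_{n_0}}(T,N,\delta)}{a_N(t)}\geq\frac{1}{6},
\]
which places every $t>0$ into $B(\delta,\mathcal{P}_{n_0})$ for all sufficiently small $\delta$, forcing $A(\delta,\mathcal{P}_{n_0})=+\infty$ and hence $\lent_{a_n(t)}^{\mu_X}(T,\mathcal{P}_{n_0})=+\infty$.

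First, I would fix $t>0$ and use $\lim_{n\to\infty}t_n=+\infty$ to choose $n_1$ so that $t_n\geq t$ for all $n\geq n_1$. The condition $R_nH_n\geq\tau_{n+1}H_{n+1}$ built into \eqref{eq:heightHn23} guarantees that the intervals $[\tau_nH_n,R_nH_n]$ chain together without gaps for $n\geq n_1$. Therefore, every sufficiently large $N$ can be written as $N=kH_n+r$ with $\tau_n\leq k\leq R_n$ and $0\leq r<H_n$, placing $k$ into exactly one of the two regimes of Proposition \ref{prop:towerSequence}.

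The key technical step, and the main subtlety, is the interpolation between $N$ and the nearest earlier length $kH_n$, because the Proposition only provides lower bounds at the specific lengths $kH_n$. For this I will use the elementary monotonicity inequality
\[
S_{\mathcal{P}}(T,N,\delta)\;\geq\;S_{\mathcal{P}}\!\Big(T,\,kH_n,\,\tfrac{\delta N}{kH_n}\Big),\qquad N\geq kH_n,
\]
which follows since an $\epsilon$-Hamming cover at length $N$ restricts to a cover at length $kH_n$ of radius $\delta N/(kH_n)$. Because $k\geq\tau_n\geq n$, the ratio $N/(kH_n)\leq(k+1)/k$ is at most $1+1/n$, so $\delta N/(kH_n)$ stays close to $\delta$. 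Choosing $\delta_0<\frac{3}{8000}$ allows me to pick $\epsilon_A=2\delta N/(kH_n)$ in Case A or $\epsilon_B=4\delta N/(3kH_n)$ in Case B of the Proposition, both lying in $(0,1/1000)$, and making the Hamming radius there coincide exactly with $\delta N/(kH_n)$.

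Feeding these choices into Proposition \ref{prop:towerSequence} yields in Case A ($k\in[7R_n/(n+1)^{12},R_n]$) the bound $\tfrac{1}{5}a_{(R_n+1)H_n}(t_n)$ and in Case B ($k\in[\tau_n,7R_n/(n+1)^{12})$) the bound $\tfrac{1}{6}a_{(k+1)H_n}(t_n)$. In Case A I use $N\leq R_nH_n$ together with monotonicity of $a_\ast(t_n)$ in the index and monotonicity of $a_N(\cdot)$ in $t$ combined with $t_n\geq t$ to get $a_{(R_n+1)H_n}(t_n)\geq a_N(t)$; in Case B I use $N<(k+1)H_n$ the same way. Both cases therefore give $S_{\mathcal{P}_{n_0}}(T,N,\delta)\geq \tfrac{1}{6}a_N(t)$ for all $N$ large enough, which delivers the claimed $\liminf$ bound and completes the proof. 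The only real obstacle is keeping track of the Hamming radii through the length-reduction step, which is the reason the construction was set up with two regimes of different safety factors ($\epsilon/2$ versus $3\epsilon/4$) in Proposition \ref{prop:towerSequence}.
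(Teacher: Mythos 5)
Your proposal is correct and follows essentially the same route as the paper: writing $N=k H_n+r$ with $\tau_n\le k\le R_n$ (possible by the gap-free condition $R_nH_n\ge\tau_{n+1}H_{n+1}$ from \eqref{eq:heightHn23}), reducing $S_{\mathcal{P}_{n_0}}(T,N,\delta)$ to $S_{\mathcal{P}_{n_0}}(T,kH_n,\cdot)$ via the elementary Hamming-restriction inequality, invoking Proposition \ref{prop:towerSequence}, and then using $t_n\ge t$ together with the monotonicity of $a_\ast(\cdot)$ in both arguments. The only (minor) difference is that you track the radius as $\delta N/(kH_n)$ and explicitly bound $\delta_0<3/8000$ to stay within the $(0,1/1000)$ window and cover both regimes of Proposition \ref{prop:towerSequence}, while the paper uses the slightly looser $(k_m+1)\epsilon/k_m$ and leaves these bookkeeping checks implicit.
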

\begin{proof}
	Since $R_nH_n\geq\tau_{n+1}H_{n+1}$ for any $n\in\mathbb{Z}^+$ (see \eqref{eq:heightHn23} for more details), for any $m\in\mathbb{Z}^+$ there exist $k_m,n_m\in\mathbb{Z}^+$ such that
	\begin{equation}\label{eq:boundsOnM}
		\begin{aligned}
			&k_mH_{n_m}\leq m\leq(k_m+1)H_{n_m},\\
			&\tau_{n_m}\leq k_m\leq R_{n_m}-1.
		\end{aligned}
	\end{equation}
	As a result of \eqref{eq:boundsOnM} we see that if we have $$d_{m}^H(\phi_{\mathcal{P}_{n_0},m}(x),\phi_{\mathcal{P}_{n_0},m}(y))<\epsilon,$$
	then we get
	\begin{equation}\label{eq:stairControl}
		d_{k_mH_{n_m}}^H(\phi_{\mathcal{P}_{n_0},k_mH_{n_m}}(x),\phi_{\mathcal{P}_{n_0},k_mH_{n_m}}(y))<\frac{k_m+1}{k_m}\epsilon.
	\end{equation}
	
	Then by Proposition \ref{prop:towerSequence} and \eqref{eq:stairControl}, we obtain
	\begin{equation}\label{eq:secondInfinity1}
		\frac{S_{\mathcal{P}_{n_0}}(T,m,\epsilon)}{ a_m(t)}\geq \frac{ S_{\mathcal{P}_{n_0}}(T,k_mH_{n},\frac{k_m+1}{k_m}\epsilon)}{ a_{(k_m+1)H_{n}}(t)}\geq\frac{\frac{1}{6}a_{(k_m+1)H_{n}}(t_{n})}{a_{(k_m+1)H_{n}}(t)}.
	\end{equation}
	Recall that for any fixed $t>0$ there exists $N_0$ such that for every $n>N_0$ we have $t_{n}>t$. Moreover, recall that $a_n(t_1)\leq a_n(t_2)$ for $t_2>t_1>0$. Then for every $n>N_0$, we have
	\begin{equation}\label{eq:secondInfinity2}
		a_{(k_m+1)H_{n}}(t)\leq a_{(k_m+1)H_{n}}(t_{n}).
	\end{equation}
	
	Combining \eqref{eq:secondInfinity1}, \eqref{eq:secondInfinity2} and arbitrariness of $t$, we complete the proof of the proposition.
\end{proof}

Since $\mathcal{P}_{n_0}$ converges to the decomposition into points as $n_0\to+\infty$, we conclude $\lent_{a_n(t)}^{\mu_X}(T)=+\infty$ with the aid of Proposition \ref{prop:generatingSequence}.  Recalling that our system has rank at most $2$, our estimates and the bounds on the polynomial lower slow entropy in Ferenczi \cite[Proposition 5]{Fe} and Kanigowski \cite[Proposition 1.3]{Kanigowski} show that the rank of our system is exactly $2$. This completes the proof of Theorem \ref{thm:slowentropyFiniterank}.

\section{Appendix}
\subsection{Proof of Proposition \ref{prop:ergodic}}\label{sec:ergodic}
We first show that $T$ preserves Lebesgue measure $\mu$. By the construction of $T$, we know that there exist countable disjoint intervals $I_i\subset X$ for $i\in\mathcal{I}$ such that $\cup_{\mathcal{I}}I_i=X$ and $T$ is measure preserving from $I_i$ to $T(I_i)$. Let $A\subset X$ be a measurable subset, then there exists a countable decomposition of $A$ into $A_i=A\cap I_i$ for $i\in\mathcal{I}$ such that $A=\cup_{i\in\mathcal{I}}A_i$. Since $T$ is a measure preserving map on each interval $I_i$, we have $\mu(A_i)=\mu(T^{-1}(A_i))=\mu(T(A_i))$, which gives that $\mu(A)=\mu(T^{-1}(A))=\mu(T(A))$.

Now suppose that $A$ is an $T$-invariant set with positive measure, i.e. $A=T^{-1}(A)$ and $\mu(A)>0$. Based on our construction, there exists $N_0\in\mathbb{N}$ such that for any $n\geq N_0$, we have
\begin{equation}
\mu(A\cap(S_n^{(1)}\cup S_n^{(2)}))>\frac{\mu(A)}{2}>0.
\end{equation}
Moreover, it is worth to point out that our construction also gives us that for any $n\leq m$:
$$S_n^{(1)}\cup S_n^{(2)}\subset S_m^{(1)}\cup S_m^{(2)},$$
thus we have
\begin{equation}
A\cap(S_n^{(1)}\cup S_n^{(2)})\subset A\cap(S_m^{(1)}\cup S_m^{(2)}).
\end{equation}

Denote $A'=A\cap(S_{N_0}^{(1)}\cup S_{N_0}^{(2)})$ and let $x\in A'$ with Lebesgue density equal to $1$. This implies that for any $\epsilon\in(0,10^{-8})$ there exists $\delta>0$ such that if $I$ is an interval that contains $x$ and $\mu(I)<\delta$, we have $\mu(A'\cap I)>(1-\epsilon)\mu(I)$.

Let $N_1\in\mathbb{N}$ such that for any $n\geq N_1$, $\mu(B_{n}^{(1)}),\mu(B_n^{(2)})<\delta$ and $\mu(X\setminus(S_n^{(1)}\cup S_n^{(2)}))<\epsilon\mu(X)$. By Lebesgue density Theorem and the definition of $A'$, we obtain that for any $n\geq N_1$, there exists at least one level $I_0$ from $S_n^{(1)}$ or $S_n^{(2)}$ such that \begin{equation}\label{eq:firstInterval}
\mu(A'\cap I_0)\geq(1-\epsilon)\mu(I_0).
\end{equation}
By the stack's definition and $A=\cup_{i=-\infty}^{+\infty}T^i(A)\supset\cup_{i=-\infty}^{+\infty}T^i(A')$, we know that \eqref{eq:firstInterval} implies
\begin{equation}\label{eq:goodRegion2}
\mu(A\cap S_n^{(1)})\geq (1-\epsilon)\mu(S_n^{(1)}).
\end{equation}
or
\begin{equation}\label{eq:goodRegion3}
\mu(A\cap S_n^{(2)})\geq (1-\epsilon)\mu(S_n^{(2)}).
\end{equation}

If \eqref{eq:goodRegion2} holds, then denote the levels of $S_{n+1}^{(1)}$ from $S_n^{(1)}$ as $D_i$ and the levels of $S_{n+1}^{(2)}$ from $S_n^{(1)}$ as $E_j$. Since $\frac{\mu(\cup D_i)}{\mu(\cup E_j)}\in(\frac{1}{400},\frac{1}{20})$ (see \eqref{eq:initialEquation} and steps \ref{item:step2},\ref{item:step5} in our construction) and $(\cup D_i)\cup(\cup E_j)=S_n^{(1)}$, we obtain following inequalities from \eqref{eq:goodRegion2}:
\begin{equation}\label{eq:markovPreparation}
\begin{aligned}
\mu\left(A\cap(\cup D_i)\right)\geq(1-500\epsilon)\mu(\cup D_i),\ \ \mu\left(A\cap(\cup E_j)\right)\geq(1-500\epsilon)\mu(\cup E_j).
\end{aligned}
\end{equation}

Denote $k_{bad}^{(1)}=\operatorname{Card}\{D_i:\mu(A\cap D_i)\leq (1-10^5\epsilon)\mu(D_i)\}$ and $k_{bad}^{(2)}=\operatorname{Card}\{E_j:\mu(A\cap E_j)\leq (1-10^5\epsilon)\mu(E_j)\}$, we have the following inequality by \eqref{eq:markovPreparation}:
\begin{equation}
\begin{aligned}
k_{bad}^{(1)}\cdot10^5\epsilon\cdot\mu(D_i)\leq500\epsilon\mu(\cup D_i), \ \ k_{bad}^{(2)}\cdot10^5\epsilon\cdot\mu(E_j)\leq500\epsilon\mu(\cup E_j),
\end{aligned}
\end{equation}
which implies that
\begin{equation}\label{eq:markovResults}
\begin{aligned}
k_{bad}^{(1)}\leq \frac{1}{200}\operatorname{Card}(\{D_i\}),\ \
k_{bad}^{(2)}\leq \frac{1}{200}\operatorname{Card}(\{E_j\}).
\end{aligned}
\end{equation}
Then \eqref{eq:markovResults} implies that there exists $i_0$ and $j_0$ such that
\begin{equation}\label{eq:goodLevels}
\begin{aligned}
\mu(A\cap D_{i_0})\geq(1-10^5\epsilon)\mu(D_{i_0}),\ \
\mu(A\cap E_{j_0})\geq(1-10^5\epsilon)\mu(E_{j_0}).
\end{aligned}
\end{equation}
Recall that $D_{i_0}$ and $E_{j_0}$ are levels of $S_{n+1}^{(1)}$ and $S_{n+1}^{(2)}$, respectively. Together with $A=T^i(A)$ for $i\in\mathbb{Z}$ and the definition of towers, we obtain that
\begin{equation}\label{eq:ergodicInequality}
\begin{aligned}
\mu(A)&\geq\mu(A\cap(S_{n+1}^{(1)}\cup S_{n+1}^{(2)}))\geq(1-10^5\epsilon)(\mu(S_{n+1}^{(1)})+\mu(S_{n+1}^{(2)}))\\
&\geq (1-\epsilon)(1-10^5\epsilon)\mu(X).
\end{aligned}
\end{equation}

Since $\epsilon\in(0,10^{-8})$ is arbitrary, we obtain that $\mu(A)=\mu(X)$. If \eqref{eq:goodRegion3} holds, we will consider the towers $S_{n+2}^{(1)}$ and $S_{n+2}^{(2)}$ instead of $S_{n+1}^{(1)}$ and $S_{n+1}^{(2)}$, and then we will have \eqref{eq:ergodicInequality} again. Combining these two cases, we obtain that $T$ is ergodic with respect to $\mu_X$.


\begin{thebibliography}{9}
\bibitem[A21]{Adams} T. Adams, {\it Genericity and rigidity for slow entropy transformations}, New York J. Math. 27 (2021), 393--416.
\bibitem[ADP10]{ADP} Y. Ahn, D. Dou, K. K. Park, {\it Entropy dimension and variational principle}, Studia Math. 199 (2010), no. 3, 295--309.
\bibitem[AK70]{AK} D. V. Anosov, A. Katok, {\it New examples in smooth ergodic theory. Ergodic diffeomorphisms}, Trudy Moskov. Mat. Obsc. 23 (1970), 3--36.
\bibitem[Blu97]{Blume1} F. Blume, {\it Possible rates of entropy convergence}, Ergodic Theory Dynam. Systems 17 (1997), no. 1, 45--70
\bibitem[Blu12]{Blume2} F. Blume, {\it An entropy estimate for infinite interval exchange transformations}, Math. Z. 272 (2012), no. 1--2, 17--29.
\bibitem[Blu15]{Blume3} F. Blume, {\it Realizing subexponential entropy growth rates by cutting and stacking}, Discrete Contin. Dyn. Syst. Ser. B 20 (2015), no. 10, 3435--3459.
\bibitem[BKWpp]{BKW} S. Banerjee, P. Kunde, D. Wei, {\it Slow entropy for some Anosov-Katok diffeomorphisms}, In preparation.
\bibitem[CKpp]{CK} V. Cyr, B. Kra {\it Realizing ergodic properties in zero entropy subshifts}
    Preprint, arXiv:1902.08645.
\bibitem[C97]{Carvalho} M. de Carvalho, {\it Entropy dimension of dynamical systems}, Portugal. Math. 54 (1997), no. 1, 19--40.
\bibitem[Cha69]{Chacon} R. V. Chacon, {\it Weakly mixing transformations which are not strongly mixing}, Proc. Amer. Math. Soc. 22 (1969), 559--562.
\bibitem[DHP11]{DouHuangPark} D. Dou, W. Huang, K. K. Park, {\it Entropy dimension of topological dynamical systems}, Trans. Amer. Math. Soc. 363 (2011), no. 2, 659--680.
\bibitem[FK04]{FK} B. Fayad, A. Katok, {\it Constructions in elliptic dynamics}, Ergodic Theory Dynam Systems 24 (2004), 1477--1520.
\bibitem[Fer97S]{Ferenczirank} S. Ferenczi, {\it Systems of finite rank}, Colloq. Math. 73 (1997), no. 1, 35--65.
\bibitem[Fer97M]{Fe} S. Ferenczi, {\it Measure-theoretic complexity of ergodic systems}, Israel J. Math. 100 (1997), 189--207.
\bibitem[FP07]{FerencziPark} S. Ferenczi, K. K. Park, {\it Entropy dimensions and a class of constructive examples}, Discrete Contin. Dyn. Syst. 17 (2007), no. 1, 133--141.
\bibitem[FGJ16]{FGJ} G. Fuhrmann, M. Gr\"oger, T. J\"ager, {\it Amorphic complexity}, Nonlinearity 29 (2016), no. 2, 528--565.
\bibitem[FRW11]{FRW} M. Foreman, D. Rudolph, B.Weiss, {\it The conjugacy problem in ergodic theory}, Ann. of Math. (2) 173 (2011), no. 3, 1529--1586.
\bibitem[FW19]{FW1} M. Foreman, B. Weiss, {\it A symbolic representation for Anosov-Katok Systems}, Journal d'Analyse Mathematique 137 (2019), no. 2, 603--661.
\bibitem[F70]{Fried} N. Friedman, {\it Introduction to Ergodic Theory}, Van Nostrand Reinhold, New York, 1970.
\bibitem[FW78]{FuWe} H. Furstenberg, B. Weiss, {\it The finite multipliers of infinite ergodic transformations}, Lecture Notes in Math., 668, Springer, Berlin, 1978.
\bibitem[Go74]{Goodman} T. N. T. Goodman, {\it Topological sequence entropy}, Proc. London Math. Soc. (3) 29 (1974), 331--350.
\bibitem[HN42]{HN} P. Halmos, J. von Neumann, {\it Operator methods in classical mechanics. II}, Ann. of Math. (2) 43 (1942), 332--350.
\bibitem[H12]{Hochman} M. Hochman, {\it Slow entropy and differentiable models for infinite-measure preserving $\mathbb{Z}^k$ actions}, Ergodic Theory Dynam. Systems 32 (2012), no. 2, 653--674.
\bibitem[HKM14]{HKM} B. Host, B. Kra, A. Maass, {\it Complexity of nilsystems and systems lacking nilfactors}, J. Anal. Math. 124 (2014), 261--295.
\bibitem[HSY05]{HuangShaoYe} W. Huang, S. Shao, X. Ye, {\it Mixing via sequence entropy}, Algebraic and topological dynamics, 101--122, Contemp. Math., 385, Amer. Math. Soc., Providence, RI, 2005.
\bibitem[H82]{Hulse} P. Hulse, {\it Sequence entropy and subsequence generators}, J. London Math. Soc. (2) 26 (1982), no. 3, 441--450.
\bibitem[Kan18]{Kanigowski} A. Kanigowski, {\it Slow entropy for some smooth flows on surfaces}, Israel J. Math. 226 (2018), no. 2, 535--577.
\bibitem[KKWpp]{KanigowskiKatokWei} A. Kanigowski, A. Katok, D. Wei, {\it Survey on entropy-type invariants of sub-exponential growth in dynamical systems}, Preprint, arXiv:2004.04655.
\bibitem[KKVWpp]{KanigowskiKundeVinhageWei} A. Kanigowski, P. Kunde, K. Vinhage, D. Wei, {\it Slow entropy of higher rank abelian unipotent actions}, Preprint, arXiv:2005.02212.
\bibitem[KVW19]{KanigowskiVinhageWei} A. Kanigowski, K. Vinhage, D. Wei, {\it Slow entropy of some parabolic flows}, Comm. Math. Phys. 370 (2019), no. 2, 449--474.
\bibitem[KS67]{KS} A. Katok, A. Stepin, {\it Approximations in ergodic theory}, Uspehi Mat. Nauk 22 (1967), 81--106.
\bibitem[K77]{K77} A. Katok, {\it Monotone equivalence in ergodic theory}, Mathematics of the USSR-Izvestiya 11 (1977), no. 1, 99--146.
\bibitem[KT97]{KatokThou} A. Katok, J.-P. Thouvenot, {\it Slow entropy type invariants and smooth realization of commuting measure-preserving transformations}, Ann. Inst. H. Poincar\'{e} Probab. Statist. 33 (1997), no. 3, 323--338.
\bibitem[K80]{Katok80} A. Katok, {\it Lyapunov exponents, entropy and periodic orbits for diffeomorphisms}, Inst. Hautes \'{a}tudes Sci. Publ. Math. No. 51 (1980), 137--173.
\bibitem[K03]{K} A. Katok, {\it Combinatorial constructions in ergodic theory and dynamics}, University Lecture Series, 30. American Mathematical Society, Providence, RI, 2003. ISBN: 0-8218-3496-7
\bibitem[Kpp]{Ksurvey} P. Kunde, {\it On the smooth realization problem and the AbC method}, Preprint.
\bibitem[K67]{Kushnirenko} A. G. Kushnirenko, {\it Metric invariants of entropy type}, Uspekhi Mat. Nauk, 22:5(137) (1967), 57--65.
\bibitem[N32]{Ne} J. von Neumann, {\it Zur Operatorenmethode in der klassischen Mechanik}, Ann. of Math. (2) 33 (1932), no. 3, 587--642.
\bibitem[O70]{Or} D. Ornstein, {\it Bernoulli shifts with the same entropy are isomorphic}, Advances in Math. 4 (1970), 337--352.
\bibitem[Veppp]{Vep20} G. Veprev, {\it Non-existence of a universal zero entropy system for non-periodic amenable group actions}, Preprint, arXiv:2010.09806.
\bibitem[Ver00]{Vershik} A. M. Vershik, {\it Dynamic theory of growth in groups: entropy, boundaries, examples}, (Russian. Russian summary) Uspekhi Mat. Nauk 55 (2000), no. 4(334), 59--128;
\bibitem[VG07]{VerGor} A. M. Vershik, A. D. Gorbulski{\u{i}}, {\it Scaled entropy of filtrations of $\sigma$-algebras} (Russian) Teor. Veroyatn. Primen. 52 (2007), no. 3, 446--467;
\bibitem[Ver11]{Ver11} A. M. Vershik, {\it Scaled entropy and automorphisms with a pure point spectrum}, (Russian) Algebra i Analiz 23 (2011), no. 1, 111--135;
\end{thebibliography}
\end{document}